\newtheorem{thm}{Theorem}[section]
\newtheorem{lem}[thm]{Lemma}
\newtheorem{prop}[thm]{Proposition}
\newtheorem{cor}[thm]{Corollary}
\newtheorem{introthm}{Theorem}
\newtheorem{introprop}[introthm]{Proposition}
\newtheorem{introcor}[introthm]{Corollary}
\theoremstyle{definition}
\newtheorem{defn}[thm]{Definition}
\theoremstyle{remark}
\newtheorem{rem}[thm]{Remark}
\newtheorem{ex}[thm]{Example}
\let\skippedproof\proof%
\def\proof{\skippedproof\unskip}
\tikzset{>=stealth}
\tikzstyle{uncontracted}=[circle, draw=black, inner sep=1.5]
\tikzstyle{contracted}=[circle, draw=black, fill=black, inner sep=1.5]
\newcommand{\cC}{\CMcal{C}}
\newcommand{\cE}{\CMcal{E}}
\newcommand{\cF}{\CMcal{F}}
\newcommand{\cG}{\CMcal{G}}
\newcommand{\cH}{\CMcal{H}}
\newcommand{\cM}{\CMcal{M}}
\newcommand{\cW}{\CMcal{W}}
\newcommand{\bC}{\mathbf{C}}
\newcommand{\bR}{\mathbf{R}}
\newcommand{\bbP}{\mathbb{P}}
\newcommand{\fA}{\mathcal{A}}
\newcommand{\fM}{\mathcal{M}}
\newcommand{\A}{\mathbb{A}}
\newcommand{\C}{\mathbb{C}}
\newcommand{\Hom}{\mathrm{Hom}}
\newcommand{\HH}{\mathrm{HH}}
\renewcommand{\d}{\mathrm{d}}
\renewcommand{\det}{\mathrm{det}}
\newcommand{\id}{\mathrm{id}}
\newcommand{\<}{\langle}
\renewcommand{\>}{\rangle}
\DeclareMathOperator{\supp}{\mathsf{supp}}
\DeclareMathOperator{\tr}{tr}
\renewcommand{\O}{\CMcal{O}}
\newcommand{\Ext}{\mathrm{Ext}}
\newcommand{\HC}{\mathrm{HC}}
\DeclareMathOperator{\colim}{colim}
\DeclareMathOperator{\Mod}{\mathsf{Mod}}
\DeclareMathOperator{\coh}{\mathsf{coh}}
\newcommand{\N}{\mathbb{N}}
\newcommand{\Z}{\mathbb{Z}}
\DeclareMathOperator{\pc}{\mathsf{PC}}
\newcommand{\R}{\mathbb{R}}
\newcommand{\Mat}{\mathrm{Mat}}
\newcommand{\nrm}[1]{{\left\|#1\right\|}}
\newcommand{\norm}{{\nrm{\cdot}}}
\newcommand{\cont}{\mathrm{cont}}
\newcommand{\one}{\mathbbm{1}}
\newcommand{\bb}{\mathbf{b}}
\newcommand{\cyc}{\mathrm{cyc}}
\newcommand{\op}{\mathrm{op}}
\renewcommand{\P}{\mathbb{P}}
\DeclareMathOperator{\D}{\mathsf{D}}
\newcommand{\hh}{\mathrm{H}}
\newcommand{\sHom}{\cH\kern-2pt\operatorname{om}}
\newcommand{\an}{\mathrm{an}}
\newcommand{\AinfAlg}{{\mathsf{Alg}^\infty}}
\newcommand{\AinfAlgun}{\mathsf{Alg}^{\infty,\un}}
\newcommand{\AinfAlgan}{{\mathsf{Alg}^{\infty,\an}}}
\newcommand{\infbimod}{\operatorname{\mathsf{Mod}}^\infty}
\newcommand{\NMod}{\operatorname{\mathsf{NMod}}}
\newcommand{\elle}{{\ell^e}}
\newcommand{\grMod}{\operatorname{\mathsf{grMod}}}
\newcommand{\grNMod}{\operatorname{\mathsf{grNMod}}}
\newcommand{\bimodl}{\grMod \ell^e}
\newcommand{\catp}{\mathsf{Catp}}
\newcommand{\un}{\mathrm{un}}
\newcommand{\barB}{\mathsf{B}}
\newcommand{\barC}{\mathsf{C}}
\newcommand{\frh}{\mathfrak{h}}
\newcommand{\frg}{\mathfrak{g}}
\newcommand{\sA}{\mathscr{A}}
\newcommand{\Dol}{D}
\newcommand{\perf}{\mathrm{perf}}
\newcommand{\cl}{\mathrm{cl}}
\newcommand{\per}{\mathrm{pert}}
\newcommand{\barT}{\mathsf{T}}
\newcommand{\ev}{\mathsf{ev}}
\title[Cyclic $A_\infty$-Algebras and Calabi--Yau Structures in the Analytic Setting]{Cyclic $A_\infty$-Algebras and Calabi--Yau Structures\\ in the Analytic Setting}
\author[O. van Garderen]{Okke van Garderen}
\begin{document}

\maketitle

\begin{abstract}
  This paper considers $A_\infty$-algebras whose higher products satisfy an analytic bound with respect to a fixed norm.
  We define a notion of right Calabi--Yau structures on such $A_\infty$-algebras and show that these give rise to cyclic minimal models satisfying the same analytic bound.
  This strengthens a theorem of Kontsevich--Soibelman \cite{KS08}, and yields a flexible method for obtaining analytic potentials of Hua-Keller \cite{HK19}.

  We apply these results to the endomorphism DGAs of polystable sheaves considered by Toda \cite{Tod18},
for which we construct a family of such right CY structures obtained from analytic germs of holomorphic volume forms on a projective variety.
As a result, we find a canonical cyclic analytic $A_\infty$-structure on the Ext-algebra of a polystable sheaf, which depends only on the analytic-local geometry of its support. 
  This shows that the results of \cite{Tod18} can be extended to the quasi-projective setting, and yields a new method for comparing cyclic $A_\infty$-structures of sheaves on different Calabi--Yau varieties.
\end{abstract}

\tableofcontents

\section{Introduction}
\subsection{Motivation}

In their seminal paper, Donaldson and Thomas \cite{DT98} proposed a definition of gauge-theoretic invariants for a compact Calabi--Yau threefold $X$.
Writing $\upnu_X$ for the volume form on $X$, these invariants would count critical points of a holomorphic Chern-Simons functional
\[
  \mathsf{CS} \colon \alpha \mapsto \int_X \tr(\Dol \alpha \wedge \alpha + \alpha \wedge \alpha \wedge \alpha) \wedge \upnu_X
\]
on spaces of connections on vector bundles on $X$, taken up to the action of a suitable gauge group.
Defining such invariants directly in this infinite dimensional setup proved difficult, which led Thomas \cite{Tho00} to pursue an algebraic setup involving moduli spaces of finite type.
In the study of Donaldson--Thomas type invariants that followed \cite{KS08,Beh09,JS12} the moduli space of interest is the moduli stack of semistable sheaves $\cM_X$, and the invariants are defined by expressing $\cM_X$ locally as a critical locus of a finite type potential \cite{Joy15}.

The link between the gauge-theoretic and the algebraic setups was explored by Toda \cite{Tod18}, who shows that the moduli space $\cM_X$ is locally controlled by a gauge problem associated to a polystable sheaf $\cF \in\coh X$.
Concretely, one takes a locally free resolution $\cE^\bullet\to \cF$ and considers the DG algebra
\[
  \frg_\cE = \frg_\cE^\bullet \colonequals \left(\Upgamma(X, \sA^{0,\bullet}(\cE nd(\cE))), \Dol, \wedge  \right),
\]
of Dolbeault forms, whose Maurer-Cartan locus represents the critical locus of the Chern-Simons functional.
The gauge problem is made finite dimensional by transferring the DG algebra structure to a minimal model, given by
an $A_\infty$-structure on the Ext-algebra
\[
  \cH_\cE = \left(\Ext^\bullet_X(\cF,\cF), \upmu = (\upmu_n)_{n\in \N}\right).
\]
Toda \cite{Tod18} shows that the Maurer-Cartan locus of $\cH_\cE$ is an analytic space, whose quotient by the gauge group presents an analytic neighbourhood of $\cF$ in $\cM_X$.
Under the assumption that $X$ is 3 Calabi--Yau, it is moreover a critical locus $\{\d \cW_\cF = 0\}$ of the associated analytic potential
\[
  \cW_\cF(\alpha) = \sum_n \upsigma(\alpha)(\upmu_n(\alpha,\ldots,\alpha)),
\]
where $\upsigma\colon \Ext^\bullet(\cF,\cF) \xrightarrow{\ \sim\ } \Ext^{\dim X-\bullet}(\cF,\cF)^\vee$ is the Serre duality pairing.
The potential is the desired replacement for the Chern-Simons functional in this finite dimensional model, and is the correct function for defining refined versions of the DT invariants \cite{KS08,MMNS12,Tod21}.

These results suggest that it should be possible to study the enumerative geometry of $X$ by directly manipulating the $A_\infty$-algebra $\cH_\cE$ using algebraic methods.
However, for this to work there are two additional structures which need to be considered:
\begin{itemize}
\item an \emph{analytic structure} on $\frg_\cE$ and $\cH_\cE$, in the form of a norm $\norm$ for which (as shown by Tu \cite{Tu14}) the higher products satisfy a geometric series bound 
  \begin{equation}\label{eq:convmu}
    \nrm{\upmu_n} < C^n\quad \forall n\geq 1.
  \end{equation}
  
\item a \emph{cyclic structure} on $\cH_\cE$, in the form of the isomorphism $\upsigma \colon \cH_\cE \to \cH_\cE^*$ which (as shown by Polishchuk \cite{Pol01}) is compatible with the natural $A_\infty$-bimodule structures on $\cH_\cE$ and $\cH_E^*$.
\end{itemize}
Neither the analytic nor the cyclic structure is compatible with arbitrary $A_\infty$-algebra morphisms.
Instead one should consider morphisms of \emph{cyclic $A_\infty$-algebras} (defined in \cite{Kaj07,KS09}), and morphisms of \emph{analytic $A_\infty$-algebras} (appearing implicitly in \cite{Tu14,Tod18}) respectively.

The aim of this paper is to develop the theory of $A_\infty$-algebras carrying both an analytic and cyclic structure.
The main goal is to find a general method for constructing analytic cyclic minimal models for a given analytic $A_\infty$-algebra, with the motivating example being the analytic DG algebra $\frg_\cE$ and its analytic cyclic minimal model $(\cH_\cE,\upsigma)$.
To do this we study the behaviour of \emph{right Calabi--Yau structures} on such analytic $A_\infty$-algebras.

\subsection{Main result}
In what follows, $A$ denotes an $A_\infty$-algebra defined over a separable $\C$-algebra $\ell$, which includes $A_\infty$-structures on vector spaces and quivers.

Recall that a right $d$-CY structure on $A$ is a cocycle in the negative cyclic complex of $A$ satisfying a certain nondegeneracy condition \cite{BD19}.
In this paper, we take the Connes complex $\barC_\lambda^\bullet(A)$ as our model for negative cyclic cohomology $\HC_\lambda^\bullet(A)$, and an $d$-CY structure will be a cocycle in $\barC_\lambda^{-d}(A)$.
Given an analytic structure $\norm \colon A \to \R$ as above we define the subcomplex
\[
  \barC_\lambda^{\an,\bullet}(A) \colonequals \{\ \upphi \in \barC_\lambda^\bullet(A) \mid \upphi \text{ satisfies a geometric series bound in } \norm\ \},
\]
and call a right CY structure analytic if it lies in $\barC_\lambda^{\an,\bullet}(A)$.
We show that the subcomplex $\barC_\lambda^{\an,\bullet}(A)$ admits pullbacks with respect to $A_\infty$-morphisms satisfying a bound analogous to \labelcref{eq:convmu}, and is therefore a good cohomology theory on a category of analytic $A_\infty$-algebras.
Working inside this category, we find the following main theorem.

\begin{introthm}[\cref{mainthm}]\label{thm:D}
  Let $A$ be an analytic $A_\infty$-algebra admitting an analytic minimal model $\hh(A)$ which is finite dimensional and unital.
  Then every analytic right $d$-CY structure $\upphi \in \barC_\lambda^{\an,-d}(A)$ defines a $d$-cyclic analytic minimal model
  \[
    (\hh(A)^\upphi,\upsigma^\upphi)
  \]
  which depends only on the class $[\upphi]_\an \in \HC_\lambda^{\an,-n}(A)$ up to an isomorphism of cyclic analytic $A_\infty$-algebras.
  If $B$ is a second such $A_\infty$-algebra, then every analytic quasi-isomorphism $f\colon B \to A$ induces a cyclic analytic $A_\infty$-isomorphism
  \[
    (\hh(B)^{f^*\upphi},\upsigma^{f^*\upphi})\ \cong_{\cyc,\an}\ (\hh(A)^\upphi,\upsigma^\upphi).
  \]
\end{introthm}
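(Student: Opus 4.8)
The plan is to upgrade the classical theory of cyclic minimal models (due to Kontsevich--Soibelman and Kajiura) to the analytic setting by carrying the norm bounds through every step of the transfer. The starting point is a homotopy transfer: since $A$ admits an analytic minimal model $\hh(A)$, fix analytic transfer data — quasi-isomorphisms $i\colon \hh(A)\to A$, $p\colon A \to \hh(A)$ and a homotopy $H$ — all satisfying geometric series bounds. First I would show that pullback along such analytic $A_\infty$-quasi-isomorphisms is well-defined on $\barC_\lambda^{\an,\bullet}$ (this is asserted in the excerpt as part of the construction of the cohomology theory, so I may invoke it) and use it to push $\upphi$ to an analytic negative cyclic cocycle $i^*\upphi$ on the \emph{minimal} algebra $\hh(A)$. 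So without loss of generality the problem reduces to: given a finite-dimensional unital analytic \emph{minimal} $A_\infty$-algebra $H$ with an analytic right $d$-CY cocycle $\psi\in\barC_\lambda^{\an,-d}(H)$, produce a cyclic analytic $A_\infty$-structure on a quasi-isomorphic $H^\psi$.

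The second step is the nondegeneracy-to-pairing passage. The leading (length-zero in the cyclic variable, or rather the ``$\upmu_2$-component'') term of $\psi$ is a map $H\otimes H \to \C$, and the right-CY nondegeneracy condition says the induced $\upsigma^\psi\colon H \to H^*$ is a quasi-isomorphism of $A_\infty$-bimodules; since $H$ is minimal and finite-dimensional this is an honest isomorphism of graded vector spaces. Here I would invoke Polishchuk-type arguments: a right CY structure on a minimal algebra is, after a change of the higher products within its homotopy class, the same data as a cyclic structure. Concretely, one runs an inductive obstruction argument — order by order in $n$, modify $\upmu_n$ by the differential of a Hochschild cochain so that the pairing $\upsigma^\psi$ becomes strictly cyclic with respect to the new products — and the key point is that at each stage the correcting cochain is obtained by \emph{contracting $\psi$ against a bounded operator}, hence inherits a geometric series bound. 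Summing a geometric-series-bounded sequence of corrections (with the bound constant controlled uniformly, using that $\dim H<\infty$ so only finitely many arities contribute in each total degree) yields an analytic cyclic $A_\infty$-structure $(\hh(A)^\upphi,\upsigma^\upphi)$. Unitality is preserved because one can choose all corrections to vanish on the unit (strictly unital transfer), which is where the hypothesis that $\hh(A)$ is unital is used.

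The third step is well-definedness up to cyclic analytic isomorphism. If $\upphi$ and $\upphi'$ are cohomologous in $\barC_\lambda^{\an,-d}(A)$, say $\upphi-\upphi' = \d\eta$ with $\eta$ analytic, I would run a one-parameter / homotopy version of the obstruction argument above to build an $A_\infty$-isotopy between $(\hh(A)^\upphi,\upsigma^\upphi)$ and $(\hh(A)^{\upphi'},\upsigma^{\upphi'})$ that is simultaneously cyclic and analytic — again the correction terms at each arity are contractions of $\eta$ against bounded operators, so geometric series bounds propagate. The naturality statement for a second algebra $B$ and an analytic quasi-isomorphism $f\colon B\to A$ follows formally: $f$ composed with the chosen transfer data for $A$ gives analytic transfer data exhibiting $\hh(B)$ as an analytic minimal model of $A$ as well (using that analytic $A_\infty$-morphisms compose and the bound is stable under composition), so the minimal models $\hh(B)^{f^*\upphi}$ and $\hh(A)^{\upphi}$ are both obtained from the \emph{same} analytic cyclic cocycle on $A$ via analytic transfer, and hence by the uniqueness part are isomorphic as cyclic analytic $A_\infty$-algebras.

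The main obstacle is the bookkeeping of norm bounds through the inductive obstruction/transfer argument: one must check that each correcting Hochschild cochain, built from iterated compositions of $\upmu$, $H$, $i$, $p$ and contractions with $\psi$, satisfies a geometric series bound whose constant does \emph{not} degrade as the arity grows, and that the infinite sum of such corrections still lies in $\barC_\lambda^{\an,\bullet}$. This is precisely the analytic refinement missing from \cite{KS08,Kaj07}, and the finite-dimensionality of $\hh(A)$ is what makes the sums manageable (in each cohomological degree only boundedly many arities occur), while unitality ensures the homotopies can be taken strictly unital so that the cyclic normalization is not obstructed.
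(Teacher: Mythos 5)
Your overall architecture is reasonable and matches the paper's at the coarsest level (pull $\upphi$ back to the minimal model, strictify the resulting nondegenerate pairing to a genuine cyclic structure, then deduce well-definedness and functoriality from a relative version of the strictification). But there is a genuine gap at the one step that carries all the content of the theorem: the claim that the order-by-order obstruction argument produces corrections satisfying a \emph{uniform} geometric series bound. In an arity-by-arity induction the correcting Hochschild cochain at arity $n$ is built from compositions involving all previously constructed corrections, so the constants compound and a bound of the form $\nrm{\cdot}<C^n$ with $C$ independent of $n$ does not follow from "each step is a contraction against a bounded operator." Your proposed rescue --- that finite-dimensionality of $\hh(A)$ means "only finitely many arities contribute in each total degree" --- is false: $A[1]$ has elements of degree $0$ (coming from $\Ext^1$), so arbitrarily many inputs contribute to any fixed output degree, and indeed the whole point of the analytic theory is that $\upmu_n\neq 0$ for all $n$ is the generic situation. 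Unitality is also doing different work than you assign it: in the paper it is needed to contract the dual normalized bar complex and thereby lift a negative cyclic cocycle to a closed $2$-form at all, not merely to normalize the transfer.

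The paper closes exactly this gap by avoiding the induction. It first converts the analytic right CY structure into an analytic strong homotopy inner product $\uprho\in\Omega^{2,\cl,\an}(\hh(A))$ via two short exact sequences that are shown to restrict to the analytic subcomplexes, and then runs a noncommutative Moser flow in the style of Amorim--Tu: the interpolation $\uprho^t=\upsigma+t(\uprho-\upsigma)$, a family of cocycles $\upalpha^t=(\uprho^t)^{-1}\circ\overline\upxi$, and the ODE $\tfrac{\d}{\d t}f^t=\upalpha^t\diamond f^t$. The solution is written as an explicit sum over planar trees, and the analyticity of $f^1$ is obtained by bounding each tree term by $C^{n+d-1}$ and the number of trees by a Catalan number, which is where the uniform constant actually comes from; the inverse $(\uprho^t)^{-1}$ is controlled by a parallel caterpillar-tree formula. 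If you want to salvage your inductive route you would need to supply an analogous closed-form (tree-type) expression for the accumulated correction at arity $n$ and bound it combinatorially; as written, the convergence of the infinite sequence of corrections in $\barC_\lambda^{\an,\bullet}$ is asserted rather than proved, and this is precisely the analytic refinement the theorem is claiming over the classical statements of Kontsevich--Soibelman and Kajiura.
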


The proof of our main theorem follows the work of Amorim--Tu \cite{AT22}, proving a type of ``analytic Darboux theorem'' for a version of the homotopy-invariant version of cyclic structures \cite{Cho08}, which are related to the noncommutative symplectic structures of \cite{KS09}.

Cyclic $A_\infty$-algebras are especially interesting in the 3-CY setting, because the cyclic $A_\infty$-structure is captured by a potential: given such a 3-cyclic $A_\infty$-algebra $(A,\upsigma)$ there is an associated noncommutative potential $W \in \widehat\barT V \colonequals \prod_{n\geq 0} V^{\otimes n}$ in the completed tensor algebra over $V = (A^1)^\vee$.
If $A$ is moreover equipped with an analytic structure, then this potential lies in an analytic subring
\[
  \widetilde\barT V \colonequals \left\{\ (v_n)_{n\in\N} \in \widehat\barT V \quad\middle|\quad \exists C \text{ such that } \nrm{v_n} < C^n \text{ for all } n>0\ \right\},
\]
and is therefore an analytic potential in the framework of Hua--Keller \cite{HK19}.
Analytic morphisms induce algebra morphisms between these analytic subrings, so in the 3-CY setting our theorem can be interpreted as follows.

\begin{introcor}[{\cref{potentialsmaincor}}]
  In the situation of \cref{thm:D} suppose that $\upphi$ is a right 3-CY structure, and write $V_A = \hh^1(A)^\vee$.
  Then there is a canonical analytic potential
  \[
    W^\upphi \in \widetilde \barT V_A,
  \]
  which is well-defined up to an automorphism of $\widetilde \barT V_A$.
  Moreover, if $f\colon B\to A$ is a quasi-isomorphism and $V_B = \hh^1(B)^\vee$ then there is an induced isomorphism $g\colon \widetilde\barT V_A \to \widetilde\barT V_B$ such that
  \[
    g(W^\upphi) = W^{f^*\upphi}.
  \]
\end{introcor}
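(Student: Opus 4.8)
The plan is to obtain $W^\upphi$ by feeding the cyclic analytic minimal model produced by \cref{thm:D} into the potential construction recalled above, and then to transport the isomorphism statements of \cref{thm:D} through that construction. First I would invoke \cref{thm:D}: since $A$ is analytic with a finite-dimensional unital analytic minimal model and $\upphi$ is an analytic right $3$-CY structure, we obtain a $3$-cyclic analytic $A_\infty$-algebra $(\hh(A)^\upphi,\upsigma^\upphi)$ on the fixed graded vector space $\hh^\bullet(A)$. To this pair one associates its noncommutative potential $W^\upphi=\sum_{n\ge 2}\tfrac1{n+1}\,\upsigma^\upphi\bigl(x,\upmu^\upphi_n(x,\dots,x)\bigr)$ (with the standard Koszul signs) in $\widehat\barT V_A$, where $V_A=\hh^1(A)^\vee$; the point is that unitality of $\hh(A)^\upphi$ together with the cyclicity of $\upsigma^\upphi$ forces only the components $\upmu^\upphi_n\colon(\hh^1(A))^{\otimes n}\to\hh^2(A)$ to contribute, so $W^\upphi$ genuinely lives in the completed tensor algebra on $V_A$ alone.

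Second, I would check analyticity. Because $(\hh(A)^\upphi,\upsigma^\upphi)$ is an analytic $A_\infty$-algebra there is a constant $C$ with $\nrm{\upmu^\upphi_n}<C^n$ for all $n$, while $\upsigma^\upphi$ is a fixed linear map on a finite-dimensional normed space and hence bounded by some $C'$. Consequently the degree-$n$ homogeneous component of $W^\upphi$ has norm at most $\tfrac1n C' C^{n-1}\le(C'+C)^n$, so $W^\upphi\in\widetilde\barT V_A$ and is therefore an analytic potential in the sense of Hua--Keller.

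Third, for canonicity and functoriality I would use that the potential construction is functorial for cyclic $A_\infty$-isomorphisms: reading a cyclic analytic $A_\infty$-isomorphism $(\hh(B)^{\uppsi},\upsigma^{\uppsi})\cong_{\cyc,\an}(\hh(A)^\upphi,\upsigma^\upphi)$ on the tensor-algebra side, and using the fact recorded in the body that analytic $A_\infty$-morphisms induce continuous algebra morphisms of the analytic subrings $\widetilde\barT$, one obtains a continuous algebra isomorphism $g\colon\widetilde\barT V_A\to\widetilde\barT V_B$ with $g(W^\upphi)=W^{\uppsi}$, the linear part of $g$ being the dual of the linear part of the $A_\infty$-isomorphism. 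Applied to the isomorphism furnished by \cref{thm:D} when $[\uppsi]_\an=[\upphi]_\an$ on the same algebra (and to the homotopy-data choices inside the construction of the model), this yields the well-definedness of $W^\upphi$ up to an automorphism of $\widetilde\barT V_A$; applied to the isomorphism $(\hh(B)^{f^*\upphi},\upsigma^{f^*\upphi})\cong_{\cyc,\an}(\hh(A)^\upphi,\upsigma^\upphi)$ supplied by \cref{thm:D} for a quasi-isomorphism $f\colon B\to A$, it yields the comparison $g(W^\upphi)=W^{f^*\upphi}$ with $V_B=\hh^1(B)^\vee$.

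The main obstacle is this third step: one must verify that the potential construction is genuinely functorial at the analytic level, i.e. that the higher components of a cyclic analytic $A_\infty$-isomorphism organise into a \emph{continuous and invertible} algebra morphism of the $\widetilde\barT$'s that intertwines the potentials. In the formal setting this is the classical statement that cyclic $A_\infty$-isomorphisms correspond to symplectomorphisms of the formal noncommutative manifold preserving the Hamiltonian, and amounts to a sign- and combinatorics-bookkeeping exercise; the new content is to track the geometric-series bounds through that argument and to confirm that the induced map and its inverse preserve the analytic subrings, both of which follow from the pullback estimates already established for $\barC_\lambda^{\an,\bullet}$ and for the analytic tensor algebras. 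I would also double-check that the reduction to $\hh^1(A)$ in the first step, which uses unitality of the minimal model, is untouched by the analytic estimates — as it is, being purely a statement about which matrix coefficients of the $\upmu^\upphi_n$ can be nonzero.
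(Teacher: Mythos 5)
Your proposal follows essentially the same route as the paper: feed the cyclic analytic minimal model from Theorem~\ref{mainthm} into the potential construction, verify the geometric-series bound on the coefficients (this is exactly \cref{analpotential}), and then use Kajiura's result that cyclic $A_\infty$-isomorphisms intertwine potentials together with the coefficient estimate $|v(f_n(a_{i_1},\ldots,a_{i_n}))|\leq\nrm{v}\nrm{f_n}\prod_j\nrm{a_{i_j}}$ showing the induced tensor-algebra map preserves $\widetilde\barT V$. The only cosmetic differences are your $\tfrac1{n+1}$ normalisation of the potential and that you bound the norm of the degree-$n$ component rather than the individual basis coefficients, which is equivalent in finite dimensions.
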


The main theorem and its corollary provide a bridge between infinite dimensional settings, involving e.g. normed DG algebras, with finite dimensional settings, such as those considered in \cite{HK19}.
In particular, it applies to the gauge-theoretic setup mentioned above.

\subsection{Analytic CY structures in complex geometry}

We apply \cref{thm:D} to describe the cyclic analytic minimal models of Toda's DGA $\frg_\cE$ for a perfect complex $\cE$ of vector bundles on a smooth projective variety $X$ of dimension $d$.
If $X$ is Calabi--Yau with holomorphic volume form ${\upnu_X}\in \hh^0(X,\Omega^d_X)$, then there is an induced right CY structure: the bounded linear functional
\begin{equation}\label{eq:volumetrace}
  \frg_\cE \to \C,\quad \alpha \mapsto \int_X \tr(\alpha)\wedge {\upnu_X}
\end{equation}
defines a cocycle $\upphi^{\upnu_X} \in \barC_\lambda^{\an,-d}(\frg_\cE)$ which is the nondegenerate in the appropriate sense.
The associated cyclic analytic minimal model $(\cH_\cE^{\upnu_X},\upsigma^{\upnu_X})$ recovers the one found in \cite{Pol01,Tu14,Tod18}.

There are in general many other choices of analytic right CY structures, each of which yields a cyclic analytic minimal model of $\frg_\cE$.
We identify a family of such CY structures corresponding to \emph{holomorphic volume germs} along the support $Z = \supp \cE$, by which we mean the equivalence class of a differential form
\[
  \upnu \in (\Omega_X^d)_Z \colonequals \operatorname{\underset{U\supset Z}{\colim}} \Upgamma(U,\Omega_X^d)
\]
on analytic open neighbourhoods $U\supset Z$, which is nonvanishing when $U$ is sufficiently small.
Such a germ determines a bounded linear functional on the DG subalgebra
\[
  \frg_{\cE|_U,c} \colonequals \left\{ \upxi \in \frg_\cE \mid \supp \upxi \subset U \right\} \subset \frg_{\cE,c},
\]
of Dolbeault forms with compact support in any sufficiently small neighbourhood $U\supset Z$.
This functional induces an analytic right CY structure on $\frg_{\cE|_U,c}$, which we transfer to a right CY structure on $\frg_\cE$ using an explicit $A_\infty$-quasi-isomorphism.
This leads to the following theorem.

\begin{introthm}[{\cref{cycminmodcan}}]\label{thm:A}
  Let $X$ be a smooth projective vartiety of dimension $d$, and $\cE \in \D^\perf(X)$ a complex with support $Z\subset X$.
  Then every volume germ $\upnu \in (\Omega^d_X)_Z$ determines a class $[\upphi^\upnu]_\an \in \HC_\lambda^{\an,-d}(\frg_\cE)$ of an analytic right $d$-CY structure, and hence a $d$-cyclic analytic minimal model
  \[
    (\cH_\cE^\upnu,\upsigma^\upnu) = (\Ext_X^\bullet(\cE,\cE),\upmu^\nu,\upsigma^\nu),
  \]
  which is uniquely defined up to an isomorphism of cyclic analytic $A_\infty$-algebras.
  Moreover, $\cH_\cE^\upnu$ is isomorphic to $\cH_\cE$ as an ordinary (non-cyclic) analytical $A_\infty$-algebra.
\end{introthm}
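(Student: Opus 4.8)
The plan is to produce the analytic right $d$-CY structure $\upphi^\upnu$ on $\frg_\cE$ directly from the volume germ $\upnu$, and then feed it into \cref{thm:D} (Theorem~\ref{thm:D}). The construction proceeds in three stages. First, I would fix a sufficiently small analytic open neighbourhood $U\supset Z$ on which a representative $\upnu\in\Upgamma(U,\Omega^d_X)$ is nonvanishing, and consider the DG subalgebra $\frg_{\cE|_U,c}\subset\frg_\cE$ of Dolbeault forms supported in $U$. On this subalgebra the linear functional $\upxi\mapsto \int_X\tr(\upxi)\wedge\upnu$ is well-defined (the integrand has compact support inside $U$) and bounded with respect to the norm, so it defines a cocycle in $\barC_\lambda^{\an,-d}(\frg_{\cE|_U,c})$; the nonvanishing of $\upnu$ together with Serre duality localized near $Z$ gives the required nondegeneracy, exactly as in the Calabi--Yau case \labelcref{eq:volumetrace} but now only near the support. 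Second, I would exhibit an explicit analytic $A_\infty$-quasi-isomorphism relating $\frg_{\cE|_U,c}$ (or a zig-zag through $\frg_{\cE,c}$ and $\frg_\cE$) to $\frg_\cE$ — this is where one uses that the inclusions of compactly-supported and all Dolbeault forms are quasi-isomorphisms on the relevant Ext-level, and that partitions of unity give bounded contracting homotopies so the homotopy-transferred $A_\infty$-structures satisfy the geometric series bound \labelcref{eq:convmu}. Pulling $\upphi^\upnu$ back along this analytic quasi-isomorphism, using that $\barC_\lambda^{\an,\bullet}$ admits analytic pullbacks (the key formal property established before Theorem~\ref{thm:D}), yields the desired class $[\upphi^\upnu]_\an\in\HC_\lambda^{\an,-d}(\frg_\cE)$.

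Third, I would apply \cref{thm:D} to $A=\frg_\cE$: the hypotheses are met because $\frg_\cE$ is an analytic DG algebra whose minimal model is the finite-dimensional unital $A_\infty$-algebra $\cH_\cE=(\Ext^\bullet_X(\cE,\cE),\upmu)$ with the Tu bound. The theorem then outputs a $d$-cyclic analytic minimal model $(\cH_\cE^\upnu,\upsigma^\upnu)=(\Ext^\bullet_X(\cE,\cE),\upmu^\upnu,\upsigma^\upnu)$, well-defined up to cyclic analytic $A_\infty$-isomorphism and depending only on $[\upphi^\upnu]_\an$. The final clause — that $\cH_\cE^\upnu$ is isomorphic to $\cH_\cE$ as an ordinary (non-cyclic) analytic $A_\infty$-algebra — follows because forgetting the CY structure, \cref{thm:D} produces a minimal model of the underlying analytic $A_\infty$-algebra $\frg_\cE$, and any two analytic minimal models of the same analytic $A_\infty$-algebra are isomorphic (the non-cyclic analytic minimal model being essentially unique). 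Concretely, the $A_\infty$-isomorphism $\cH_\cE^\upnu\cong\cH_\cE$ can be taken to be the transfer isomorphism underlying the construction in Theorem~\ref{thm:D}.

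The main obstacle I anticipate is the second stage: constructing the analytic $A_\infty$-quasi-isomorphism between $\frg_\cE$ and the compactly-supported subalgebra $\frg_{\cE|_U,c}$ and checking that it respects the analytic bounds. Passing to compact supports is harmless cohomologically but delicate analytically — one must choose a bump function adapted to $U$ and verify that the resulting homotopy operator is bounded in the fixed norm uniformly enough that homotopy transfer preserves the geometric series estimate; this is precisely the kind of estimate that is routine in spirit (it mirrors Tu's argument \cite{Tu14}) but needs care because the cutoff interacts with the $\bar\partial$-operator through its differential $\bar\partial$(bump function). A secondary subtlety is verifying nondegeneracy of $\upphi^\upnu$ as a right CY structure when $\upnu$ is only a germ: one needs the pairing $\upsigma^\upnu$ on $\Ext^\bullet_X(\cE,\cE)$ induced by $\upnu$ to be a perfect pairing, which should follow from the local-to-global spectral sequence together with Serre duality on a tubular neighbourhood of $Z$, but requires knowing the relevant local Ext sheaves are supported on $Z$ so that only the behaviour of $\upnu$ near $Z$ matters. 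Once these two points are in place, everything else is an application of the machinery already developed.
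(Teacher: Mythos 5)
Your proposal follows essentially the same route as the paper: restrict to a compactly supported Dolbeault subalgebra $\frg_{\cE|_U,c}$ on a small neighbourhood $U\supset Z$ where the volume germ has an $L^2$-integrable nonvanishing representative, show the trace functional $\upxi\mapsto\int_U\upnu\wedge\tr(\upxi)$ is a bounded negative cyclic cocycle whose nondegeneracy comes from Serre duality for compactly supported cohomology, transport it to $\frg_\cE$ along an explicit analytic quasi-isomorphism built from a weak unit $u$ with $\Dol h=i_!u-\id_\cE$, and feed the resulting class into Theorem~\ref{thm:D}. The only point your sketch leaves implicit is the verification that the class $[\upphi^\upnu]_\an$ is independent of the choice of neighbourhood $U$ and of the unit/coboundary pair $(u,h)$ — the paper settles this by an analytic-homotopy argument comparing the quasi-isomorphisms for nested neighbourhoods — but the homotopy-invariance machinery you invoke already contains what is needed.
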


The above theorem applies to the setting of \cite{Tod18} when $\cE$ is a resolution of a polystable sheaf $\cF \in \coh X$ whose support does not meet the canonical divisor $K_X$.
Some examples considered in the enumerative geometry literature include point sheaves \cite{BBS13}, and sheaves on contractible curves \cite{Sze08,Katz08}.
If $X$ is a threefold, then there is a noncommutative analytic potential
\[
  W^\upnu_\cE \in \widetilde\barT \Ext^1(\cE,\cE)^\vee,
\]
whose abelianisation $\cW^\upnu_\cE = (W^\upnu_\cE)^{\mathrm{ab}}$ is an ordinary analytic function whose critical locus describes the moduli space of semistable sheaves around $\cF$.
Hence, the gauge-theoretic setup can be generalised from the projective CY-3 case in \cite{Tod18} to open analytic neighbourhoods of projective CY-3 folds equipped with a choice of volume form.

In our setup, both the $A_\infty$-algebra structure on the Ext-algebra of $\cE$ and the volume germ $\upnu$ are determined in terms of analytic-local geometry around the support of $\cE$ in $X$.
One would therefore expect that the cyclic $A_\infty$-algebra $(\cH_\cE^\upnu,\upsigma^\upnu)$ is not sensitive to the global geometry of $X$.
To show this we consider embeddings of such neighbourhoods into other projective varieties.

\begin{introthm}[{\cref{embeddingminmod}}]\label{thm:B}
  Let $X,X'$ be smooth projective varieties of dimension $d$, and let $Y$ be an open submanifold of $X$ with an open embedding into  $X'$, as in the following diagram:
  \[
    \begin{tikzcd}
      X & Y\ar[l,hook',"i",swap] \ar[r,"f"] & X'.
    \end{tikzcd}
  \]
  Let $\cE'$ be a perfect complex on $X'$ with  cohomological support $f(Z) \subset f(Y)$ for some compact $Z\subset Y$, and let $\upnu \in (\Omega^d_{X'})_Z$ be a volume germ.
  Then for any $\cE \in \D^\perf(X)$ such that $\cE|_Y \simeq f^*\cE'$ there exists an analytic quasi-isomorphism
  \[
    \frg_{\cE} \simeq_\an \frg_{\cE'}
  \]
  which identifies the classes $[\upphi^\upnu]$ and $[\upphi^{f^*\upphi}]$ via pullback.
  In particular, there is an analytic cyclic $A_\infty$-isomorphism between the $d$-cyclic analytic minimal models
  \[
    (\cH_\cE^{f^*\upnu},\upsigma^{f^*\upnu}) \cong_{\an,\cyc} (\cH_{\cE'}^\upnu,\upsigma^\upnu).
  \]
\end{introthm}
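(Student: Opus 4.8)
The plan is to reduce the cyclic statement to the construction of a zigzag of analytic quasi-isomorphisms of DGAs, and to build that zigzag by factoring both sides through a DGA of Dolbeault forms localised near $Z$. By \cref{thm:D} applied with $A = \frg_{\cE'}$, $B = \frg_\cE$ and $\upphi = \upphi^\upnu$, any analytic quasi-isomorphism $g\colon \frg_\cE \to \frg_{\cE'}$ with $g^*[\upphi^\upnu] = [\upphi^{f^*\upnu}]$ in $\HC_\lambda^{\an,-d}(\frg_\cE)$ at once yields the asserted isomorphism $(\cH_\cE^{f^*\upnu},\upsigma^{f^*\upnu}) \cong_{\an,\cyc} (\cH_{\cE'}^\upnu,\upsigma^\upnu)$, since \cref{thm:D} guarantees that the resulting cyclic minimal model depends only on the class of the CY cocycle. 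A zigzag of analytic quasi-isomorphisms compatible with the CY classes is equally good --- either because analytic quasi-isomorphisms are invertible in the homotopy category of analytic $A_\infty$-algebras, or simply by invoking \cref{thm:D} along each leg and composing the resulting cyclic $A_\infty$-isomorphisms. So it suffices to produce such a zigzag $\frg_\cE \simeq_\an \frg_{\cE'}$.

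First I would fix an analytic open neighbourhood $U$ of $Z$ with compact closure contained in $Y$, small enough that (i) $f^*\upnu \in (\Omega^d_X)_Z$ is represented on $U$ by a nowhere-vanishing holomorphic $d$-form --- equivalently, $\upnu$ is so represented on the biholomorphic image $f(U)\subset X'$ --- and (ii) $U$ is ``sufficiently small'' in the sense required by the compactly supported constructions preceding \cref{thm:A}. Each of these conditions holds for all sufficiently small neighbourhoods of the compact set $Z$, so they can be arranged simultaneously; and since $f$ restricts to a biholomorphism $f|_U\colon U \xrightarrow{\ \sim\ } f(U)$ onto an open neighbourhood of $f(Z)$ in $X'$, the same $U$ serves as a ``sufficiently small'' neighbourhood for the corresponding construction on $X'$. (I also assume, as in the intended applications, that $\cE$ has cohomological support $Z$, so that $\frg_\cE$ is governed by the geometry near $Z$.)

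Next I would invoke the machinery developed to prove \cref{thm:A}. On $X$, the germ $f^*\upnu$ defines, via the bounded functional $\upxi \mapsto \int_X \tr(\upxi)\wedge\upnu$ on $\frg_{\cE|_U,c}$, an analytic right $d$-CY cocycle, and there is an explicit analytic $A_\infty$-quasi-isomorphism relating $\frg_{\cE|_U,c}$ and $\frg_\cE$ that carries it to a representative of $[\upphi^{f^*\upnu}]$; the same construction on $X'$ produces an explicit analytic $A_\infty$-quasi-isomorphism relating $\frg_{\cE'|_{f(U)},c}$ and $\frg_{\cE'}$ that carries the analogous cocycle to a representative of $[\upphi^\upnu]$. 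It remains to identify the two compactly supported DGAs: the biholomorphism $f|_U$ together with the equivalence $\cE|_U \simeq (f|_U)^*(\cE'|_{f(U)})$ in $\D^\perf(U)$ induces an analytic quasi-isomorphism $\frg_{\cE|_U,c} \simeq_\an \frg_{\cE'|_{f(U)},c}$ --- possibly itself a short zigzag, needed to compare locally free resolutions of $\cE|_U$ and $\cE'|_{f(U)}$ --- under which the two CY cocycles correspond, because pullback of top-degree forms along $f|_U$ intertwines the two trace-and-integrate functionals by the change-of-variables formula for compactly supported $(d,d)$-forms. Concatenating the three pieces gives the required zigzag $\frg_\cE \simeq_\an \frg_{\cE'}$ matching $[\upphi^{f^*\upnu}]$ with $[\upphi^\upnu]$, and applying \cref{thm:D} along its legs produces the desired cyclic analytic $A_\infty$-isomorphism.

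The main obstacle is the naturality bookkeeping in this last identification. What must be checked is that the assignment $\cE \mapsto \frg_\cE$, together with its analytic structure \emph{and} its CY cocycle, is sufficiently functorial under (i) restriction to an open subset with a compact-support condition and (ii) a derived equivalence of perfect complexes --- in particular that the ``explicit $A_\infty$-quasi-isomorphisms'' produced in the proof of \cref{thm:A} can be chosen coherently on the two sides, and that every intermediate comparison respects the geometric series bounds defining $\barC_\lambda^{\an,\bullet}$. Conceptually nothing is deep here: up to analytic quasi-isomorphism the pair $(\frg_\cE,[\upphi^\upnu])$ depends only on the analytic germ of $X$ along $Z$ equipped with the volume germ, together with $\cE$ near $Z$, and this germ-level datum is identical for $(X,\cE,f^*\upnu)$ and $(X',\cE',\upnu)$ by hypothesis; making it precise amounts to running the estimates underlying \cref{thm:A} once on $U$ and transporting everything along $f|_U$.
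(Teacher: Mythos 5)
Your proposal is correct and follows essentially the same route as the paper: both factor the comparison through the compactly supported Dolbeault algebras $\frg_{\cE|_U,c}$ and $\frg_{\cE'|_{f(U)},c}$ on a sufficiently small neighbourhood $U\supset Z$, identify them via the biholomorphism $f|_U$ together with a quasi-isomorphism of perfect complexes (the paper's \cref{arbcompqiso} makes your ``short zigzag'' a single explicit analytic morphism $F$), match the trace functionals by the change-of-variables formula, and conclude via \cref{mainthm}. The only cosmetic difference is that the paper assembles everything into one composite analytic quasi-isomorphism $j_!\diamond (f^*)^{-1}\diamond F\diamond K_c$ using the explicit analytic homotopy inverses of the extension-by-zero maps, rather than leaving a zigzag.
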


\begin{introcor}[{\cref{potentialsembedcor}}]
  If $X,X'$ are threefolds, then the noncommutative potentials $W_{\cE'}^\upnu$ and $W_{\cE}^{f^*\upnu}$ are related by an analytic change of variables $\widetilde\barT\Ext^1(\cE,\cE)^\vee \xrightarrow{\sim} \widetilde\barT\Ext^1(\cE',\cE')$.
\end{introcor}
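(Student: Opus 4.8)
The strategy is to combine \cref{thm:B} with \cref{potentialsmaincor}. Assume $X$ and $X'$ are threefolds. By \cref{thm:A} the volume germ $\upnu\in(\Omega^3_{X'})_Z$ determines an analytic right $3$-CY class $[\upphi^\upnu]_\an\in\HC^{\an,-3}_\lambda(\frg_{\cE'})$, and likewise its pullback along the comparison map of \cref{thm:B} is the class $[\upphi^{f^*\upnu}]_\an\in\HC^{\an,-3}_\lambda(\frg_\cE)$; \cref{thm:A} moreover places both $\frg_\cE$ and $\frg_{\cE'}$ in the situation of \cref{thm:D}, so \cref{potentialsmaincor} applies to each. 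Feeding $[\upphi^\upnu]_\an$ into \cref{potentialsmaincor} for $\frg_{\cE'}$ recovers the noncommutative analytic potential
\[
  W^\upnu_{\cE'} = W^{\upphi^\upnu}\in \widetilde\barT\,\hh^1(\frg_{\cE'})^\vee=\widetilde\barT\Ext^1(\cE',\cE')^\vee,
\]
well-defined up to an automorphism of $\widetilde\barT\Ext^1(\cE',\cE')^\vee$, and the same construction applied to $\frg_\cE$ gives $W^{f^*\upnu}_\cE = W^{\upphi^{f^*\upnu}}\in\widetilde\barT\Ext^1(\cE,\cE)^\vee$.

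Now apply the functoriality statement of \cref{potentialsmaincor} to the analytic quasi-isomorphism $f\colon\frg_\cE\simeq_\an\frg_{\cE'}$ furnished by \cref{thm:B}. With $B=\frg_\cE$, $A=\frg_{\cE'}$, and $\upphi=\upphi^\upnu$, this produces an isomorphism of analytic tensor algebras
\[
  g\colon\widetilde\barT\Ext^1(\cE',\cE')^\vee\xrightarrow{\ \sim\ }\widetilde\barT\Ext^1(\cE,\cE)^\vee
\]
with $g(W^{\upphi^\upnu}) = W^{f^*\upphi^\upnu}$; since $f^*[\upphi^\upnu]_\an = [\upphi^{f^*\upnu}]_\an$ by \cref{thm:B}, this reads $g(W^\upnu_{\cE'}) = W^{f^*\upnu}_\cE$. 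Its inverse $g^{-1}\colon\widetilde\barT\Ext^1(\cE,\cE)^\vee\xrightarrow{\sim}\widetilde\barT\Ext^1(\cE',\cE')^\vee$ is then the asserted analytic change of variables, carrying $W^{f^*\upnu}_\cE$ to $W^\upnu_{\cE'}$. That $g$ — and hence $g^{-1}$ — is genuinely analytic is exactly the statement, recalled just before \cref{potentialsmaincor}, that an analytic $A_\infty$-morphism induces an algebra morphism of the analytic subrings $\widetilde\barT V\subset\widehat\barT V$; if the comparison of \cref{thm:B} is realised by a finite zigzag of analytic quasi-isomorphisms, one takes for $g$ the corresponding alternating composite of the isomorphisms and inverses supplied by \cref{potentialsmaincor}.

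There is no essentially new obstacle here: the content is entirely packaged in \cref{thm:B} and \cref{potentialsmaincor}, and the only care required is bookkeeping of the automorphism ambiguities. Each of the two potentials is canonical only up to an automorphism of the corresponding analytic tensor algebra, so the statement is, as in \cref{potentialsmaincor}, to be understood modulo these; and in the zigzag case one checks that the identity $g(W^\upnu_{\cE'})=W^{f^*\upnu}_\cE$ is stable under composition, which holds because \cref{potentialsmaincor} matches, at each stage, the potential of a pulled-back CY class with the image of the original potential under the induced algebra isomorphism.
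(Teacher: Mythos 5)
Your proof is correct and follows essentially the same route as the paper, which deduces the corollary directly from the second part of \cref{potentialsmaincor} applied to the analytic quasi-isomorphism of \cref{embeddingminmod} and the identification of the pulled-back Calabi--Yau class. (The zigzag caveat at the end is unnecessary: \cref{embeddingminmod} already supplies a single analytic quasi-isomorphism $\frg_\cE\to\frg_{\cE'}$ as a composite.)
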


One implication of \cref{thm:B} is that there is a canonical cyclic $A_\infty$-structure on the Ext-algebra of a compactly supported sheaf $\cF$ on a quasi-projective Calabi--Yau threefold $Y$ equipped with a fixed volume form.
Indeed, applying \cref{thm:A} to a resolution of $\cF$ on any compactification $X = \overline Y$, it follows from \cref{thm:B} that the cyclic $A_\infty$-algebra obtained does not depend on these choices.

For the reader who is exclusively interesting in projective Calabi--Yau geometry, we stress that \cref{thm:B} also allows one to compare the cyclic analytic $A_\infty$-structure (hence the enumerative geometry) of different projective CY varieties on a common analytic neighbourhood.
As a motivating example, we compute the cyclic analytic $A_\infty$-structure on the Ext-algebra of an arbitrary  point sheaf.

\begin{introprop}[{\cref{cyclicstructonpoint}}]\label{prop:C}
  Let $p\in X$ be a point on a smooth projective variety $X$ of dimension $d$, and $\cF \to \O_p$ a locally free resolution for the associated point sheaf.
  Then for every volume germ $\upnu \in (\Omega^d_Y)_p$ there is an isomorphism of cyclic analytic $A_\infty$-algebras
  \[
    \cH^\upnu_\cF \cong_{\an,\cyc} \left(\ \bigwedge^\bullet T_{\kern-.5pt o\kern.5pt}\A^d,\ \upsigma^\lambda\colon \xi \mapsto \lambda(\xi \wedge -)\ \right),
  \]
  where the right hand side denotes the graded algebra of polyvectors at the origin $o\in \A^d$, with cyclic structure induced by some linear form $\lambda\colon \bigwedge^d T_{\kern-.5pt o\kern.5pt}\A^d \xrightarrow{\sim} \C$.
\end{introprop}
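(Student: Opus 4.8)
The plan is to pin down the underlying $A_\infty$-algebra of $\cH^\upnu_\cF$ by a Koszul computation, read off the cyclic pairing, and then invoke \cref{thm:B} to see that the answer is intrinsic. By \cref{thm:A} the cyclic analytic $A_\infty$-algebra $(\cH^\upnu_\cF,\upsigma^\upnu)$ is well defined, and its underlying (non-cyclic) analytic $A_\infty$-algebra is the analytic minimal model $\cH_\cF$ of $\frg_\cF$, which computes $\Ext^\bullet_X(\O_p,\O_p)$. First I would determine this minimal model. Since the sheaf $\O_p$ is supported at the single point $p$, the locality of the construction --- the same analytic locality that underlies \cref{thm:A} and \cref{thm:B} --- reduces the computation of $\cH_\cF$ to that of the $A_\infty$-algebra $\RHom_{\O_{X,p}}(\C,\C)$ over the local ring of $X$ at $p$. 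Because $\O_{X,p}$ is a regular, hence Koszul, local ring, the Koszul resolution of its residue field exhibits this $A_\infty$-algebra as the exterior algebra $\bigwedge^\bullet(\m_p/\m_p^2)^\vee$ equipped with zero differential and vanishing higher products. Therefore $\cH_\cF$ is formal, with
\[
  \Ext^k_X(\O_p,\O_p)\;\cong\;\textstyle\bigwedge^k T_pX,\qquad \upmu_2=\wedge,\qquad \upmu_n=0 \ \ (n\neq 2),
\]
and, being finite dimensional, it carries the trivial analytic structure, so this is also an isomorphism of \emph{analytic} $A_\infty$-algebras. A choice of local coordinates at $p$ --- equivalently a linear isomorphism $T_pX\cong T_o\A^d$ --- then presents the underlying analytic $A_\infty$-algebra of $\cH^\upnu_\cF$ as the polyvector algebra $(\textstyle\bigwedge^\bullet T_o\A^d,\wedge)$.

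Next I would transport the cyclic structure along this identification, making $\upsigma^\upnu$ a nondegenerate cyclic pairing of degree $-d$ on the graded-commutative algebra $\bigwedge^\bullet T_o\A^d$; by the construction in \cref{thm:A} it is the descent of the bounded functional $\alpha\mapsto\int\tr(\alpha)\wedge\upnu$, i.e.\ the Dolbeault incarnation of Serre duality twisted by $\upnu$. For \emph{any} cyclic pairing $\langle-,-\rangle$ on $\bigwedge^\bullet T_o\A^d$, the invariance relation $\langle ab,c\rangle=\pm\langle a,bc\rangle$ forces $\langle a,b\rangle=\pm\lambda(a\wedge b)$ with $\lambda\colonequals\langle 1,-\rangle$ automatically concentrated in top degree, that is, a linear form $\lambda\colon\bigwedge^d T_o\A^d\to\C$, which nondegeneracy makes an isomorphism; conversely every such $\lambda$ yields a cyclic structure, because with $\upmu_{\geq 3}=0$ the only remaining condition follows from associativity and graded-commutativity of $\wedge$. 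Hence $\upsigma^\upnu=\upsigma^\lambda$ for the isomorphism $\lambda$ determined by the value $\upnu(p)\in(\bigwedge^d T_pX)^\vee$, and combining with the previous paragraph gives the asserted isomorphism of cyclic analytic $A_\infty$-algebras. Applying \cref{thm:B} to an open embedding of a coordinate neighbourhood of $p$ into a smooth projective compactification of $\A^d$ finally shows that this cyclic analytic $A_\infty$-algebra is independent of $X$, of $p$, and of the resolution $\cF$, depending only on $d$ and the germ $\upnu$, as claimed.

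The step I expect to be the main obstacle is the reduction to the local Koszul model: one must be sure it produces the \emph{canonical analytic} minimal model of $\frg_\cF$ up to isomorphism of \emph{analytic} $A_\infty$-algebras, and not merely an ordinary DGA quasi-isomorphic to $\frg_\cF$. This is exactly the content of the analytic-locality results developed for \cref{thm:A} and \cref{thm:B}, so the argument should be routed through that machinery --- or through the explicit analytic $A_\infty$-quasi-isomorphisms constructed there --- rather than through a naive quasi-isomorphism, and the finite-dimensionality remark is then what makes the analytic structure on the local model irrelevant. The remaining bookkeeping, namely Koszul signs and the precise normalisation of $\lambda$, is routine and, since the statement only asserts the existence of \emph{some} linear form $\lambda$, not even necessary.
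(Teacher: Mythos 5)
Your overall skeleton (identify the non-cyclic minimal model with the exterior algebra of polyvectors, identify the cyclic structure with one of the form $\lambda(-\wedge-)$, then use \cref{thm:B} for independence of the ambient variety) matches the paper's, and your first step is essentially recoverable: the paper realises the formality by an explicit analytic DG embedding $I\colon\frh=\bigwedge^\bullet T_o\A^d\hookrightarrow\frg_\cE$ for the Koszul resolution on $\P^d$ and then invokes uniqueness of analytic minimal models, which is the ``routing through the machinery'' you correctly flag as necessary.

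The genuine gap is in your second step. The cyclic minimal model $(\cH^\upnu_\cF,\upsigma^\upnu)$ produced by \cref{thm:D}/\cref{Darboux} carries a \emph{perturbed} product $\upmu^\upnu$, and it is only isomorphic to the formal exterior algebra by a (generally non-cyclic) $A_\infty$-isomorphism. Transporting $\upsigma^\upnu$ along such an isomorphism yields a strong homotopy inner product on $(\bigwedge^\bullet T_o\A^d,\wedge)$ with possibly nonzero higher components $\upsigma_{i,j}$, $i+j>0$; your classification ``every cyclic pairing on the exterior algebra is $\pm\lambda(a\wedge b)$'' applies only to \emph{strict} pairings, and strictifying again via Darboux would perturb the product away from $\wedge$. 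So you cannot simultaneously assume $\upmu_{\geq 3}=0$ and $\upsigma^\upnu$ strict without an argument; in the $3$-fold case this is exactly the nontrivial claim that the potential of a point has no terms of order $\geq 4$ after a change of variables. The paper closes this gap by an explicit computation (\cref{uhpairfortrace}): it builds a concrete unit/coboundary pair from bump functions and shows by a polar-coordinate integration (the angular integrals $\int_0^{2\pi}e^{-\alpha_j\theta_j i}\,\d\theta_j$ vanish) that the pulled-back Calabi--Yau structure $I^*K_c^*\uptau_{\omega|_U}$ on $\frh$ has vanishing higher components, so it is \emph{already} a strict trace and no perturbation occurs. It then reduces an arbitrary volume germ $\upnu$ to the standard form $\omega$ by constructing a holomorphic change of coordinates $\varphi$ with $\varphi^*\omega=f^*\upnu$ and applying \cref{thm:B} twice, rather than trying to read off $\lambda$ from the value $\upnu(p)$ as you propose. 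Your proof needs either this computation or a substitute rigidity argument before the classification of pairings can be invoked.
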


\subsection{Structure of the paper}

In \cref{sec:prelims} we recall the definition of (analytic) $A_\infty$-algebras, and prove some new results about inverses of analytic morphisms that are used in later sections.

In \cref{sec:bimodules} we define analytic $A_\infty$-bimodules and analytic Hochschild cohomology, and we prove some new results about the invertibility of analytic $A_\infty$-bimodule maps.

In \cref{sec:cyclic} we define analytic (homotopy) cyclic structures, used to prove \cref{thm:D}.

In \cref{sec:geometry} we apply these results to the DG algebras of Dolbeault differential forms, which is used to prove \cref{thm:A}, \cref{thm:B}, and \cref{prop:C}.
This section is largely self-contained.

\subsection*{Acknowledgements}
The author would like to thank Ben Davison and Louis Ioos for interesting discussions, Sarah Scherotzke for support.
He also thanks an anonymous referee for suggesting important improvements to an earlier version of this paper.

\section{Analytic \texorpdfstring{$A_\infty$-algebras}{A-infinity algebras}}\label{sec:prelims}

In this section we recall the definition of $A_\infty$-algebras and morphisms, and introduce the relevant category of analytic $A_\infty$-algebras based on the convergence conditions used in \cite{Tu14}.

In what follows let $\ell$ be a separable $\C$-algebra and $\bimodl$ the graded $\C$-linear category of graded bimodules $V = V^\bullet$ which are nonzero in finitely many degrees.
Given $V,W\in\bimodl$,
\[
  \hom_{\elle}^\bullet(V,W) \colonequals \bigoplus_{n\in\Z} \Hom_{\elle}(V^n,V^{n+\bullet})
\]
denotes the graded vector space of $\elle$-linear maps, and $\Hom_\elle(V,W)$ the subspace of graded morphisms.
We write $|f|$ for the degree of a homogeneous element $f\in\hom_\elle^\bullet(V,W)$.
The category $\bimodl$ carries the usual graded tensor product $\otimes \colonequals \otimes_\ell$ and shift functor $[1]\colon \bimodl \to \bimodl$.
We write $s\in \hom^{-1}_\elle(V,V[1])$ for the shift map, acting as the identity $V^i \to V^i = (V[1])^{i-1}$ on each graded component.

\subsection{\texorpdfstring{$A_\infty$-algebras}{A-infinity algebras} and morphisms}

$A_\infty$-algebras are commonly defined as sequences of multilinear maps either of the form $V^{\otimes n} \to W$ with varying degrees or of the form $V[1]^{\otimes n} \to W[1]$ with all maps being homogeneous.
In this section we adopt the latter convention, adopting the notation
\[
  \fM^\bullet(V,W) \colonequals \prod_{n=0}^\infty \hom_\elle^\bullet(V[1]^{\otimes n}, W[1]),
\]
for the graded vector space of these sequences $f = (f_n)_{n\in\N}$, and we write $\overline\fM(V,W) \subset \fM(V,W)$ for the subspace of sequences with $f_0 = 0$.
The exists various compositions between sequences in $\fM(V,W)$, which are most cleanly described using the bar construction
\[
  \barB V \colonequals \left(\bigoplus_{n\in\N} V[1]^{\otimes n}, \Delta\right)
\]
which is a cofree coaugmented conilpotent coalgebra over $\ell$ with the decomposition coproduct $\Delta$.
The cofreeness implies that every sequence $f \in \fM(V,V) \simeq \hom_\elle(\barB V,V[1])$ induces a unique coderivation $\widetilde f \colon \barB V \to \barB V$ via the formula (see Tradler \cite{Tra08})
\[
  \widetilde f \colonequals \sum_{k\in\N} \sum_{i,j\geq 0} \id_{V[1]}^{\otimes i}  \otimes f_k \otimes \id_{V[1]}^{\otimes j},
\]
and likewise every coderivation defines a sequence in $\fM(V,V)$ by composing with the obvious projection $\uppi \colon \barB V \to V[1]$.
In terms of the bar construction, $A_\infty$-algebras are defined as follows.
\begin{defn}
  An $A_\infty$-algebra is a pair $(A,\upmu)$ of $A\in\bimodl$ and a map $\upmu \in \overline\fM^1(A, A)$ satisfying the condition ${\widetilde \upmu}^2 = 0$, or equivalently $\upmu \circ \widetilde \upmu = 0$.
\end{defn}
The $A_\infty$-condition implies that $\upmu_1^2 = 0$, so that $\upmu_1$ makes $A[1]$ into a cochain complex.
We say an $A_\infty$-algebra is \emph{compact} if the cohomology with respect to $\upmu_1$ is finite dimensional over $\C$.

\begin{ex}\label{exDGalg}
  If $(A,\d, \cdot)$ is a DG algebra, then the associated $A_\infty$-structure on is given by the compositions $\upmu_1 = -s\circ \d \circ s^{-1}$ and $\upmu_2 = -s^{-1} \circ (-\cdot-) \circ (s^{-1}\otimes s^{-1})$.
\end{ex}
 
Given $V,W\in\bimodl$, every sequence $f \in \fM(V,W) \simeq \hom_\elle(\barB V,W[1])$ induces a  cohomomorphism $\widehat f \colon \barB V \to \barB W$ between the associated coalgebras, via the formula
\[
  \widehat f \colonequals \sum_{k\in\N} \sum_{n_1,\ldots,n_k\in\N} f_{n_1} \otimes \cdots \otimes f_{n_k},
\]
and every cohomomorphism between bar-coalgebras can be described in this way.
These lifts are used to define (pre-)morphisms of $A_\infty$-algebras.
\begin{defn}\label{def:premorb}
  A pre-morphism between $A_\infty$-algebras $A$ and $B$ is an element $f\in \overline\fM^0(A,B)$.
  A pre-morphism is a \emph{morphism} if it satisfies $\widetilde \upmu_B \circ \widehat f = \widehat f \circ \widetilde \upmu_A$, or equivalently $\upmu_B \circ \widehat f = f\circ \widetilde \upmu_A$.
\end{defn}
The morphism condition implies that $f_1\colon A[1] \to B[1]$ is a morphism of cochain complexes.
A morphism $f\in \Hom_\AinfAlg(A,B)$ is called a \emph{quasi-isomorphism} if $f_1$ is a quasi-isomorphism.

By lifting pre-morphisms to cohomomorphisms, one obtains an associative composition between \hbox{(pre-)mor}\-phisms which we will write as
\[
  \diamond \colon \fM^0(B,C) \times \fM^0(A,B) \to \fM^0(A,C),\quad g \diamond f \colonequals g \circ \widehat f.
\]
With this composition $A_\infty$-algebras form a category $\AinfAlg$ with morphism spaces $\Hom_\AinfAlg(A,B)$.
We remark that the composition of two quasi-isomorphisms is again a quasi-isomorphism.

\begin{defn}
A \emph{unit} for an $A_\infty$-algebra $A = (A,\upmu)$ is an injective map $\one \colon \ell[1] \to A[1]$ such that
  \[
    \begin{aligned}
      \upmu_2(\one(l),a) = la,\quad \upmu_2(a,\one(l)) = (-1)^{|a|+1} al \quad &\text{for all } l\in \ell[1],\  a\in A[1]\\
      \upmu_n(\ldots,\one(l),\ldots) = 0 \quad &\text{for all } n\neq 2,
    \end{aligned}
  \]
  and a triple $(A,\upmu,\one)$ is called a \emph{unital $A_\infty$-algebra}.
\end{defn}

If $A$ and $B$ are unital $A_\infty$-algebras, with units $\one_A$ and $\one_B$, then one can consider the subset $\Hom_\AinfAlg^\un(A,B) \subset \Hom_\AinfAlg(A,B)$ of morphisms $f$ satisfying
\[
  \begin{aligned}
    f_1 \circ \one_A &= \one_B,\\
    f_{i+j+1} \circ (\id^{\otimes i} \otimes \one_A \otimes \id^{\otimes j}) &= 0\quad \text{for all } i+j > 0,
  \end{aligned}
\]
which are called \emph{unital} $A_\infty$-morphisms.
The composition of two unital morphisms is again unital, so unital $A_\infty$-algebras again form a category $\AinfAlgun$.
Finally, we consider a weakening of the unital condition in cohomology.

\begin{defn}
  A \emph{weak unit} or \emph{cohomological unit}, is a map $\one\colon \ell[1] \to A[1]$ for which the image is $\upmu_1$-closed and which satisfies the following relations in $\upmu_1$-cohomology:
  \[
    \upmu_2([\one(l)],[a]) = [la],\quad \upmu_2([a],[\one(l)]) = (-1)^{|a|+1}[al].
  \]
\end{defn}

One can likewise define a category of cohomologically unital $A_\infty$-algebras, with morphisms consisting of $A_\infty$-morphisms $f$ for which $[f_1(\one(l))] = [\one(l)]$.

\subsection{Homotopies}\label{ssec:homotopies}

$A_\infty$-algebras are a model for homotopical algebra and it is therefore natural to consider morphisms up to homotopy.
There are various definitions, ranging from algebraic \cite{LefevreHasegawa03,K01} to more continuous variants \cite{FOOO09}.
These models lead to equivalent theories of homotopy, but differ in flexibility and expressivity.
Below we use a variation of \cite{FOOO09}.

Let $\pc^\infty([0,1]) \subset L^\infty([0,1])$ denote the ring of piece-wise smooth functions, i.e. bounded real functions on $[0,1]$ that are smooth outside of a discrete set, identified if they agree almost everywhere.
There is an associated DG algebra of piecewise differential forms
\[
  \Omega^\bullet_{[0,1]} \colonequals \{ x + y\d t \mid x,y  \in \pc^\infty([0,1]),\ x\text{ is continuous}\},
\]
with the usual wedge product and differential $\d \colon x + y \d t \mapsto \tfrac{\partial x}{\partial t} \d t$.
Then we consider for any $A_\infty$-algebra $(A,\upmu)$ the tensor product $A_\infty$-algebra
\[
  \big(\Omega^\bullet_{[0,1]}\otimes A,\ \upmu^\otimes\big).
\]
where the $A_\infty$-algebra structure is defined as in \cite{L11} by
\begin{align}  
  \upmu_1^\otimes &= \d \otimes \id +  \id \otimes \upmu_1, \label{eq:difftensor}\\
  \upmu_k^\otimes &= (-\cdots-) \otimes \upmu_k, \label{eq:highertensor}
\end{align}
where $(-\cdots-)$ denotes the multiplication on $k$ elements in $\Omega^\bullet_{[0,1]}$, and maps are applied along the isomorphism $(\Omega_{[0,1]}^\bullet\otimes A[1])^{\otimes k} \cong (\Omega_{[0,1]}^\bullet)^{\otimes k} \otimes A[1]^{\otimes k}$ observing the usual Koszul sign conventions.
The tensor product fits into a diagram of three quasi-isomorphisms
\[
  A \xrightarrow{\ a \mapsto 1\otimes a\ } \Omega^\bullet_{[0,1]} \otimes A \xrightarrow{(\ev_0, \ev_1)} A \times A
\]
where $\ev_t((x+y\d t)\otimes a) = x(t) \cdot a$.
These maps make $\Omega^\bullet_{[0,1]} \otimes A$ into a path-space object as in \cite[Definition 4.2.1]{FOOO09},
which leads to the following notion of a homotopy.

\begin{defn}
  A \emph{homotopy} between two morphisms $f,g\in \Hom_\AinfAlg(A,B)$ is an $A_\infty$-morphism $H\in \Hom_\AinfAlg(A,\Omega^\bullet_{[0,1]}\otimes B)$ such that $\ev_0\circ H = f$ and $\ev_1 \circ H = g$.
\end{defn}

To make the condition more explicit we can write $H \in \overline\fM^0(A,\Omega^\bullet_{[0,1]}\otimes B)$ as
$H = H_x + H_y \d t$ for some coefficient functions $t\mapsto H_x^t\in \overline\fM^0(A,B)$ and $t\mapsto H_y^t \in \overline\fM^{-1}(A,B)$.
Then as observed in \cite[(4.2.41)]{FOOO09} the map $H$ is a homotopy if and only if the following two conditions hold:
\begin{align}
  H_x^t &\in \Hom_\AinfAlg(A,B) & \text{for all } t\in[0,1], \label{eq:homotopycondone}\\
  \frac{\partial}{\partial t} H_x^t &= \upmu_B \circ (\widehat H_x^t \otimes H_y^t \otimes \widehat H_x^t) + H_y^t \circ \widetilde \upmu_A
  & \text{for almost all } t\in [0,1], \label{eq:homotopycondtwo}
\end{align}
It follows from a general argument \cite[Proposition 4.2.37]{FOOO09} homotopy defines an equivalence relation between $A_\infty$-morphisms, which is compatible with the composition.
We give a direct proof below, which will also apply in the analytic setting.

\begin{lem}\label{lem:homequiv}
  Homotopy defines an equivalence relation $\sim$ on $\Hom_\AinfAlg(A,B)$.
\end{lem}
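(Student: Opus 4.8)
The plan is to verify the three defining properties of an equivalence relation directly, working with the explicit characterization of homotopies via the coefficient functions $H_x^t, H_y^t$ in \labelcref{eq:homotopycondone,eq:homotopycondtwo}.

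First, for \textbf{reflexivity}: given $f\in\Hom_\AinfAlg(A,B)$, I would exhibit the constant homotopy $H = f$ (viewed in $\overline\fM^0(A,\Omega^\bullet_{[0,1]}\otimes B)$ via $a\mapsto 1\otimes f(a)$), i.e. $H_x^t = f$ for all $t$ and $H_y^t = 0$. Condition \labelcref{eq:homotopycondone} holds since $f$ is a morphism, and \labelcref{eq:homotopycondtwo} reduces to $0 = 0$. Clearly $\ev_0\circ H = \ev_1\circ H = f$.

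Second, for \textbf{symmetry}: given a homotopy $H$ from $f$ to $g$, I would apply the orientation-reversing reparametrisation $r\colon [0,1]\to[0,1]$, $t\mapsto 1-t$. Pulling back piecewise differential forms along $r$ induces a DG-algebra automorphism of $\Omega^\bullet_{[0,1]}$ (sending $\d t\mapsto -\d t$), hence an $A_\infty$-automorphism $r^*$ of $\Omega^\bullet_{[0,1]}\otimes B$ with $\ev_0\circ r^* = \ev_1$ and $\ev_1\circ r^* = \ev_0$. Then $r^*\circ H$ is a homotopy from $g$ to $f$; concretely its coefficients are $(r^*H)_x^t = H_x^{1-t}$ and $(r^*H)_y^t = -H_y^{1-t}$, and one checks \labelcref{eq:homotopycondone,eq:homotopycondtwo} are preserved since the extra sign from $\d t\mapsto -\d t$ cancels the sign from $\tfrac{\partial}{\partial t}$ under $t\mapsto 1-t$.

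Third, for \textbf{transitivity} — which I expect to be the main obstacle — suppose $H$ is a homotopy from $f$ to $g$ and $H'$ from $g$ to $h$. The natural idea is concatenation: rescale $H$ to live on $[0,\tfrac12]$ and $H'$ on $[\tfrac12,1]$ and glue. The point of using \emph{piecewise} smooth forms in the definition of $\Omega^\bullet_{[0,1]}$ is precisely to make this work: define $H''$ by $H''_x{}^t = H_x^{2t}$, $H''_y{}^t = 2H_y^{2t}$ for $t\in[0,\tfrac12]$ and $H''_x{}^t = H'_x{}^{2t-1}$, $H''_y{}^t = 2H'_y{}^{2t-1}$ for $t\in[\tfrac12,1]$. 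Continuity of $H''_x$ at $t=\tfrac12$ holds because both sides equal $g$ there, so $H''$ lands in $\overline\fM^0(A,\Omega^\bullet_{[0,1]}\otimes B)$; condition \labelcref{eq:homotopycondone} holds on each subinterval by hypothesis, the factor of $2$ in the $\d t$-coefficient accounts for the chain rule so that \labelcref{eq:homotopycondtwo} holds for almost all $t$ (the single point $t=\tfrac12$ being negligible), and $\ev_0\circ H'' = f$, $\ev_1\circ H'' = h$. The only subtlety to check carefully is that the rescaled objects still define honest $A_\infty$-morphisms $A\to\Omega^\bullet_{[0,1]}\otimes B$ — equivalently that \labelcref{eq:homotopycondone,eq:homotopycondtwo} are exactly the morphism conditions for the glued map — which is immediate from the reparametrisation-invariance of those equations established along the way. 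Since nothing in any of these three arguments uses finite-dimensionality or any analytic bound, the same proof applies verbatim in the analytic setting, as claimed in the paragraph preceding the lemma.
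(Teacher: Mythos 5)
Your proposal is correct and follows essentially the same route as the paper: the constant homotopy for reflexivity, time-reversal $t\mapsto 1-t$ (with the sign flip on the $\d t$-coefficient) for symmetry, and the reparametrised concatenation with the factor of $2$ for transitivity, checking \eqref{eq:homotopycondone} on each subinterval and \eqref{eq:homotopycondtwo} almost everywhere. The only cosmetic difference is that you package symmetry as pullback along an orientation-reversing automorphism of $\Omega^\bullet_{[0,1]}$ rather than writing the reversed coefficients directly, which amounts to the same computation.
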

\begin{proof}
  It is clear that $f\sim f$ for any $f\in \Hom_\AinfAlg(A,B)$ via the constant homotopy $H^t = f$.
  Likewise if $f\sim g$ via a homotopy $H = H_x + H_y \d t$ then $G^t = H^{-t}_x - H_y^{-t} \d t$ defines a homotopy $g\sim f$.
  Now if $f\sim g$ and $g\sim h$ via homotopies $H = H_x + H_y\d t$ and $G= G_x + G_y\d t$, then we consider the pre-morphism $G*H \in \overline\fM^0(A,\Omega^\bullet_{[0,1]}\otimes B)$ with
  \[
    (G*H)^t = \begin{cases}
      H_x^{2t} + 2 H_y^{2t}\d t  & t\leq 1/2\\
      G_x^{2t-1} + 2 G_y^{2t-1}  & t>1/2
    \end{cases}
  \]
  We remark that this is well-defined: the coefficient functions are piecewise smooth, and $(G*H)_x$ is continuous since $H_x^1 = g = G_x^0$ by assumption.
  By construction $(G*H)_x^t \in \Hom_\AinfAlg(A,B)$ for all $t\in [0,1]$, showing \eqref{eq:homotopycondone}.
  The identity \eqref{eq:homotopycondtwo} holds for almost all $t \in [0,1/2)$ since
  \[
    \begin{aligned}
      \frac{\partial}{\partial t} (G*H)_x^t
      &= 2\cdot (\upmu \circ (\widehat H_x^{2t} \otimes H_y^{2t} \otimes \widehat H_x^{2t}) + H_y^{2t} \circ \widetilde \upmu)
      \\&= \upmu \circ (\smash{\widehat{(G*H)}}_x^t \otimes (G*H)_y^t \otimes \smash{\widehat{(G*H)}}_x^t) + (G*H)_y^t \circ \widetilde \upmu,
    \end{aligned}
  \]
  and a similar computation shows the identity holds for almost all $t\in (1/2,1]$.
  Hence $G*H$ is a homotopy between $f = \ev_0 \circ (G*H)$ and $h = \ev_1\circ (G*H)$.
\end{proof}

\begin{lem}\label{lem:diamondhom}
  If $f\sim f'$ and $g\sim g'$ then $f\diamond g \sim f'\diamond g'$.
\end{lem}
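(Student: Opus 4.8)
The plan is to reduce the two-sided statement to a pair of one-sided claims via the usual factorization: since homotopy is an equivalence relation (\cref{lem:homequiv}), it suffices to prove that $f\sim f'$ implies $f\diamond g \sim f'\diamond g$, and symmetrically that $g\sim g'$ implies $f\diamond g \sim f\diamond g'$; composing the two gives $f\diamond g \sim f'\diamond g \sim f'\diamond g'$. Each of the two one-sided claims is handled by constructing an explicit homotopy out of the given one.

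For the first claim, suppose $H = H_x + H_y\d t \in \Hom_\AinfAlg(A,\Omega^\bullet_{[0,1]}\otimes B)$ is a homotopy from $f$ to $f'$, so that $\ev_0\circ H = f$, $\ev_1\circ H = f'$. The natural candidate is $K \colonequals H \diamond \hat g$, or more precisely the pre-morphism $K\in\overline\fM^0(A,\Omega^\bullet_{[0,1]}\otimes B)$ obtained by precomposing $H$ with $g$; concretely $K^t_x = H^t_x \diamond g$ and $K^t_y = H^t_y \diamond \widehat{H^t_x}\text{-style insertion}$ — here the cleanest route is to regard $H$ itself as an $A_\infty$-morphism into the tensor-product $A_\infty$-algebra $\Omega^\bullet_{[0,1]}\otimes B$ and simply form the composite $A_\infty$-morphism $H\diamond g \colon A \to \Omega^\bullet_{[0,1]}\otimes B$ in the category $\AinfAlg$. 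Since $\diamond$ is associative and $\ev_t$ is an $A_\infty$-morphism, one has $\ev_0\circ(H\diamond g) = (\ev_0\circ H)\diamond g = f\diamond g$ and $\ev_1\circ(H\diamond g) = f'\diamond g$, so $H\diamond g$ is the required homotopy. The fact that $H\diamond g$ is again a genuine $A_\infty$-morphism (not just a pre-morphism) is automatic from functoriality of $\diamond$, so conditions \eqref{eq:homotopycondone} and \eqref{eq:homotopycondtwo} hold for it.

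For the second claim, suppose $G = G_x + G_y\d t$ is a homotopy from $g$ to $g'$, a morphism $A \to \Omega^\bullet_{[0,1]}\otimes B$ with $\ev_0\circ G = g$, $\ev_1\circ G = g'$. Now I would use that $f$ induces an $A_\infty$-morphism $\id_{\Omega^\bullet_{[0,1]}}\otimes f \colon \Omega^\bullet_{[0,1]}\otimes A \to \Omega^\bullet_{[0,1]}\otimes B$ on tensor-product $A_\infty$-algebras, defined componentwise by $(\id\otimes f)_k = (-\cdots-)\otimes f_k$ on $(\Omega^\bullet_{[0,1]})^{\otimes k}\otimes A[1]^{\otimes k}$ in analogy with \eqref{eq:highertensor}; one checks from the $A_\infty$-relations and the definitions \eqref{eq:difftensor}–\eqref{eq:highertensor} that this is indeed a morphism and that it commutes with $\ev_t$ in the sense $\ev_t\circ(\id\otimes f) = f\circ\ev_t$ (as morphisms of $A_\infty$-algebras). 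The composite $(\id_{\Omega^\bullet_{[0,1]}}\otimes f)\diamond G \colon A\to \Omega^\bullet_{[0,1]}\otimes B$ then satisfies $\ev_0\circ\big((\id\otimes f)\diamond G\big) = f\diamond(\ev_0\circ G) = f\diamond g$ and likewise $\ev_1\circ\big((\id\otimes f)\diamond G\big) = f\diamond g'$, so it is the desired homotopy.

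The main obstacle, and the only place where a genuine verification is needed, is the claim that $\id_{\Omega^\bullet_{[0,1]}}\otimes f$ is an $A_\infty$-morphism of tensor-product $A_\infty$-algebras and intertwines the evaluation maps. This is a bookkeeping check of the $A_\infty$-morphism equation for the twisted-tensor structure: one expands $\upmu^\otimes_B \circ \widehat{(\id\otimes f)}$ and $(\id\otimes f)\circ\widetilde{\upmu^\otimes_A}$, separates the $\d$-part of $\upmu^\otimes_1$ from the $\upmu_1$-part, and matches terms using the Leibniz rule for $\d$ on $\Omega^\bullet_{[0,1]}$ together with the $A_\infty$-morphism equations for $f$; the Koszul signs must be tracked carefully along the shuffle isomorphism $(\Omega_{[0,1]}^\bullet\otimes A[1])^{\otimes k}\cong(\Omega_{[0,1]}^\bullet)^{\otimes k}\otimes A[1]^{\otimes k}$. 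Since everything here involves only finite sums with bounded multiplication on $\Omega^\bullet_{[0,1]}$, the same argument applies verbatim in the analytic category, which is why this proof is recorded directly rather than invoked as a black box.
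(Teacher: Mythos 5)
Your proposal is correct and follows essentially the same route as the paper: factor the statement as $f\diamond g\sim f'\diamond g\sim f'\diamond g'$, realise the first homotopy as $H\diamond g$, and the second by lifting $f'$ to an $A_\infty$-morphism $\id_{\Omega^\bullet_{[0,1]}}\otimes f'$ between path objects and composing with $G$. The only difference is presentational: you explicitly flag the verification that $\id\otimes f$ is an $A_\infty$-morphism intertwining $\ev_t$, which the paper asserts without comment.
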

\begin{proof}
  Let $H$ and $G$ be the homotopies for $f\sim f'$ and $g\sim g'$.
  Then $H\diamond g$ is clearly a homotopy between $f\diamond g \sim f'\diamond g$.
  Now $f'$ lifts to an $A_\infty$-morphism $\id \otimes f'$ between the path objects, providing a homotopy $(\id\otimes f')\diamond G$ between $f'\diamond g$ and $f'\diamond g'$.
\end{proof}

We say two morphisms $f,g$ in $\AinfAlg$ are \emph{homotopy inverse} if $f\diamond g$ and $g\diamond f$ are homotopic to the identity.
In this case $f$ and $g$ are said to be \emph{homotopy equivalences} and their source/target $A_\infty$-algebras are homotopy equivalent.

\subsection{Minimal models}
\label{sec:minmods}
  
Recall that an $A_\infty$-algebra $(A,\upmu)$ is called \emph{minimal} if $\upmu_1 = 0$.
It is often convenient to replace an $A_\infty$-algebra by a homotopy equivalent minimal one: a minimal model.
\begin{defn}
  A \emph{minimal model} for $A$ is a minimal $\hh(A) \in \AinfAlg$ with a quasi-isomorphisms $P = P_A \in \Hom_\AinfAlg(A,\hh(A))$ and $I = I_A \in\Hom_\AinfAlg(\hh(A),A)$ such that:
  \begin{equation}\label{eq:minmodord}
      P\diamond I = \id_{\hh(A)}\quad\text{ and }\quad   I\diamond P \sim \id_A.
  \end{equation}
\end{defn}
It is well-known that every $A_\infty$-algebra admits a minimal model, which can be constructed explicitly using Kadeishvili's homotopy transfer formula \cite{Kad80} applied to the cohomology algebra of $A$.

Given $A_\infty$-algebras $A,B\in\AinfAlg$ with a fixed choice of minimal models $\hh(A)$ and $\hh(B)$, we obtain for every morphism $f\in\Hom_\AinfAlg(A,B)$ an induced map
\[
  \hh(f) \colonequals P_B \diamond f \diamond I_A \in \Hom_\AinfAlg(\hh(A),\hh(B)).
\]
It is well-known that if $f$ is a quasi-isomorphism, then $\hh(f)$ is an isomorphism.
The inverse at the level of minimal models can be constructed using a ``tree-formula'' that we recall below.

\begin{defn}
  $\O(n)$ denotes the set of \emph{planar rooted trees on $n$ leaves}, in which every internal node has valency $\geq 3$, and $\O(n,d) \subset \O(n)$ denotes the subset of trees with $d$ internal nodes.
\end{defn}

Note that there is a unique tree in $\O(1) = \O(1,0)$ and that for $n>1$ each tree $T\in \O(n)$ is uniquely determined by the ordered list of rooted subtrees $T_1,\ldots,T_k$ starting in the internal node of $T$ connected to the root.

For any pre-morphism $f\in \overline\fM^0(A,B)$ and map $g_1 \colon B[1]\to A[1]$ we define a map for every $T\in \O(n)$ as follows:
for the unique tree $T\in \O(1)$ we set $g_T = g_1$ and for general $T\in\O(n)$
\[
  g_T = -g_1 \circ f_k \circ (g_{T_1} \otimes \cdots \otimes g_{T_k}).
\]
where $T_1,\ldots,T_k$ are the subtrees starting in the internal node in $T$ connected to the root.
The following result is classical, and can be easily verified using the recursive formula.
\begin{prop}\label{inversemap}
  Let $f\in\overline\fM^0(A,B)$ be a pre-morphism such that $f_1\colon A[1]\to B[1]$ admits an inverse $g_1\colon B[1]\to A[1]$.
  Then the pre-morphism $g\in \overline\fM(B,A)$ defined by
  \[
    g_n = \sum_{T\in \O(n)} g_T,
  \]
  is an inverse to $f$. If $f$ is moreover an $A_\infty$-morphism then $g$ is again an $A_\infty$-morphism.
\end{prop}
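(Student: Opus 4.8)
The plan is to verify directly from the recursive definition that the sequence $g = (g_n)_{n\in\N}$ defined by $g_n = \sum_{T\in\O(n)} g_T$ satisfies $g\diamond f = \id_A$, and then to deduce that $g$ is a two-sided inverse and an $A_\infty$-morphism. First I would unwind the composition $g\diamond f = g\circ \widehat f$ in components: the $n$-th component $(g\diamond f)_n$ is a sum over ways of partitioning $n$ inputs into blocks, applying an $f_{n_i}$ to each block, and then applying $g_k$ to the resulting $k$ outputs. Expanding each $g_k$ as $\sum_{T\in\O(k)} g_T$ and then recursively expanding each $g_T$ via $g_T = -g_1\circ f_m\circ(g_{T_1}\otimes\cdots\otimes g_{T_m})$, one obtains a sum indexed by certain decorated trees: each internal node carries an $f$-operation and each edge (other than those incident to leaves) carries a $g_1\circ f_?$, while the leaves carry $g_1$. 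The key combinatorial observation is that these contributions organise into a telescoping cancellation: the ``$-$'' sign in the definition of $g_T$ is precisely arranged so that each tree with $\geq 1$ internal nodes cancels against another, leaving only the contribution of the trivial tree, which gives $g_1\circ f_1 = \id_{A[1]}$ in degree $1$ and $0$ in higher degrees. Concretely, I would set up an involution (or an inductive argument on $n$) on the index set: grouping terms by the subtree structure at the root, the recursion $g_n = -g_1 f_k(g_{T_1}\otimes\cdots)$ is exactly the relation forcing $\sum_{k}\sum_{n_1+\cdots+n_k=n} g_k\circ(f_{n_1}\otimes\cdots\otimes f_{n_k}) = 0$ for $n>1$, which is the statement $(g\diamond f)_n = 0$.

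Once $g\diamond f = \id_A$ is established, I would argue that $g$ is also a right inverse. Since $f_1$ is invertible, $f$ is a left-invertible premorphism, hence (by a standard argument, e.g. considering the induced coalgebra cohomomorphism $\widehat f\colon \barB A\to \barB B$, which is an isomorphism because it is filtered with invertible associated graded $f_1$) $f$ is in fact invertible in the monoid of premorphisms under $\diamond$; a left inverse of an invertible element equals the two-sided inverse, so $f\diamond g = \id_B$ as well. Alternatively one runs the same tree-cancellation argument with the roles of $f$ and $g$ swapped, using that $g_1$ is the inverse of $f_1$.

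Finally, suppose $f$ is an $A_\infty$-morphism, i.e. $\widehat f$ intertwines the codifferentials $\widetilde\upmu_A$ and $\widetilde\upmu_B$. Then $\widehat g = \widehat f^{-1}$ (by the invertibility statement above, $\widehat g$ is the cohomomorphism inverse of $\widehat f$), and conjugating $\widetilde\upmu_B\circ\widehat f = \widehat f\circ\widetilde\upmu_A$ by $\widehat f^{-1}$ gives $\widehat g\circ\widetilde\upmu_B = \widetilde\upmu_A\circ\widehat g$, which is exactly the condition that $g$ is an $A_\infty$-morphism (\cref{def:premorb}).

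I expect the main obstacle to be the bookkeeping in the first step: making the tree-indexed expansion of $g\diamond f$ precise, matching the Koszul signs coming from the degree-$0$ shifted convention against the explicit minus sign in $g_T = -g_1\circ f_k\circ(\cdots)$, and exhibiting the cancellation cleanly rather than by an opaque generating-function manipulation. Everything else — invertibility of $\widehat f$ and the conjugation argument for the $A_\infty$-condition — is formal.
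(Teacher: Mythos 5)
The paper offers no written proof here (it declares the result ``classical'' and leaves the verification to the recursion), so the relevant comparison is with the argument the recursion actually supports. Your overall architecture is right --- prove one of the two identities $g\diamond f=\id_A$, $f\diamond g=\id_B$ by a direct telescoping from the tree recursion, get the other side formally, and then obtain the $A_\infty$-morphism property by conjugating $\widetilde\upmu_B\circ\widehat f=\widehat f\circ\widetilde\upmu_A$ by $\widehat g=\widehat f^{-1}$. The last two steps are fine as written. (Also, since everything lives in $\overline\fM^0$, all the maps involved are homogeneous of degree $0$ in the shifted convention, so the Koszul-sign bookkeeping you flag as the main obstacle is vacuous.)

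The genuine misstep is in the first step: you have the direction of the direct computation backwards. The recursion $g_T=-g_1\circ f_k\circ(g_{T_1}\otimes\cdots\otimes g_{T_k})$ peels off the node adjacent to the \emph{root}, so summing over $T\in\O(n)$ with $n>1$ gives
\[
  g_n \;=\; -\,g_1\circ\!\!\sum_{k\geq 2}\ \sum_{n_1+\cdots+n_k=n} f_k\circ(g_{n_1}\otimes\cdots\otimes g_{n_k}),
\]
which, after precomposing with $f_1\circ g_1=\id$, is exactly the statement $(f\diamond g)_n=f_1\circ g_n+\sum_{k\geq 2}\sum f_k\circ(g_{n_1}\otimes\cdots\otimes g_{n_k})=0$; i.e.\ the recursion directly yields the \emph{right}-inverse identity $f\diamond g=\id_B$. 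It is not ``exactly the relation forcing $\sum_k\sum g_k\circ(f_{n_1}\otimes\cdots\otimes f_{n_k})=0$'': in $(g\diamond f)_n$ the $f$'s sit at the leaves and $g_k$ at the root, which does not match the root decomposition, so the telescoping you describe does not go through as stated. The repair is simply to swap the roles: establish $f\diamond g=\id_B$ by the computation above, and then deduce $g\diamond f=\id_A$ either by your coalgebra-invertibility argument for $\widehat f$, or by induction on $n$ from $f\diamond(g\diamond f)=f$ using that $(g\diamond f)_1=\id$ and $f_1$ is injective. With that correction the proof is complete.
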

\begin{lem}\label{quasiisominmodinv}
  Every quasi-isomorphism induces an isomorphism between minimal models.
\end{lem}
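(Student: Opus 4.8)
The plan is to combine \cref{inversemap} with the formal properties of homotopy established in \cref{lem:homequiv} and \cref{lem:diamondhom}. Let $f\colon A\to B$ be a quasi-isomorphism with fixed minimal models $(\hh(A),P_A,I_A)$ and $(\hh(B),P_B,I_B)$, and set $\hh(f) = P_B\diamond f\diamond I_A \in \Hom_\AinfAlg(\hh(A),\hh(B))$. Since $P_B$, $f$, $I_A$ are all quasi-isomorphisms and quasi-isomorphisms are closed under $\diamond$, the composite $\hh(f)$ is a quasi-isomorphism between \emph{minimal} $A_\infty$-algebras; hence its linear term $\hh(f)_1$ is a quasi-isomorphism of cochain complexes with zero differential, i.e.\ an isomorphism of graded $\ell$-modules. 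Let $g_1 \colonequals \hh(f)_1^{-1}$ and apply \cref{inversemap} to the pre-morphism $\hh(f)$: this produces an $A_\infty$-morphism $g\in\Hom_\AinfAlg(\hh(B),\hh(A))$ with $g_n = \sum_{T\in\O(n)} g_T$, which is a genuine two-sided inverse of $\hh(f)$ for the composition $\diamond$, that is $g\diamond \hh(f) = \id_{\hh(A)}$ and $\hh(f)\diamond g = \id_{\hh(B)}$. In particular $\hh(f)$ is an isomorphism in the category $\AinfAlg$, which is the assertion.

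The only subtlety is to make sure the hypothesis of \cref{inversemap} is legitimately met, i.e.\ that $\hh(f)_1$ really is invertible and not merely a quasi-isomorphism. This is where minimality is used essentially: a minimal $A_\infty$-algebra has $\upmu_1 = 0$, so its $\upmu_1$-cohomology is the underlying graded module itself, and a quasi-isomorphism between two such is by definition a morphism whose linear term induces an isomorphism on cohomology — which for zero differentials means $\hh(f)_1$ is itself a graded isomorphism, with a strict graded inverse $g_1$. Thus \cref{inversemap} applies verbatim. I would also remark, if desired, that the two-sidedness of the inverse in \cref{inversemap} (as stated there for $f$ a pre-morphism with invertible linear term, upgraded to an $A_\infty$-morphism when $f$ is one) is exactly what yields an isomorphism rather than just a one-sided inverse; no homotopy is needed at this stage because $P_B\diamond I_B = \id$ on the nose for minimal models and the tree formula produces a strict inverse.

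I do not anticipate a real obstacle here: the statement is essentially a repackaging of \cref{inversemap} once one observes that minimality turns ``quasi-isomorphism'' into ``linear-isomorphism''. The one place requiring a line of care is the well-definedness of $\hh(f)$ as a quasi-isomorphism, which follows because $\diamond$-composites of quasi-isomorphisms are quasi-isomorphisms (noted after \cref{def:premorb}) and because $P_A, I_A, P_B$ are quasi-isomorphisms by the definition of a minimal model. If one wanted the inverse to be compatible with the chosen structure maps $P,I$ in a stronger sense — e.g.\ that $\hh(f)$ fits into a homotopy-commuting square with $f$ — that would additionally invoke \cref{lem:diamondhom} and the relations \eqref{eq:minmodord}, but for the bare statement as given this is not required.
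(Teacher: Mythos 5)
Your argument is exactly the paper's proof: form $\hh(f)=P_B\diamond f\diamond I_A$, note it is a quasi-isomorphism since composites of quasi-isomorphisms are quasi-isomorphisms, use minimality to upgrade $\hh(f)_1$ from a quasi-isomorphism to a genuine graded isomorphism, and invoke \cref{inversemap} to produce the strict two-sided $A_\infty$-inverse. The proposal is correct and takes essentially the same route, with your remarks on why minimality is essential being a faithful (if more verbose) elaboration of the paper's one-line justification.
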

\begin{proof}
  Fix minimal models $\hh(A)$ and $\hh(B)$ with maps $I_A,I_B,P_A,P_B$ as in \eqref{eq:minmodord}.
  Then given any quasi-isomorphism $f\in \Hom_\AinfAlg(A,B)$ the induced map $\hh(f) = P_B \diamond f \diamond I_A$ is again a quasi-isomorphism, because $P_B$ and $I_A$ are quasi-isomorphisms.
  But then $\hh(f)_1$ is an isomorphism by minimality and the inverse $\hh(f)^{-1}$ exists by \cref{inversemap}.
\end{proof}
\begin{cor}\label{uniqminmod}
  Minimal models are unique up to $A_\infty$-isomorphism.
\end{cor}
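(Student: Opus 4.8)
The plan is to deduce the corollary directly from \cref{quasiisominmodinv}, applied to the identity quasi-isomorphism but with two \emph{different} minimal models chosen on the source and on the target.

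First I would fix notation: suppose $\hh(A)$ and $\hh(A)'$ are both minimal models for $A$, equipped with quasi-isomorphisms $P_A\in\Hom_\AinfAlg(A,\hh(A))$, $I_A\in\Hom_\AinfAlg(\hh(A),A)$ and $P_A'\in\Hom_\AinfAlg(A,\hh(A)')$, $I_A'\in\Hom_\AinfAlg(\hh(A)',A)$ satisfying \eqref{eq:minmodord}. Next I would observe that $\id_A$ is a quasi-isomorphism in $\Hom_\AinfAlg(A,A)$, and that \cref{quasiisominmodinv} is stated for an arbitrary quasi-isomorphism between $A_\infty$-algebras carrying \emph{fixed, a priori unrelated} minimal models. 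Taking $B=A$, $f=\id_A$, with $\hh(A)$ as the chosen minimal model of the source and $\hh(A)'$ as that of the target, the lemma produces an $A_\infty$-isomorphism
\[
  \hh(\id_A) \;=\; P_A' \diamond \id_A \diamond I_A \;=\; P_A' \diamond I_A \colon \hh(A) \xrightarrow{\ \sim\ } \hh(A)',
\]
which is exactly the asserted isomorphism.

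I do not expect a genuine obstacle here: all the real work has already been carried out in \cref{inversemap} (the tree formula inverting a pre-morphism whose linear part is invertible) and \cref{quasiisominmodinv}. The only point needing a moment's care is the remark above that \cref{quasiisominmodinv} permits the minimal models of source and target to be chosen independently, which is precisely what lets us feed in two distinct minimal models of the same algebra. If one wanted the finer statement that this isomorphism is moreover well defined up to homotopy, or compatible with the structure maps $P_A,I_A,P_A',I_A'$ up to homotopy, one would additionally invoke \cref{lem:diamondhom} together with the homotopies $I_A\diamond P_A\sim\id_A$ and $I_A'\diamond P_A'\sim\id_A$; but the stated corollary asks only for the existence of an $A_\infty$-isomorphism, so this is not needed.
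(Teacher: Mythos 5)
Your proposal is correct and is exactly the argument the paper gives: apply \cref{quasiisominmodinv} to $f=\id_A$ with two different minimal models chosen on source and target, yielding the isomorphism $P_A'\diamond I_A$. Nothing further is needed.
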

\begin{proof}
  This follows directly from \cref{quasiisominmodinv} when taking $\hh(A)$ and $\hh(B)$ to be two different minimal models of $A=B$ with morphism $f=\id_A$.
\end{proof}

Given a quasi-isomorphism $f$ the composition $g = I_A \diamond \hh(f)^{-1}\diamond P_B$ is a homotopy inverse to $f$.
To see this, note that there are identities
\begin{align}  
  g\diamond f =
  I_A \diamond \hh(f)^{-1}\diamond P_B \diamond f
  \,\sim\,
  I_A \diamond \hh(f)^{-1}\diamond \underbrace{P_B \diamond f \diamond I_A}_{\hh(f)} \diamond P_A
  \,\sim\,
  I_A \diamond P_A \sim \id_A, \label{eq:homotinvgf}\\
  f\diamond g =
  f \diamond I_A \diamond \hh(f)^{-1}\diamond P_B
  \,\sim\,
  I_B \diamond \underbrace{P_B \diamond f \diamond I_A}_{\hh(f)} \diamond \hh(f)^{-1} \diamond P_B
  \,\sim\,
  I_B \diamond P_B \sim \id_B, \label{eq:homotinvfg}
\end{align}
induced by the homotopies $I_A\circ P_A \sim \id_A$ and $I_B\circ P_B \sim \id_B$.
Hence, every quasi-isomorphism is also a homotopy equivalence.

\subsection{Analytic algebras and morphisms}\label{sec:norms}

We now consider the category $\grNMod \elle$ of \emph{graded normed $\elle$-bimodules}, by which we mean pairs $(V,\norm_V)$ of a bimodule $V\in\bimodl$ and a norm $\norm_V \colon V\to \R$ on the underlying ungraded vector space so that for any $l,r\in \ell$ the multiplication $v \mapsto l v r$ is bounded in $\norm_\ell$.
Between any $V,W \in \grNMod \elle$ there is a graded normed vector space
\[
  \hom_\elle^\cont(V,W) \colonequals \{ f\in \hom_\elle(V,W) \mid \nrm{f}_\op < \infty \}
\]
of bounded morphisms with respect to the operator norm $\nrm{f}_\op \colonequals \sup_{\|v\|_V = 1} \|f(v)\|_W$.
The shift naturally extends to a functor $[1]\colon \grNMod \elle \to \grNMod\elle$, and for any pair $V,W\in\grNMod \elle$ the tensor product $V\otimes W$ is again normed via the projective tensor norm
\[
  \|z\|_\pi \colonequals \inf\{\textstyle \sum_i \|v_i\|_V \|w_i\|_W \mid \textstyle z = \sum_i v \otimes_\C w \},
\]
where the infimum is over all presentations of an element $z\in V\otimes W$ as a sum of pure tensors in the tensor product $V\otimes_\C W$ over $\C$.
If $f_i\colon V_i \to W_i$ are bounded bimodule morphisms between $V_1,V_2,W_1,W_2 \in \grNMod\elle$, the tensor product $f_1\otimes f_2$ has operator norm
\[
  \nrm{f_1\otimes f_2}_\op = \nrm{f_1}_\op \nrm{f_2}_\op < \infty,
\]
and is therefore a bounded bimodule map $V_1\otimes V_2 \to W_1\otimes W_2$.
Henceforth, we will drop the subscripts on the above norms to reduce clutter.

Now given a normed $\ell$-bimodule $V\in\grNMod \elle$ and $r \in \R$ with $r > 0$ we consider the following normed version of the bar construction:
\[
  \barB(V,r) \colonequals (\barB V, \norm_r),\quad \nrm{\textstyle\sum v_n}_r \colonequals \sup_{n\in\N} \nrm{v_n}r^{-n},
\]
noting that the supremum in the definition of $\norm_r$ is finite for any finite sum in $\barB V$.
Bounded maps out of these normed $\ell$-bimodules are characterised by the following lemma.

\begin{lem}\label{analboundedness}
  Let $V,W\in\grNMod \elle$ and let $f \in \fM(V,W)$. Then the following are equivalent:
  \begin{enumerate}
  \item there exists $C>0$ such that $\nrm{f_n} < C^n$ for all $n\geq 1$,
  \item there exists $r>0$ such that $f\colon \barB(V,r) \to W[1]$ is bounded,
  \end{enumerate}
  if additionally $f_0 = 0$ then this is also equivalent to:
  \begin{enumerate}[resume]
  \item for every $r'>0$ there exists an $r>0$ such that $\widehat f \colon \barB(V,r) \to \barB(W,r')$ is bounded,
  \end{enumerate}
  if additionally $V=W$ then this is also equivalent to:
  \begin{enumerate}[resume]
  \item for every $r'>0$ there exists an $r>0$ such that $\widetilde f \colon \barB(V,r) \to \barB(V,r')$ is bounded.
  \end{enumerate}
\end{lem}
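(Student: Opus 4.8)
The plan is to prove the implications cyclically, exploiting the explicit combinatorial formulas for $\widehat f$ and $\widetilde f$ together with submultiplicativity of the projective tensor norm.

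First, for $(1)\Leftrightarrow(2)$: a map $f\colon \barB(V,r)\to W[1]$ is bounded iff $\sup_n \|f_n\|\, r^{n}<\infty$ (since the norm on $\barB(V,r)$ rescales the $n$-th graded piece by $r^{-n}$, and a bounded map must be bounded on each piece with the bounds assembling into a geometric control). Thus boundedness for some $r>0$ is equivalent to $\|f_n\| < C^n$ eventually, hence (after enlarging $C$) for all $n\geq 1$; this is exactly condition $(1)$. One direction I would spell out explicitly: given $(1)$, pick $r < 1/C$, then $\|f_n\| r^n < (Cr)^n \to 0$, so $\sup_n \|f_n\| r^n<\infty$; conversely boundedness gives $\|f_n\|\leq M r^{-n}$, and choosing $C > \max(1/r, M)$ yields $(1)$.

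Next, for the equivalence with $(3)$ under $f_0=0$: the key computation uses the formula $\widehat f = \sum_k \sum_{n_1,\dots,n_k} f_{n_1}\otimes\cdots\otimes f_{n_k}$. Since $f_0=0$, each $n_i\geq 1$, so a summand landing in $W[1]^{\otimes k}$ from $V[1]^{\otimes n}$ with $n = n_1+\cdots+n_k$ has operator norm $\prod_i \|f_{n_i}\| < C^{n_1}\cdots C^{n_k} = C^n$ by submultiplicativity of $\norm_\pi$. The number of such compositions $(n_1,\dots,n_k)$ with fixed sum $n$ and each part $\geq 1$ is $\binom{n-1}{k-1}$, and summing over $k$ gives at most $2^{n-1}$ terms. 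Hence the component of $\widehat f$ from degree $n$ to all of $\barB W$ has norm $< 2^{n-1}C^n \leq (2C)^n$. To show boundedness $\barB(V,r)\to\barB(W,r')$ for a prescribed $r'$: an element $\sum_n v_n$ with $\|\sum_n v_n\|_r = 1$ has $\|v_n\|\leq r^n$, so its image in $W[1]^{\otimes k}$ (coming from various input degrees $n\geq k$) has norm $\leq \sum_{n\geq k}(2C)^n r^n$, and the $\barB(W,r')$-norm of the image is the sup over $k$ of $(r')^{-k}$ times this; choosing $r$ small enough that $2Cr < 1$ and $\sum_{n\geq 1}(2Cr)^n \leq r'$ (possible by shrinking $r$) makes this sup finite, giving $(3)$. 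Conversely $(3)\Rightarrow(2)$ is trivial by post-composing with the projection $\uppi\colon \barB(W,r')\to W[1]$, which is bounded.

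Finally, for the equivalence with $(4)$ when $V=W$: the coderivation formula $\widetilde f = \sum_k\sum_{i,j\geq 0}\id^{\otimes i}\otimes f_k\otimes \id^{\otimes j}$ shows that the component of $\widetilde f$ mapping $V[1]^{\otimes n}$ into $V[1]^{\otimes m}$ (with $m = n - k + 1$ for the piece involving $f_k$) is a sum of $n-k+1 \leq n$ terms each of norm $\|f_k\| < C^k \leq C^{n}$ (absorbing the identity factors, which have norm $1$); so the total norm from degree $n$ is $< n\cdot C^{n}\cdot(\text{number of relevant }k)\leq n^2 C^n \leq (C')^n$ for suitable $C'$. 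Then the same rescaling argument as for $(3)$ — choosing $r$ with $C'r<1$ and $\sum_{n\geq 1}(C'r)^n\leq r'$ — gives $\widetilde f\colon\barB(V,r)\to\barB(V,r')$ bounded, establishing $(1)\Rightarrow(4)$. The converse $(4)\Rightarrow(2)$ again follows by composing with $\uppi$, noting $f = \uppi\circ\widetilde f$ since the only summand of $\widetilde f$ projecting nontrivially to $V[1]$ is the $i=j=0$ term $f_k$ with output in $V[1]$, i.e.\ $k=n$, recovering $f_n$.

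The main obstacle is bookkeeping the combinatorial multiplicities: one must check that the number of trees/compositions contributing to a fixed pair of tensor degrees grows only exponentially in $n$ (here $2^{n-1}$, respectively polynomially times that), so that it can be absorbed into the geometric constant by shrinking $r$. Once that exponential bound is in hand, all the estimates are routine applications of submultiplicativity of $\norm_\pi$ and summing geometric series; the subtle point to state carefully is the quantifier structure of $(3)$ and $(4)$ ("for every $r'$ there exists $r$"), which is exactly what makes the constants compose correctly when one later pulls back cochains along analytic morphisms.
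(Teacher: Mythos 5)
Your argument follows essentially the same route as the paper: componentwise estimates via submultiplicativity of the projective tensor norm, an exponential bound on the number of compositions contributing to each tensor degree ($2^{n-1}$, where the paper simply invokes a bound $q^n$), and absorption of everything into a geometric series by shrinking $r$; the paper handles the $(r')^{-k}$ weighting with the substitution $K=\min\{r',1\}$ where you instead take a supremum over the output degree, but these are cosmetic differences.

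One step should be repaired. You assert that $f\colon \barB(V,r)\to W[1]$ is bounded \emph{iff} $\sup_n\nrm{f_n}r^n<\infty$, and your proof of $(1)\Rightarrow(2)$ only verifies that this supremum is finite. For a fixed $r$ the ``if'' direction is false: an element $\sum_{n=1}^N v_n$ with $\nrm{v_n}=r^n$ has $\norm_r$-norm $1$, yet $\nrm{f(\sum v_n)}$ can be as large as $\sum_{n=1}^N\nrm{f_n}r^n$, which is unbounded in $N$ when $\nrm{f_n}r^n\equiv 1$. What you actually need is convergence of the series $\sum_n\nrm{f_n}r^n$, which your choice $r<1/C$ does supply since $\sum_n(Cr)^n<\infty$; the triangle inequality then bounds the operator norm by $(1-Cr)^{-1}$, exactly as in the paper. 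Since you carry out precisely this summation in parts (3) and (4), this is a one-line fix rather than a conceptual gap, but as written the justification for $(1)\Rightarrow(2)$ rests on a false equivalence.
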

\begin{proof}
  (1$\implies$2) Without loss of generality we may assume that $f_0 = 0$.
  Let $C>0$ be such that $\nrm{f_n} < C^n$ for all $n\geq 1$ and fix some $r < C^{-1}$.
  Then the map $f \colon \barB(V,r) \to W[1]$ satisfies for every $v = \sum_{n\geq 1} v_n \in \barB(V,r)$
  \[
    \nrm{f(v)} \leq \sum_{n\geq 1} \nrm{f_n(v_n)} < \sum_{n\geq 1} C^n \nrm{v_n} \leq \sum_{n\in\N} (Cr)^n \nrm{v}_r = \frac{\nrm{v}_r}{1-Cr},
  \]
  where the final equality for the geometric series holds because $Cr < 1$.
  It follows that $f$ has operator norm bounded by $(1-Cr)^{-1} < \infty$.

  (2$\implies$1) If $f \colon \barB(V,r) \to W[1]$ is bounded in the operator norm by some constant $K>1$, then for each $n\geq 1$ and $v_n \in V[1]^{\otimes n}$ the map $f_n$ satisfies
  \[
    \nrm{f_n(v_n)} = \nrm{f(v_n)} \leq K \nrm{v_n} r^{-n} \leq (K/r)^n \nrm{v_n}.
  \]
  Hence, picking any $C> K/r$ it follows that $\nrm{f_n} < C^n$ for all $n\geq 1$.

  (1$\implies$3) Let $C>0$ be such that $\nrm{f_n} < C^n$ for all $n\geq 1$, fix $r'>0$ and let $K = \min\{r',1\}$.
  Then assuming $f_0=0$ and expanding the formula for $\widehat f$, we find that for every $v = \sum_{n\in\N} v_n \in \barB V$
  \[
    \begin{aligned}
      \nrm{\widehat f(v)}_{r'} &\leq \sum_{n\geq 1} \sum_{n_1+\cdots+n_k = n} \nrm{f_{n_1}\otimes \cdots \otimes f_{n_k}(v_n)} (r')^{-k}
      \\&\leq \sum_{n\in\N} \sum_{n_1+\cdots+n_k = n} C^n K^{-k} \nrm{v_n}
      \\&\leq \sum_{n\in\N} (\text{\# partitions of n})\cdot \left(\frac CK\right)^n \nrm{v_n}.
    \end{aligned}
  \]
  The number of partitions of $n$ is exponentially bounded by a function $q^n$ for some $q>0$.
  Picking $r< \frac{K}{qC}$ the geometric series in $\frac{rqC}{K}$ again converges, which shows that
  \[
    \nrm{\widehat f(v)}_{r'} \leq \sum_{n\in\N} \left(\frac{rqC}{K}\right)^n \nrm{v}_r = \frac{K}{K-rqC}  \nrm{v}_r.
  \]
  As $q$ and $r$ can be chosen independently of $v$, it follows that $\widehat f \colon \barB(V,r) \to \barB(V,r')$ is bounded.

  (1$\implies$ 4) Let $C>1$ be such that $\nrm{f_n}<C^n$ for all $n\in\N$, fix $r'>0$ and let $K=\min\{r',1\}$.
  Then assuming $f_0 = 0$ and expanding the formula for $\widetilde f$ yields for every $v=\sum_{n\in\N} v_n \in\barB V$
  \[
    \begin{aligned}
      \nrm{\widetilde f(v)}_{r'} &\leq \sum_{n\in\N} \sum_{i+k+j=n} \nrm{(\id^{\otimes i} \otimes f_k \otimes \id^{\otimes j})(v_n)} (r')^{-(i+j+1)}
      \\&\leq \sum_{n\in\N} \sum_{i+k+j=n} C^k K^{-(i+j+1)}\nrm{v_n}
      \\&\leq \sum_{n\in\N} n^3 \left(\frac CK\right)^n\nrm{v_n}.
    \end{aligned}
  \]
  Picking $q>0$ such that $n^3 < q^n$ for all $n$ and an $r<\frac{K}{qC}$, it again follows that
  \[
    \nrm{\widetilde f(v)}_{r'} \leq \sum_{n\in\N} \left(\frac{rqC}{K}\right)^n \nrm{v}_r = \frac{K}{K - rqC}\nrm{v}_r,
  \]
  hence $\widetilde f$ is bounded when considered as a linear map $\barB(V,r) \to \barB(V,r')$.
  
  (3,4$\implies$2) For any $r'$, the projection $\uppi\colon \barB(V,r') \to V[1]$ is obviously bounded, so if a map $\widehat f$ or $\widetilde f$ is bounded it follows that $f = \uppi \circ \widehat f = \uppi \circ \widetilde f$ is again bounded.
\end{proof}

We define the space of \emph{analytic sequences of multilinear maps} between $V$ and $W$ as
\[
  \fA(V,W) = \bigcup_{r>0} \hom_\elle^\cont(\barB(V,r), W[1]) \subset \fM(V,W),
\]
and note that its elements are sequences $f = (f_n)_{n\in\N}$ satisfying one of the equivalent conditions in the above lemma.
As before, we write $\overline\fA(V,W) \subset \fA(V,W)$ for the sequences with $f_0 = 0$.

\begin{defn}
  An \emph{analytic $A_\infty$-algebra} is a pair $(A,\upmu)$ of $A\in \grNMod\elle$ and $\upmu\in \overline\fA^1(A,A)$ which satisfies ${\widetilde \upmu}^2 = 0$.
\end{defn}

\begin{ex}
  Let $(A,\d)$ be a DG algebra, and suppose $\norm\colon A \to \R$ is any norm on the underlying vector space.
  Then the differential and multiplication define an analytic $A_\infty$-algebra structure if and only if both are bounded.
  In particular, this includes the case of Banach DG algebras.
\end{ex}

\begin{ex}
  Let $(A,\upmu)$ be a finite dimensional $A_\infty$-algebra such that $\upmu_n = 0$ for $n\gg 0$.
  Then $(A,\upmu)$ is analytic for any choice of norm on $A$.
\end{ex}

\begin{rem}
It follows from \cref{analboundedness} that the definition given above is equivalent to the $A_\infty$-algebras satisfying a convergence condition used in \cite{Tu14,Tod18}.
\end{rem}

Given two analytic $A_\infty$-algebras $A$ and $B$, we say a pre-morphism $f$ is \emph{analytic} if it lies in the subspace $f\in \fA^0(A,B)$.
\Cref{analboundedness} implies that the composition of analytic pre-morphisms is well-defined.

\begin{lem}
  For any three analytic $A_\infty$-algebras $A,B,C$, the composition restricts to a map
  \[
    \diamond \colon \fA^0(B,C) \times \fA^0(A,B) \to \fA^0(A,C).
  \]
\end{lem}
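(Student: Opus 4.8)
The plan is to reduce everything to \cref{analboundedness} together with the characterisation of $\diamond$ via the bar construction. Recall that for analytic pre-morphisms $f\in\overline\fA^0(A,B)$ and $g\in\overline\fA^0(B,C)$ we have $g\diamond f = g\circ \widehat f$, where $\widehat f\colon \barB A \to \barB B$ is the induced cohomomorphism. So the claim is precisely that $g\circ\widehat f$ again satisfies one of the equivalent boundedness conditions of \cref{analboundedness}.

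First I would fix $r'>0$. Since $g\in\overline\fA^0(B,C)$, by \cref{analboundedness}(2) there is some $r_1>0$ such that $g\colon \barB(B,r_1)\to C[1]$ is bounded; take this $r_1$ as the target radius. Since $f\in\overline\fA^0(A,B)$ and $f_0=0$, \cref{analboundedness}(3) applied with target radius $r_1$ gives an $r>0$ such that $\widehat f\colon \barB(A,r)\to\barB(B,r_1)$ is bounded. Composing, $g\circ\widehat f = g\diamond f\colon \barB(A,r)\to C[1]$ is bounded, so by \cref{analboundedness}(2) the sequence $g\diamond f$ satisfies the geometric series bound, i.e. $g\diamond f\in\fA(A,C)$. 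Since $f$ and $g$ are pre-morphisms, $(g\diamond f)_0 = 0$ (indeed $\widehat f$ sends the coaugmentation to the coaugmentation and $g_0=0$), so in fact $g\diamond f\in\overline\fA^0(A,C)$; the degree $0$ is immediate because $\diamond$ already restricts to $\fM^0(B,C)\times\fM^0(A,B)\to\fM^0(A,C)$ on the underlying non-analytic spaces. This completes the argument.

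The only subtle point — and the one worth spelling out rather than waving through — is making sure the quantifiers in \cref{analboundedness}(3) are applied in the right order: one must \emph{first} extract the radius $r_1$ at which $g$ is bounded, and only \emph{then} feed $r_1$ as the desired target radius $r'$ into condition (3) for $f$. This is exactly why part (3) of the lemma is phrased with a universal quantifier over target radii $r'$; the statement of the present lemma would fail if $\widehat f$ were only known to be bounded for \emph{some} pair of radii rather than for arbitrarily small target radius. No genuine obstacle arises beyond this bookkeeping, so the proof is short.
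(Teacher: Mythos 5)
Your proof is correct and follows exactly the paper's own argument: pick $r_1$ so that $g\colon\barB(B,r_1)\to C[1]$ is bounded, then use \cref{analboundedness}(3) to find $r$ with $\widehat f\colon\barB(A,r)\to\barB(B,r_1)$ bounded, and compose. The remark about the quantifier order in condition (3) is a worthwhile clarification but does not change the substance.
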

\begin{proof}
  Given $f\in \fA^0(A,B)$ and $g\in \fA^0(B,C)$, it follows from \cref{analboundedness} that there exist $r_1,r_2>0$ such that the maps
  \[
    g \colon \barB(B,r_1) \to C[1],\quad \widehat f \colon \barB(A,r_2) \to \barB(B,r_1)
  \]
  are bounded. Then the composition $g \diamond f = g \circ \widehat f$ is also bounded.
\end{proof}

We conclude that analytic $A_\infty$-algebras again form a category $\AinfAlgan$, with morphism spaces
\[
  \Hom_\AinfAlg^\an(A,B) \colonequals \overline\fA(A,B) \cap \Hom_\AinfAlg(A,B),
\]
with an obvious forgetful functor $\AinfAlgan \to \AinfAlg$.

\subsection{Analytic homotopies}

To generalise the definition of homotopies to analytic $A_\infty$-algebras we introduce a norm on the path object.
The algebra $\pc^\infty([0,1])$ is bounded for the essential supremum-norm $\norm_{\infty}$ on $L^\infty([0,1])$ and likewise $\Omega^\bullet_{[0,1]}$ is bounded in the Sobolev norm
\[
  \nrm{x+y\d t} = \nrm{x}_\infty + \nrm{\tfrac{\partial x}{\partial t}}_\infty + \nrm{y}_\infty.
\]
For $A\in \AinfAlg^\an$ there is an induced tensor norm on $\Omega^\bullet_{[0,1]}\otimes A$ and one checks easily that this is an analytic $A_\infty$-algebra using \eqref{eq:difftensor} and \eqref{eq:highertensor}.
The following is then a natural generalisation of homotopy between analytic $A_\infty$-morphisms. 

\begin{defn}
  For $A,B\in\AinfAlgan$ a homotopy $H\in \Hom_\AinfAlg(A,\Omega^\bullet_{[0,1]}\otimes B)$ is \emph{analytic} if it lies in the subspace $\Hom_\AinfAlg^\an(A,\Omega^\bullet_{[0,1]}\otimes B)$.
\end{defn}

We will use an explicit sufficient condition for a homotopy to be analytic in terms of the coefficient functions.
Suppose $H = H_x + H_y \d t$ then for all $k\geq 1$ we have a bound
\[
  \nrm{H_k} \leq \nrm{H_{x,k}}_\infty  + \nrm{\tfrac{\partial}{\partial t} H_{x,k}}_\infty + \nrm{H_{y,k}}_\infty,
\]
where $\norm_\infty$ denotes the essential supremum of the operator norms over the interval,
and bounds
\[
  \nrm{H_{x,k}}_\infty \leq \nrm{H_k},\quad
  \nrm{\tfrac{\partial}{\partial t}H_{x,k}}_\infty \leq \nrm{H_k},\quad
  \nrm{H_{y,k}}_\infty \leq \nrm{H_k}.
\]
Hence $H$ is analytic if and only if there is a $C>0$ such that for each $k\geq 1$ the functions $H_{x,k}$, $\tfrac{\partial}{\partial t} H_{x,k}$, and $H_{y,k}$ are bounded almost everywhere by $C^k$.
We use this to prove the following.

\begin{lem}
  Analytic homotopy defines an equivalence relation $\sim_\an$ on $\Hom_\AinfAlg^\an(A,B)$.
\end{lem}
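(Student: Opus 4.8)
The plan is to adapt the proof of \cref{lem:homequiv} step by step, checking at each stage that the homotopies constructed there remain analytic when one starts from analytic homotopies. Concretely, I would revisit the three verifications---reflexivity, symmetry, and transitivity---and in each case apply the explicit criterion established just above: a homotopy $H = H_x + H_y\d t$ is analytic if and only if there is a constant $C>0$ with $\nrm{H_{x,k}}_\infty$, $\nrm{\tfrac{\partial}{\partial t}H_{x,k}}_\infty$, and $\nrm{H_{y,k}}_\infty$ all bounded by $C^k$ for every $k\geq 1$.

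First, reflexivity: the constant homotopy $H^t = f$ used for $f\sim f$ has $H_y = 0$ and $H_{x,k}^t = f_k$ independent of $t$, so $\tfrac{\partial}{\partial t}H_{x,k} = 0$ and the criterion is met with the same $C$ that witnesses analyticity of $f$, i.e. $f \in \overline\fA^0(A,B)$ already supplies the bound $\nrm{f_k} < C^k$. Second, symmetry: if $f\sim_\an g$ via $H = H_x + H_y\d t$, then the reparametrised homotopy $G^t = H_x^{-t} - H_y^{-t}\d t$ (as in \cref{lem:homequiv}) has coefficient functions obtained from those of $H$ by the change of variable $t\mapsto -t$, which preserves essential suprema over the interval and preserves the bound on $\tfrac{\partial}{\partial t}H_{x,k}$ up to sign; hence $G$ satisfies the same geometric series bound and is analytic.

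The main work, and the only genuinely delicate point, is transitivity: given analytic homotopies $H$ for $f\sim_\an g$ and $G$ for $g\sim_\an h$, I would take the concatenation $G*H$ defined exactly as in \cref{lem:homequiv}, whose coefficient functions on $[0,1/2]$ are $H_x^{2t}$ and $2H_y^{2t}$, and on $[1/2,1]$ are $G_x^{2t-1}$ and $2G_y^{2t-1}$. The algebraic verification that $G*H$ is a homotopy between $f$ and $h$ is identical to the one already carried out, so I would simply invoke it. For analyticity, note that rescaling the parameter $t\mapsto 2t$ leaves the essential supremum $\nrm{H_{x,k}}_\infty$ unchanged but multiplies $\tfrac{\partial}{\partial t}H_{x,k}$ by $2$, and the $\d t$-coefficient picks up an extra explicit factor of $2$; thus if $C_H$ and $C_G$ witness analyticity of $H$ and $G$ respectively, then $2C^k \leq (2C)^k$ for $k\geq 1$, so $C\colonequals 2\max\{C_H,C_G\}$ works as a common bound for all three families of coefficient functions of $G*H$. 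The continuity of $(G*H)_x$ at $t=1/2$ is inherited from $H_x^1 = g = G_x^0$ exactly as before. The only thing to watch is that the finitely many factors of $2$ introduced by the reparametrisation do not spoil the \emph{uniform} (in $k$) geometric bound---which is precisely why absorbing them into the base of the exponential, rather than as a prefactor, is the right move.

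Having checked all three properties, the lemma follows. I expect no further obstacle; the whole argument is a routine but careful bookkeeping exercise riding on the already-proved \cref{lem:homequiv} together with the explicit analyticity criterion for homotopies.
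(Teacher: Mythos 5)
Your proposal is correct and takes essentially the same route as the paper: it reuses the constant, reversed, and concatenated homotopies from \cref{lem:homequiv} and verifies the coefficient-function criterion in each case, absorbing the factor of $2$ from the concatenation into the base of the geometric bound exactly as the paper does (passing from $C^k$ to $(2C)^k$). There is nothing to add.
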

\begin{proof}
  Given $f\in \Hom_\AinfAlg^\an(A,B)$ the constant homotopy $H = 1\otimes f$ has norms
  $\nrm{(1\otimes f)_k} = \nrm{f_k}$ and is therefore analytic.
  Hence $f\sim_\an f$ always holds.
  Likewise, if $f\sim_\an g$ with analytic homotopy $H = H_x + H_y \d t$, then the coefficient functions of the homotopy $G^t = H_x^{-t} - H_y^{-t} \d t$ have norms
  \[
    \nrm{G_{x,k}}_\infty = \nrm{H_{x,k}}_\infty,\quad
    \nrm{\tfrac{\partial}{\partial t} G_{x,k}}_\infty
    = \nrm{\tfrac{\partial}{\partial t} H_{x,k}}_\infty,\quad
    \nrm{G_{y,k}}_\infty = \nrm{H_{y,k}}_\infty.
  \]
  Hence $G$ is again analytic and provides an analytic homotopy $g\sim_\an f$.
  Finally, suppose $f\sim_\an g$ and $g\sim_\an h$ via analytic homotopies $H$ and $G$, then we have
  \[
    \begin{gathered}
      \nrm{(G*H)_{x,k}}_\infty = \max\{\nrm{H_{x,k}}_\infty,\nrm{G_{x,k}}_\infty\},\quad
      \nrm{(G*H)_{x,k}}_\infty = 2 \max\{\nrm{\tfrac{\partial}{\partial t}H_{x,k}}_\infty,\nrm{\tfrac{\partial}{\partial t}G_{x,k}}_\infty\},\\
      \nrm{(G*H)_{x,k}}_\infty = 2 \max\{\nrm{H_{y,k}}_\infty,\nrm{G_{y,k}}_\infty\},\quad
    \end{gathered}
  \]
  Since the coefficient functions of $H$ and $G$ are essentially bounded by $C^k$ for some $C>0$, the coefficient functions of $G*H$ are essentially bounded by $(2C)^k$.
  Hence, $G*H$ is an analytic homotopy $f\sim_\an h$.
\end{proof}

The relation $\sim_\an$ is again compatible with $\diamond$, because the compositions used in \cref{lem:diamondhom} are analytic.
Hence we obtain an equivalence relation on morphisms in $\AinfAlgan$ which is compatible with composition.
In particular, we can speak of \emph{analytic homotopy inverse} maps and \emph{analytic homotopy equivalences} as before.

\subsection{Analytic minimal models}

We define an analogue of a minimal model in the analytic setting by requiring all the structures involved to be analytic.
\begin{defn}\label{minimalmodel}
    An \emph{analytic minimal model} for $A\in\AinfAlgan$ is a minimal $\hh(A) \in \AinfAlgan$ with quasi-isomorphisms $P \in \Hom_\AinfAlg^\an(A,\hh(A))$ and $I \in\Hom_\AinfAlg^\an(\hh(A),A)$ such that:
  \begin{equation}\label{eq:minmodordan}
      P\diamond I = \id_{\hh(A)}\quad\text{ and }\quad   I\diamond P \sim_\an \id_A.
  \end{equation}
\end{defn}

Contrary to the non-analytic case, \emph{not every analytic $A_\infty$-algebra has an analytic minimal model}.
Indeed, it is not even possible in general to find chain maps $I_1$ and $P_1$ providing a bounded splitting of its cohomology.
Nonetheless, the construction in \cite{Tu14} shows that the homotopy transfer formula can be used to construct analytic minimal models in geometric settings which we will see in \cref{sec:geometry}.

We conclude by generalising some results about (homotopy) inverses from \cref{sec:minmods}.
\begin{lem}\label{analinverse}
  Let $f \in \overline\fA^0(A,B)$ be an \emph{analytic} pre-morphism such that $f_1\colon A[1]\to B[1]$ admits a bounded inverse $g_1$. Then the inverse $g$ of \cref{inversemap} is again analytic.
\end{lem}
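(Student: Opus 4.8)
The plan is to bound the operator norm of each summand in the tree formula $g_n=\sum_{T\in\O(n)}g_T$ of \cref{inversemap} and then sum over the finitely many trees. Since $f$ is analytic, \cref{analboundedness} provides a constant $C>0$ with $\nrm{f_k}<C^k$ for all $k\geq 1$, and after enlarging $C$ we may assume $C\geq 1$ and $\nrm{g_1}\leq C$. Unwinding the recursion $g_T=-g_1\circ f_k\circ(g_{T_1}\otimes\cdots\otimes g_{T_k})$ from the base case $\O(1)=\{g_1\}$, a straightforward induction on the number $d$ of internal nodes of $T$ shows that $g_T$ is, up to sign, a composite of exactly one factor $f_{k_v}$ for each internal node $v$ — where $k_v$ denotes the number of children of $v$ — together with exactly $d+n$ factors equal to $g_1$ (one for each internal node and one for each leaf).

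Applying submultiplicativity of the operator norm under composition together with the identity $\nrm{f_1\otimes f_2}_\op=\nrm{f_1}_\op\nrm{f_2}_\op$ recorded in \cref{sec:norms}, this decomposition yields
\[
  \nrm{g_T}\ \leq\ \nrm{g_1}^{\,d+n}\prod_{\text{internal }v}\nrm{f_{k_v}}\ <\ C^{\,d+n}\cdot C^{\sum_v k_v}.
\]
Every non-root node of $T$ is a child of a unique internal node, so $\sum_v k_v\leq d+n$, while a tree on $n$ leaves all of whose internal nodes have valency $\geq 3$ satisfies $d\leq n-1$; hence $\nrm{g_T}<C^{2(d+n)}\leq C^{4n}$. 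Since it is classical that $|\O(n)|$ grows at most exponentially in $n$, there is $K\geq 1$ with $|\O(n)|\leq K^n$, and therefore
\[
  \nrm{g_n}\ \leq\ \sum_{T\in\O(n)}\nrm{g_T}\ <\ K^n\,C^{4n}\ =\ (KC^4)^n
\]
for every $n\geq 1$. By \cref{analboundedness} this is exactly the statement that $g\in\overline\fA^0(B,A)$, and the fact that $g$ is an $A_\infty$-morphism when $f$ is one is already part of \cref{inversemap}, so nothing further is required.

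The estimates above are all immediate applications of the bounds from \cref{analboundedness}; the only point requiring genuine care is the combinatorial bookkeeping — identifying precisely how many copies of $g_1$ and of the various $f_k$ occur in $g_T$, so that the arities do not accumulate faster than linearly in $n$, together with the exponential bound on $|\O(n)|$. I expect this to be the main (though still mild) obstacle.
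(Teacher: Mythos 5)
Your proposal is correct and follows essentially the same route as the paper: bound each $\nrm{g_T}$ by $C^{O(n)}$ using the tree recursion (the paper phrases this as an induction giving $\nrm{g_T}<C^{2n+2d-1}$, which with $d\leq n-1$ matches your $C^{4n}$), then invoke the exponential bound on $|\O(n)|$ via Catalan numbers to conclude $\nrm{g_n}<(KC^4)^n$. The factor-counting bookkeeping you flag as the main obstacle is exactly the content of the paper's inductive step, and your counts check out.
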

\begin{proof}
  By assumption there exists a common constant $C>1$ such that $\nrm{g_1} < C$ and $\nrm{f_n} < C^n$ for all $n\in\geq 1$.
  We now claim that for any tree $T \in \O(n,d)$ the norm of $g_T$ is bounded by $\nrm{g_T} < C^{2n+2d-1}$.
  This follows by induction: for the base case $T\in \O(1) = \O(1,0)$, as
  \[
    \nrm{g_T} = \nrm{g_1} < C = C^{2+0-1},
  \]
  while for $n>1$ any tree $T\in \O(n)$ determined by subtrees $T_i \in \O(n_i,d_i)$ such that $\sum n_i = n$ and $\sum d_i = d-1$, we have
  \[
    \nrm{g_T} \leq \nrm{g_1}\nrm{f_k}\nrm{g_{T_1}\otimes \cdots\otimes g_{T_k}} 
    < C \cdot C^k \cdot C^{2\sum n_i + 2\sum d_i - k} = C^{2n + 2d -1},
  \]
  assuming the claim holds for all $n_i< n$ and $d_i < d$.
  Now we note that a tree $T\in\O(n,d)$ has $n+d$ total vertices, so the cardinality of $\O(n,d)$ is bounded by the Catalan number $\cC_{n+d}$.
  Because internal nodes have valency $\geq 3$ it follows that any tree in $\O(n)$ has at most $d = n-1$ internal nodes, so that the total cardinality is bounded by the sum $\cC_1 + \cdots + \cC_{2n-1}$ of Catalan numbers.
  It is well-known that the function $n \mapsto \cC_1 + \cdots + \cC_{2n-1}$ can be bounded by $q^n$ for all $n$, for some sufficiently large constant $q\gg 0$.
  We therefore find the following bound on the norm of $g_n$:
  \[
      \nrm{g_n} \leq \sum_{d=0}^{n-1} \sum_{T\in \O(n,d)} \|g_T\|
      < \sum_{d=0}^{n-1} \sum_{T\in \O(n,d)} C^{2n+2d-1}
      \leq |\O(n)| \cdot C^{4n} < (qC^4)^n.
  \]
  Because the constant $qC^4$ is independent of $n$, it follows that $g$ is analytic.
\end{proof}
Note that if the $A_\infty$-algebras $A$ and $B$ are finite-dimensional, then the continuity of $g_1$ is automatic.
The above now implies that the morphism between minimal models induced by an analytic quasi-isomorphism is invertible in the analytic category.
\begin{lem}\label{quasitoiso}
  Suppose $A,B\in \AinfAlgan$ are compact and admit analytic minimal models.
  Then every analytic quasi-isomorphism induces an analytic isomorphism between the minimal models.
\end{lem}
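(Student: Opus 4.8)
The plan is to reduce the claim to the non-analytic statement of \cref{quasiisominmodinv} together with the analytic invertibility results just established. Let $f\in\Hom_\AinfAlg^\an(A,B)$ be an analytic quasi-isomorphism, and fix analytic minimal models $\hh(A),\hh(B)$ with analytic structure maps $I_A,I_B,P_A,P_B$ as in \labelcref{eq:minmodordan}. First I would form the induced map on minimal models
\[
  \hh(f) = P_B \diamond f \diamond I_A \in \Hom_\AinfAlg^\an(\hh(A),\hh(B)),
\]
which is analytic because it is a $\diamond$-composite of analytic morphisms (by the composition lemma for $\AinfAlgan$), and which is a quasi-isomorphism since $P_B$ and $I_A$ are. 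Because $\hh(A)$ and $\hh(B)$ are minimal, $\hh(f)_1$ is then an \emph{isomorphism} of the underlying (finite-dimensional, since $A,B$ are compact) graded bimodules, hence in particular $\hh(f)_1$ admits a bounded inverse — finite-dimensionality makes continuity automatic.

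The key step is then to invoke \cref{analinverse}: applying it to the analytic pre-morphism $\hh(f)$, whose linear term $\hh(f)_1$ has a bounded inverse, yields that the inverse pre-morphism $g$ produced by the tree formula of \cref{inversemap} is again \emph{analytic}, and since $\hh(f)$ is an $A_\infty$-morphism, $g$ is an analytic $A_\infty$-morphism. Thus $\hh(f)^{-1} \in \Hom_\AinfAlg^\an(\hh(B),\hh(A))$, and $\hh(f)$ is an isomorphism in $\AinfAlgan$. This is exactly the assertion of the lemma. (One should also note, for the record, that $\hh(f)$ does not depend up to isomorphism on the choice of analytic minimal models, but this is not needed for the bare statement.)

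I do not expect any serious obstacle here: the real work was already done in \cref{analinverse}, where the combinatorial bound on the tree contributions $g_T$ via Catalan numbers shows the inverse of an analytic morphism with invertible linear term stays analytic. The only points requiring a word of care are (i) that all of $I_A,P_A,I_B,P_B$ and $f$ are genuinely analytic, so that $\hh(f)$ lands in $\overline\fA^0$ — this is precisely the content of \cref{minimalmodel} plus the hypothesis on $f$ — and (ii) that compactness of $A$ and $B$ forces $\hh(A),\hh(B)$ to be finite-dimensional, which is what lets us drop the hypothesis "$f_1$ has a bounded inverse" appearing in \cref{analinverse} and conclude unconditionally. With these observations the proof is a two-line assembly of the cited results, and I would present it as such.
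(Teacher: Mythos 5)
Your proof is correct and follows exactly the paper's own argument: form $\hh(f) = P_B \diamond f \diamond I_A$, note it is analytic and a quasi-isomorphism, use minimality plus compactness to get a bounded inverse of $\hh(f)_1$, and conclude via \cref{analinverse}. No gaps.
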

\begin{proof}
  By assumption there are minimal models $\hh(A)$, $\hh(B)$ such that the maps $I_A,I_B,P_A,P_B$ are analytic, and hence the morphism $\hh(f) \colonequals P_B \diamond f \diamond I_A$ is again analytic.
  By assumption $\hh(A)$ and $\hh(B)$ are finite dimensional, so the inverse of $\hh(f)_1$ is bounded and therefore the inverse $\hh(f)^{-1}$ is analytic by \cref{analinverse}.
\end{proof}
\begin{cor}\label{uniqanalminmod}
  Analytic minimal models of compact analytic $A_\infty$-algebras are unique up to analytic isomorphism.
\end{cor}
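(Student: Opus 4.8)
The plan is to deduce this immediately from \cref{quasitoiso}, exactly as \cref{uniqminmod} was deduced from \cref{quasiisominmodinv} in the non-analytic setting. So I would start with a compact $A\in\AinfAlgan$ carrying two analytic minimal models, which — following the notation used in the proof of \cref{uniqminmod} — I will call $\hh(A)$ and $\hh(B)$, regarding them as two minimal models of the single algebra $A = B$, with structure quasi-isomorphisms $(I_A,P_A)$ and $(I_B,P_B)$ respectively.

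The first thing to record is that these hypotheses place us squarely in the situation of \cref{quasitoiso}: $A = B$ is compact by assumption and admits analytic minimal models by assumption, so there is nothing further to verify. I would then apply \cref{quasitoiso} to the identity morphism $\id_A \in \Hom_\AinfAlg^\an(A,A)$, which is trivially an analytic quasi-isomorphism. Its conclusion is that the induced map
\[
  \hh(\id_A) = P_B \diamond \id_A \diamond I_A = P_B \diamond I_A \colon \hh(A) \longrightarrow \hh(B)
\]
is an analytic isomorphism. Since an analytic isomorphism is in particular an isomorphism in $\AinfAlgan$, this exhibits the two analytic minimal models as isomorphic as analytic $A_\infty$-algebras, which is exactly the assertion.

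I do not anticipate any real obstacle: all of the genuine content has already been absorbed into \cref{analinverse} and \cref{quasitoiso}, where boundedness of the tree-formula inverse is established through the Catalan-number estimate, and the present corollary is purely formal given those results. The only step worth a moment's attention is confirming that a minimal model of a compact algebra is again compact, so that \cref{quasitoiso} genuinely applies with both of its arguments satisfying the standing hypotheses; but this is automatic, since a minimal $A_\infty$-algebra coincides with its own $\upmu_1$-cohomology, which is finite-dimensional here precisely by compactness of $A$.
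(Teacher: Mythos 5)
Your proposal is correct and follows exactly the paper's route: \cref{uniqanalminmod} is deduced from \cref{quasitoiso} by taking two analytic minimal models of $A=B$ and the morphism $f=\id_A$, mirroring how \cref{uniqminmod} follows from \cref{quasiisominmodinv}. Your closing remark that the minimal models are finite dimensional by compactness of $A$ is the same observation the paper relies on inside the proof of \cref{quasitoiso}.
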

If $f$ is an analytic quasi-isomorphism between compact analytic $A_\infty$-algebras admitting minimal models, then $g = I_A \diamond \hh(f)^{-1} \diamond P_B$ is an analytic homotopy inverse, since the homotopies \eqref{eq:homotinvgf} and \eqref{eq:homotinvfg} are again analytic.
Hence for such analytic $A_\infty$-algebras, analytic quasi-isomorphisms and analytic homotopy equivalences are the same.

Finally we show that the existence of minimal models only depends on the analytic homotopy equivalence class of an $A_\infty$-algebra.

\begin{lem}\label{existsminmod}
  Suppose $A,B\in\AinfAlgan$ analytic $A_\infty$-algebras which are analytically homotopy equivalent.
  Then $A$ admits an analytic minimal model if and only if $B$ does.
\end{lem}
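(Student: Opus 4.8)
The plan is to transport an analytic minimal model across the equivalence: if $A$ has one, I will exhibit the \emph{same} minimal $A_\infty$-algebra, with its structure maps conjugated by the homotopy inverses, as an analytic minimal model of $B$; since ``analytically homotopy equivalent'' is a symmetric relation this proves both implications at once. Concretely, fix analytic morphisms $\phi\colon A\to B$ and $\psi\colon B\to A$ with $\phi\diamond\psi\sim_\an\id_B$ and $\psi\diamond\phi\sim_\an\id_A$, and suppose $A$ admits an analytic minimal model $\hh(A)$ with analytic quasi-isomorphisms $I_A\colon\hh(A)\to A$ and $P_A\colon A\to\hh(A)$ as in \eqref{eq:minmodordan}. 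First I would record the one auxiliary fact used throughout: homotopic $A_\infty$-morphisms (analytic or not) induce the same map on $\upmu_1$-cohomology. This is immediate from the path-object structure, since the inclusion $a\mapsto 1\otimes a$ and the two evaluations $\ev_0,\ev_1$ are quasi-isomorphisms and $\ev_0,\ev_1$ split the inclusion, and it applies verbatim in the analytic setting. In particular $\phi_1\circ\psi_1$ and $\psi_1\circ\phi_1$ are isomorphisms on cohomology, so $\phi$ and $\psi$ are themselves analytic quasi-isomorphisms; hence the composites $I'\colonequals\phi\diamond I_A$ and $P'\colonequals P_A\diamond\psi$ are analytic quasi-isomorphisms between $\hh(A)$ and $B$ by the composition lemmas of \cref{sec:norms}, and $\hh(A)$ is already a minimal analytic $A_\infty$-algebra; these are the candidate structure maps.

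Next I would verify the two defining identities of an analytic minimal model, the delicate one being that $P'\diamond I'$ must equal $\id_{\hh(A)}$ \emph{strictly}, not merely up to analytic homotopy. Compatibility of $\sim_\an$ with $\diamond$ together with $\psi\diamond\phi\sim_\an\id_A$ gives $P'\diamond I'=P_A\diamond(\psi\diamond\phi)\diamond I_A\sim_\an P_A\diamond I_A=\id_{\hh(A)}$. Since $\hh(A)$ is minimal, its $\upmu_1$-cohomology is $\hh(A)[1]$ itself and the induced cohomology map of a morphism is just its linear part; hence $(P'\diamond I')_1=\id$ is invertible, so by \cref{inversemap} and \cref{analinverse} the analytic $A_\infty$-morphism $P'\diamond I'$ admits an analytic $A_\infty$-inverse $\theta\colon\hh(A)\to\hh(A)$. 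Replacing $P'$ by $P''\colonequals\theta\diamond P'$ --- still an analytic quasi-isomorphism, as $\theta$ is an isomorphism --- we obtain $P''\diamond I'=\theta\diamond(P'\diamond I')=\id_{\hh(A)}$ on the nose.

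It remains to show $I'\diamond P''\sim_\an\id_B$. First, inverting the relation $P'\diamond I'\sim_\an\id_{\hh(A)}$ --- again via compatibility of $\sim_\an$ with $\diamond$ --- gives $\theta=\theta\diamond\id_{\hh(A)}\sim_\an\theta\diamond(P'\diamond I')=\id_{\hh(A)}$. Therefore
\[
  I'\diamond P''=\phi\diamond(I_A\diamond\theta\diamond P_A)\diamond\psi\ \sim_\an\ \phi\diamond(I_A\diamond P_A)\diamond\psi\ \sim_\an\ \phi\diamond\psi\ \sim_\an\ \id_B,
\]
using $\theta\sim_\an\id_{\hh(A)}$, then $I_A\diamond P_A\sim_\an\id_A$, then $\phi\diamond\psi\sim_\an\id_B$, and transitivity of $\sim_\an$. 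Hence $(\hh(A),I',P'')$ is an analytic minimal model of $B$, which (with the roles of $A$ and $B$ interchanged) completes the proof. I expect the only genuine difficulty to be this rigidification step: passing from $P'\diamond I'\sim_\an\id$ to an honest equality is exactly where analyticity could have failed, and it is recovered via \cref{analinverse} together with the observation that a self-morphism of a minimal $A_\infty$-algebra homotopic to the identity has identity linear part; everything else is formal bookkeeping with $\diamond$ and $\sim_\an$ already licensed by the results of \cref{sec:norms} and the preceding subsections.
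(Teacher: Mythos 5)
Your proof is correct and follows essentially the same route as the paper's: both transport $(\hh(A),I_A,P_A)$ along the homotopy equivalence, taking $I_B=\phi\diamond I_A$ and correcting $P_A\diamond\psi$ on the left by the inverse of $P_A\diamond\psi\diamond\phi\diamond I_A$ so that the left-inverse identity holds strictly, then checking the right-inverse only up to analytic homotopy; unwinding the paper's $T=I_A\diamond\hh(g\diamond f)^{-1}\diamond P_A$ shows that its $P_B$ coincides with your $P''$. The one genuine difference is how the analytic inverse $\theta$ is obtained. The paper cites \cref{quasitoiso}, whose statement carries a compactness hypothesis (used there to make the inverse of $\hh(f)_1$ bounded), whereas you observe that $P'\diamond I'\sim_\an\id$ on a \emph{minimal} algebra forces $(P'\diamond I')_1=\id$ exactly --- since homotopic morphisms induce the same map on $\upmu_1$-cohomology and on a minimal algebra that map is the linear part itself --- so \cref{analinverse} applies with the trivially bounded inverse $\id$. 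This is a small but real improvement: it removes any reliance on finite-dimensionality and matches the hypotheses actually stated in \cref{existsminmod}. The remaining steps (that $\phi,\psi$ are quasi-isomorphisms, that $\theta\sim_\an\id$, and the chain of analytic homotopies giving $I'\diamond P''\sim_\an\id_B$) are exactly those of the paper.
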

\begin{proof}
  By assumption there are maps $f \in \Hom_\AinfAlg^\an(A,B)$ and $g\in\Hom_\AinfAlg^\an(B,A)$ which are homotopy inverse.
  Then the composition $g\diamond f \in \Hom_{\AinfAlg}^\an(A,A)$ is a homotopy equivalence, hence a quasi-isomorphism.
  If $A$ admits a minimal model, it follows from \cref{quasitoiso} that $g\diamond f$ admits an analytic homotopy-inverse $T = I_A \diamond \hh(g\diamond f)^{-1} \diamond P_A$.
  We see that
  \[
    (P_A\diamond T\diamond g) \diamond (f\diamond I_A)  =
    P_A \diamond I_A \diamond \hh(g\diamond f)^{-1} \diamond \hh(g\diamond f) = \id_{\hh(A)},
  \]
  so that $P_A\diamond T\diamond g$ is left-inverse to $f\diamond I_A$.
  To show it is homotopy right-inverse, first note that $g\diamond f \sim_\an \id_A$ implies that $T$ is homotopic to the identity via
  \[
    T = I_A \diamond \hh(g\diamond f)^{-1} \diamond P_A \sim_\an
    I_A \diamond \hh(g\diamond f)^{-1} \diamond P_A \diamond g\diamond f \diamond I_A \diamond P_A \sim_\an
    = I_A \diamond P_A \sim_\an \id_A.
  \]
  Since $f\diamond g\sim_\an \id_B$ also holds, the claim then follows from the chain of homotopies
  \[
    (f\diamond I_A) \diamond (P_A\diamond T\diamond g) \sim_\an f \diamond T \diamond g \sim_\an f\diamond g \sim_\an \id_B.
  \]
  Hence the maps $I_B = f\diamond I_A$ and $P_B = P_A\diamond T\diamond g$ make $\hh(A)$ into a minimal model for $B$.
\end{proof}

\section{Analytic \texorpdfstring{$A_\infty$-}{A-infinity }bimodules}\label{sec:bimodules}

In this section we define $A_\infty$-bimodules and morphisms over an analytic $A_\infty$-algebra by imposing an analogous condition on sequences of multilinear maps. 
We start by recalling the definition of such bimodules using a bar construction, for which we again follow Tradler \cite{Tra08}.

\subsection{\texorpdfstring{$A_\infty$-}{A-infinity }bimodules}

Given an $A_\infty$-algebra $A \in \AinfAlg$, $A$-bimodules and the morphisms between them can be defined via double sequences of morphisms in the graded vector space
\[
  \fM_A(M,N) \colonequals \prod_{i,j=0}^\infty \hom_\elle(A[1]^{\otimes i} \otimes M[1] \otimes  A[1]^{\otimes j}. N[1]).
\]
We will use the notation $\uprho = (\uprho_{i,j})_{i,j\in\N}$ for homogeneous elements of $\fM_A(M,N)$, and when applying such a map to elements we will underline the element in $M[1]$ for clarity.
As before, this space of sequences can be identified with the morphisms out of a bar construction
\[
  \barB_A M \colonequals \barB A \otimes M[1] \otimes \barB A = \bigoplus_{i,j=0}^\infty A[1]^{\otimes i} \otimes M[1]\otimes A[1]^{\otimes j},
\]
which is naturally a cofree cobimodule over the coalgebra $\barB A$.
In \cite{Tra08} it is shown that one can lift any map $\uprho \in \fM_A(M,N)$ to a cohomomorphism $\widehat \uprho\colon \barB_AM \to \barB_AN$, and to a coderivation $\widetilde \uprho \colon \barB_AM \to \barB_A M$ if $M=N$.
These maps are explicitly given by the following formulas:
\[
  \begin{aligned}
    \widehat \uprho &\colonequals \sum_{i,j\geq 0} \id^{\otimes i} \otimes \underline{\uprho} \otimes \id^{\otimes j}
    \\
    \widetilde \uprho &\colonequals
                        \sum_{i,j} \id^{\otimes i} \otimes  \underline\uprho \otimes \id^{\otimes j} + 
                        \sum_{i,j,k\geq 0} \id^{\otimes i} \otimes \upmu_A \otimes \id^{\otimes k} \otimes \underline\id_{M[1]} \otimes \id^{\otimes j} \\&\quad+
                        \sum_{i,j,k\geq 0}\id^{\otimes i} \otimes \underline \id_{M[1]}  \otimes \id^{\otimes k} \otimes \upmu_A \otimes \id^{\otimes j},
  \end{aligned}
\]
where the part of the maps landing in $M[1]$ is again underlined.
These lifts are used to define $A_\infty$-bimodule structures and their morphisms, as follows.

\begin{defn}
  An $A_\infty$-bimodule over an $A_\infty$-algebra $A$ is pair $(M,\upnu)$ of a bimodule $M \in \bimodl$ and a map $\upnu \in \fM_A^0(M, M)$ satisfying $\widetilde \upnu^2 = 0$, or equivalently $\upnu \circ \widetilde \upnu = 0$.
\end{defn}

The main examples of bimodules which we will consider are the following, proofs that these are bimodules can be found in \cite{Tra08}.

\begin{ex}
  The \emph{diagonal bimodule} $A_\Delta = (A,\upnu_\Delta)$ is defined by the map
  \[
    \upnu_{\Delta,i,j}(a,\underline b,c) = \upmu_{i+j+1}(a,b,c).
  \]
\end{ex}
\begin{ex}
  The \emph{dual bimodule} $A^\vee = (\Hom_\C(A,\C), \upnu_\Delta^\vee)$ is defined by the sequence of maps
  \[
    \upnu_{\Delta,i,j}^\vee(a, \underline b,c)(d) = (-1)^K b(\mu_{i+j+1}(c,d,a)),
  \]
  where $K= |a|(|b| + |c| + |d|) + |b|$ is a Koszul sign.
\end{ex}

Bimodules over an $A_\infty$-algebra $A$ form a DG category with the following morphism complexes.

\begin{defn}
  A \emph{pre-morphism} between $A_\infty$-bimodules $(M,\upnu_M)$ and $(N,\upnu_N)$ is a cochain in
  \[
    \hom_{A-A}^\bullet(M,N) \colonequals \left(\fM_A^\bullet(M, N),\delta\right),\quad \updelta(\uprho) = \upnu_N \circ \widehat\uprho - (-1)^{|\uprho|} \uprho \circ \widetilde\upnu_M.
  \]
  And the degree $0$ cocycles form the subspace of \emph{morphisms} $\Hom_{A-A}(M,N) \subset \hom_{A-A}^0(M,N)$.
\end{defn}

Given $\uprho \in \hom_{A-A}(M,N)$ and $\uptau \in \hom_{A-A}(L,M)$ the composition is defined as $\uprho \diamond \uptau = \uprho \circ \widehat \uptau$.
This composition then yields a well-defined DG category $\infbimod_{A-A}$ of $A_\infty$-bimodules.

Given a morphism $f \in \Hom_{\AinfAlg}(A,B)$ of $A_\infty$-algebras $A,B\in\AinfAlg$ there is a DG functor
\[
  f^\sharp \colon \infbimod_{B-B}\to \infbimod_{A-A},
\]
mapping a $B$-bimodule $(M,\upnu)$ to $(M,f^\sharp\upnu) = (M,\upnu \circ (\widehat f \otimes \underline\id_{M[1]} \otimes \widehat f))$ and mapping a $B$-bimodule pre-morphism $\uprho$ to the map $f^\sharp\uprho$ defined by the composition
\[
  \barB_A M = \barB A \otimes M[1] \otimes \barB A \xrightarrow{\widehat f \otimes \underline\id_{M[1]} \otimes \widehat f} \barB B \otimes M[1] \otimes \barB B \xrightarrow{\uprho} N[1].
\]
For a proof that this defines a DG functor $f^\sharp$ and that this is functorial in $f$, see \cite[§\nobreak\,\nobreak2.8]{Ganatra12}.

For any $f \in \Hom_{\AinfAlg}(A,B)$ there are $A_\infty$-bimodule morphisms $f_\Delta \in \Hom_{A-A}(A_\Delta,f^\sharp B_\Delta)$ and $f_\Delta^\vee\in \Hom_{A-A}(f^\sharp B_\Delta^\vee, A_\Delta^\vee)$ defined in components by
\[
  \begin{aligned}
    (f_\Delta)_{i,j}(a,b,c) &= f_{i+1+j}(a,b,c),\\
    (f_\Delta^\vee)_{i,j}(a,\underline{b},c)(d) &= (-1)^K b (f_{i+1+j} (c, d, a)),
  \end{aligned}
\]
where $a \in A^{\otimes i}$, $c\in A^{\otimes j}$, and $K = |a|(|b|+|c|+|d|)$ is a Koszul sign.
Given any pre-morphism $\uprho \in \hom_{B-B}(B_\Delta,B_\Delta^\vee)$ there is a pre-morphism $f^*\uprho \in \hom_{A-A}(A_\Delta,A_\Delta^\vee)$ given by
\[
  f^*\uprho \colon \barB_A A_\Delta \xrightarrow{\widehat f_\Delta} \barB_A f^\sharp B_\Delta \xrightarrow{\widehat{f^\sharp\uprho}} \barB_A f^\sharp B_{\Delta}^\vee \xrightarrow{{\widehat f}^\vee_\Delta} A_\Delta^\vee.
\]
This defines a chain map $f^*\colon \hom_{B-B}(B_\Delta,B_\Delta^\vee) \to \hom_{A-A}(A_\Delta,A_\Delta^\vee)$ for each $f$, since $f^\sharp$ is a DG functor and $f_\Delta$ and $f_\Delta^\vee$ commute with the differential.
One can check that this is also functorial.

\subsection{Hochschild cohomology}

Given an $A_\infty$-algebra $A$, one can define the Hochschild cohomology with values in any $A_\infty$-bimodule $M$.
The Hochschild complex can again be modeled using the bar construction: following Tradler \cite[Lemma 2.3]{Tra08} each $\upxi \in \fM(A,M)$ defines a map
\[
  \overline\upxi \colonequals \sum_{i,j,n\geq 0} \id^{\otimes i}_{A[1]} \otimes \underline{\xi_n} \otimes \id^{\otimes j}_{A[1]},
\]
which is a coderivation $\overline\upxi\colon \barB A \to \barB_AM$ with values in the comodule $\barB_AM$.
As before, the underline signifies the component mapping to the factor $M$.
Using this lift, the Hochschild cochain complex can be defined as follows.

\begin{defn}
  Let $(A,\upmu)$ be an $A_\infty$-algebra and $(M,\upnu)$ an $A$-bimodule.
  Then the \emph{$M$-valued Hochschild cochain complex} is the complex $\barC^\bullet(A,M) \colonequals \left(\fM(A,M)[-1],\ \bb\right)$ with differential
  \[
    \bb(\xi) \colonequals \upnu \circ \overline\xi - (-1)^{|\xi|} \xi \circ \widetilde\upmu.
  \]
  The cohomology of the complex is denoted $\HH^\bullet(A,M)$, and if $M = A_\Delta$ the complex and its cohomology are denoted as $\barC^\bullet(A) \colonequals \barC^\bullet(A,A_\Delta)$ and $\HH^\bullet(A) \colonequals \HH^\bullet(A,A_\Delta)$ respectively.
\end{defn}

The Hochschild complex is functorial in each factor: any morphisms $f\in\Hom_{\AinfAlg}(A,B)$ or $\uprho\in\Hom_{A-A}(M,N)$ induce respective chain maps
\[
  \begin{aligned}
    \barC^\bullet(B,M) &\to \barC^\bullet(A,f^\sharp M)\quad &\upxi \mapsto \upxi \circ \widehat f,\\
    \barC^\bullet(A,M) &\to \barC^\bullet(A,N)\quad &\upxi \mapsto \uprho \circ \overline\upxi.
  \end{aligned}
\]
In particular, for $M= A_\Delta^\vee$ and $\uprho = f_\Delta^\vee$ the combination of these constructions yields a chain map $f^* \colon \barC^\bullet(B,B_\Delta^\vee) \to \barC^\bullet(A,A_\Delta^\vee)$, sending $\xi \mapsto f_\Delta^\vee \circ \overline{\xi \circ \widehat f}$.
Explicitly, this map is given by
\[
  f^*\xi(a_1,\ldots,a_n)(a_0) = \sum_{\substack{i,j\geq 0\\i+j \leq n}} \pm \xi(\widehat f(a_{i+1},\ldots,a_{n-j}))(f(a_{n-j+1},\ldots,a_n,a_0,a_1,\ldots,a_i)),
\]
where the signs are the natural Koszul signs.
This map is natural with respect to $f$ and induces a map $[f^*] \colon \HH^\bullet(B,B_\Delta^\vee) \to \HH^\bullet(A,A_\Delta^\vee)$, making the Hochschild cohomology with values in the dual bimodule functorial.

A variation of Hochschild cohomology is \emph{negative cyclic cohomology}, which can be realised using Connes' complex.
An element $\upxi = (\xi_n)_{n\in\N} \in \barC^\bullet(A,A^\vee_\Delta)$ is called \emph{cyclic} if for each $n\geq 1$ and all $a_0,\ldots,a_n \in A[1]$ the following relation holds:
\[
  \xi_n(a_1,\ldots,a_n)(a_0) = (-1)^{|a_0|(|a_1|+\cdots+|a_n|)} \xi_n(a_0,\ldots,a_{n-1})(a_n)
\]
One checks that the Hochschild differential preserves cyclic cochains, yielding a subcomplex
\[
  \barC^\bullet_\lambda(A) \colonequals \left(\left\{\ \upxi \in \barC^\bullet(A,A^\vee_\Delta) \mid \upxi \text{ is cyclic }\right\}, \bb \right),
\]
called Connes' complex.
The cohomology of this complex is denoted $\HC_\lambda^\bullet(A)$.
We remark that the pullback along any morphism $f\in\Hom_{\AinfAlg}(A,B)$ preserves cyclic cocycles, and therefore restricts to a map $f^*\colon\barC^\bullet_\lambda(B) \to \barC^\bullet_\lambda(A)$ and an induced map $[f^*] \colon \HC_\lambda^\bullet(B) \to \HC_\lambda^\bullet(A)$.

It is well-known that the Hochschild cohomology and negative cyclic cohomology are invariant under homotopy equivalences.
Below we include a proof using the definition of homotopy in~\cref{ssec:homotopies}, which will also apply in the analytic setting.

\begin{prop}\label{hochschildhomot}
  Let $f,g\in\Hom_\AinfAlg(A,B)$ with $f\sim g$, then $f$ and $g$ induce the same morphisms
  \[
    [f^*] = [g^*] \colon \HH^\bullet(B,B_\Delta^\vee) \to \HH^\bullet(A,A_\Delta^\vee),\quad
    [f^*] = [g^*] \colon \HC^\bullet_\lambda(B) \to \HC^\bullet_\lambda(A).
  \]
\end{prop}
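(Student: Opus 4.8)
The plan is to construct an explicit chain homotopy between $f^*$ and $g^*$ on the Hochschild (respectively Connes) complex, built directly from the path-space homotopy $H\in\Hom_\AinfAlg(A,\Omega^\bullet_{[0,1]}\otimes B)$ realizing $f\sim g$. The starting point is the observation that $H$ induces, by functoriality of the constructions in the previous subsection, a chain map
\[
  H^* \colon \barC^\bullet(\Omega^\bullet_{[0,1]}\otimes B, (\Omega^\bullet_{[0,1]}\otimes B)_\Delta^\vee) \to \barC^\bullet(A, A_\Delta^\vee),
\]
and similarly on Connes' complexes, since $H^*$ is of the form $\xi\mapsto H_\Delta^\vee\circ\overline{\xi\circ\widehat H}$ and hence preserves cyclicity exactly as the plain pullback does. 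The two evaluation morphisms $\ev_0,\ev_1\colon \Omega^\bullet_{[0,1]}\otimes B\to B$ are $A_\infty$-algebra morphisms with $\ev_0\circ H = f$ and $\ev_1\circ H = g$, so by functoriality $f^* = H^*\circ\ev_0^*$ and $g^* = H^*\circ\ev_1^*$. Thus it suffices to produce a chain homotopy between $\ev_0^*$ and $\ev_1^*$ on the Hochschild and Connes complexes of $\Omega^\bullet_{[0,1]}\otimes B$; pre-composing with $H^*$ then gives the desired homotopy between $f^*$ and $g^*$.

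For the homotopy between $\ev_0^*$ and $\ev_1^*$ I would use integration along the interval. Concretely, the DG algebra $\Omega^\bullet_{[0,1]}$ carries the two augmentations $\ev_0,\ev_1$ to $\C$ together with the primitive operator $\int_0^{(-)}$, giving a contracting homotopy $\ev_1 - \ev_0 = \d\circ\int + \int\circ\,\d$ at the level of forms. This extends componentwise: writing an element of $\Omega^\bullet_{[0,1]}\otimes B$ in the form $x+y\,\d t$ with $x$ continuous and $x,y$ piecewise smooth, the operator $h\colon x+y\,\d t\mapsto \int_0^{1}y(t)\,\d t$ (extended on multilinear cochains by the obvious Koszul-signed formula) should, after a bookkeeping computation using \labelcref{eq:difftensor}--\labelcref{eq:highertensor}, satisfy $\ev_1^* - \ev_0^* = \bb\circ h + h\circ\bb$ on $\barC^\bullet(\Omega^\bullet_{[0,1]}\otimes B,(\Omega^\bullet_{[0,1]}\otimes B)_\Delta^\vee)$. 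The Leibniz-type identity $\tfrac{\partial}{\partial t}H_x^t = \upmu_B\circ(\widehat H_x^t\otimes H_y^t\otimes\widehat H_x^t) + H_y^t\circ\widetilde\upmu_A$ from \labelcref{eq:homotopycondtwo} is exactly the fundamental-theorem-of-calculus input that makes this work; one differentiates the cochain $t\mapsto (H_x^t)^*\xi$ in $t$, recognizes the right-hand side as $\bb(\text{something}) + (\text{something})\circ\bb$, and integrates from $0$ to $1$. It is routine to check that the integration operator $h$ preserves cyclicity, so the same homotopy restricts to Connes' complex and gives $[f^*]=[g^*]$ on $\HC_\lambda^\bullet$ as well.

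The main obstacle is purely organizational rather than conceptual: correctly assembling the explicit formula for the chain homotopy $h$ on the full Hochschild cochain complex of $\Omega^\bullet_{[0,1]}\otimes B$ (not just on forms), keeping track of all Koszul signs coming from the shift conventions and from moving $\d t$ past the underlined slot in $A_\Delta^\vee$, and verifying that $\ev_1^* - \ev_0^* = \bb h + h\bb$ holds on the nose. A cleaner route that sidesteps much of this sign bookkeeping is to argue more abstractly: the diagram $A\to \Omega^\bullet_{[0,1]}\otimes B\rightrightarrows B$ exhibits $\Omega^\bullet_{[0,1]}\otimes B$ as a path object, and Hochschild cohomology with values in the dual bimodule (resp. negative cyclic cohomology) is a homotopy functor by general nonsense once one knows it sends the two quasi-isomorphisms $\ev_0,\ev_1$ to isomorphisms that are both right-inverse to $(a\mapsto 1\otimes a)^*$; since $\ev_0,\ev_1$ are quasi-isomorphisms, $\ev_0^*=\ev_1^*$ on cohomology, and composing with $[H^*]$ yields $[f^*]=[g^*]$. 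For the purposes of this paper — and to have a proof that transfers verbatim to the analytic setting, where the integration operator $h$ is manifestly bounded because it decreases or preserves all the relevant operator norms — I would present the explicit integration homotopy, since it is the version that adapts without change to $\barC_\lambda^{\an,\bullet}$.
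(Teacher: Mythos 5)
Your proposal is correct and follows essentially the same route as the paper: reduce via $[f^*]=[H^*][\ev_0^*]$, $[g^*]=[H^*][\ev_1^*]$ to comparing $\ev_0^*$ and $\ev_1^*$, then exhibit their difference as a coboundary using the integration-along-$[0,1]$ operator (the paper's cochain $\upzeta$ in \labelcref{eq:zetacobound} is exactly your $h$ applied to a cocycle, with the $h\circ\bb$ term vanishing on cocycles), and check that this operator preserves cyclicity and boundedness. The only cosmetic difference is that you assert the full chain-homotopy identity $\ev_1^*-\ev_0^*=\bb h+h\bb$ while the paper verifies it only on cocycles, which suffices.
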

\begin{proof}
  If $f\sim g$ then $f = \ev_0 \circ H$ and $g=\ev_1 \circ H$ for some homotopy $H\in \Hom_\AinfAlg(A,\Omega^\bullet_{[0,1]}\otimes B)$ and the induced maps are therefore given by
  \[
    [f^*] = [H^*][\ev_0^*],\quad [g^*] = [H^*][\ev_1^*].
  \]
  Therefore, it is sufficient to prove that the maps $\ev_0,\ev_1 \in \Hom_\AinfAlg(\Omega^\bullet_{[0,1]} \otimes B, B)$ induce the same map $[\ev_0^*] = [\ev_1]$ on Hochschild/negative cyclic cohomology, which we do below.
  
  Let $\upxi \in Z^k\barC(B,B^\vee_\Delta)$ be any fixed cocycle.
  Then the pullback along $\ev_t$ for any $t\in [0,1]$ is given by a cocycle $\ev_t^*\upxi \in Z^k\barC(\Omega^\bullet_{[0,1]} \otimes B,(\Omega^\bullet_{[0,1]} \otimes B)^\vee_\Delta)$,
  given on pure tensors of the form $\omega_i = (x_i+y_i\d t) \otimes b_i$ by
  \[
    (\ev_t^*\upxi)_n (\omega_1,\ldots,\omega_n)(\omega_{n+1})
    = x_1(t)\cdots x_{n+1}(t) \cdot \xi_n(b_1,\ldots,b_n)(b_{n+1}).
  \]
  We claim that the difference $\ev_1^*\upxi - \ev_0^*\upxi$ is the $\bb$-image $\upzeta\in\barC^{k-1}(\Omega^\bullet_{[0,1]} \otimes B, (\Omega^\bullet_{[0,1]} \otimes B)^\vee_\Delta)$ defined on pure tensors $\omega_i = (x_i+y_i\d t) \otimes b_i$ as
  \begin{equation}\label{eq:zetacobound}
    \begin{aligned}
      \zeta_n(\omega_1,\ldots,\omega_n)(\omega_{n+1})
      &= \sum_{j=1}^n(-1)^{|\upxi|+|b_1|+\cdots+|b_{j-1}|} \int_0^1 x_1\cdots y_j \cdots x_{n+1} \d t \cdot \xi_n(b_1,\ldots,b_n)(b_{n+1})\\
      &\quad+ (-1)^{|\upxi|+1+|b_1|+\cdots+|b_n|} \int_0^1 x_1\cdots x_n y_{n+1} \d t \cdot \xi_n(b_1,\ldots,b_n)(b_{n+1}).
    \end{aligned}
  \end{equation}
  For brevity we decompose the $A_\infty$-structure defined in~\eqref{eq:difftensor} and~\eqref{eq:highertensor} as $\upmu^\otimes = \d + \upmu$, using the abbreviation $\d = \d \otimes \id$ and $\upmu = \sum_{k\geq 1} (-\cdots-) \otimes \upmu_k$.
  Then we have the following:
  \begin{align}
      \bb(\upzeta)
      &=
        \d_\Delta^\vee \circ \upzeta - (-1)^{|\upzeta|} \cdot \upzeta \circ \widetilde{\d} \label{bbtopline}\\
      &\quad+\upmu^\vee_\Delta \circ \overline\upzeta
        -(-1)^{|\upzeta|} \upzeta \circ \widetilde{\upmu_k}.\label{bbbotline}
  \end{align}  
  By inspection, the value of~\eqref{bbtopline} on variables $\omega_i = (x_i+y_i\d t) \otimes b_i$ is 
  \[
    \begin{aligned}
      &(\d^\vee \circ \upzeta - (-1)^{|\upzeta|} \cdot\upzeta \circ \widetilde{\d})(\omega_1,\ldots,\omega_n)(\omega_{n+1})\\
      \\&\quad=
        (-1)^{|\upzeta|+|b_1|+\cdots + |b_n|} \upzeta(x_1\otimes b_1,\ldots,x_n\otimes b_n)(\d x_{n+1}\otimes b_{n+1})
      \\&\quad\quad-
       \sum_{j=1}^n (-1)^{|\upzeta|+|b_1|+\cdots+|b_{j-1}|} \cdot\upzeta_n(x_1\otimes b_1,\ldots, \d x_j\otimes b_j , \ldots, x_n\otimes b_n)(x_{n+1}\otimes b_{n+1})
      \\&\quad=
      \sum_{j=1}^{n+1}
      \int_0^1 \left(x_0\cdots \tfrac{\partial x_j}{\partial t} \cdots x_n\right) \d t \cdot \upxi_n(b_1,\ldots,b_n)(b_{n+1})
      \\&\quad=
      (x_0(1)\cdots \cdots x_n(1) -
      x_0(0)\cdots \cdots x_n(0))\cdot \upxi_n(b_1,\ldots,b_n)(b_0)
      \\&\quad= (\ev_1^*\upxi)_n (\omega_1,\ldots,\omega_n)(\omega_0) - (\ev_0^*\upxi)_n (\omega_1,\ldots,\omega_n)(\omega_0).
    \end{aligned}
  \]
  Conversely, the motivated reader may check that the value of~\eqref{bbbotline} on pure tensors $\omega_i$ is equal to
  \[
    \begin{aligned}
      &(\upmu_\Delta^\vee\circ\overline\upzeta - (-1)^{|\upzeta|} \upzeta \circ \widetilde \upmu)(\omega_1,\ldots,\omega_n)(\omega_{n+1})    
      \\&=\quad\sum_{j=1}^{n} (-1)^{|\upxi|+|b_1|+\cdots+|b_{j-1}|} \cdot \int_0^1 x_1\cdots y_j \cdots x_n \d t \cdot \bb(\upxi)(b_1,\ldots,b_n)(b_{n+1}),
    \end{aligned}
  \]
  which always vanishes because $\upxi$ is assumed to be a cocycle.
  We conclude that
  \[
    [\ev_1^*\upxi] = [\ev_0^*\upxi + \d \upzeta] = [\ev_0^*\upxi].
  \]
  To see that the same is true for the cyclic classes, it suffices to show that $\upzeta$ is cyclic whenever $\upxi$ is cyclic.
  If $\upxi$ is cyclic, then given homogeneous pure tensors $\omega_i$ with $\omega_j = y_j \d t \otimes b_j$ for some fixed $j\leq n$ and $\omega_i = x_i \otimes b_i$ for all $i\neq j$ we have
  \[
    \begin{aligned}
      &\zeta_n(\omega_{n+1},\omega_1,\ldots,\omega_{n-1})(\omega_n)
      \\&\quad=
      (-1)^{|\upxi|+|b_{n+1}| + |b_1|+\cdots+|b_{j-1}|} \int_0^1 x_1\cdots y_j \cdots x_{n+1} \d t \cdot \xi_n(b_{n+1},b_1,\ldots,b_{n-1})(b_n)
      \\&\quad=
      (-1)^{|b_{n+1}| + |b_1|+\cdots+|b_{j-1}| + |b_{n+1}|(|b_1|+\cdots+|b_n|)} \int_0^1 x_1\cdots y_j \cdots x_{n+1} \d t \cdot \xi_n(b_1,\ldots,b_n)(b_{n+1})
      \\&\quad=
      (-1)^{|b_{n+1}|(|b_1|+\cdots+(|b_j|+1)+\cdots+|b_n|)}\upzeta(\omega_1,\ldots,\omega_n)(\omega_{n+1})
      \\&\quad=
      (-1)^{|\omega_{n+1}|(|\omega_1|+\cdots+|\omega_n|)}\upzeta(\omega_1,\ldots,\omega_n)(\omega_{n+1})
    \end{aligned}
  \]
  A similar computation shows that the cyclic identity holds when $\omega_{n+1} = y_{n+1}\d t \otimes b_{n+1}$.
  This shows that $\ev_0$ and $\ev_1$ induce the same classes on Hochschild and negative cyclic cohomology, as claimed.
\end{proof}

\begin{cor}\label{hochschildminim}
  For an $A_\infty$-algebra $A$ with minimal model $\hh(A)$ there are isomorphisms
  \[
    \HH(A,A^\vee_\Delta) \cong \HH(\hh(A),\hh(A)^\vee_\Delta),\quad
    \HC_\lambda(A,A^\vee_\Delta) \cong \HC_\lambda(\hh(A),\hh(A)^\vee_\Delta).
  \]
\end{cor}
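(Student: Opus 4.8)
The plan is to deduce both isomorphisms from the homotopy invariance of \cref{hochschildhomot} together with the functoriality of the pullback maps $f \mapsto f^*$ on the Hochschild and Connes complexes. First I would fix a minimal model $\hh(A)$ with quasi-isomorphisms $I = I_A \in \Hom_\AinfAlg(\hh(A),A)$ and $P = P_A \in \Hom_\AinfAlg(A,\hh(A))$ satisfying $P \diamond I = \id_{\hh(A)}$ and $I \diamond P \sim \id_A$, as provided by \eqref{eq:minmodord}. These morphisms induce chain maps
\[
  I^* \colon \barC^\bullet(A,A_\Delta^\vee) \to \barC^\bullet(\hh(A),\hh(A)_\Delta^\vee), \qquad
  P^* \colon \barC^\bullet(\hh(A),\hh(A)_\Delta^\vee) \to \barC^\bullet(A,A_\Delta^\vee),
\]
which restrict to the Connes subcomplexes $\barC_\lambda^\bullet$ since pullback along an $A_\infty$-morphism preserves cyclic cocycles.

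Using the functoriality of the construction $f \mapsto f^*$ I would write $(P\diamond I)^* = I^* \circ P^*$ and $(I\diamond P)^* = P^* \circ I^*$ at the level of complexes. The relation $P\diamond I = \id_{\hh(A)}$ then gives $I^* \circ P^* = \id$ already on the chain level, hence on $\HH^\bullet(\hh(A),\hh(A)_\Delta^\vee)$ and on $\HC_\lambda^\bullet(\hh(A))$. For the composite in the other order I would invoke \cref{hochschildhomot}: the homotopy $I\diamond P \sim \id_A$ yields $[(I\diamond P)^*] = [\id_A^*] = \id$ on $\HH^\bullet(A,A_\Delta^\vee)$ and on $\HC_\lambda^\bullet(A)$, so $[P^*]\circ[I^*] = \id$ there. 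Combining the two, $[I^*]$ and $[P^*]$ are mutually inverse isomorphisms, which is exactly the assertion of the corollary, both for the Hochschild group with values in the dual bimodule and for the negative cyclic group.

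I do not expect a genuine obstacle; the argument is formal once the right statements are assembled. The one point worth flagging is that \cref{hochschildhomot} is phrased for the honest homotopy relation $\sim$ of \cref{ssec:homotopies}, so I would take care to use the minimal-model data in exactly that form, namely the $\sim$ that appears in \eqref{eq:minmodord}, rather than some coarser equivalence. The very same chain of reasoning --- now using the analytic minimal-model data of \cref{minimalmodel} and the analytic refinement of \cref{hochschildhomot} noted there --- will later establish the corresponding statement for analytic $A_\infty$-algebras.
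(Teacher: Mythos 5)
Your argument is correct and is essentially identical to the paper's own proof: both use $I^*P^* = (P\diamond I)^* = \id$ on the nose from functoriality, and $[P^*][I^*] = [(I\diamond P)^*] = \id$ on cohomology via the homotopy invariance of \cref{hochschildhomot}. Nothing further is needed.
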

\begin{proof}
  The maps induced by the morphisms $I$ and $P$ satisfy $I^*P^* = (P\diamond I)^* = \id$ and the existence of a homotopy $I\diamond P \sim \id_A$ implies $[P^*][I^*] = [(I\diamond P)^*] = [(\id_A)^*] = \id$.
\end{proof}

\subsection{Analytic \texorpdfstring{$A_\infty$-}{A-infinity }bimodules}\label{sec:analbimodules}

Given $A\in\AinfAlgan$ and $M\in\grNMod\elle$ we consider a family of norms as in~\cref{sec:norms}: for each $r>0$ we define
\[
  \barB_A(M,r) \colonequals (\barB_A M, \norm_r),\quad \nrm{m}_r = \sup_{i,j} \nrm{m_{i,j}}r^{-i-j-1},
\]
where $m = \sum_{i,j} m_{i,j} \in \barB_AM$ is a decomposition into elements $m_{i,j} \in A[1]^{\otimes i} \otimes M[1] \otimes A[1]^{\otimes j}$.
Boundedness of linear maps out of these normed bar constructions can again be characterised in various ways, analogously to~\cref{analboundedness}.

\begin{lem}\label{bianalbound}
  For $\uprho\in\fM_A(M,N)$ the following are equivalent:
  \begin{enumerate}
  \item there exists $C>0$ such that $\nrm{\uprho_{i,j}} \leq C^{i+j+1}$ for all $i,j\in\N$
  \item there exists $r>0$ such that $\uprho\colon \barB_A(M,r) \to N[1]$ is bounded,
  \item for every $r'>0$ there exists $r>0$ such that $\widehat \uprho \colon \barB_A(M,r) \to \barB_A(N,r')$ is  bounded
  \end{enumerate}
  if moreover $M=N$, then this is also equivalent to:
  \begin{enumerate}[resume]
  \item for every $r'>0$ there exists $r>0$ such that $\widetilde \uprho \colon \barB_A(M,r) \to \barB_A(M,r')$ is bounded.
  \end{enumerate}
\end{lem}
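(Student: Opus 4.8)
The plan is to transcribe the proof of \cref{analboundedness} to the cobimodule bar construction, with the pair of indices $(i,j)$ playing the role of the single index $n$ there. Two features make the bimodule case slightly easier: expanding $\widehat\uprho$ or $\widetilde\uprho$ on a homogeneous element of $\barB_A M$ produces only \emph{polynomially} many summands in the total tensor length (rather than exponentially many, as with ordered partitions in the algebra case), and no hypothesis $\uprho_{0,0}=0$ is needed, because each of these lifts contains exactly one copy of $\uprho$ (and, in $\widetilde\uprho$, one copy of $\upmu_A$, whose zeroth term vanishes since $A\in\AinfAlgan$). As usual the equivalences are proved by the cycle $(1)\Leftrightarrow(2)$, $(1)\Rightarrow(3)\Rightarrow(2)$, $(1)\Rightarrow(4)\Rightarrow(2)$.

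For $(1)\Rightarrow(2)$, fix $C\geq 1$ with $\nrm{\uprho_{i,j}}\leq C^{i+j+1}$ and pick $r<C^{-1}$; then for $m=\sum_{i,j}m_{i,j}\in\barB_A(M,r)$ one estimates $\nrm{\uprho(m)}\leq\sum_{i,j}C^{i+j+1}\nrm{m_{i,j}}\leq\nrm{m}_r\sum_{i,j}(Cr)^{i+j+1}<\infty$, the double geometric series converging since $Cr<1$. Conversely, if $\uprho\colon\barB_A(M,r)\to N[1]$ has operator norm $\leq K$, testing on a homogeneous element $m_{i,j}\in A[1]^{\otimes i}\otimes M[1]\otimes A[1]^{\otimes j}$ gives $\nrm{\uprho_{i,j}(m_{i,j})}\leq K r^{-(i+j+1)}\nrm{m_{i,j}}$, so any $C>\max\{K/r,1\}$ realizes $(1)$.

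For $(1)\Rightarrow(3)$, fix $r'>0$ and set $K=\min\{r',1\}$. Applying $\widehat\uprho=\sum_{i,j\geq0}\id^{\otimes i}\otimes\underline\uprho\otimes\id^{\otimes j}$ to $m_{p,q}$ yields a sum over the $(p+1)(q+1)$ decompositions $p=i+k$, $q=l+j$; each summand lies in $A[1]^{\otimes i}\otimes N[1]\otimes A[1]^{\otimes j}$ with $i+j\leq p+q$ and has norm $\leq C^{k+l+1}\nrm{m_{p,q}}\leq C^{p+q+1}\nrm{m_{p,q}}$, while $(r')^{-(i+j+1)}\leq K^{-(p+q+1)}$. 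Grouping by $n=p+q$ and using $\nrm{m_{p,q}}\leq\nrm{m}_r\,r^{p+q+1}$ gives
\[
  \nrm{\widehat\uprho(m)}_{r'}\ \leq\ \nrm{m}_r\sum_{n\geq0}(n+1)^3\,(Cr/K)^{n+1},
\]
which converges once $r<K/C$, so $\widehat\uprho\colon\barB_A(M,r)\to\barB_A(N,r')$ is bounded. The argument for $(1)\Rightarrow(4)$ is identical except that $\widetilde\uprho$ has two additional families in which a single $(\upmu_A)_\ell$ is inserted among the left or right algebra factors; enlarging $C$ so that $\nrm{(\upmu_A)_\ell}\leq C^\ell$ (possible since $A\in\AinfAlgan$), each such family contributes only $O((p+q)^2)$ summands, each of norm $\leq C^{p+q+1}\nrm{m_{p,q}}$ and landing in a component of total length $\leq p+q+1$, so the same geometric estimate applies with a polynomial prefactor in place of $(n+1)^3$. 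Finally, for $(3)\Rightarrow(2)$ and $(4)\Rightarrow(2)$ one uses that the projection $\uppi_0\colon\barB_A(N,r')\to N[1]$ onto the summand $A[1]^{\otimes0}\otimes N[1]\otimes A[1]^{\otimes0}$ has operator norm $\leq r'$ and that $\uprho=\uppi_0\circ\widehat\uprho=\uppi_0\circ\widetilde\uprho$, since the only summand of either lift landing in that component is the $i=j=0$ term $\uprho_{p,q}$ (every $\upmu_A$-summand of $\widetilde\uprho$ outputs a tensor word containing at least one algebra factor). I expect no real obstacle: the only point needing care is bookkeeping the index ranges, together with the observation that $\upmu_A$ itself obeys a geometric bound — which is precisely the content of $A\in\AinfAlgan$.
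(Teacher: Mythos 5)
Your proof is correct and follows essentially the same route as the paper: the same cycle of implications, the same geometric-series estimates with a radius $r<C^{-1}$ (resp.\ $r<K/C$), and the same use of the bounded projection $\barB_A(N,r')\to N[1]$ for $(3),(4)\Rightarrow(2)$; the only cosmetic difference is that you keep the polynomial prefactors and invoke radius of convergence $1$, where the paper absorbs them into a geometric factor $q^{i+j+1}$. You also supply the detail of $(1)\Rightarrow(4)$ that the paper leaves to the reader, correctly noting that the $\upmu_A$-insertions in $\widetilde\uprho$ are controlled precisely because $A\in\AinfAlgan$.
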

\begin{proof}
  (1$\implies$2) Suppose there exists $C>0$ such that $\nrm{\uprho_{i,j}} \leq C^{i+j+1}$ for all $i,j\in\N$ and fix $0<r< C^{-1}$.
  Then an element $m = \sum_{i,j} m_{i,j} \in \barB_A(M,r)$ of norm $\nrm{m}_r = 1$ satisfies $\nrm{m_{i,j}} \leq r^{i+j+1}$ for all $i,j\in\N$ and therefore
  \[
    \nrm{\uprho(m)} \leq \sum_{n\in\N} \sum_{i+j=n} \nrm{\uprho_{i,j}(m_{i,j})} \leq \sum_{n\in\N} \sum_{i+j=n} C^{i+j+1} r^{i+j+1} = \sum_{n\in \N} n (Cr)^{n+1}.
  \]
  Because the power series $\sum_n n z^{n+1}$ has radius of convergence $1$, it converges at $z = Cr < 1$.
  In particular, the norm $\uprho$ is bounded.
  
  (2$\implies$1) Let $K>1$ be a constant bounding the norm of $\uprho\colon \barB_A(M,r) \to N[1]$ and pick $C \geq \frac{K}{r}$, then for all $i,j\in\N$ and $m_{i,j}\in A[1]^{\otimes i} \otimes M[1] \otimes A[1]^{\otimes j}$ of norm $\nrm{m_{i,j}} = 1$ the map $\uprho_{i,j}$ satisfies
  \[
    \nrm{\uprho_{i,j}(m_{i,j})} = \nrm{\uprho(m_{i,j})} \leq K \nrm{m_{i,j}} r^{-i-j-1} \leq \left(\frac{K}{r}\right)^{i+j+1} \leq C^{i+j+1},
  \]
  which shows that in particular that $\nrm{\uprho_{i,j}} \leq C^{i+j+1}$.

  (1$\implies$3) Suppose there exists $C>1$ such that $\nrm{\uprho_{i,j}} \leq C^{i+j+1}$ for all $i,j\in\N$, and fix $r'>0$.
  Then we choose a constant $q>0$ such that $ij < q^{i+j+1}$ for all $i,j\in\N$, a constant $K=\min\{r',1\}$, and a radius $r<\frac{K}{qC}$.
  Then for any element $m = \sum m_{i,j} \in \barB_A M$ of norm $\nrm{m}_r = 1$ one has $\nrm{m_{i,j}} \leq r^{i+j+1}$ and hence
  \[
    \begin{aligned}
      \nrm{\widehat\uprho(m)}_{r'}
      &\leq \sum_{i,j\geq 0} \sum_{\substack{0\leq i\leq n\\0\leq j\leq m}} \nrm{(\id^{\otimes i} \otimes \uprho_{n-i,m-j} \otimes \id^{\otimes j})(m_{i,j})} (r')^{-i-j-1}
      \\&\leq \sum_{i,j\geq 0} \sum_{\substack{0\leq i\leq n\\0\leq j\leq m}} C^{i+j-i-j+1} K^{-i-j-1} r^{i+j+1}
      \\&\leq \sum_{i,j\geq 0} nm \left(\frac{rC}{K}\right)^{i+j+1}
      \\&\leq \sum_{N\geq 1} (N-1) \left(\frac{rq C}{K}\right)^N
    \end{aligned}
  \]  
  Then the norm is bounded by the value of the power series $\sum_N (N-1) z^N$ at $z=\frac{rqC}{K} < 1$.

  (1$\implies$4) This follows in an analogous way to previous implication. We leave the details up to the reader.

  (3,4 $\implies$ 2) The projection $\uppi \colon \barB_A(N,r') \to N[1]$ is clearly bounded for any $r'>0$, so boundedness of $\widehat \uprho\colon \barB_A(M,r) \to \barB_A(N,r')$ implies that $\uprho = \uppi\circ \widehat \uprho$ is bounded.
  Likewise, $\uprho = \uppi \circ \widetilde \uprho$ is bounded when $\widetilde \uprho\colon \barB_A(M,r) \to \barB_A(M,r')$ is bounded.
\end{proof}

Hence, as in the algebra case we find that the union over the spaces $\hom_\elle^\cont(\barB_A(M,r),N[1])$ is given by a graded vector space of double sequences
\[
\fA_A(V,W) \colonequals \left\{\; f \in \fM_A(V,W) \;\middle|\; \exists C>0 \text{ such that }  \|f_{i,j}\| \leq C^{i+j+1}\text{ for all } i,j\in\N \;\right\},
\]
whose elements we again call \emph{analytic}.
In particular, we obtain a definition of analytic $A_\infty$-bimodules over an analytic $A_\infty$-algebra.

\newcommand{\nn}{\mathbf{n}}
\begin{defn}
  Let $(A,\upmu)$ be an analytic $A_\infty$-algebra, then an \emph{analytic} $A$-bimodule is a pair $(M,\upnu)$ of $M\in \grNMod(\elle)$ and $\upnu \in \fA_A^1(M,M)$ satisfying $\widetilde\upnu^2 = 0$.
\end{defn}

\begin{ex}
  The diagonal bimodule $A_\Delta$ is analytic for any analytic $A_\infty$ algebra when endowed with the induced norm, as $\nrm{\upnu_{\Delta,i,j}} = \nrm{\upmu_{i+j+1}}$ satisfies the appropriate bound.
\end{ex}
\begin{ex}
  The dual bimodule $A^\vee$ is not analytic in general, as the dual space is not endowed with a natural norm.
  However, one can consider the \emph{continuous dual bimodule}
  \[
    A_\Delta' \colonequals \left(\hom^\cont_\C(A,\C), \upnu'_\Delta\right),
  \]
  where the components of $\upnu'_\Delta$ are defined by the same formula as the components of $\upnu_\Delta^\vee$.
  It is again straightforward to check that $\nrm{\upnu^\vee_{\Updelta,i,j}} = \nrm{\upmu_{i+j+1}}$ is bounded by a geometric series.
  If $A$ is finite dimensional, then $A'_\Delta$ is equal to $A^\vee_\Delta$ in $\Mod_{A-A}^\infty$, but in general it is only a submodule.
\end{ex}

As before we call a (pre-)morphism $\uprho \in \hom_{A-A}^{\bullet}(M,N)$ between $A$-bimodules $(M,\upnu_M)$ and $(N,\upnu_N)$ analytic if it lies in the subspace $\fA_A(M,N) \subset \fM_A(M,N)$.
It follows from~\cref{bianalbound} that
\[
  \updelta = \upnu_N \circ \widehat{(-)} - (-1)^{|\uprho|} (-)\circ \widetilde\upnu_M
\]
is a bounded operator on this space, and hence gives rise to a complex of analytic pre-morphisms
\[
  \hom_{A-A}^{\bullet,\an}(M,N) \colonequals \left(\fA_A(M,N), \delta\right),
\]
which is a subcomplex of $\hom_{A-A}^\bullet(M,N)$.
As before we denote the subspace of analytic morphisms by $\Hom_{A-A}^\an(M,N) \subset \Hom_{A-A}(M,N)$.
It follows from~\cref{bianalbound} that the composition of analytic bimodule pre-morphisms is again analytic, and therefore defines morphism complexes for a DG category $\NMod^{\infty,\an}_{A-A}$ of analytic bimodules over $A$.

Given an analytic $A_\infty$-morphism $f$ one can again consider the functor $f^*$, and the pullback map $f^\sharp$ on the Hom-spaces between the diagonal bimodule and its continuous dual.

\begin{lem}\label{analpullback}
  Let $f\in\Hom^\an_{\AinfAlg}(A,B)$ be an analytic morphism, then there is a well-defined functor
  \[
    f^\sharp \colon \NMod^{\infty,\an}_{B-B} \to \NMod^{\infty,\an}_{A-A},
  \]
  which induces a pullback $f^*\colon \hom_{B-B}^\an(B_\Delta,B_\Delta') \to \hom_{A-A}^\an(A_\Delta,A_\Delta')$.
\end{lem}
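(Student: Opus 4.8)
The statement is purely about preservation of analyticity: that $f^\sharp$ is a functor, that $f_\Delta$ and $f_\Delta^\vee$ are bimodule morphisms, and that $f^*$ is a chain map natural in $f$, are all recorded above for the underlying non-analytic categories (following \cite{Ganatra12}), so what remains is to check that each construction respects the geometric-series bounds. The plan is: (i) show $f^\sharp$ carries analytic $B$-bimodule structures, and analytic $B$-bimodule pre-morphisms, to analytic ones over $A$; (ii) show $f_\Delta$ is analytic and that the continuous-dual analogue $f_\Delta'$ of $f_\Delta^\vee$ is well-defined and analytic; (iii) conclude for $f^*$ by writing it as a composition of the maps from (i) and (ii).

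For (i), take $f$ analytic with $\nrm{f_n}<C^n$ for all $n\geq1$ (recall $f_0=0$) and write $\widehat f_{[i\to p]}\colon A[1]^{\otimes i}\to B[1]^{\otimes p}$ for the summand of $\widehat f$ outputting $p$ tensor factors, so that $\widehat f_{[i\to p]}=\sum f_{n_1}\otimes\cdots\otimes f_{n_p}$ over compositions $i=n_1+\cdots+n_p$ with all $n_\ell\geq1$ (and $\widehat f_{[0\to0]}=\id_\ell$). Since the operator norm is multiplicative on tensor products of bounded maps, $\nrm{\widehat f_{[i\to p]}}\leq\binom{i-1}{p-1}C^{i}$, the binomial coefficient counting such compositions. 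Given an analytic $B$-bimodule $(M,\upnu)$ with $\nrm{\upnu_{p,q}}\leq D^{p+q+1}$, the formula $(f^\sharp\upnu)_{i,j}=\sum_{p\leq i,\,q\leq j}\upnu_{p,q}\circ(\widehat f_{[i\to p]}\otimes\underline{\id}_{M[1]}\otimes\widehat f_{[j\to q]})$, together with $\nrm{\underline{\id}_{M[1]}}=1$ and the identity $\sum_{p=1}^{i}\binom{i-1}{p-1}D^p=D(1+D)^{i-1}$, yields
\[
  \nrm{(f^\sharp\upnu)_{i,j}}
  \;\leq\;\sum_{p\leq i,\,q\leq j}D^{p+q+1}\binom{i-1}{p-1}\binom{j-1}{q-1}C^{i+j}
  \;=\;\frac{D^{3}}{(1+D)^{2}}\bigl((1+D)C\bigr)^{i+j},
\]
which is bounded by $E^{i+j+1}$ for a suitable constant $E$, so $f^\sharp\upnu$ is analytic by \cref{bianalbound}. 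The identical computation with $\upnu$ replaced by an analytic bimodule pre-morphism shows $f^\sharp\uprho$ is analytic whenever $\uprho$ is; hence $f^\sharp$ restricts to a functor $\NMod^{\infty,\an}_{B-B}\to\NMod^{\infty,\an}_{A-A}$, the functor axioms being inherited verbatim from the non-analytic case.

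For (ii), $(f_\Delta)_{i,j}=f_{i+1+j}$ gives $\nrm{(f_\Delta)_{i,j}}<C^{i+j+1}$, so $f_\Delta$ is analytic. For $f_\Delta'$, observe that for fixed $a\in A[1]^{\otimes i}$ and $c\in A[1]^{\otimes j}$ the assignment $d\mapsto f_{i+1+j}(c,d,a)$ is a bounded linear map $A[1]\to B[1]$ (a single slot of a bounded multilinear map), so post-composing with a bounded functional $b$ gives a bounded functional on $A[1]$; thus $(f_\Delta')_{i,j}(a,\underline b,c)(d)=(-1)^Kb(f_{i+1+j}(c,d,a))$ genuinely lands in $\hom^\cont_\C(A,\C)[1]$, with $\nrm{(f_\Delta')_{i,j}}\leq\nrm{f_{i+1+j}}<C^{i+j+1}$. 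So $f_\Delta\in\Hom^\an_{A-A}(A_\Delta,f^\sharp B_\Delta)$ and $f_\Delta'\in\Hom^\an_{A-A}(f^\sharp B_\Delta',A_\Delta')$. Then for (iii), given $\uprho\in\hom^\an_{B-B}(B_\Delta,B_\Delta')$, the pullback $f^*\uprho$ is the composite $f_\Delta'\diamond(f^\sharp\uprho)\diamond f_\Delta$ of $A$-bimodule pre-morphisms, each analytic by (i) and (ii); since composition of analytic bimodule pre-morphisms is analytic (established above via \cref{bianalbound}), $f^*\uprho\in\hom^\an_{A-A}(A_\Delta,A_\Delta')$, and the chain-map property and naturality in $f$ are the non-analytic statements.

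The only step that is not a formal rearrangement of what has already been done is the binomial estimate $\nrm{\widehat f_{[i\to p]}}\leq\binom{i-1}{p-1}C^i$ and the resummation $\sum_{p}\binom{i-1}{p-1}D^p=D(1+D)^{i-1}$ — equivalently, the bookkeeping that makes \cref{bianalbound}(3) go through for $\widehat f$, hence for $\widehat f\otimes\underline{\id}\otimes\widehat f$ — and it is precisely this that keeps the output bound geometric in $i+j$. Every remaining manipulation, Koszul signs included, is literally the one used to construct $f^\sharp$ and $f^*$ in the first place.
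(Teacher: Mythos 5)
Your proof is correct and follows essentially the same route as the paper: expand $(f^\sharp\uprho)_{i,j}$ as a sum over ways of distributing the tensor inputs among copies of $f$, bound each term by the product of the geometric bounds, and control the number of terms to get a bound of the form $E^{i+j+1}$, then obtain $f^*$ as the composite $f'_\Delta\diamond f^\sharp\uprho\diamond f_\Delta$. The only difference is cosmetic — you count the compositions exactly via $\binom{i-1}{p-1}$ and resum to $D(1+D)^{i-1}$, where the paper simply bounds the number of such decompositions by $q^i$ — and your explicit check that $(f'_\Delta)_{i,j}$ lands in the continuous dual is a detail the paper leaves implicit.
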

\begin{proof}
  Given $f \in\Hom^\an_{\AinfAlg}(A,B)$, for any $N,M\in\grNMod \elle$ and $\uprho\in\fA_B(M,N)$, there exists a common constant $C>0$ such that $\nrm{f_n} < C^n$ for all $n\in\N$ and $\nrm{\uprho_{i,j}} < C^{i+j+1}$ for all $i,j\in\N$.
  Then the construction $f^\sharp\uprho$ has components satisfying
  \[
    \begin{aligned}
      \nrm{(f^\sharp\uprho)_{i,j}}
      &\leq \sum_{\substack{i_1+\cdots+i_n = i\\ j_1+\cdots+j_n = j}} \nrm{\uprho_{n,m}} \nrm{f_{i_1}} \cdots \nrm{f_{i_n}} \nrm{f_{j_1}} \cdots\nrm{f_{j_m}}
      \\&< \sum_{\substack{i_1+\cdots+i_n = i\\ j_1+\cdots+j_n = j}} C^{n+m+i+j+1}
      \\&\leq (\text{\# of partitions of $i$}) \cdot (\text{\# of partitions of $j$})) \cdot C^{2(i+j)+1}.
    \end{aligned}
  \]
  Picking some constant $q>1$ such that $q^n$ bounds the number of partitions of $n$, it follows that $\nrm{(f^\sharp\uprho)_{i,j}} < q^{i+j}C^{2(i+j)+1} \leq (qC^2)^{i+j+1}$, which shows that $f^\sharp\uprho \in \fA_A(M,N)$.
  If follows that $f^\sharp$ maps every analytic bimodule structure $\upnu$ to an analytic bimodule structure $f^\sharp\upnu$, and an analytic (pre-)morphism $\uprho \in \hom^\an_{A-A}(M,N)$ between analytic bimodules to an analytic bimodule map $f^\sharp\uprho$.
  Hence $f^\sharp$ is a well-defined functor between categories of analytic bimodules.

  It follows that $f^\sharp B_\Delta$ is an analytic bimodule, and it follows directly from the definition that the map $f_\Delta\colon A \to f^\sharp B_\Delta$ is analytic.
  The bimodule $f^\sharp B_\Delta'$ is likewise analytic, and it follows easily that the formula for $f^\vee_\Delta$ defines an analytic map $f_\Delta' \colon f^\sharp B_\Delta' \to A_\Delta'$.  
  Hence, if $\uprho \in \hom_{B-B}^\an(B_\Delta,B_\Delta')$ is an analytic bimodule map, the composition $f^*\uprho = f_\Delta' \diamond f^\sharp \uprho \diamond f_\Delta$ is again analytic.
\end{proof}

\subsection{Analytic Hochschild cohomology}\label{analhoch}

Given an analytic $A_\infty$-algebra $(A,\upmu)$ and an analytic $A$-bimodule $(M,\upnu) \in \NMod^{\infty,\an}_{A-A}$, we call a Hochschild cochain $\upxi \in \barC^\bullet(A,M) = (\fM(A,M)[-1],\bb)$ analytic if it lies in the subspace $\fA(A,M)[-1] \subset \fM(A,M)[-1]$.
This analytic condition can again be characterised in multiple ways.

\begin{lem}\label{analhochbound}
  Let $\upxi \in \barC^\bullet(A,M)$, then the following are equivalent:
  \begin{enumerate}
  \item $\upxi$ is analytic
  \item for every $r'>0$ there exists $r>0$ such that $\overline\upxi\colon \barB(A,r) \to \barB_A(M,r')$ is bounded
  \end{enumerate}
\end{lem}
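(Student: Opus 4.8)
The plan is to follow the proof of \cref{analboundedness}, specifically the equivalence of conditions (1) and (4) there, since $\overline\upxi=\sum_{i,j,n\ge0}\id^{\otimes i}\otimes\underline{\xi_n}\otimes\id^{\otimes j}$ is assembled from $\upxi$ in exactly the same ``insert identities on either side'' manner in which $\widetilde\upmu$ is assembled from $\upmu$, the only difference being that the target is now the cofree comodule $\barB_A M$ instead of $\barB V$. The one structural observation to make at the outset is that a summand $\id^{\otimes i}\otimes\xi_n\otimes\id^{\otimes j}$ sends $A[1]^{\otimes(i+n+j)}$ into the $(i,j)$-graded piece of $\barB_A M$, which in the norm $\nrm{\cdot}_{r'}$ carries weight $(r')^{-(i+j+1)}$ \emph{independently of $n$}; it is this decoupling of the $n$ consumed inputs from the $i+j+1$ output weight that lets the geometric series below converge.

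For $(1\implies 2)$: assume $\nrm{\xi_n}<C^n$ for all $n\ge1$, fix $r'>0$, and set $K=\min\{r',1\}$. Estimating $\nrm{\overline\upxi(a)}_{r'}$ for $a=\sum_N a_N\in\barB A$ by expanding $\overline\upxi$ and collecting terms according to the total input degree $N=i+n+j$, one obtains, using the observation above, a bound of the shape $\sum_N P(N)(C/K)^N\nrm{a_N}$, where $P(N)$ is the number of triples $(i,n,j)$ with $i+n+j=N$, a quadratic polynomial in $N$. Substituting $\nrm{a_N}\le r^N\nrm{a}_r$ and choosing $r$ small enough that $Cr/K<1$ turns this into a convergent power series in $Cr/K$, so $\overline\upxi\colon\barB(A,r)\to\barB_A(M,r')$ is bounded. (If one prefers, $P(N)$ can be crudely dominated by $q^N$ and the argument run verbatim as in \cref{analboundedness}.)

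For $(2\implies 1)$: composing $\overline\upxi$ with the projection $\barB_A M\to M[1]$ onto the $(0,0)$-graded piece recovers $\xi_n$ on $A[1]^{\otimes n}$, directly from the formula for $\overline\upxi$. Hence if $\overline\upxi\colon\barB(A,r)\to\barB_A(M,r')$ is bounded by $K$, then for $a_n\in A[1]^{\otimes n}$ one reads off $\nrm{\xi_n(a_n)}=r'\nrm{\xi_n(a_n)}_{r'}\le r'\nrm{\overline\upxi(a_n)}_{r'}\le r'K\,r^{-n}\nrm{a_n}$, so $\nrm{\xi_n}\le r'K\,r^{-n}$, which is exponentially bounded in $n$; thus $\upxi$ is analytic.

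I do not expect a genuine obstacle here: the whole content is the norm/weight bookkeeping of the first paragraph, and the only nontrivial input is that the number of summands of $\overline\upxi$ of a given input degree grows subexponentially — the same fact about compositions already exploited in \cref{analboundedness} and \cref{bianalbound}.
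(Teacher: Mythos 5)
Your proof is correct and follows essentially the same route as the paper: the same expansion of $\overline\upxi$ with the key observation that the output weight $(r')^{-(i+j+1)}$ is decoupled from the number $n$ of consumed inputs, a subexponential count of summands per total input degree, and a geometric series; the converse by projecting onto the $(0,0)$-component. The only cosmetic difference is that the paper isolates the $\xi_0$-terms (for which there is no $C^n$ bound from analyticity) into a separate convergent sum, whereas you absorb them into the ``bound of the shape'' $\sum_N P(N)(C/K)^N\nrm{a_N}$ — this is harmless since their contribution is controlled by the single constant $\nrm{\xi_0(1)}$ times a constant prefactor.
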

\begin{proof}
  The implication (2) $\implies$ (1) is obvious, hence we only check the other implication.
  Given $\upxi\in\barC^{\bullet,\an}(A,M)$ there exists $C>1$ such that $\nrm{\xi_n} < C^n$ for all $n\geq 1$.
  Fix $r'>0$, and let $r < \min\{r',1/C\}$.
  Then for every element $\sum_{n\in\N} a_n \in \barB(A,r)$ of norm $\nrm{a}_r \leq 1$ 
  \[
    \begin{aligned}
      \nrm{\overline\upxi(a)}_{r'}
      &\leq \sum_{i,j,k\geq 0} \sum_{k\geq 0} \nrm{(\id^i\otimes\underline{\xi_k}\otimes \id^{\otimes j})(a_{i+k+j})} (r')^{-i-j-1}
      \\&\leq \sum_{i,j\geq 0} \nrm{\upxi_0(1)} \nrm{a_{i+j}} (r')^{-i-j-1}
      + \sum_{i,j\geq 0}\sum_{k\geq 1} C^{i+k+j} \nrm{a_{i+k+j}} \cdot (r')^{-i-j-1}
      \\&\leq \nrm{\upxi_0(1)}(r')^{-1} \sum_{n\geq 0} (n+1) (r/r')^n 
      + (r')^{-1}\sum_{i,j\geq 0}\sum_{k\geq 1} (Cr)^{k} \cdot (r/r')^{i+j}
      \\&\leq
      \frac{\nrm{\upxi_0(1)}}{r'(1-(r/r'))^2} 
      + \frac{1}{r'(1-Cr)(1-(r/r'))^2}
      < \infty.
    \end{aligned}
  \]
  It follows that the operator $\overline \upxi$ is bounded.
\end{proof}

As a corollary, it follows that the differential maps an analytic cochain $\upxi \in \barC^{\bullet,\an}(A,M)$ to a cochain
\[
  \bb(\upxi) = \upnu \circ \overline\upxi - (-1)^{|\upxi|} \upxi \circ \widetilde\upmu
\]
which is bounded as a map $\barB(A,r) \to M$ for some $r$, hence analytic by~\cref{analboundedness}.
It follows that the analytic cochains form a well-defined subcomplex which we denote by
\[
  \barC^{\bullet,\an}(A,M) \colonequals (\fA(A,M)[-1],\bb).
\]
For $M=A_\Delta$ we again abbreviate $\barC^{\bullet,\an}(A) \colonequals \barC^{\bullet,\an}(A,A_\Delta)$.
Similarly, we define the analytic version of Connes' complex as the intersection
\[
  \barC^{\bullet,\an}_\lambda(A) \colonequals \barC^\bullet_\lambda(A) \cap \barC^{\bullet,\an}(A,A_\Delta'),
\]
which will play the role of the negative cyclic cohomology in the analytic setting.

The analytic Hochschild complex is again functorial in each argument with respect to analytic maps: given $f\in\Hom_{\AinfAlg}^\an(A,B)$ it follows directly from~\cref{analboundedness} that there is a well-defined map
\[
  -\circ \widehat f \colon \barC^{\bullet,\an}(B,M) \to \barC^{\bullet,\an}(A,f^\sharp M),
\]
and likewise for any $\uprho\in\Hom_{A-A}^\an(M,N)$ it follows from~\cref{analhochbound} that
\[
  \uprho \circ \overline{-} \colon \barC^{\bullet,\an}(A,M) \to \barC^{\bullet,\an}(A,N),
\]
is well-defined.
In particular, composition with $\widehat f$ and $f'_\Delta$ induces maps $f^*\colon \barC^{\bullet,\an}(B,B_\Delta') \to \barC^{\bullet,\an}(A,A_\Delta')$ and $f^*\colon \barC^{\bullet,\an}_\lambda(B) \to\barC^{\bullet,\an}_\lambda(A)$ for any
analytic $A_\infty$-morphism $f\in\Hom_{\AinfAlg}^\an(A,B)$.
As before, we have a compatibility with homotopies.

\begin{lem}\label{analhochschildhomot}
  Let $f,g\in\Hom_\AinfAlg^\an(A,B)$ with $f\sim_\an g$, then $f$ and $g$ induce the same morphism on analytic Hochschild cohomology and analytic cyclic cohomology.
\end{lem}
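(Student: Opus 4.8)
The plan is to reprise the proof of \cref{hochschildhomot}, checking at each step that the objects and maps constructed there stay inside the analytic category. Since $f\sim_\an g$, we may write $f = \ev_0\diamond H$ and $g = \ev_1\diamond H$ for an analytic homotopy $H\in\Hom_\AinfAlg^\an(A,\Omega^\bullet_{[0,1]}\otimes B)$, using that $\Omega^\bullet_{[0,1]}\otimes B$ is an analytic $A_\infty$-algebra for the Sobolev tensor norm. By the functoriality of analytic Hochschild and cyclic cohomology established above, the induced maps factor as $[f^*] = [H^*]\circ[\ev_0^*]$ and $[g^*] = [H^*]\circ[\ev_1^*]$, so it suffices to show that $\ev_0$ and $\ev_1$ induce the same map on the cohomology of $\barC^{\bullet,\an}(-,(-)_\Delta')$ and of $\barC^{\bullet,\an}_\lambda(-)$. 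Both $\ev_0$ and $\ev_1$ are strict (linear) $A_\infty$-morphisms, hence trivially analytic, and they are bounded as linear maps, so by~\cref{analhochbound} their pullbacks are defined on the analytic complexes and land in the continuous dual bimodule, which is analytic by the computation for $A_\Delta'$ recorded above.

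Fix an analytic cocycle $\upxi\in Z^k\barC^{\an}(B,B_\Delta')$ with $\nrm{\xi_n} < C^n$ for some $C>1$. I claim the primitive $\upzeta$ of~\eqref{eq:zetacobound} is analytic. Each of the $n+1$ summands of $\zeta_n(\omega_1,\ldots,\omega_n)(\omega_{n+1})$ is, up to sign, the integral over $[0,1]$ of a product of $n$ of the functions $x_i$ together with one function $y_j$, multiplied by $\xi_n(b_1,\ldots,b_n)(b_{n+1})$; by bilinearity it suffices to bound on such pure tensors, exactly as in~\cref{hochschildhomot}. Since $\nrm{x_i}_\infty,\nrm{y_j}_\infty\le\nrm{\omega_i}$ in the Sobolev norm of $\Omega^\bullet_{[0,1]}$, and the integral over $[0,1]$ of a product is bounded by the product of essential suprema, we obtain
\[
  \nrm{\zeta_n}\ \le\ (n+1)\,\nrm{\xi_n}\ <\ (n+1)\,C^n,
\]
and choosing $C'>C$ with $(n+1)C^n<(C')^n$ for all $n$ shows $\upzeta\in\barC^{k-1,\an}(\Omega^\bullet_{[0,1]}\otimes B,(\Omega^\bullet_{[0,1]}\otimes B)_\Delta')$.

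With this in hand the identity $\ev_1^*\upxi - \ev_0^*\upxi = \bb(\upzeta)$ proved in~\cref{hochschildhomot} holds verbatim, since that computation is purely formal and uses no finiteness; it now takes place inside the analytic Hochschild complex, giving $[\ev_1^*\upxi] = [\ev_0^*\upxi]$ in analytic Hochschild cohomology. The same computation shows that $\upzeta$ is cyclic whenever $\upxi$ is, so the identity also holds in the analytic Connes complex $\barC^{\bullet,\an}_\lambda$, and hence $[\ev_0^*]=[\ev_1^*]$ on analytic cyclic cohomology as well. The only point requiring genuine care is the norm estimate on $\upzeta$, i.e. confirming that forming antiderivatives in the interval direction preserves the geometric-series bound; this it does, at the cost of the harmless polynomial factor $n+1$. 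I expect no other obstacle, since the remainder is a direct transcription of the non-analytic argument.
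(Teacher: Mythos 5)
Your proposal is correct and follows essentially the same route as the paper: reduce to $[\ev_0^*]=[\ev_1^*]$ via the factorisation through the analytic homotopy $H$, and then verify that the explicit primitive $\upzeta$ of \cref{hochschildhomot} is analytic by a norm estimate on pure tensors. The only (harmless) difference is that you bound $\nrm{\zeta_n}$ by $(n+1)\nrm{\xi_n}$, whereas the paper absorbs the sum over $j$ into the product of Sobolev norms to get $\nrm{\zeta_n}\le\nrm{\xi_n}$; either way the geometric-series bound is preserved.
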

\begin{proof}
  If $f\sim_\an g$ then $f = \ev_0 \circ H$ and $g=\ev_1\circ H$ for an analytic map $H\in \Hom_\AinfAlg^\an(A,\Omega^\bullet_{[0,1]}\otimes B)$.
  Hence, as in~\cref{hochschildhomot} it suffices to show that the analytic maps $\ev_0$ and $\ev_1$ induce the same map on the analytic versions of Hochschild/negative cyclic cohomology.
  For this it suffices to show that if $\upxi \in Z^k\barC^{\bullet,\an}(B,B'_\Delta)$ is an analytic cocycle then the cochain~\eqref{eq:zetacobound} is analytic.
  However, this is straightforward: for any fixed $n$ and elements $\omega_i = \sum_{k_i} (x_i^{k_i} + y_i^{k_i}\d t) \otimes b^{k_i}_i$  we have
  \[
    \begin{aligned}
      \nrm{\zeta_n(\omega_1,\ldots,\omega_n)(\omega_{n+1})}
      &\leq
        \sum_{k_1,\ldots,k_{n+1}}\sum_{j=1}^n \int_0^1 x_1^{k_1}\cdots y_j^{k_j}\cdots x_{n+1}^{k_{n+1}}\d t \cdot \nrm{\xi_n(b_1,\ldots,b_n)(b_{n+1})}
      \\&\leq
      \nrm{\xi_n} \cdot \sum_{k_1,\ldots,k_{n+1}} \sum_{j=1}^n \nrm{x_1^{k_1}}_\infty\cdots \nrm{y_j^{k_j}}_\infty\cdots \nrm{x_{n+1}^{k_{n+1}}}_\infty \cdot \nrm{b_1}\cdots \nrm{b_{n+1}}.
      \\&\leq \nrm{\xi_n} \cdot \sum_{k_1} \nrm{(x_1^{k_1} + y_1^{k_1}\d t) \otimes b^{k_1}_1} \cdots \sum_{k_{n+1}}  \nrm{(x_{n+1}^{k_{n+1}} + y_{n+1}^{k_{n+1}}\d t) \otimes b^{k_{n+1}}_{n+1}}.
    \end{aligned}
  \]
  Taking the infimum over all such decompositions of $\omega_i$ into pure tensors, we see that $\nrm{\zeta_n} \leq \nrm{\xi_n}$, and it follows that $\upzeta$ is again an analytic cochain.
  Hence $[\ev_1^*\upxi] = [\ev_0^*\upxi]$ also holds in the analytic versions of the Hochschild and negative cyclic cohomology.
\end{proof}

\begin{cor}
  For every analytic $A_\infty$-algebra $A$ admitting an analytic minimal model $\hh(A)$ there are isomorphisms $\HH^\an(A,A^\vee_\Delta) \cong \HH^\an(\hh(A),\hh(A)^\vee_\Delta)$ and
  $\HC^\an_\lambda(A) \cong \HC^\an_\lambda(\hh(A))$.
\end{cor}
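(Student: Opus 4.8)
The plan is to mimic the proof of \cref{hochschildminim} in the analytic setting, replacing every construction by its analytic counterpart and feeding in the invariance results established above. By \cref{minimalmodel} an analytic minimal model comes with analytic $A_\infty$-quasi-isomorphisms $P\in\Hom_\AinfAlg^\an(A,\hh(A))$ and $I\in\Hom_\AinfAlg^\an(\hh(A),A)$ satisfying $P\diamond I = \id_{\hh(A)}$ and $I\diamond P\sim_\an\id_A$. First I would invoke \cref{analpullback} to see that each of these analytic morphisms induces a pullback chain map on the analytic Hochschild complexes with values in the continuous dual bimodule, namely $P^*\colon \barC^{\bullet,\an}(\hh(A),\hh(A)_\Delta')\to\barC^{\bullet,\an}(A,A_\Delta')$ and likewise for $I^*$; since these pullbacks are given by composition with $\widehat{(-)}$ and the bimodule maps $f_\Delta'$, and cyclicity is preserved under such composition, they restrict to chain maps on the analytic Connes complexes $\barC^{\bullet,\an}_\lambda(-)$ as well. (Here one reads $\HH^\an(A,A^\vee_\Delta)$ as the cohomology of $\barC^{\bullet,\an}(A,A_\Delta')$, which agrees with the naive dual when $A$ is finite dimensional.)

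Next I would use the functoriality of the analytic pullback — inherited from the non-analytic functoriality, since the analytic complexes are subcomplexes of the non-analytic ones and the maps literally agree — to conclude that $I^*\circ P^* = (P\diamond I)^* = \id$ already at the level of complexes, hence $[I^*]\circ[P^*] = \id$ on both $\HH^\an$ and $\HC^\an_\lambda$. For the opposite composite one has $P^*\circ I^* = (I\diamond P)^*$, and since $I\diamond P\sim_\an\id_A$, \cref{analhochschildhomot} gives $[P^*]\circ[I^*] = [(I\diamond P)^*] = [(\id_A)^*] = \id$ on analytic Hochschild and analytic cyclic cohomology. Therefore $[P^*]$ and $[I^*]$ are mutually inverse isomorphisms, which yields both asserted isomorphisms.

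The only points needing care — and all of them are already settled — are that the pullback along an analytic morphism stays inside the analytic subcomplex (\cref{analpullback}, via the partition-counting bound on $f^\sharp$), that the analytic cochains form a subcomplex closed under $\bb$ (\cref{analhochbound}), and that analytic homotopies act trivially on analytic Hochschild and cyclic cohomology, which is exactly \cref{analhochschildhomot}. I do not anticipate any genuine obstacle: the statement is a formal consequence of the analytic invariance machinery, in precisely the way \cref{hochschildminim} is a formal consequence of \cref{hochschildhomot} in the non-analytic case.
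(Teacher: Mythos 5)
Your proposal is correct and is exactly the argument the paper intends: the corollary is stated immediately after \cref{analhochschildhomot} with no separate proof, precisely because it follows from $I^*P^* = (P\diamond I)^* = \id$ and $[P^*][I^*] = [(I\diamond P)^*] = \id$ via the analytic homotopy invariance, mirroring \cref{hochschildminim}. Your added care about reading $\HH^\an(A,A^\vee_\Delta)$ as the cohomology of $\barC^{\bullet,\an}(A,A_\Delta')$ and about pullbacks preserving the analytic subcomplex is consistent with the paper's conventions in \cref{analhoch}.
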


\subsection{Analytic inverses for bimodule morphisms}

In the remaineder of this section we show the analogue of~\cref{analinverse} for bimodule morphisms: given $\uprho \in \Hom_{A-A}^\an(M,N)$ such that $\uprho_{0,0}$ is invertible in $\grNMod \elle$, we show that it admits an analytic inverse $\uptau \in \Hom_{A-A}^\an(N,M)$.
To check the growth condition on $\uptau$, we define it explicitly using a tree formula using a special kind of planar tree.

\begin{defn}
  A \emph{caterpillar} is a rooted planar tree $T$ with one of the leaves marked, such that all internal nodes have valency $\geq 3$ and lie on a \emph{central path} between the root and the marked leaf.
  The set of caterpillars for which $n_1$ unmarked leaves lie to the left of this central path and $n_2$ unmarked leaves lie to the right of the central path is denoted $\catp(n_1,n_2)$.
  The subset of caterpillars with $d$ internal nodes is denoted $\catp(n_1,n_2,d)$.
\end{defn}
\begin{figure}[h]
  \centering
  \begin{subfigure}[b]{.49\textwidth}
    \centering
    \begin{tikzpicture}[scale=2]
      \begin{scope}[inner sep=1pt]
        \node[inner sep=0pt] (A) at (0,0) {$\scriptstyle\times$};
        \node[fill=black,circle] (B) at (0,1) {};
        \node[fill=black,circle] (B1) at (.25,1.5) {};
        \node[fill=black,circle] (B2) at (.5,1.5) {};
        \node[fill=black,circle] (B3) at (-.25,1.5) {};
        \node[fill=black,circle] (B4) at (-.5,1.5) {};
        \node[fill=black,circle] (D) at (0,2) {};
        \node[fill=black,circle] (D1) at (.5,2.5) {};
        \node[fill=black,circle] (D2) at (.25,2.5) {};
        \node[fill=black,circle] (D3) at (-.5,2.5) {};
        \node[inner sep=0pt] (E) at (0,3) {$\scriptstyle\times$};
      \end{scope}
      \draw (A) -- (B) -- (D) -- (E);
      \draw (B) -- (B1);
      \draw (B) -- (B2);
      \draw (B) -- (B3);
      \draw (B) -- (B4);
      \draw (D) -- (D1);
      \draw (D) -- (D2);
      \draw (D) -- (D3);
    \end{tikzpicture}
  \end{subfigure}
  \begin{subfigure}[b]{.49\textwidth}
    \centering
    \begin{tikzpicture}[scale=2]
      \begin{scope}[inner sep=1pt]
        \node[inner sep=0pt] (A) at (0,0) {};
        \node[draw, inner sep=2pt] (B) at (0,1) {$\scriptstyle\uprho_{2,2}$};
        \node (B1) at (1,3) {};
        \node (B2) at (1.25,3) {};
        \node (B3) at (-1,3) {};
        \node (B4) at (-1.25,3) {};
        \node[draw,  inner sep=2pt] (D) at (0,2) {$\scriptstyle\uprho_{2,1}$};
        \node (D1) at (.75,3) {};
        \node (D2) at (.5,3) {};
        \node (D3) at (-.5,3) {};
        \node (E) at (0,3) {};
      \end{scope}
      \draw (A) --node[midway,draw=black,fill=white,inner sep=1.5pt] {$\scriptstyle\uptau_{0,0}$} (B) -- node[midway,draw=black,fill=white,inner sep=1.5pt] {$\scriptstyle\uptau_{0,0}$} (D) -- node[midway,draw=black,fill=white,inner sep=1.5pt] {$\scriptstyle\uptau_{0,0}$} (E);
      \draw (B) to[out=10,in=-90]  (B1);
      \draw (B) to[out=0,in=-90] (B2);
      \draw (B) to[out=170,in=-90] (B3);
      \draw (B) to[out=180,in=-90] (B4);
      \draw (D) to[out=0,in=-90] (D1);
      \draw (D) to[out=10,in=-90] (D2);
      \draw (D) to[out=180,in=-90] (D3);
    \end{tikzpicture}
  \end{subfigure}
  \caption{
    Left: an example of a tree $T \in \catp(3,4,2)$ with the root at the bottom. Right: the string diagram for the corresponding map $\uptau_T \colon A[1]^{\otimes 3} \otimes M[1] \otimes A[1]^{\otimes 4} \to N[1]$.
  }
  \label{fig:catptree}
\end{figure}
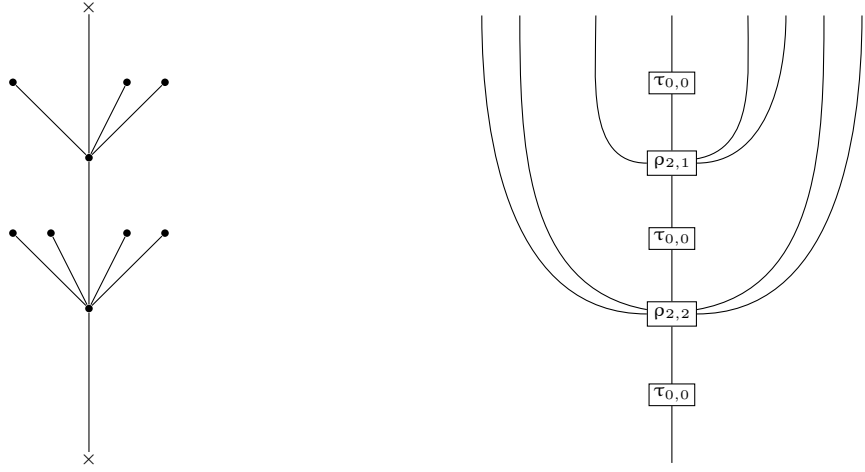

A typical example of a caterpillar tree is given in~\cref{fig:catptree}.
There is a unique tree $T\in \catp(0,0)$ which has $d=0$ internal nodes.
For $n>1$ one has $d>1$ and every tree $T\in \catp(n_1,n_2,d)$ can be uniquely decomposed into an internal node and a subtree in two ways:
\[
  T \quad=\quad
  \begin{tikzpicture}[baseline=(current bounding box.center)]
    \node[inner sep=0pt,outer sep=0pt] (A) at (0,0) {$\scriptstyle\times$};
    \node[inner sep=1pt, fill=black,circle] (B) at (1,0) {};
    \node[inner sep=1pt, fill=black,circle] (C1) at (2,.8) {};
    \node (C2) at (2,.6) {$\vdots$};
    \node[inner sep=1pt, fill=black,circle] (C3) at (2,.2) {};
    \node[inner sep=1pt, fill=black,circle] (C4) at (2,-.8) {};
    \node (C5) at (2,-.4) {$\vdots$};
    \node[inner sep=1pt, fill=black,circle] (C6) at (2,-.2) {};
    \node (D) at (3.5,0) {$T_r$};
    \draw (A) -- (B) -- (D);
    \draw (B) -- (C1);
    \draw (B) -- (C3);
    \draw (B) -- (C4);
    \draw (B) -- (C6);
    \draw [decorate, decoration = {calligraphic brace,mirror}, thick] (2.3,.1) to[edge label'=$\scriptstyle l_1$] (2.3,.9);
    \draw [decorate, decoration = {calligraphic brace}, thick] (2.3,-.1) to[edge label=$\scriptstyle l_2$]  (2.3,-.9); 
  \end{tikzpicture}
  \quad=\quad
  \begin{tikzpicture}[baseline=(current bounding box.center)]
    \node (A) at (0,0) {$T_l$};
    \node[inner sep=1pt, fill=black,circle] (B) at (1,0) {};
    \node[inner sep=1pt, fill=black,circle] (C1) at (2,.8) {};
    \node (C2) at (2,.6) {$\vdots$};
    \node[inner sep=1pt, fill=black,circle] (C3) at (2,.2) {};
    \node[inner sep=1pt, fill=black,circle] (C4) at (2,-.8) {};
    \node (C5) at (2,-.4) {$\vdots$};
    \node[inner sep=1pt, fill=black,circle] (C6) at (2,-.2) {};
    \node[inner sep=0pt,outer sep=0pt] (D) at (3.5,0) {$\scriptstyle\times$};
    \draw (A) -- (B) -- (D);
    \draw (B) -- (C1);
    \draw (B) -- (C3);
    \draw (B) -- (C4);
    \draw (B) -- (C6);
    \draw [decorate, decoration = {calligraphic brace,mirror}, thick] (2.3,.1) to[edge label'=$\scriptstyle r_1$] (2.3,.9);
    \draw [decorate, decoration = {calligraphic brace}, thick] (2.3,-.1) to[edge label=$\scriptstyle r_2$]  (2.3,-.9); 
  \end{tikzpicture}
\]
so $T$ is unique determined by a pair of numbers $(l_1,l_2)$ and a tree $T_r \in \catp(n_1-l_1,n_2-l_2,d-1)$, and also by a pair of numbers $(r_1,r_2)$ and a tree as $T_l \in \catp(n_1-r_1,n_2-r_2,d-1)$.
Hence we have bijections
\[
  \begin{aligned}
    \catp(n_1,n_2) &\cong \{(l_1,l_2,T_r) \mid l_1+l_2>0,\ T_r \in \catp(n_1-l_1,n_2-l_2)\}
    \\&\cong  \{(T_l,r_1,r_2) \mid r_1+r_2>0,\ T_l \in \catp(n_1-r_1,n_2-r_2)\}
  \end{aligned}
\]
Given $\uprho\in\hom_{A-A}(M,N)$ and $\uptau_{0,0} \in \hom_{\elle}^\cont(N,M)$ we then define for every $T\in\catp(n_1,n_2)$ a bimodule pre-morphism as follows.
For $n_1=n_2 =0$ we set $\uptau_T = \uptau_{0,0}$, and otherwise we define $\uptau_T$ via the following equivalent recursive formulas:
\begin{equation}\label{eq:bitreeinv}
  \begin{aligned}
    \uptau_T &= - \uptau_{0,0} \circ \uprho_{l_1,l_2} \circ (\id^{\otimes l_1} \otimes \underline{\uptau_{T_r}}\otimes \id^{\otimes l_2})\\
           &= - \uptau_{T_l} \circ (\id^{\otimes (n_1-r_1)} \otimes \underline{(\uprho_{r_1,r_2}\circ \uptau_{0,0})} \otimes \id^{\otimes (n_2-r_2)}).    
  \end{aligned}
\end{equation}
See again figure~\cref{fig:catptree} for an example of the map $\uptau_T$.
If $\uptau_{0,0}$ is an inverse for $\uprho_{0,0}$ then the sum of these maps determines an inverse bimodule map.

\begin{lem}
  Let $\uprho \in \hom_{A-A}(M,N)$ be a pre-morphism such that $\uprho_{0,0}$ admits a left-/right inverse $\uptau_{0,0}$.
  Then $\uprho$ admits a left-/right inverse $\uptau \in \hom_{A-A}(N,M)$ with components
  \[
    \uptau_{i,j} = \sum_{T\in \catp(i,j)} \uptau_T,
  \]
  where the maps $\uptau_T$ are defined as above.
\end{lem}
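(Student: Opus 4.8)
The strategy is to verify directly, using the recursive formula \eqref{eq:bitreeinv}, that the sequence $\uptau = (\uptau_{i,j})$ with $\uptau_{i,j} = \sum_{T\in\catp(i,j)}\uptau_T$ satisfies $\uprho\diamond\uptau = \id$ (in the right-inverse case; the left-inverse case is symmetric, using the second of the two equivalent recursions). First I would unwind the composition $\uprho\diamond\uptau = \uprho\circ\widehat\uptau$ in components: the $(i,j)$-component of $\widehat\uptau$ is $\sum_{k,l}\id^{\otimes k}\otimes\underline{\uptau_{i-k,j-l}}\otimes\id^{\otimes l}$, and composing with $\uprho$ picks out a sum over ways of grouping the $i$ left inputs and $j$ right inputs. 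The key bookkeeping observation is that a term $\uprho_{a,b}\circ(\id^{\otimes c}\otimes\underline{\uptau_T}\otimes\id^{\otimes d})$ appearing here is, by the first line of \eqref{eq:bitreeinv}, exactly (minus) the first step of the recursion for a larger caterpillar obtained by attaching a new internal node labelled $\uprho_{a,b}$ below the root of $T$ — provided that node replaces the bottom $\uptau_{0,0}$ edge. Tracking signs, the telescoping collapses all terms with at least one internal node, leaving only the base contribution $\uprho_{0,0}\circ\uptau_{0,0} = \id$ in bidegree $(0,0)$ and $0$ in all other bidegrees.

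Concretely, I would set up an auxiliary involution-style cancellation on the indexing set. For a fixed output bidegree $(i,j)$ with $(i,j)\neq(0,0)$, the terms of $(\uprho\diamond\uptau)_{i,j}$ are indexed by pairs $(T, \text{choice of an edge on the central path of }T)$ where we "open up" a $\uptau_{0,0}$ edge into an explicit $\uprho$-node versus leaving the tree intact with its bottom node coming from $\uprho$ itself; equivalently, by caterpillars together with a distinguished marking of which internal node "came from the outer $\uprho$". Two such terms of opposite sign are matched by moving the marking up or down one step along the central path. Since a caterpillar with $d\geq 1$ internal nodes has $d$ such nodes, the matching is a sign-reversing involution with no fixed points once we exclude the $d=0$ tree, which only contributes in bidegree $(0,0)$. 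This is the standard mechanism behind \cref{inversemap}, transported to the bimodule/caterpillar setting.

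I would also record that $\uptau$ is a genuine bimodule pre-morphism, i.e. lies in $\fM_A(N,M)$ — this is automatic since each $\uptau_T$ is built by composing bimodule maps and identities in the prescribed "one underlined strand" pattern, so the construction never leaves $\fM_A$. If $\uprho$ is moreover a bimodule \emph{morphism}, then the left-inverse and right-inverse (which exist by the two halves of the lemma applied to the left/right inverse of $\uprho_{0,0}$, both of which equal $\uprho_{0,0}^{-1}$ when $\uprho_{0,0}$ is invertible) must coincide, so $\uptau$ is a two-sided inverse; and then the identity $\updelta\uptau = -\uptau\circ(\updelta\uprho)\circ\text{(stuff)} = 0$ forces $\uptau\in\Hom_{A-A}(N,M)$ — alternatively one checks $\widetilde{\upnu_M}\circ\widehat\uptau = \widehat\uptau\circ\widetilde{\upnu_N}$ directly from $\widetilde{\upnu_N}\circ\widehat\uprho = \widehat\uprho\circ\widetilde{\upnu_M}$ by pre- and post-composing with $\widehat\uptau$. (The statement as phrased only claims $\uptau$ is a pre-morphism with the given components, so this last paragraph is a bonus rather than a requirement.)

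\textbf{Main obstacle.} The genuinely fiddly part is the sign bookkeeping in the telescoping/involution argument: the Koszul signs in the bar differential and in the definition of $\widehat\uptau$, together with the sign in front of each node in \eqref{eq:bitreeinv}, must be shown to conspire so that the paired terms genuinely cancel rather than merely having matched absolute values. I would handle this exactly as in the proof of \cref{inversemap} — work with the shifted/degree-$0$ conventions throughout (so that $\uprho_{i,j}$, $\uptau_{i,j}$ and the structure maps are all homogeneous of degree as recorded), in which case most Koszul signs are trivial and the only surviving sign is the explicit $-1$ per internal node; then the sign-reversing involution is immediate because adjacent caterpillars in the matching differ by exactly one internal node. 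I do not expect any conceptual difficulty beyond this, since the combinatorics of caterpillars mirrors that of planar trees in \cref{inversemap} with the single modification that inputs now carry a left/right bimodule decoration, which the recursion \eqref{eq:bitreeinv} already accounts for.
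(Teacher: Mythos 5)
Your proposal is correct and follows essentially the same route as the paper: expand $(\uprho\diamond\uptau)_{i,j}=\sum_{l_1,l_2}\uprho_{l_1,l_2}\circ(\id^{\otimes l_1}\otimes\uptau_{i-l_1,j-l_2}\otimes\id^{\otimes l_2})$ and use the recursion \eqref{eq:bitreeinv} together with the bijection $\catp(i,j)\cong\{(l_1,l_2,T_r)\}$ to cancel everything outside bidegree $(0,0)$. The only difference is cosmetic: the paper's cancellation is a single-step matching between the $(l_1,l_2)=(0,0)$ terms and the $l_1+l_2>0$ terms (after inserting $\id=\uprho_{0,0}\circ\uptau_{0,0}$), so the sign-reversing involution over marked internal nodes you describe is more machinery than is needed, and the only sign in play is the explicit $-1$ per internal node, exactly as you anticipate.
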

\begin{proof}
  Suppose $\uptau_{0,0}$ is a right-inverse for $\uprho_{0,0}$, then we claim that $\uptau$ is a right-inverse for $\uprho$.
  The base case $i=j = 0$ is trivial as
  \[
    (\uprho\diamond\uptau)_{0,0} = \uprho_{0,0} \circ \sum_{T\in\catp(0,0)} \uptau_T = \uprho_{0,0} \circ \uptau_{0,0} = \id_{M[1]}.
  \]
  For $i+j>0$ it follows from the recursive definition of the maps $\uptau_T$ that
  \[
    \begin{aligned}
      (\uprho\diamond\uptau)_{i,j} &= \uprho_{0,0} \circ \uptau_{i,j} + \sum_{l_1+l_2>0}\sum_{T_r\in\catp(i-l_1,j-l_2)} \uprho_{l_1,l_2} \circ (\id^{\otimes l_1} \otimes \uprho_{T_r} \otimes \id^{\otimes l_2})
      \\&= \uprho_{0,0} \circ \uptau_{i,j} + \sum_{l_1+l_2>0}\sum_{T_r\in\catp(i-l_1,j-l_2)} \uprho_{0,0} \circ \uptau_{0,0} \circ \uprho_{l_1,l_2} \circ (\id^{\otimes l_1} \otimes \uprho_{T_r} \otimes \id^{\otimes l_2})
      \\&= \uprho_{0,0} \circ \uptau_{i,j} - \sum_{T \in \catp(i,j)} \uprho_{0,0} \circ \uptau_{T} = 0,
    \end{aligned}    
  \]
  where the final line uses the bijection between trees in $\catp(i,j)$ and triples $(l_1,l_2,T_r)$ discussed above.
  It follows that $\uprho\diamond \uptau = \id$, so $\uptau$ is a right inverse.
  The case where $\uptau_{0,0}$ is a left-inverse is similar, using the other decomposition into triples $(T_l,r_1,r_2)$.
  In particular, $\uptau$ is a two-sided inverse if $\uptau_{0,0}$ is.
\end{proof}

We remark that if $\uprho$ is a morphism and $\uptau=\uprho^{-1}$ is a two-sided inverse, then the latter is again a morphism by the equality
\[
  0 = \uptau \diamond \updelta(\id) = \uptau \diamond \updelta(\uprho)\diamond \uptau + \updelta(\uptau) = \updelta(\uptau).
\]
We claim that the map $\uptau$ is moreover analytic when $\uprho$ is analytic.

\begin{prop}\label{analbiinverse}
  Let $\uprho \in \hom_{A-A}^\an(M,N)$ be an analytic pre-morphism, and suppose $\uprho_{0,0}$ admits a continuous left-/right-inverse $\uptau_{0,0}$.
  Then $\uprho$ admits an analytic left-/right-inverse.
\end{prop}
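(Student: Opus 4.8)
The plan is to follow the template of the proof of \cref{analinverse}: the preceding lemma already exhibits a left-/right-inverse $\uptau$ with components $\uptau_{i,j}=\sum_{T\in\catp(i,j)}\uptau_T$, and (via the remark after that lemma) the two-sided inverse of a morphism is again a morphism, so the only thing to prove is that $\uptau$ is analytic, i.e. that $\nrm{\uptau_{i,j}}<(C')^{i+j+1}$ for some $C'$ independent of $i,j$. By analyticity of $\uprho$ and continuity of $\uptau_{0,0}$ I would first fix a common $C>1$ with $\nrm{\uptau_{0,0}}<C$ and $\nrm{\uprho_{i,j}}<C^{i+j+1}$ for all $i,j\in\N$.

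\emph{Step 1: bounding one caterpillar.} I would show by induction on the number $d$ of internal nodes that every $T\in\catp(n_1,n_2,d)$ satisfies $\nrm{\uptau_T}<C^{\,n_1+n_2+2d+1}$. For $d=0$ this is $\nrm{\uptau_{0,0}}<C$; for $d\geq 1$, writing $T$ as a triple $(l_1,l_2,T_r)$ with $l_1+l_2>0$ and $T_r\in\catp(n_1-l_1,n_2-l_2,d-1)$, the first recursion in \eqref{eq:bitreeinv} gives
\[
  \nrm{\uptau_T}\leq\nrm{\uptau_{0,0}}\,\nrm{\uprho_{l_1,l_2}}\,\nrm{\uptau_{T_r}}<C\cdot C^{l_1+l_2+1}\cdot C^{(n_1-l_1)+(n_2-l_2)+2(d-1)+1}=C^{\,n_1+n_2+2d+1},
\]
the exponents telescoping; morally each of the $d$ internal nodes contributes one $\uprho$-factor and each of the $d+1$ segments of the central path contributes one $\uptau_{0,0}$-factor.

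\emph{Step 2: counting caterpillars.} Since every internal node has valency $\geq 3$ it carries at least one off-path leaf, whence $d\leq n_1+n_2$; and a caterpillar in $\catp(n_1,n_2,d)$ is in particular a planar rooted tree with $d$ internal nodes and $n_1+n_2+1$ leaves, so $|\catp(n_1,n_2,d)|$ is crudely bounded by the Catalan number $\cC_{n_1+n_2+d+1}$. Hence $|\catp(n_1,n_2)|=\sum_{d=0}^{n_1+n_2}|\catp(n_1,n_2,d)|\leq (n_1+n_2+1)\,\cC_{2(n_1+n_2)+1}$, which by the asymptotics of Catalan numbers is $\leq q^{\,n_1+n_2}$ for a sufficiently large fixed $q$, exactly as in \cref{analinverse}. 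Combining the two steps and using $d\leq i+j$,
\[
  \nrm{\uptau_{i,j}}\leq\sum_{d=0}^{i+j}\sum_{T\in\catp(i,j,d)}\nrm{\uptau_T}<|\catp(i,j)|\cdot C^{\,3(i+j)+1}<q^{\,i+j}\,C^{\,3(i+j)+1}\leq (qC^3)^{\,i+j+1},
\]
so $\uptau$ is analytic by \cref{bianalbound}. The left-inverse case is verbatim the same, using the second recursion in \eqref{eq:bitreeinv} and the bijection $\catp(n_1,n_2)\cong\{(T_l,r_1,r_2)\}$.

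The conceptual input is nil --- this is exactly the bimodule analogue of \cref{analinverse} --- so I expect the only real work to be the caterpillar bookkeeping: keeping track of the asymmetry in \eqref{eq:bitreeinv} when running the induction on internal nodes, and pinning down a clean exponential bound for $|\catp(n_1,n_2,d)|$ (for which the Catalan estimate above, or a direct composition count, suffices). Once those are in hand the analytic estimate is the same geometric-series manipulation as in the algebra case.
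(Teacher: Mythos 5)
Your proposal is correct and follows essentially the same route as the paper: reduce to the analyticity of the caterpillar-tree inverse, bound each $\uptau_T$ by $C^{n_1+n_2+2d+1}$ via induction on the number of internal nodes using the recursion \eqref{eq:bitreeinv}, and then control $|\catp(i,j)|$ by a Catalan-number estimate together with $d\leq i+j$. The only differences are cosmetic bookkeeping (your exponent arithmetic and the justification of $d\leq n_1+n_2$ via off-path leaves are, if anything, slightly more carefully stated than in the paper).
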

\begin{proof}
  Because $\uprho$ is analytic and $\uptau_{0,0}$ is continuous, there exists $C>0$ such that $\nrm{\uptau_{0,0}} < C$ and $\nrm{\uprho_{i,j}} < C^{i+j+1}$ for all $i,j\in\N$.
  We claim that for $T\in\catp(n_1,n_2,d)$ the map $\uptau_T$ satisfies $\nrm{\uptau_T} < C^{n_1+n_2+2d+1}$.
  For $n_1=n_2 = 0$ and $d=0$ one has
  \[
    \nrm{\uptau_T} = \nrm{\uptau_{0,0}} < C = C^{0+0+0+1}.
  \]
  Now let $n_1,n_2$ and $d>0$ be arbitrary, then assuming the bound holds for all $d' < d$ it follows by the recursive formula that
  there exists $(l_1,l_2)$ and a tree $T_r\in \catp(n_1-l_1,n_2-l_2,d-1)$ such that
  \[
    \nrm{\uptau_T} = \nrm{\uptau_{0,0}} \nrm{\uprho_{l_1,l_2}} \nrm{\uptau_{T_r}} < C \cdot C^{l_1+l_2+1} \cdot C^{(n_1-l_1)+(n_1-l_2)+2(d-1)+1} = C^{n_1+n_2+2d+2}.
  \]
  As the number of internal nodes satisfies $d\leq i+j$, it follows that $\nrm{\uptau_T} < C^{3(i+j)+3}$ for $i+j> 0$.
  Hence, taking the sum over all trees in $\catp(i,j)$ one finds that
  \[
    \nrm{\uptau_{i,j}} \leq \sum_{T\in\catp(i,j)} \nrm{\uptau_T} < \sum_{T\in\catp(i,j)} C^{3(i+j)+3} = |\catp(i,j)| (C^3)^{i+j+1}.
  \]
  Each $T\in \catp(i,j)$ corresponds to a unique planar tree with less than $3(i+j)+3$ total nodes, so the cardinality of the set $\catp(i,j)$ is clearly bounded by the Catalan number $\cC_{3(i+j)+3}$.
  Taking again a constant $q>1$ such that $\cC_n < q^n$ for $n\gg 0$, it follows that
  $\nrm{\uptau_{i,j}} < (qC)^{i+j+1}$ for $i+j\gg 0$.
  Hence $\uptau$ is analytic.  
\end{proof}

\section{Cyclic structures and analytic Calabi-Yau structures}\label{sec:cyclic}

The notion of a cyclic structure on an $A_\infty$-algebra was introduced in \cite{Kaj07,KS09} as a non-degenerate skew-symmetric pairing $\<-,-\> \colon A[1] \otimes A[1] \to \ell[2-d]$ satisfying a condition
\[
  \<\upmu_n(a_1,\ldots,a_n),a_{n+1}\> = (-1)^K \<a_1,\upmu_n(a_2,\ldots,a_{n+1})\>, \quad\forall n\in\N
\]
for some appropriate Koszul sign $K$.
In particular, a cyclic $A_\infty$-algebra is always finite dimensional.
In practice, it is easier to work with an alternative characterisation of the cyclic structures as a bimodule map \cite{Tra08,Cho08} which is as follows.

\begin{defn}\label{def:cyclic}
  A \emph{$d$-cyclic structure} on $A\in\AinfAlg$ is a morphism $\upsigma \in \Hom_{A-A}(A_\Delta,A_\Delta^\vee[-d])$ such that $\upsigma_{0,0}$ is a skew-symmetric isomorphism and $\upsigma_{i,j} = 0$ for $i+j > 0$.
\end{defn}

A pair $(A,\upsigma_A)$ of an $A_\infty$-algebra and a cyclic $A_\infty$-structure is called a cyclic $A_\infty$-algebra, and given two such cyclic $A_\infty$-algebras $(A,\upsigma_A)$ and $(B,\upsigma_B)$ a morphism $f\in\Hom_{\AinfAlg}(A,B)$ is said to be cyclic precisely if $f^* \upsigma_B = \upsigma_A$.

A case of special interest is when $A$ is $d$-cyclic for $d=3$, and $A^0 \cong \ell$.
In this case, the structure of the cyclic $A_\infty$-algebra $(A,\upsigma_A)$ can be recovered from a \emph{potential}:
setting $V_A = (A^1)^*$ the cyclic structure defines an element
\[
  W_A \colonequals \sum_{n\in\N} \upsigma(\upmu_n(-,\ldots,-))(-) \in \widehat\barT_\ell V_A
\]
in the \emph{completed tensor algebra} $\widehat\barT_\ell V_A \colonequals \prod_{n\geq 0} V_A^{\otimes n}$ on $V_A$, which can be used to recover the $A_\infty$-structure (see e.g. \cite{VdBer15}).
Likewise, given an $\ell$-bimodule $V$ and an element $W = \sum_n W_n \in \widehat\barT_\ell V$ such that each $W_n$ is invariant under cyclic permutation, there is a well-defined cyclic $A_\infty$-algebra structure on
\[
  A_V = \ell \oplus V^*[-1] \oplus V[-2] \oplus \ell^*[-3].
\]
Any (pre-)morphism $f\in \hom_{\AinfAlg}^0(A,B)$ induces a graded algebra morphism $f^* \colon \widehat\barT_\ell V_B \to \widehat\barT_\ell V_A$, and a morphism $f$ is cyclic if and only if $f^*(W_B) = W_A$ by a result of Kajiura \cite{Kaj07}.

In this section we consider cyclic analytic $A_\infty$-algebras, by which we mean a pair $(A,\upsigma_A)$ with $A\in\AinfAlgan$ and $\upsigma_A$ a cyclic structure in the above sense.
In the $d=3$ case the analytic requirement yields an \emph{analytic potential}, as defined in \cite{HK19}.

\begin{prop}\label{analpotential}
  Let $(A,\upsigma)$ be a 3-cyclic analytic $A_\infty$-algebra with $A^0 \cong \ell$ and pick a basis $v_1,\ldots v_m$ for $V_A$, then the potential $W_A$ lies in the analytic subring
  \[
    \widetilde\barT_\ell V_A \colonequals \left\{\sum_{n=0}^\infty \sum_{i_1,\ldots,i_n} c_{i_1,\ldots,i_n} \cdot v_{i_1}\otimes \ldots \otimes v_{i_n} \in \widehat\barT_\ell V_A \ \middle|\ %
      \begin{gathered}
        \text{ there exists } C>0 \text{ such that } \\
        |c_{i_1,\ldots,i_n}| < C^n \text{ for all }  i_1,\ldots,i_n
      \end{gathered}
      \ %
    \right\}.
  \]
\end{prop}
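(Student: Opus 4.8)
The plan is to prove this by a direct size estimate on the coefficients of $W_A$. The point is that each coefficient is obtained by feeding basis vectors into the composite $\upsigma_{0,0}\circ\upmu_n$, so it is controlled by the analytic bound on $\upmu_n$ together with the (automatic) boundedness of the single nonzero component $\upsigma_{0,0}$ of the cyclic structure, via submultiplicativity of the operator norm under tensor products.

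First I would observe that the subring $\widetilde\barT_\ell V_A$ does not depend on the chosen basis: a change of basis acts on a coefficient array $(c_{i_1,\dots,i_n})_n$ through the tensor powers of a fixed invertible matrix, whose entries are bounded by some $M$, so it sends a sequence with $|c|<C^n$ to one with $|c|<(M^2C)^n$. Hence it suffices to work with one basis. Fix $v_1,\dots,v_m$ for $V_A=(A^1)^\vee$ --- finitely many, since a cyclic $A_\infty$-algebra is finite dimensional --- and let $e_1,\dots,e_m\in A^1$ be the dual basis, with $M\colonequals\max_k\nrm{e_k}$. Since $A\in\AinfAlgan$, \cref{analboundedness} provides $C_\upmu>0$ with $\nrm{\upmu_n}<C_\upmu^n$ for all $n\geq1$; and, $A$ being finite dimensional, $\upsigma_{0,0}$ is a bounded bilinear form, say $\nrm{\upsigma_{0,0}}=C_\upsigma<\infty$. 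By the definition of the potential, the coefficient of $v_{i_1}\otimes\cdots\otimes v_{i_{n+1}}$ in $W_A$ equals, up to a Koszul sign,
\[
  c_{i_1,\dots,i_{n+1}}=\upsigma_{0,0}\bigl(\upmu_n(e_{i_1},\dots,e_{i_n})\bigr)(e_{i_{n+1}}).
\]
Then, using submultiplicativity of the operator norm under tensor products (\cref{sec:norms}),
\[
  \bigl|c_{i_1,\dots,i_{n+1}}\bigr|\ \leq\ C_\upsigma\cdot\nrm{\upmu_n}\cdot\prod_{k=1}^{n+1}\nrm{e_{i_k}}\ \leq\ C_\upsigma M\,(C_\upmu M)^{n},
\]
which is a geometric bound in the tensor degree $n+1$ (the parts of tensor degree $0$ and $1$ contribute nothing, since $\upmu_0=0$). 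Choosing $C$ large enough that $C^{n+1}\geq C_\upsigma M\,(C_\upmu M)^{n}$ for all $n$, one gets $|c_{i_1,\dots,i_N}|<C^N$ for every $N$, i.e.\ $W_A\in\widetilde\barT_\ell V_A$.

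I do not expect a genuine obstacle here; this is essentially bookkeeping. The two places needing slight care are: tracking the degree shift, i.e.\ that the $n$-th product $\upmu_n$ contributes the tensor-degree-$(n+1)$ part of $W_A$, so that after accounting for the extra evaluation slot the bound $C_\upmu^n$ still yields a geometric bound; and observing that $\upsigma_{0,0}$ has finite operator norm, which is immediate from finite dimensionality of a cyclic $A_\infty$-algebra (equivalently, in the analytic framework one takes the cyclic structure to be an analytic bimodule morphism, and for a cochain concentrated in bidegree $(0,0)$ this says exactly that $\upsigma_{0,0}$ is bounded).
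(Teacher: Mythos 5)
Your proof is correct and takes essentially the same route as the paper's: both bound the coefficient $\upsigma(\upmu_{n-1}(a_{i_1},\ldots,a_{i_{n-1}}))(a_{i_n})$ directly by $\nrm{\upsigma}\,\nrm{\upmu_{n-1}}\,\prod_k\nrm{a_{i_k}}$, using the geometric bound on $\nrm{\upmu_n}$ and the automatic boundedness of $\upsigma$ on a finite-dimensional algebra. The extra remarks on basis independence and the degree shift are harmless additions not present in the paper.
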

\begin{proof}
  Let $a_1,\ldots,a_m \in A^1$ be a dual basis to $v_1,\ldots,v_m \in V_A = (A^1)^*$.
  Then the coefficient $c_{i_1,\ldots,i_n}$ of $W_A$ in $\widehat\barT_\ell V_A$ is given by $c_{i_1,\ldots,i_n} = \upsigma(\upmu_{n-1}(a_{i_1},\ldots,a_{i_{n-1}}))(a_{i_n})$.
  Because $\upmu$ is analytic by assumption, there exists $C_0$ such that $\nrm{\upsigma} < C_0$ and $\nrm{\upmu_n} < C^n$ for all $n$, and $\nrm{a_i} < C_0$ for each basis vector.
  It follows that there is a bound for all $i_1,\ldots,i_n$:
  \[
    |c_{i_1,\ldots,i_n}| = |\upsigma(\upmu_{n-1}(a_{i_1},\ldots,a_{i_{n-1}}))(a_{i_n})| \leq \nrm{\upsigma}\nrm{\upmu_{n-1}}\nrm{a_{i_1}}\cdots\nrm{a_{i_n}} < C_0 \cdot C_0^{2n-1} \cdot C_0^n = (C_0^2)^n,
  \]
  so setting $C = C_0$, the result follows.
\end{proof}

\begin{rem}
  Hua--Keller \cite{HK19} use the language of quivers to define their potential.
  Here the base is $\ell = \prod_{v\in Q_0} \C v$ is a product over the nodes of a quiver $Q$, and the $\ell$-bimodule $V$ should be seen as the span of the arrows in $Q$.
  This identifies $\widehat\barT_\ell V$ with the completed path algebra $\widehat{\C Q}$,
  and identifies the subring $\widetilde\barT_\ell V_A$ with the analytic path algebra $\widetilde{\C Q} \subset \widehat{\C Q}$ in \cite{HK19}.
\end{rem}

In the rest of this section we will show how to obtain a cyclic structure on analytic minimal models of analytic $A_\infty$-algebras that satisfy a Calabi--Yau property.
This requires us to consider a ``homotopic'' generalisation of cyclic structures, called a \emph{strong homotopy inner product}.

\subsection{Strong homotopy inner products}

If $\upsigma$ is a cyclic structure on an $A_\infty$-algebra $B$ then the pull-back $f^*\upsigma$ along a quasi-morphism $f\in\Hom_{\AinfAlg}(A,B)$ will, in general, not be a cyclic structure on $A$ because both the condition
\[
  (f^*\upsigma)_{i,j} = 0 \text{ if } i+j>0,
\]
as well as the requirement that $(f^*\upsigma)_{0,0}$ is an isomorphism may be violated;
one says that cyclic structures are \emph{strict}.
For this reason it is better to use a homotopy-invariant alternative which relaxes the cyclic conditions.
One such alternative was define by Cho \cite{Cho08}, which we recall here following the setup of \cite{AT22}.

\begin{defn}
  Given $A \in \AinfAlg$, a \emph{closed 2-form} is an element of the subcomplex $\Omega^{2,\cl}(A)\subset \hom_{A-A}^\bullet(A_\Delta,A_\Delta^\vee)[-2]$ of shifts of bimodule morphisms $\uprho$ satisfying:
  \begin{itemize}
  \item \emph{skew-symmetry}: for any $i,j\in\N$ and $a_0\otimes\cdots\otimes a_{i+j+1} \in (A[1])^{\otimes i+j+1}$ the relation
    \[
      \uprho(a_1\otimes\cdots\otimes\underline a_{i+1}\otimes\cdots\otimes a_n)(a_0) =
      (-1)^{K}       \uprho(a_{i+2}\otimes\cdots\otimes\underline a_0\otimes\cdots\otimes a_i)(a_{i+1})
    \]
    holds, where $K$ is obtained from the Koszul sign rule after cyclic permutation
  \item \emph{closedness}: for any $a_0\otimes\cdots\otimes a_n \in (A[1])^{\otimes n}$ and indices $1\leq i < j < k \leq n$ the relation
    \[
      \begin{aligned}
        (-1)^{K_i}\uprho(a_{j+1}\otimes\cdots\otimes\underline a_i\otimes\cdots\otimes a_{j-1})(a_j) &+ (-1)^{K_j}\uprho(a_{k+1}\otimes\cdots\otimes\underline a_j\otimes\cdots\otimes a_{k-1})(a_k) \\&+ (-1)^{K_k}\uprho(a_{i+1}\otimes\cdots\otimes\underline a_k\otimes\cdots\otimes a_{i-1})(a_i) = 0
      \end{aligned}
    \]
    holds, where $K_i,K_j,K_k$ are obtained from the Koszul sign rule after cyclic permutation.
  \end{itemize}
\end{defn}
\begin{defn}
  A \emph{strong homotopy inner product} (SHIP) is a cocycle $\uprho\in\Omega^{2,\cl}(A)$ of some degree $d$ such that the map $\uprho_{0,0} \colon A_\Delta[1] \to A_\Delta^\vee[d-1]$ is a quasi-isomorphism.
\end{defn}

Any cyclic structure defines a SHIP, but the latter class is much better behaved under $A_\infty$-morphisms: given $f\in\Hom_{\AinfAlg}(A,B)$ the pullback $f^*$ on bimodule morphisms restricts to a chain map
\[
  f^* \colon \Omega^{2,\cl}(B) \to \Omega^{2,\cl}(A),
\]
which maps SHIPs to SHIPs when $f$ is a quasi-isomorphism.
Following the philosophy of \cite{KS09}, a SHIP can be viewed as a type of noncommutative shifted-symplectic structure:
one can view the Hochschild cohomologies $\barC^\bullet(A)$ and $\barC^\bullet(A,A^\vee)$ as vector fields and differential 1-forms on a noncommutative space, and any cocycle $\uprho \in \Omega^{2,\cl}(A)$ defines a \emph{contraction map}
\[
  \iota_-\uprho\colon \barC^\bullet(A) = \barC^\bullet(A,A) \to \barC^\bullet(A,A_\Delta^\vee),\quad \upalpha \mapsto \uprho \circ \overline\upalpha,
\]
which is a quasi-isomorphism if $\uprho$ is a SHIP.
We now generalise the notion of strong homotopy inner product to analytic $A_\infty$-algebra, as follows.

For an analytic $A_\infty$-algebra $A\in\AinfAlgan$, we define the complex of \emph{analytic closed 2-forms} 
\[
  \Omega^{2,\cl,\an}(A) \colonequals \Omega^{2,\cl}(A) \cap \fA(A_\Delta,A_\Delta')[-2],
\]
where we view $\fA(A_\Delta,A_\Delta')$ as a subspace of $\fM(A_\Delta,A_\Delta^\vee)$ via the inclusion $A_\Delta' \hookrightarrow A_\Delta^\vee$.
The assignment $A\mapsto \Omega^{2,\cl,\an}(A)$ is again functorial, as \cref{analpullback} guarantees that there is a chain map
\[
  f^* \colon \Omega^{2,\cl,\an}(B) \to \Omega^{2,\cl,\an}(A),
\]
for every analytic $A_\infty$-morphism $f\in\Hom_{\AinfAlg}^\an(A,B)$.
We define the following analogue of a SHIP for analytic $A_\infty$-algebras admitting an analytic minimal model.

\begin{defn}
  Let $A$ be an analytic $A_\infty$-algebra $A$ with minimal model $\hh(A)$.
  Then an \emph{analytic SHIP} is a cocycle $\uprho\in \Omega^{2,\cl,\an}(A)$ such that the map
  \[
    \hh(A)[1] \xrightarrow{\ I\ } A[1] \xrightarrow{\uprho_{0,0}} A^\vee[1] \xrightarrow{\ I^\vee\ } \hh(A)^\vee[1] 
  \]
  admits a continuous inverse.
\end{defn}

As before, an analytic SHIP can be though of as a noncommutative shifted symplectic structure with contraction map
\[
  \iota_-\uprho \colon \barC^{\an,\bullet}(A) \to \barC^{\an,\bullet}(A,A_\Delta'),
\]
which is well-defined by the discussion in \cref{analhoch}.
If $\hh(A)$ is a strong minimal model, one can again show that this map is a quasi-isomorphism.
In what follows we consider finite dimensional minimal $A_\infty$-algebras, for which it suffices that $\uprho_{0,0}$ is invertible and the contraction map is an isomorphism.

\begin{prop}
  Suppose $A$ is a finite dimensional minimal $A_\infty$-algebra and $\uprho\in\Omega^{2,\cl,\an}(A)$ a cocycle with $\uprho_{0,0}$ invertible in $\grMod\elle$.
  Then $\uprho$ is an analytic  SHIP and $\iota_-\uprho$ is an isomorphism.
\end{prop}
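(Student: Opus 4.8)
The plan is to reduce everything to finite-dimensional linear algebra together with a triangular recursion of the same type already used in \cref{analinverse} and \cref{analbiinverse}. First I would dispense with the SHIP part. Since $A$ is minimal we have $\upmu_1 = 0$, so $A$ serves as its own analytic minimal model with $I = P = \id_A$ (and by \cref{uniqanalminmod} this choice is unique up to analytic isomorphism, so the notion of an analytic SHIP for $A$ does not depend on it). Under this choice the composite $\hh(A)[1]\xrightarrow{I} A[1]\xrightarrow{\uprho_{0,0}} A^\vee[1]\xrightarrow{I^\vee}\hh(A)^\vee[1]$ appearing in the definition of an analytic SHIP is just $\uprho_{0,0}$ itself, using the identification $A^\vee_\Delta = A_\Delta'$ that holds in finite dimensions. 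As $A$ is finite dimensional, the bijective $\ell$-linear map $\uprho_{0,0}$ is automatically bounded with bounded inverse, hence admits a continuous inverse, and therefore $\uprho$ is an analytic SHIP. (It is also a SHIP in the non-analytic sense: for minimal $A$ both $A_\Delta[1]$ and $A^\vee_\Delta[d-1]$ carry the zero differential, so invertibility of $\uprho_{0,0}$ is the same as being a quasi-isomorphism.)

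Next I would show that $\iota_-\uprho\colon \barC^{\an,\bullet}(A)\to\barC^{\an,\bullet}(A,A_\Delta')$ is an isomorphism of complexes. It is already a chain map by the discussion preceding the definition of an analytic SHIP, so it suffices to produce a two-sided linear inverse that preserves analyticity; such an inverse is then automatically a chain map. Expanding $\overline\upalpha = \sum_{i,j,n}\id^{\otimes i}\otimes\underline{\alpha_n}\otimes\id^{\otimes j}$ and pairing with $\uprho = \sum_{i,j}\uprho_{i,j}$, the arity-$m$ component of $\iota_-\uprho(\upalpha) = \uprho\circ\overline\upalpha$ is
\[
  \big(\iota_-\uprho(\upalpha)\big)_m \;=\; \uprho_{0,0}\circ\alpha_m \;+\; \sum_{0 < i+j \le m}\uprho_{i,j}\circ(\id^{\otimes i}\otimes\alpha_{m-i-j}\otimes\id^{\otimes j}),
\]
with the Koszul signs absorbed into the graded tensor products. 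This system is triangular in the arity $m$, with invertible ``diagonal'' $\alpha_m\mapsto\uprho_{0,0}\circ\alpha_m$, so given $\upxi\in\fA(A,A_\Delta')$ one solves for $\upalpha$ by the recursion $\alpha_0 = \uprho_{0,0}^{-1}\circ\xi_0$ and
\[
  \alpha_m \;=\; \uprho_{0,0}^{-1}\circ\Big(\xi_m \;-\; \sum_{0<i+j\le m}\uprho_{i,j}\circ(\id^{\otimes i}\otimes\alpha_{m-i-j}\otimes\id^{\otimes j})\Big),
\]
which defines a linear two-sided inverse $\psi$ of $\iota_-\uprho$ on all of $\fM(A,A^\vee)$.

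Finally I would verify that $\psi$ preserves analyticity, by the same bookkeeping as in \cref{analbiinverse}. Unwinding the recursion expresses $\alpha_m$ as a sum over caterpillar-type iterated insertions of the maps $\uprho_{i,j}$ around copies of $\uprho_{0,0}^{-1}$, with a single $\xi$ grafted in at the marked leaf. Fixing $C>1$ with $\nrm{\uprho_{0,0}^{-1}} < C$, $\nrm{\uprho_{i,j}} < C^{i+j+1}$ for all $i,j$, and $\nrm{\xi_n} < C^n$, an induction on $m$ bounds the norm of each such term by $C^{an}$ for a fixed linear bound $an$ in $m$, while the number of such terms of arity $m$ is dominated by a Catalan number and hence by $q^m$ for some fixed $q$. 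Combining, $\nrm{\alpha_m} < (C')^m$ for a constant $C'$ independent of $m$, so $\psi$ sends $\barC^{\an,\bullet}(A,A_\Delta')$ into $\barC^{\an,\bullet}(A)$; since $\iota_-\uprho$ already restricts to a map between these analytic subcomplexes, it is an isomorphism. The only mildly delicate point — and the main thing to get right — is pinning down the precise combinatorial and sign form of the recursion; once that is settled the estimate is routine and parallels \cref{analbiinverse}.
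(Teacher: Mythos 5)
Your proof is correct and follows essentially the same route as the paper: continuity of $\uprho_{0,0}^{-1}$ is automatic in finite dimensions, and the inverse of $\iota_-\uprho$ is produced by a triangular recursion in the arity, with analyticity controlled by caterpillar-tree/Catalan-number estimates. The paper merely packages this more efficiently — it invokes \cref{analbiinverse} to get an analytic bimodule inverse $\uptau$ of $\uprho$ and then notes that $\upxi\mapsto\uptau\circ\overline\upxi$ inverts the contraction map by functoriality of $\barC^{\bullet}(A,-)$; expanding $\uptau$ into its caterpillar sum recovers exactly your inline recursion, so the explicit estimate you redo is already subsumed by that lemma.
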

\begin{proof}
  Continuity of $\uprho_{0,0}^{-1}$ is automatic if $A$ is finite dimensional, hence $\uprho$ is an analytic SHIP.
  It then follows from \cref{analbiinverse} that $\uprho$ admits an analytic inverse $\uptau \in \Hom_{A-A}^\an(A'_\Delta,A_\Delta)$, and the induced map
  \[
    \barC^{\an,\bullet}(A,A_\Delta')\to \barC^{\an,\bullet}(A) \quad \upxi \mapsto \uptau \circ \overline{\upxi}
  \]
  is then inverse to the contraction map by bi-functoriality of the Hochschild cochain complex.
\end{proof}

\subsection{An analytic Darboux lemma}

Kontsevich--Soibelman \cite{KS09} showed that when interpreted in the language of noncommutative symplectic geometry, any SHIP can be put into a standard \emph{Darboux form}.
In the algebraic setup we use here, this means that any SHIP $\uprho\in \Omega^{2,\cl}(A)$ on a minimal $A_\infty$-algebra $A = (A^\bullet,\upmu)$ can be strictified to a genuine cyclic structure
\[
  \upsigma = f^*\uprho \in \Omega^{2,\cl}(A^\per)
\]
where $A^\per = (A^\bullet,\upmu^\per)$ is a perturbation of the $A_\infty$-structure and $f\in \Hom_{\AinfAlg}(A^\per,A)$ is an $A_\infty$-morphism with first order term $f_1 = \id_A$.
A Darboux lemma of this form was first shown by Cho--Lee \cite{CL11}.
More recently, Amorim--Tu \cite{AT22} show that the morphism $f$ can be obtained as $f=f^1$ from a solution of a differential equation
\begin{equation}\label{crucialdiffeq}
  \left\{
  \begin{aligned}
    \frac{\d}{\d t} f^t &= \upalpha^t \diamond f^t\\
    f^0 &= \id,
  \end{aligned}
  \right.
\end{equation}
where $t\mapsto \upalpha^t$ is certain one-parameter family of Hochschild cocycles interpolating between $\uprho$ and $\upsigma$, chosen such that the solution satisfies $f^1 = f$.
In what follows we give an analytic version of this result using an explicit tree expression for the solution.

Fix a finite dimensional minimal $A \in \AinfAlgan$ and let $\upalpha \in \fM(A,A\otimes \pc^\infty([0,1]))[-1]$ be a continuous family of Hochschild cochains $\upalpha^t \in \barC_\bullet(A)$.
Then we define for every $T\in\O(n)$ a continuous family
\[
  f_T\in \hom_\elle^\cont(A[1]^{\otimes n}, A[1] \otimes \pc^\infty([0,1]))
\]
as follows.
The unique tree $T\in \O(1)$ determines the constant map $f_T^t = \id_{A[1]}$, and for $T\in\O(n)$ with $n>1$ the map $f_T$ is given by the recursive formula:
\[
  f^t_T = \int_0^t \upalpha^\uptau_k \circ (f_{T_1}^\uptau \otimes \cdots \otimes f_{T_k}^\uptau) \d \uptau,
\]
where $(T_1,\ldots,T_k)$ are the subtrees emanating from the first internal node as in \cref{analinverse}.
The following lemma shows that this yields a solution for the differential equation of \cite{AT22}.

\begin{lem}\label{treemorb}
  Suppose $\upalpha^t_0 = \upalpha^t_1 = 0$. Then the family $f \in \overline\fM(A,A\otimes \pc^\infty([0,1]))$ defined by
  \[
    f_n = \sum_{T \in \O(n)} f_T,
  \]
  is a solution to the differential equation \labelcref{crucialdiffeq}.
\end{lem}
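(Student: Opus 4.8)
The strategy is to verify directly that the stated $f = (f_n)_{n\in\N}$ with $f_n = \sum_{T\in\O(n)} f_T$ solves the two conditions in \labelcref{crucialdiffeq}, using the recursive tree formula for $f_T$. First I would check the initial condition: at $t=0$ every integral $f_T^0 = \int_0^0(\cdots)\,\d\uptau$ vanishes for $T$ with an internal node, so only the unique tree in $\O(1)$ contributes, giving $f_1^0 = \id_{A[1]}$ and $f_n^0 = 0$ for $n>1$; hence $f^0 = \id$ as a pre-morphism. (Here one uses that $A$ is minimal, so there is no $\upmu_1$-term to worry about, and that $\upalpha^t_0 = 0$ ensures $f_0 = 0$, i.e. $f\in\overline\fM$.)

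**The main computation.** The substantive step is the differential equation $\tfrac{\d}{\d t}f^t = \upalpha^t\diamond f^t$. Differentiating the recursive formula for each $f_T$ with $T\in\O(n)$, $n>1$, the fundamental theorem of calculus gives
\[
  \frac{\d}{\d t} f_T^t = \upalpha^t_k \circ (f_{T_1}^t \otimes \cdots \otimes f_{T_k}^t),
\]
where $(T_1,\ldots,T_k)$ are the subtrees at the first internal node. For $T\in\O(1)$ the derivative is $0$, which matches since $\upalpha^t_0 = \upalpha^t_1 = 0$ kills the would-be $k=0,1$ terms. Now I would sum over all $T\in\O(n)$ and reorganise: a tree $T\in\O(n)$ with $n>1$ is, as recalled just before \cref{inversemap}, uniquely specified by the ordered tuple of subtrees $(T_1,\ldots,T_k)$ with $k\geq 2$ emanating from the internal node attached to the root, where $T_i\in\O(n_i)$ and $\sum n_i = n$. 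Therefore
\[
  \frac{\d}{\d t} f_n^t = \sum_{k\geq 2}\ \sum_{n_1+\cdots+n_k = n}\ \upalpha^t_k \circ \Big(\sum_{T_1\in\O(n_1)} f_{T_1}^t \otimes \cdots \otimes \sum_{T_k\in\O(n_k)} f_{T_k}^t\Big) = \sum_{k\geq 2}\ \sum_{n_1+\cdots+n_k = n}\ \upalpha^t_k \circ (f_{n_1}^t\otimes\cdots\otimes f_{n_k}^t).
\]
The right-hand side is exactly the component $(\upalpha^t\diamond f^t)_n = \upalpha^t \circ \widehat{f^t}$ expanded via the formula for $\widehat{f^t} = \sum_k\sum_{n_1,\ldots,n_k} f_{n_1}^t\otimes\cdots\otimes f_{n_k}^t$, once one checks that the $k=0$ and $k=1$ contributions of $\widehat{f^t}$ are annihilated: $k=0$ because $\upalpha^t_0 = 0$, and $k=1$ because $\upalpha^t_1 = 0$. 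This matches the $n>1$ case; for $n=1$ both sides are $0$ (the left by the $\O(1)$ computation, the right since $\upalpha^t_0 = \upalpha^t_1 = 0$). Hence $\tfrac{\d}{\d t}f^t = \upalpha^t\diamond f^t$ holds.

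**The delicate point.** The step I expect to require the most care is the bookkeeping in the reorganisation of the sum over trees — in particular making sure the decomposition ``$T \leftrightarrow (T_1,\ldots,T_k)$ at the root node'' is a genuine bijection between $\O(n)$ (for $n>1$) and tuples with $k\geq 2$ and $\sum n_i = n$, with no double counting, and that the recursive definition of $f_T$ is compatible with this indexing (including Koszul signs carried by the $\otimes$ of the $f_{T_i}$). One also needs to confirm that differentiation commutes with the (finite, for each fixed $n$ and fixed arity) sum $\sum_{T\in\O(n)} f_T$ and with the tensor contractions — this is routine since $A$ is finite dimensional and each $f_T^t$ is piecewise smooth in $t$, so no convergence issue arises at this stage (analyticity of $f$ is a separate matter, presumably handled in a subsequent lemma). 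I would close by noting that $f$ is a pre-morphism in $\overline\fM^0(A,A\otimes\pc^\infty([0,1]))$ of the correct degree, which follows from $\upalpha^t$ having degree matching a Hochschild $1$-cochain after the shift $[-1]$, so each $f_T$ lands in degree $0$.
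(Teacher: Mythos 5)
Your proof is correct and follows essentially the same route as the paper: check the initial condition via the empty integral, differentiate the recursive tree formula, and reorganise the sum over $\O(n)$ using the bijection with tuples of subtrees at the root, with the hypotheses $\upalpha^t_0=\upalpha^t_1=0$ accounting for the missing $k=0,1$ terms. The extra care you flag about the tree decomposition and interchanging differentiation with the finite sum is reasonable but routine, exactly as in the paper's argument.
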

\begin{proof}
  For $t=0$ we have $f^0_T = \id_{A[1]}$ for the unique tree $T\in \O(1)$ and $f_T^0 = 0$ for $T\in \O(n)$ with $n>1$, as the latter are given by an empty integral.
  Hence $f$ satisfies the initial condition $f^0 = \id$ in \labelcref{crucialdiffeq} and just remains to check that it satisfies the differential equation
  \begin{equation}\label{eq:actualdiffeq}
    \frac{\d}{\d t} f^t_n = (\upalpha^t\diamond f^t)_n \colonequals \sum_{k=1}^n \sum_{n_1+\cdots+n_k = n} \upalpha_k^t \circ (f_{n_1} \otimes \cdots \otimes f_{n_k}).
  \end{equation}
  Applying the recursive formula for $f_T$ it follows that
  \[
    \begin{aligned}
      \frac{\d}{\d t} f^t_n
      &=  \sum_{T\in\O(n)} \frac{\d}{\d t} f_T^t
      \\&= \sum_{k=2}^{n} \sum_{n_1+\cdots + n_k = n} \sum_{T_i\in\O(n_i)} \upalpha_k^t \circ (f_{T_1}^t \otimes\cdots\otimes f_{T_k}^t)
      \\&=  \sum_{k=2}^n\sum_{n_1+\cdots + n_k = n} \upalpha_k^t \circ\left(
      \left(\sum_{T_1\in\O(n_1)} f_{T_1}^t\right) \otimes\cdots\otimes
      \left(\sum_{T_k\in\O(n_k)} f_{T_k}^t\right)\right)
      \\&= \sum_{k=2}^n\sum_{n_1+\cdots + n_k = n} \upalpha_k^t \circ (f_{n_1} \otimes \cdots \otimes f_{n_k}),
    \end{aligned}    
  \]
  which is equal to \labelcref{eq:actualdiffeq} under the assumption $\upalpha^t_1 = 0$.
  Hence $f$ is a solution to \labelcref{crucialdiffeq} as claimed.
\end{proof}

Let $\pc^\infty([0,1])$ again be endowed with the essential supremum norm.
Then a continuous pre-morphism $f\in\fM(A,A\otimes\pc^\infty([0,1])$ is analytic if and only if there exists $C>0$ such that for all $n\geq 1$
\[
  \nrm{f_n}_\infty < C^n,
\]
where $\norm_\infty$ denotes the supremum of the operator norms $\nrm{f_n^t}$ for $t\in [0,1]$.
We can use this to prove the following.

\begin{lem}\label{analmorb}
  In the situation of \cref{treemorb}, suppose that $\upalpha$ is analytic, then $f$ is also analytic.
\end{lem}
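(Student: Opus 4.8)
The plan is to mirror the proof of \cref{analinverse} almost verbatim, with the recursive tree formula for the homotopy inverse replaced by the recursive formula for $f_T$ from \cref{treemorb}. First I would pass to a clean exponential bound: since $\upalpha$ is analytic there is a constant $C>0$ with $\nrm{\upalpha_k}_\infty < C^k$ for all $k\geq 1$, and after enlarging $C$ we may assume $C>1$, so that the spurious powers of $C$ appearing below only help.

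Next I would show, by induction on the number $d$ of internal nodes, that $\nrm{f_T}_\infty \leq C^{\,n+d-1}$ for every $T\in\O(n,d)$, where $\norm_\infty$ denotes the supremum over $t\in[0,1]$ of the operator norm as in the paragraph preceding the statement. For the base case the unique tree $T\in\O(1)=\O(1,0)$ has $f_T^t=\id_{A[1]}$, so $\nrm{f_T}_\infty = 1 = C^{\,1+0-1}$. For the inductive step, decompose $T\in\O(n,d)$ into the subtrees $T_1,\ldots,T_k$ (with $k\geq 2$) emanating from the first internal node, so that $T_i\in\O(n_i,d_i)$ with $\sum_i n_i = n$ and $\sum_i d_i = d-1$. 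The one genuinely new estimate is the elementary bound $\sup_t\nrm{\int_0^t g(\uptau)\,\d\uptau} \leq \int_0^1\nrm{g(\uptau)}\,\d\uptau \leq \sup_{\uptau}\nrm{g(\uptau)}$, valid because $[0,1]$ has length $1$; applied to the recursive formula $f_T^t = \int_0^t \upalpha_k^\uptau\circ(f_{T_1}^\uptau\otimes\cdots\otimes f_{T_k}^\uptau)\,\d\uptau$ it gives
\[
  \nrm{f_T}_\infty \;\leq\; \nrm{\upalpha_k}_\infty\,\nrm{f_{T_1}}_\infty\cdots\nrm{f_{T_k}}_\infty \;<\; C^{k}\cdot C^{\sum_i(n_i+d_i-1)} \;=\; C^{\sum_i n_i + \sum_i d_i} \;=\; C^{\,n+d-1},
\]
using the inductive hypothesis on each $T_i$ together with $\nrm{\upalpha_k}_\infty<C^k$. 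This closes the induction.

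Finally I would sum over trees exactly as in \cref{analinverse}. Any tree in $\O(n)$ has at most $d=n-1$ internal nodes, so $\nrm{f_T}_\infty\leq C^{\,2n-2}$ for all $T\in\O(n)$; and since $|\O(n,d)|$ is bounded by the Catalan number $\cC_{n+d}$, the total cardinality satisfies $|\O(n)|\leq \cC_1+\cdots+\cC_{2n-1}<q^n$ for some $q\gg0$. Hence
\[
  \nrm{f_n}_\infty \;\leq\; \sum_{d=0}^{n-1}\ \sum_{T\in\O(n,d)}\nrm{f_T}_\infty \;\leq\; |\O(n)|\cdot C^{\,2n-2} \;<\; (qC^2)^n,
\]
and since $qC^2$ is independent of $n$ this shows that $f$ is analytic. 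The only point not already present in \cref{analinverse} is the observation that integration over $[0,1]$ does not increase the supremum of operator norms, which is precisely what lets the exponential bound propagate through the recursion of \cref{treemorb}; beyond that the argument is the same exponential bookkeeping with Catalan numbers, and I do not anticipate any real obstacle.
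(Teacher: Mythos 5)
Your proposal is correct and follows essentially the same route as the paper: the same inductive bound $\nrm{f_T}_\infty\leq C^{n+d-1}$ over trees in $\O(n,d)$, the same use of the fact that integration over $[0,1]$ does not increase the supremum of operator norms, and the same Catalan-number count to sum over $\O(n)$. The minor differences (using $d\leq n-1$ rather than $d\leq n$, and the exact Catalan indexing) are immaterial.
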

\begin{proof}
  Let $C>0$ be such that $\nrm{\upalpha_n}_\infty < C^n$ for all $n\in\N$.
  We will show that $\nrm{f_T}_\infty \leq C^{n+d-1}$ holds for each $T \in \O(n,d)$ by induction over $n$.
  For the unique tree $T \in \O(1) = \O(1,0)$ then 
  \[
    \nrm{f_T}_\infty = \sup_{t\in [0,1]} \nrm{\id} = 1 = C^{1-0-1},
  \]
  which establishes the base case.
  For the induction step let $n>1$ and assume the statement holds for all $T\in \O(m,d)$ with $m<n$.
  Then $T\in \O(n,d)$ is built out of $l\geq 2$ subtrees $T_1,\ldots,T_l\subset T$ with $T_i \in \O(n_i,d_i)$ satisfying $n_i\neq 0$, $\sum n_i = n$, and $\sum_{i=1}^k d_i = d-1$.
  The because each $n_i$ is less than $n$ it follow by the induction hypothesis that
  \[
    \begin{aligned}
      \nrm{f_T}_\infty
      &= \sup_{t\in[0,1]}\nrm{\int_0^t \upalpha^\uptau_k \circ (f_{T_1}^\uptau \otimes \cdots \otimes f_{T_l}^\uptau) \d \uptau}
      \\&\leq \sup_{\uptau \in [0,1]} \nrm{\upalpha^\uptau_k} \nrm{f_{T_1}^\uptau} \cdots \nrm{ f_{T_k}^\uptau }
      \\&\leq \nrm{\upalpha_k}_\infty \nrm{f_{T_1}}_\infty \cdots \nrm{f_{T_k}}_\infty
      \\&\leq C^k\cdot C^{n_1+d_1-1} \cdots C^{n_k+d_k-1} = C^{n+d-1}
    \end{aligned}
  \]
  which verifies the induction.
  Because the valency of every internal node of a tree in $\O(n)$ is at least $3$, one easily verifies that the maximal number of internal nodes is $d\leq n$.
  Hence, the sum over all trees is bounded by
  \[
    \nrm{f_n}_\infty \leq \sum_{T\in \O(n)} \nrm{f_T}_\infty \leq |\O(n)| \cdot C^{2n-1}
  \]
  where $|\O(n)|$ denotes the cardinality of $\O(n)$, which is bounded by the Catalan number $\cC_{2n}$.
  These Catalan numbers are bounded by $q^n$ for some fixed $q>0$, so we obtain a bound
  \[
    \nrm{f_n}_\infty < \cC_{2n}\cdot C^{2n} < q^nC^{2n} = (qC^2)^n.
  \]
  which shows that $f$ is uniformly analytic.  
\end{proof}

Using the above and the results of the previous section, we arrive at an analytic generalisation of the Darboux lemma appearing in \cite{KS08,CL11,AT22}.

\begin{lem}[Analytic Darboux]\label{Darboux}
  Let $A \in \AinfAlgan$ be minimal and finite dimensional and suppose $\uprho \in \Omega^{2,\cl,\an}(A)$ is a SHIP of degree $2-d$.
  Then:
  \begin{enumerate}
  \item there exists an $A_\infty$-algebra $A^\uprho$ and an analytic $A_\infty$-isomorphism $f \in \Hom_{\AinfAlg}^\an(A^\uprho,A)$ such that
    \[
      \upsigma^\uprho \colonequals f^*\uprho
    \]
    is a cyclic structure.
  \item if $\uptau \in \Omega^{2,\cl,\an}(A)$ is another SHIP such that the image $\uptau - \uprho$ along the inclusion $\Omega^{2,\cl,\an}(A) \to \Omega^{2,\cl}(A)$ is exact, then there is a cyclic analytic $A_\infty$-isomorphism
    \[
      (A^\uprho,\upsigma^\uprho) \cong_{\an,\cyc} (A^\uptau,\upsigma^\uptau).
    \]
  \end{enumerate}
  Effectively, every class in $\hh^{2-d}\Omega^{2,\cl}(A)$ admitting an analytic lift defines a cyclic analytic model.
\end{lem}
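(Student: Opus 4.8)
The plan is to run the ``analytic Moser trick'' of Amorim--Tu \cite{AT22}: realise the strictifying $A_\infty$-isomorphism as the time-one flow of the differential equation \labelcref{crucialdiffeq}, written out concretely through the tree formula of \cref{treemorb}, and then invoke the analytic estimates \cref{analmorb}, \cref{analinverse} and \cref{analbiinverse} to keep every object in sight analytic. The standing input is that, since $A$ is finite dimensional and minimal (so $\upmu_1 = 0$, whence ``quasi-isomorphism'' $=$ ``isomorphism'') and $\uprho_{0,0}$ is invertible, the proposition preceding the lemma shows that $\uprho$ is an analytic SHIP and that the contraction
\[
  \iota_-\uprho \colon \barC^{\an,\bullet}(A) \longrightarrow \barC^{\an,\bullet}(A,A_\Delta')
\]
is an isomorphism; moreover its inverse is analytic, being $\upxi \mapsto \uprho^{-1}\circ\overline\upxi$ with $\uprho^{-1}$ the analytic bimodule inverse of \cref{analbiinverse}. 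This isomorphism is exactly what lets one solve, inside the analytic category, the equations that in \cite{AT22} determine the generator $\upalpha^t$ of the flow.

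For part (1) I would follow the iterative construction of \cite{AT22}: the flow $f^t$ moves the pulled-back closed $2$-form $(f^t)^*\uprho$ within a fixed cohomology class, and one builds the generator $\upalpha^t$ --- a family of Hochschild cocycles with $\upalpha^t_0 = \upalpha^t_1 = 0$ (the hypotheses of \cref{treemorb}), constructed order by order by applying an inverse contraction $(\iota_-\uprho^t)^{-1}$ to the obstruction to strictness at each arity --- so that at $t=1$ the higher components of $(f^1)^*\uprho$ have been annihilated; here the SHIP property is precisely what makes those obstructions lie in the image of the contraction, and $\upalpha^t_1 = 0$ together with $f^0 = \id$ keeps $f^t_1 = \id$. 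The only ingredient over \cite{AT22} is the geometric-series bound $\nrm{\upalpha^t_n} < C^n$, which I would get by an induction on the arity of the same shape as the proofs of \cref{analinverse} and \cref{analbiinverse}, fed by the geometric-series control on $(\iota_-\uprho^t)^{-1}$ supplied by \cref{analbiinverse}. Granting this, \cref{treemorb} and \cref{analmorb} produce the time-one flow $f$ as an analytic $A_\infty$-morphism with $f_1 = \id$, \cref{analinverse} inverts it analytically, and transporting $\upmu$ along $f^{-1}$ yields an analytic $A^\uprho \in \AinfAlgan$ for which $f\colon A^\uprho \to A$ is an analytic isomorphism; then $\upsigma^\uprho \colonequals f^*\uprho$ is analytic by \cref{analpullback} and strict by construction, i.e.\ a cyclic structure.

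For part (2), write $\uptau - \uprho = \updelta\upbeta$ with $\upbeta$ a primitive in $\Omega^{2,\cl}(A)$. The delicate point is that $\upbeta$ need not be analytic; the plan is to replace it by an analytic generator. Since $\uptau - \uprho$ lies in $\Omega^{2,\cl,\an}(A) \subset \fA(A_\Delta, A_\Delta')[-2]$ and $\iota_-\uprho$ is an analytic isomorphism, the Hochschild cochain $\upgamma \colonequals (\iota_-\uprho)^{-1}(\uptau - \uprho)$ is analytic and satisfies $\iota_\upgamma\uprho = \uptau - \uprho$, and I would use $\upgamma$ in place of $\upbeta$ to drive a relative Moser flow along the path $\uprho^s \colonequals \uprho + s(\uptau - \uprho)$, $s\in[0,1]$; these are analytic SHIPs throughout because $(\updelta\upbeta)_{0,0} = 0$ for minimal $A$, so $\uprho^s_{0,0} = \uprho_{0,0}$ for all $s$. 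Running the same tree-formula and analyticity machinery in the $s$-direction produces an analytic $A_\infty$-isomorphism intertwining the strictifications at $s=0$ and $s=1$; composing with the isomorphisms from part (1) and inverting via \cref{analinverse} gives the claimed $(A^\uprho, \upsigma^\uprho) \cong_{\an,\cyc} (A^\uptau, \upsigma^\uptau)$. The closing assertion is then immediate, since (1) builds the cyclic analytic model from any analytic representative of a class in $\hh^{2-d}\Omega^{2,\cl}(A)$ and (2) shows it is independent of the choice of representative.

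Since the formal skeleton is Amorim--Tu's, the work is entirely analytic bookkeeping, and I expect the main obstacle to be twofold. First, the bound $\nrm{\upalpha^t_n} < C^n$ on the flow generator must hold uniformly in $t$ (and, in part (2), in $s$); this is an estimate of the same flavour as \cref{analinverse}--\cref{analmorb}, made possible by \cref{analbiinverse}, but it is where the combinatorics demand care. Second --- and this is the sharper point --- part (2) as stated only furnishes a primitive $\upbeta$ in the \emph{non}-analytic complex; the proposed remedy is to route the analytic exact $2$-form $\uptau - \uprho$ through the analytic contraction isomorphism to manufacture an analytic generator $\upgamma$, which ultimately rests on the claim that an analytic closed $2$-form which is exact in $\Omega^{2,\cl}(A)$ is already exact in $\Omega^{2,\cl,\an}(A)$ when $A$ is finite dimensional and minimal. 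Establishing this comparison of analytic and ordinary cohomology --- presumably via the contraction, the identification of closed $2$-forms with (negative) cyclic cochains, and finite-dimensionality --- is the step I would expect to require the most effort.
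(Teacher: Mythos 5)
Your part (1) follows the paper's proof essentially verbatim: take the linear path $\uprho^t = \upsigma + t(\uprho-\upsigma)$ with $\upsigma = \uprho_{0,0}$, generate the flow by $\upalpha^t = (\uprho^t)^{-1}\circ\overline\upxi$ for an analytic cocycle $\upxi$ with $S(\upxi)=\uprho-\upsigma$, and feed \cref{treemorb}, \cref{analmorb}, \cref{analinverse} the uniform-in-$t$ bounds coming from \cref{analbiinverse}. One small simplification you miss: there is no need for an order-by-order induction to bound $\upalpha^t_n$ — since $\nrm{\uprho^t_{i,j}}\leq\nrm{\uprho_{i,j}}$ uniformly in $t$, the proof of \cref{analbiinverse} already gives a $t$-independent geometric bound on $(\uprho^t)^{-1}$, and the bound on $\upalpha^t=(\uprho^t)^{-1}\circ\overline\upxi$ is then a one-line composition estimate.

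In part (2) there is a genuine gap, and it is exactly the step you flag as hardest: you assert that the argument "ultimately rests on the claim that an analytic closed $2$-form which is exact in $\Omega^{2,\cl}(A)$ is already exact in $\Omega^{2,\cl,\an}(A)$," and you leave this comparison of analytic and ordinary cohomology unproven. The paper's proof shows that no such comparison is needed, and your proposed workaround does not typecheck as written: the contraction $\iota_-\uprho$ maps $\barC^{\an,\bullet}(A)$ to $\barC^{\an,\bullet}(A,A_\Delta')$ (vector fields to $1$-forms), whereas $\uptau-\uprho$ lives in $\Omega^{2,\cl,\an}(A)\subset\fA(A_\Delta,A_\Delta')[-2]$ (it is a $2$-form, i.e.\ a bimodule pre-morphism), so $(\iota_-\uprho)^{-1}(\uptau-\uprho)$ is not defined and the identity $\iota_\upgamma\uprho=\uptau-\uprho$ equates objects of different types. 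The correct route is: use the explicit analytic splitting $h$ of the surjection $S$ from \cref{analsplitepi} to pick an analytic Hochschild cocycle $\upxi\in\barC^{\an,\bullet}(A,A_\Delta')$ with $S(\upxi)=\uprho-\uptau$ — this exists unconditionally, with no exactness hypothesis — and set $\upalpha^t=(\uprho^t)^{-1}\circ\overline\upxi$ along the path $\uprho^t=\uptau+t(\uprho-\uptau)$. The analyticity of the resulting flow $h^t$ then comes entirely from the uniform bounds, exactly as in part (1). The \emph{ordinary} exactness of $\uprho-\uptau$ in $\Omega^{2,\cl}(A)$ is used only once, as the formal hypothesis of \cite[Lemma 2.33]{AT22}: it guarantees that $\upxi$ is exact in $\barC^\bullet(A,A^\vee)$ and hence that the time-one flow $h^1$ is an honest $A_\infty$-morphism satisfying $(h^1)^*\uptau=\uprho$. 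Decoupling "where the analyticity comes from" (the splitting and the uniform estimates) from "where the exactness is used" (the non-analytic formal statement of Amorim--Tu) is the key observation that makes part (2) go through without the cohomology comparison you were trying to prove.
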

\begin{proof}
  (1) Let $\upsigma \colonequals \uprho_{0,0}$ and consider for each $t\in [0,1]$ the analytic closed 2-form
  \begin{equation}\label{eq:formfam}
    \uprho^t \colonequals \upsigma + t(\uprho - \upsigma) \in \Omega^{2,\cl,\an}(A),
  \end{equation}
  and use \cref{analsplitepi} to fix an analytic Hochschild cocycle $\upxi \in \barC^{\an,\bullet}(A,A_\Delta')$ with image $S(\upxi) = \uprho - \upsigma$ along the surjective map $S\colon  \to \barC^{\an,\bullet}(A,A_\Delta') \to \Omega^{2,\cl,\an}(A)[1]$.
  It follows from the formula \labelcref{eq:splittingmap} that $\upxi_1$ is given on generators by
  \[
    \upxi_1(x_1)(x_0) = \tfrac12 (\uprho - \upsigma)(\underline x_0)(x_1) = 0,
  \]
  so $\upxi_1 = 0$, and likewise $\upxi_0 = 0$.
  Because $A$ is minimal the component $\uprho^t_{0,0} = \uprho_{0,0}$ is invertible and hence $\uprho^t$ is an isomorphism by \cref{analbiinverse}.
  Hence, for every $t$ the Hochschild cocycle $\upalpha^t \in \barC^{\an,\bullet}(A)$ defined by the equation
  \[
    \upalpha^t \colonequals (\uprho^t)^{-1} \circ \overline\upxi,
  \]
  satisfies the equation $\iota_{\upalpha^t}\uprho^t = \upxi$.
  It is straightforward to verify that the sequence $\upalpha = (\upalpha_n)_{n\in\N}$ of functions $\upalpha_n\colon t\mapsto \upalpha^t_n$ is in $\fM_{[0,1]}(A,A)$.
  Indeed, each component $\uprho^t_n$ varies linearly in $t$, and it is easy to verify from the recursive identity \labelcref{eq:formfam} that the function $t\mapsto (\uprho^t)_n^{-1}$ is a polynomial of degree at most $n$ in $t$.
  Hence, $\upalpha$ is a well-defined family of Hochschild cocycles, which satisfies
  \[
    \upalpha^t_1 = (\uprho^t)^{-1}_{0,0} \circ \upxi_1 = 0
  \]
  because $\upxi_1=0$, and likewise $\upalpha^t_0 = ((\uprho^t)^{-1} \circ \overline\upxi)_0 = 0$ by definition.
  To show that it is analytic we note that for each $t\in[0,1]$ we have 
  \[
    \nrm{\uprho^t_{0,0}} = \nrm{\uprho_{0,0}},\quad \nrm{\uprho^t_{i,j}} = t\nrm{\uprho^t_{i,j}} \leq \nrm{\uprho_{i,j}} \quad (i+j > 0),
  \]
  so that $\nrm{\uprho^t_n} < C_1^n$ for a uniform choice of constant $C_1>0$ independent of $t$.
  It then follows from the proof of \cref{analbiinverse} that there is a uniform constant $C_2>0$ such that  $\nrm{(\uprho^t)^{-1}_n} < C_2^n$ for all $t$, and because $\upxi$ is analytic there is some $C_3>0$ bounding the composition
  \[
    \nrm{\upalpha^t_n} = \nrm{((\uprho^t)^{-1} \circ \overline\upxi)_n} < C_3^n,
  \]
  for all $t$.
  Therefore $\upalpha = (\upalpha_n)_{n\in\N}$ is uniformly analytic, and it follows from \cref{treemorb} and \cref{analmorb} that there exists a uniformly analytic family of pre-morphisms $f = (f_n)_{n\in\N}$ satisfying the differential equation \labelcref{crucialdiffeq}.
  It then follows from \cite[Proposition 2.32]{AT22} that for each $t$ the pullback $(f^t)^* \colon \Omega^{2,\cl,\an}(A) \to \Omega^{2,\cl,\an}(A)$ satisfies
  \[
    (f^t)^*\uprho^t = \upsigma,
  \]
  so in particular $f^1$ pulls back $\uprho^1 = \uprho$ to the constant 2-form $(f^1)^*\uprho = \upsigma$.
  Because $f_1^1 = \id$, the pre-morphism $f^1_1$ admits an analytic inverse by \cref{analinverse}.
  Hence if we define $A^\uprho$ as the $A_\infty$-algebra with product defined by the analytic Hochschild cocycle
  \[
    \upmu^\uprho \colonequals (f^1)^{-1} \circ \overline\upmu_A \circ \widehat{f^1} \in \barC^{\an,\bullet}(A),
  \]
  it follows as in \cite[Proposition 2.32]{AT22} that $f^1\in \Hom_{\AinfAlg}^\an(A^\uprho,A)$ is an isomorphism and $\upsigma^\uprho = \upsigma = (f^1)^*\uprho$ is a cyclic structure as claimed.

  (2) Now let $\uptau$ be another analytic SHIP defining a cyclic analytic $A_\infty$-algebra $(A^\uptau,\upsigma^\uptau)$, and let $g \in \Hom_{\AinfAlg}^\an(A^\uptau,A)$ denote the isomorphism for which $\upsigma^\uptau = g^*\uptau$ is cyclic.
  Suppose that $\uptau - \uprho$ is exact in the complex $\Omega^{2,\cl}(A)$, then as in \cite[Lemma 2.33]{AT22} we note that $\uptau_{0,0} = \uprho_{0,0}$, so that for every $t\in[0,1]$ the analytic closed 2-form
  \[
    \uprho^t \colonequals \uptau + t(\uprho - \uptau) \in \Omega^{2,\cl,\an}(A),
  \]
  has $\uprho^t_{0,0} = \uptau_{0,0} = \uprho_{0,0}$ an isomorphism, and is therefore a SHIP.
  Fixing a cocycle $\upxi \in \barC^{\an,\bullet}(A,A'_\Delta)$ such that $S(\upxi) = \uprho - \uptau$ as before, there again exists a family of Hochschild cocycles $\upalpha$ which satisfy the equation $\iota_{\upalpha^t}\uprho^t = \upxi$ for each $t\in[0,1]$.
  The norms of $\uprho^t_n$ are bounded by $\max\{\nrm{\uptau_n},\nrm{\uprho_n}\}$ for all $t$, so it again follows that $\upalpha$ is uniformly analytic.
  Hence there exists a family of pre-morphisms $h$ with $h^0 = \id$ solving the differential equation
  \[
    \frac{\d}{\d t} h^t = \upalpha^t \diamond h^t.
  \]
  Because $\uprho - \uptau$ is exact in $\Omega^{2,\cl}(A)$ it follows that $\upxi$ is exact in $\barC^\bullet(A,A^\vee)$
  Therefore \cite[Lemma 2.33]{AT22} implies that the analytic pre-morphism $h^1$ is an $A_\infty$-morphism such that $(h^1)^*\uptau = \uprho$, hence lies in  $h^1\in \Hom_{\AinfAlg}^\an(A,A)$.
  Now the composition $g^{-1} \circ h^1 \circ f^1 \in \Hom_{\AinfAlg}^\an(A^\uptau,A^\uprho)$ is an analytic $A_\infty$-isomorphism satisfying
  \[
    (g^{-1}\circ h^1 \circ f^1)^*\upsigma^\uptau = (h^1 \circ f^1)^*\uptau = (f^1)^*\uprho = \upsigma^\uprho.\qedhere
  \]
\end{proof}

\subsection{Analytic Calabi--Yau structures and SHIPs}

Cyclic structures express a type of Calabi--Yau symmetry, which can be formalised using the notion of a \emph{right Calabi--Yau structure} in the terminology of \cite{BD19}.
For ordinary $A_\infty$-algebras, these structures can be defined as follows.

\begin{defn}
  A \emph{right $d$-Calabi--Yau structure} on an $A_\infty$-algebra $A = (A,\upmu) \in\AinfAlg$ is a cocycle $\upphi \in Z^{-d}\barC_\lambda^\bullet(A)$ for which the map $A \to A^\vee[-d] \colon a \mapsto \upphi_0(\upmu_2(a,-))$ is a quasi-isomorphism.
\end{defn}

It was shown by Cho--Lee \cite{CL11} (and \cite{KS08} in the symplectic language) that right CY structures and SHIPs on an $A_\infty$-algebra $A$ are equivalent notions.
More precisely, the two structures are identified up to homotopy by a map $\Omega^{2,\cl}(A)[1] \to \barC_\lambda^\bullet(A)$.
This map can be constructed in two steps.

\begin{prop}[see \cite{CL11}\cite{AT22}] \label{sesforcyclic}
  The map $S\colon \barC^\bullet(A,A_\Delta^\vee) \to \Omega^{2,\cl}(A)[1]$ defined by
  \[
    (S\upxi)(\mathbf{x} \otimes \underline v \otimes \mathbf y)(w)
    = \xi(\mathbf{x} \otimes v \otimes \mathbf{y})(w)
    - \xi(\mathbf{y} \otimes w \otimes\mathbf{x})(v),
  \]
  defines a chain map fitting into a short exact sequence of chain complexes
  \begin{equation}\label{eq:firstSES}
    \begin{tikzcd}
      0 \ar[r] & \barC^\bullet_\lambda(A) \ar[r] & \barC^\bullet(A,A_\Delta^\vee) \ar[r,"S",shift left=1] & \Omega^{2,\cl}(A)[1] \ar[r] & 0
    \end{tikzcd}
  \end{equation}
  where the first map is the inclusion of the cyclic cochains.
  Moreover, this short exact sequence is natural over $A\in \AinfAlg$ along the induced pullbacks on Hochschild cochains and bimodule maps.
\end{prop}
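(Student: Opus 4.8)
Following \cite{CL11,AT22}, the plan is to check in order: that $S$ lands in the subcomplex of closed $2$-forms, that it intertwines the Hochschild differential $\bb$ with the differential $\updelta$ on $\Omega^{2,\cl}(A)[1]$, that $\ker S$ is exactly the cyclic subcomplex $\barC^\bullet_\lambda(A)$, that $S$ is degreewise surjective, and finally that the whole picture is natural in $A$. The first three points are local computations in the bar model, the fourth assembles these into the short exact sequence \eqref{eq:firstSES}, and the last upgrades it to a morphism of short exact sequences.

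\emph{Well-definedness.} Given $\upxi\in\barC^\bullet(A,A_\Delta^\vee)$, the skew-symmetry axiom for $S\upxi$ is visible directly from the defining formula: the cyclic rotation that moves $\underline v$ into the evaluation slot and $w$ into the underlined slot simply interchanges the two summands $\upxi(\mathbf x\otimes v\otimes\mathbf y)(w)$ and $\upxi(\mathbf y\otimes w\otimes\mathbf x)(v)$, producing an overall sign; the only thing to verify is that this sign matches the Koszul sign $(-1)^{K}$ prescribed in the definition of a closed $2$-form. For closedness one expands each of the three terms of the closedness relation for $S\upxi$ into two $\upxi$-terms and matches the resulting six summands in cancelling pairs by relabelling inputs. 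I expect this Koszul-sign bookkeeping — together with the analogous bookkeeping in the chain-map identity below — to be the main obstacle: everything is elementary, but the signs must be tracked carefully through the cyclic permutations.

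\emph{Chain map and kernel.} I would expand $\bb\upxi = \upnu_\Delta^\vee\circ\overline\upxi - (-1)^{|\upxi|}\upxi\circ\widetilde\upmu$ using the explicit component formula for $\upnu_\Delta^\vee$, apply $S$, and compare termwise with $\updelta(S\upxi) = \upnu_\Delta^\vee\circ\widehat{S\upxi} - (-1)^{|S\upxi|}(S\upxi)\circ\widetilde{\upnu_\Delta}$; once the $\upnu_\Delta$- and $\upnu_\Delta^\vee$-actions are written out, the two sides match term by term up to the shift conventions. For the kernel, $S\upxi=0$ reads $\upxi(\mathbf x\otimes v\otimes\mathbf y)(w)=\upxi(\mathbf y\otimes w\otimes\mathbf x)(v)$ for all $\mathbf x,\mathbf y,v,w$, and specialising $\mathbf x$ to the empty word recovers precisely the cyclicity relation defining $\barC^\bullet_\lambda(A)$; conversely a cyclic cochain is visibly annihilated by $S$. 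Since the Hochschild differential preserves cyclicity, this identifies $\barC^\bullet_\lambda(A)$ with the kernel subcomplex.

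\emph{Surjectivity and naturality.} For surjectivity I would exhibit an explicit (non-chain) section: send a closed $2$-form $\uprho$ to the Hochschild cochain $\tfrac12\uprho$ obtained by moving the evaluation slot into the underlined slot and keeping the underlying multilinear maps, division by $2$ being harmless over $\C$. Skew-symmetry of $\uprho$ then gives $S(\tfrac12\uprho)=\uprho$ directly. Combined with the previous steps, this produces the short exact sequence \eqref{eq:firstSES}. Finally, for a morphism $f\in\Hom_\AinfAlg(A,B)$ the two relevant pullbacks — the map $\upxi\mapsto f_\Delta^\vee\circ\overline{\upxi\circ\widehat f}$ on Hochschild cochains with values in the dual bimodule, and the pullback $f^*$ on closed $2$-forms — are assembled from $\widehat f$ and $f_\Delta^\vee$ in the same way, so a direct expansion gives $S_A\circ f^* = f^*\circ S_B$; since $f^*$ also restricts to the pullback on $\barC^\bullet_\lambda$, the sequence \eqref{eq:firstSES} is natural in $A$, as claimed.
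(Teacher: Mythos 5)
The paper does not actually prove this proposition: it is imported from \cite{CL11} and \cite{AT22}, and the only in-house computation with the same content is the analytic analogue \cref{analsplitepi}, so that lemma and the cited sources are the natural reference points. Your overall architecture (well-definedness of $S\upxi$ as a closed $2$-form, the chain-map identity, identification of the kernel, surjectivity, naturality) is the standard one, and the well-definedness, kernel and naturality steps are fine as sketches; in particular the kernel argument works, since setting $\mathbf{x}=\emptyset$ in $S\upxi=0$ yields invariance of $\upxi$ under a single cyclic rotation, which iterates to full cyclicity.

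The surjectivity step, however, has a genuine gap. A Hochschild cochain has one component $\upxi_n$ in each arity, while $S\upxi$ must reproduce all $n$ components $\uprho_{i,j}$ with $i+j+1=n$ simultaneously; skew-symmetry only relates $\uprho_{i,j}$ to $\uprho_{j,i}$ (the rotation exchanging the underlined and evaluation slots), so it cannot express every component through one preferred one, and a section of the form $\tfrac12\uprho$ with a fixed placement of the underline does not satisfy $S(\tfrac12\uprho)=\uprho$ beyond the case where only $\uprho_{0,0}$ is nonzero (cf.\ \cref{exSHIPfromlinform}). Worse, $S$ is \emph{not} surjective onto merely skew-symmetric forms: writing $t$ for the cyclic rotation, each component of $S\upxi$ has the shape $t^{q}\upxi-t^{p}\upxi$, so already in arity two the image of $S$ is the kernel of the norm operator $1+t+t^{2}$, a proper subspace of the skew-symmetric forms; it is exactly the closedness relation (a three-term telescoping identity in the $t^{q}\upxi-t^{p}\upxi$ picture) that places $\uprho$ inside this image. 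Hence surjectivity genuinely uses closedness, and the correct section is the cyclic average
\[
  (h\uprho)(x_1\otimes\cdots\otimes x_n)(x_0) \;=\; \sum_{i=1}^n \frac{(-1)^\#}{n+1}\,\uprho(x_{i+1},\ldots,x_n,\underline{x_0},\ldots,x_{i-1})(x_i),
\]
which is the map the paper itself borrows from \cite[Equation 11]{AT22} in the proof of \cref{analsplitepi}; verifying $S\circ h=\id$ there uses both skew-symmetry and closedness. Replacing your proposed section by $h$ repairs the argument.
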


The above shows that the map $S$ induces an isomorphism between $\Omega^{2,\cl}(A)[1]$ and the quotient complex $\barC^\bullet(A,A_\Delta^\vee)/\barC_\lambda^\bullet(A)$.
As observed by Kontsevich--Soibelman, the latter complex can be related to the negative cyclic complex via the following short exact sequence.

\begin{prop}[\cite{KS08}\cite{AT22}]\label{secondsesforcyclic}
  There is a short exact sequence of chain complexes
  \begin{equation}\label{eq:secondses}
    \begin{tikzcd}
      0 \ar[r] & \barC^\bullet(A,A^\vee)/\barC^\bullet_\lambda(A) \ar[r,"\id-t"] & (\barB A/\ell)^\vee \ar[r,"N"] & \barC_\lambda^\bullet(A) \ar[r] & 0.
    \end{tikzcd}
  \end{equation}
  where the maps are the compositions of the norm operator $(N\upxi)_n = (1 + t + \cdots + t^{n})\upxi_n$ and the operator $\id - t$ via the identification
  \[
    (\barB A/\ell)^\vee \cong \prod_{n\geq 0} \hom_\elle(A[1]^{\otimes n+1}, \ell)[1] \cong \prod_{n\geq 0} \hom_\elle(A[1]^{\otimes n}, A^\vee) \cong \fM(A,A^\vee)[-1].
  \]
  The short exact sequence is moreover natural in $A\in\AinfAlg$ via the induced maps.
\end{prop}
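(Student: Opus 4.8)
The plan is to recognise \eqref{eq:secondses} as a cochain-level avatar of Connes' exact sequence, and to follow the argument of \cite{KS08,AT22}. First I would fix the bookkeeping. Under the identification $(\barB A/\ell)^\vee \cong \fM(A,A^\vee)[-1] = \barC^\bullet(A,A^\vee)$ of graded vector spaces, the differential dual to the bar coderivation $\widetilde\upmu$ becomes the ``inner'' summand $\upxi \mapsto -(-1)^{|\upxi|}\upxi\circ\widetilde\upmu$ of the Hochschild differential $\bb$, while the ``wrap-around'' summand $\upnu^\vee_\Delta\circ\overline{(-)}$ — the one built from the dual bimodule structure — is precisely the difference between the two differentials; write $b'$ for this inner differential. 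With this identification the operators $t$ and $N$ act on the length-$n$ part $M_n$ of $\barC^\bullet(A,A^\vee)$ through the cyclic group $C_{n+1}$ permuting the $n+1$ arguments $(a_0,\ldots,a_n)$ with the usual Koszul signs, $N = \id + t + \cdots + t^n$ on $M_n$, and the cyclic subcomplex is exactly $\barC^\bullet_\lambda(A) = \ker(\id - t)$.

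The main step, and the one I expect to be the real obstacle, is to verify that $\id - t$ and $N$ are chain maps for the two differentials — that is, the Connes identities $(\id - t)\circ\bb = b'\circ(\id - t)$ and $\bb\circ N = N\circ b'$ on $\barC^\bullet(A,A^\vee)$ (since $\bb$ preserves $\barC^\bullet_\lambda(A)$, the first descends to the quotient). This is a sign-heavy computation with the explicit formulas for $\bb$, the dual bimodule $\upnu^\vee_\Delta$, and $t$: conjugating a single application of some $\upmu_k$ by $t$ exchanges an inner face for the wrap-around face up to a Koszul sign, and a telescoping sum of these contributions produces the two identities. In the $A_\infty$ setting this is the computation in \cite{AT22}, specialising the classical identities for associative algebras. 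Granting them, $\id - t$ becomes a map of complexes out of $\barC^\bullet(A,A^\vee)/\barC^\bullet_\lambda(A)$ and $N$ a map of complexes into $\barC^\bullet_\lambda(A)$, and moreover $N(\id - t) = (\id - t)N = \id - t^{n+1} = 0$ on $M_n$ already shows $\operatorname{im}(\id - t)\subseteq\ker N$ and $\operatorname{im}(N)\subseteq\ker(\id - t) = \barC^\bullet_\lambda(A)$, so both composites in \eqref{eq:secondses} vanish and the maps land where claimed.

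Exactness then reduces to a purely algebraic statement about the $C_{n+1}$-action on each $M_n$, and here I would use that $\C$ has characteristic zero, so that $\C[C_{n+1}]$ is semisimple and the averaging idempotent $\tfrac1{n+1}N$ is available. This gives $\operatorname{im}(N) = M_n^{C_{n+1}} = \ker(\id - t)$, while the identity $(n+1)\id - N = (\id - t)(\sum_{j=1}^{n}(\id + t + \cdots + t^{j-1}))$ gives $\ker N\subseteq\operatorname{im}(\id - t)$, hence $\ker N = \operatorname{im}(\id - t)$ (exactness in the middle). Exactness on the left is that $\id - t$ descends to an injection $\barC^\bullet(A,A^\vee)/\barC^\bullet_\lambda(A)\hookrightarrow(\barB A/\ell)^\vee$ with image $\operatorname{im}(\id - t)$; exactness on the right is that $N$ carries a cyclic cochain $\upeta$ to $(n+1)\upeta$, hence is onto $\barC^\bullet_\lambda(A)$. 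Assembling over $n$ by taking products, and combining with the chain-map property, produces the short exact sequence of complexes. Finally, for naturality I would observe that for $f\in\Hom_{\AinfAlg}(A,B)$ the induced maps — the dual cohomomorphism on the middle terms and the Hochschild pullbacks $f^*$ on the outer terms — intertwine $t$, $N$, and the respective differentials, which is again a direct check on the formulas and is the functoriality part of \cite{KS08,AT22}.
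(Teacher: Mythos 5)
Your argument is correct and is essentially the argument the paper relies on: the paper states this proposition without proof, citing \cite{KS08} and \cite{AT22}, and what you have written is the standard Connes-type exactness argument — reduction, length by length, to the representation theory of the cyclic group $C_{n+1}$ in characteristic zero (your identity $(n+1)\id - N = (\id-t)\sum_{j}(\id+t+\cdots+t^{j-1})$ checks out), together with the two compatibility identities $(\id-t)\circ\bb = b'\circ(\id-t)$ and $\bb\circ N = N\circ b'$ making $\id-t$ and $N$ chain maps. The one substantive step you defer, the sign-level verification of those two identities for the $A_\infty$ Hochschild differential, is exactly what the paper also outsources to the cited references, so your proposal is if anything more detailed than the paper's own treatment.
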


It is well known that the normalised bar complex $\barB A/\ell$ is acyclic if $A$ is unital, hence so is its dual.
Choosing a contracting homotopy $s\colon (\barB A/\ell)^\vee[1] \to (\barB A/\ell)^\vee$, one obtains a quasi-isomorphism
\[
  \Omega^{2,\cl}(A)[2] \cong (\barC^\bullet(A,A_\Delta^\vee)/\barC_\lambda^\bullet(A))[1] \xrightarrow{N\circ s \circ (\id -t)} \barC_\lambda^\bullet(A),
\]
after composing with the inverse of $S$.
One can show that this map identifies right CY structures with SHIPs, leading to the following statement for unital $A_\infty$-algebras.
\begin{prop}[{\cite[Corollary 2.24]{AT22}}]
  The map $\Omega^{2,\cl}(A)[2] \xrightarrow{\sim} \barC_\lambda^\bullet(A)$ is a quasi-isomorphism, is natural with respect to $A_\infty$-morphisms, and identifies right CY structures with SHIPs.
\end{prop}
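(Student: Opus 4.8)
The plan is to assemble the quasi-isomorphism from the two short exact sequences \eqref{eq:firstSES} and \eqref{eq:secondses} together with the acyclicity of the normalised bar complex, and then to read off the three claimed properties. First I would use that when $A$ is unital the normalised bar complex $\barB A/\ell$ is contractible, hence its continuous dual $(\barB A/\ell)^\vee$ is acyclic; fix a contracting homotopy $s$ on it once and for all. Applying the long exact cohomology sequence to \eqref{eq:secondses} with its middle term acyclic shows that the connecting homomorphism is an isomorphism $\HC_\lambda^\bullet(A) \xrightarrow{\ \sim\ } \hh^{\bullet+1}\big(\barC^\bullet(A,A^\vee)/\barC_\lambda^\bullet(A)\big)$, and a short diagram chase identifies the inverse of this connecting map, on the chain level, with the composite $N\circ s\circ(\id-t)$ (one checks this is a chain map after the appropriate degree shift, since $\id-t$ is the inclusion and $N$ the quotient map in \eqref{eq:secondses}). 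Since $S$ of \cref{sesforcyclic} is an honest chain isomorphism $\barC^\bullet(A,A_\Delta^\vee)/\barC_\lambda^\bullet(A)\cong \Omega^{2,\cl}(A)[1]$, composing gives precisely the map
\[
  \Omega^{2,\cl}(A)[2] \xrightarrow{\ S^{-1}\ } \big(\barC^\bullet(A,A_\Delta^\vee)/\barC_\lambda^\bullet(A)\big)[1] \xrightarrow{\ N\circ s\circ(\id-t)\ } \barC_\lambda^\bullet(A)
\]
of the statement, and the computation just described shows it is a quasi-isomorphism.

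For naturality I would invoke that both \cref{sesforcyclic} and \cref{secondsesforcyclic} state the short exact sequences are natural with respect to the pullbacks induced by $A_\infty$-morphisms, and the connecting homomorphism of a natural short exact sequence is natural on cohomology. The chain-level map depends on the non-natural choice of contracting homotopy $s$, but any two contracting homotopies differ by a chain homotopy, so the induced map $\hh^\bullet\Omega^{2,\cl}(A)[2]\to\HC_\lambda^\bullet(A)$ is canonical and functorial in $A\in\AinfAlg$, which is all that is needed.

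Finally, to see that the quasi-isomorphism identifies right Calabi--Yau structures with SHIPs, I would trace leading components through $\id-t$, a concrete $s$, $N$ and $S^{-1}$. The goal is to show that the class of a cocycle $\upphi\in Z^{-d}\barC_\lambda^\bullet(A)$ is sent to a class $[\uprho]\in\hh^{2-d}\Omega^{2,\cl}(A)$ whose $(0,0)$-component is, up to a nonzero scalar and Koszul signs, the skew-symmetrisation of the pairing $(v,w)\mapsto\upphi_0(\upmu_2(v,w))$; since $\upphi$ is already cyclic this pairing is itself skew, so $\uprho_{0,0}$ agrees up to a unit with the map $a\mapsto\upphi_0(\upmu_2(a,-))\colon A\to A^\vee[-d]$. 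Hence $\uprho_{0,0}$ is a quasi-isomorphism --- i.e. $\uprho$ is a SHIP --- exactly when $\upphi$ satisfies the nondegeneracy condition defining a right $d$-CY structure, and conversely any SHIP has nondegenerate leading term and therefore corresponds to a right CY structure under the inverse identification.

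I expect the last step to be the main obstacle: it requires unwinding the explicit formula for $S$, the norm operator $N$, and an explicit contracting homotopy $s$ on $(\barB A/\ell)^\vee$, and carefully matching up Koszul signs and the accumulated degree shifts so as to pin down the leading term $\uprho_{0,0}$. Everything else is formal homological algebra. This is precisely the content of \cite[Corollary 2.24]{AT22}, whose argument transfers here verbatim once the earlier results of this section are in place.
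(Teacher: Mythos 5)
Your argument is correct and follows essentially the same route as the paper: the paper does not actually prove this proposition but constructs the map exactly as you do---via the short exact sequences of \cref{sesforcyclic} and \cref{secondsesforcyclic}, the acyclicity of $(\barB A/\ell)^\vee$ for unital $A$, and a contracting homotopy realising the inverse of the connecting map---and then cites \cite[Corollary 2.24]{AT22} for the remaining verification, just as you defer the final sign-chase there. The only cosmetic slip is that you describe sending a cyclic cocycle $\upphi$ forward to $\uprho$ although the map goes from $\Omega^{2,\cl}(A)[2]$ to $\barC_\lambda^\bullet(A)$; your parenthetical about the inverse identification already accounts for this.
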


We now want to show a version of the above for analytic $A_\infty$-algebras, involving the analytic SHIPs and analytic right Calabi--Yau structures, by which we mean the following.

\begin{defn}
  Let $A\in\AinfAlgan$, then a right Calabi--Yau structure $\upphi$ is \emph{analytic} if it lies in the subcomplex $\barC_\lambda^{\an,\bullet}(A) \subset \barC_\lambda^{\bullet}(A)$.
\end{defn}

\begin{ex}\label{exboundedlin}
  Let $(A,D,\norm)$ be a normed DG algebra and $\uplambda \colon A^n \to \C$ a $\norm$-bounded linear functional such that
  \[
    \uplambda(Da) = 0,\quad \uplambda(a_1\cdot a_2) = (-1)^{|a_1||a_2|}\uplambda(a_2\cdot a_1).
  \]
  Then the element $\upphi \in \barC_\lambda^{-n,\an}(A)$ defined by $\upphi_0 = \uplambda$ and $\upphi_n = 0$ for $n>0$ is a cocycle, and it is a right $n$-Calabi--Yau structure if the cohomology pairing $([a_1],[a_2]) \mapsto \uplambda([a_1\cdot a_2])$ is nondegenerate.
\end{ex}

We will show that the map $\Omega^{2,\cl}(A)[2] \to \barC_\lambda^\bullet(A)$ restricts to a map on analytic elements by showing that the short exact sequences \labelcref{eq:firstSES} and \labelcref{eq:secondses} restrict to the relevant analytic subspaces.
The analytic version of \cref{sesforcyclic} is as follows.

\begin{lem}\label{analsplitepi}
  Let $A$ be an analytic $A_\infty$-algebra, then the map $S$ fits into a short-exact sequence
  \[
    \begin{tikzcd}
      0 \ar[r] & \barC^{\an,\bullet}_\lambda(A) \ar[r] & \barC^{\an,\bullet}(A,A_\Delta') \ar[r,"S"] & \Omega^{2,\cl,\an}(A)[1] \ar[r] & 0
    \end{tikzcd}
  \]
  which is moreover natural with respect to analytic $A_\infty$-morphisms $f\in\Hom_{\AinfAlg}^\an(A,B)$.
\end{lem}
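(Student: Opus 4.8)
The plan is to obtain the analytic short exact sequence by simply restricting the non-analytic one from \cref{sesforcyclic} to the relevant analytic subspaces, so that the only genuine content is threefold: that $S$ preserves the geometric-series bound, that its kernel is exactly the analytic cyclic cochains, and that surjectivity survives the restriction. The last point I would handle by exhibiting an explicit bounded section of $S$.

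First I would check that $S$ restricts to a chain map $\barC^{\an,\bullet}(A,A_\Delta') \to \Omega^{2,\cl,\an}(A)[1]$. If $\upxi\in\barC^{\an,\bullet}(A,A_\Delta')$, say $\nrm{\upxi_n} < C^n$ for all $n\geq 1$, then each component $(S\upxi)_{i,j}$ is, up to Koszul signs and a cyclic reindexing of its arguments, a difference of two evaluations of $\upxi_{i+j+1}$, so $\nrm{(S\upxi)_{i,j}} \leq 2\,\nrm{\upxi_{i+j+1}} < 2\,C^{i+j+1}$; since $S\upxi$ already lies in $\Omega^{2,\cl}(A)[1]$ by \cref{sesforcyclic}, it lies in $\Omega^{2,\cl,\an}(A)[1]$. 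Exactness at the first two terms is then inherited from \eqref{eq:firstSES}: the inclusion $\barC^{\an,\bullet}_\lambda(A)\hookrightarrow\barC^{\an,\bullet}(A,A_\Delta')$ is injective, and $\ker(S|_{\an}) = \ker S \cap \barC^{\an,\bullet}(A,A_\Delta') = \barC^\bullet_\lambda(A)\cap\barC^{\an,\bullet}(A,A_\Delta') = \barC^{\an,\bullet}_\lambda(A)$, the last equality being the definition of the analytic Connes complex.

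The main step is surjectivity of $S|_{\an}$, which I would prove by writing down the usual set-theoretic section of $S$ (cf. \cite{CL11,AT22}): send a closed $2$-form $\uprho$ to the Hochschild cochain $\upxi$ defined by
\begin{equation}\label{eq:splittingmap}
  \upxi_n(x_1,\dots,x_n)(x_0) \colonequals \tfrac12\,\uprho_{0,n-1}(\underline{x_0}\otimes x_2 \otimes \cdots \otimes x_n)(x_1),\qquad \upxi_0 \colonequals 0.
\end{equation}
This map is manifestly bounded, since $\nrm{\upxi_n} \leq \tfrac12\,\nrm{\uprho_{0,n-1}}$, so it preserves the analytic condition, and the identity $S\upxi = \uprho$ is exactly the computation from the non-analytic case: expanding $S\upxi$ and invoking the skew-symmetry relation in the definition of a closed $2$-form (which expresses every component $\uprho_{i,j}$ in terms of the slice $\uprho_{0,\bullet}$) turns the two summands into two equal copies of $\tfrac12\uprho$. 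Hence $S|_{\an}$ is onto and the analytic sequence is exact. Formula \eqref{eq:splittingmap} is also the source of the expression for $\upxi_1$ used in the proof of \cref{Darboux}.

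Finally, for naturality I would observe that, for an analytic $A_\infty$-morphism $f\in\Hom_{\AinfAlg}^\an(A,B)$, the pullback $f^*$ on $\barC^{\an,\bullet}(-,(-)_\Delta')$ (available by \cref{analpullback}) and the pullback $f^*$ on $\Omega^{2,\cl,\an}(-)$ are by construction the restrictions of the corresponding non-analytic pullbacks, which commute with $S$ by the naturality clause of \cref{sesforcyclic}; a commuting square stays commuting under restriction to subspaces, and compatibility with the inclusion of $\barC^{\an,\bullet}_\lambda$ is automatic. I do not expect a real obstacle: the whole argument is a boundedness bookkeeping layered on \cref{sesforcyclic}, and the only point demanding a little care is matching the two-index description of closed $2$-forms against the one-index Hochschild cochains in \eqref{eq:splittingmap} and tracking the Koszul signs that make the halving map a genuine section.
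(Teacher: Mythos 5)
Your overall architecture is exactly the paper's: restrict the sequence of \cref{sesforcyclic}, observe $\nrm{(S\upxi)_{i,j}}\leq 2\nrm{\upxi_{i+j+1}}$ so that $S$ preserves the geometric-series bound, identify the kernel as the intersection defining $\barC^{\an,\bullet}_\lambda(A)$, prove surjectivity by exhibiting a bounded set-theoretic section, and get naturality from \cref{analpullback}. The one genuine gap is the section itself. The formula you propose,
\[
  \upxi_n(x_1,\dots,x_n)(x_0) = \tfrac12\,\uprho_{0,n-1}(\underline{x_0}\otimes x_2\otimes\cdots\otimes x_n)(x_1),
\]
is the $n=1$ case extrapolated to all $n$, and it is not a section of $S$ for $n\geq 2$. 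The justification you give --- that skew-symmetry ``expresses every component $\uprho_{i,j}$ in terms of the slice $\uprho_{0,\bullet}$'' --- is false: skew-symmetry relates $\uprho_{i,j}(\mathbf{x}\otimes\underline v\otimes\mathbf y)(w)$ to $\uprho_{j,i}(\mathbf y\otimes\underline w\otimes\mathbf x)(v)$, simultaneously swapping the underlined and output slots, so it only identifies $\uprho_{i,0}$ with $\uprho_{0,i}$ and otherwise pairs components with both indices positive; reducing a general $\uprho_{i,j}$ to the $\uprho_{0,\bullet}$ slice also requires the three-term closedness relation. Concretely, for $i+j=1$ the closedness condition is vacuous (it needs $1\leq i<j<k\leq n$ with $n=2$), so a closed $2$-form supported in components $\uprho_{1,0},\uprho_{0,1}$ is just an essentially arbitrary trilinear form $T$ constrained only by skew-symmetry; computing $(S\upxi)_{1,0}(x_1\otimes\underline{x_2})(x_0)$ with your $\upxi_2$ produces $\tfrac12$ times a difference of two cyclic rotations of $T$, which does not equal $T(x_1,x_2,x_0)$ for generic $T$.

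The correct section is the cyclic average used in the paper (Equation 11 of \cite{AT22}),
\[
  (h\uprho)_n(x_1\otimes\cdots\otimes x_n)(x_0)=\sum_{i=1}^n\frac{(-1)^{\#}}{n+1}\,\uprho(x_{i+1},\ldots,x_n,\underline{x_0},\ldots,x_{i-1})(x_i),
\]
whose identity $S\circ h=\id$ uses both skew-symmetry and closedness; your $\tfrac12$ is the specialization $\frac1{n+1}$ at $n=1$, which is also where the expression for $\upxi_1$ in the proof of \cref{Darboux} comes from. The good news is that the analytic half of your argument transfers verbatim: $h$ is an average of $n$ terms each bounded by $\nrm{\uprho_{n-i,i-1}}<C^n$ divided by $n+1$, so $\nrm{(h\uprho)_n}<C^n$ and surjectivity of the restricted $S$ follows. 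With the section replaced, the rest of your proof (kernel, exactness, naturality) is correct and coincides with the paper's.
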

\begin{proof}
  By construction $\barC^{\an,\bullet}_\lambda(A) = \barC^{\bullet}_\lambda(A) \cap \barC^{\an,\bullet}(A,A_\Delta')$ is a subcomplex of $\barC^{\an,\bullet}(A,A_\Delta')$, and this subcomplex is the kernel of the restriction of $S$ to $\barC^{\an,\bullet}(A,A_\Delta')$ by \cref{sesforcyclic}.
  Hence it suffices to show that $S$ restricts to a surjective map $S\colon\barC^{\an,\bullet}(A,A'_\Delta) \to \Omega^{2,\cl,\an}(A)[1]$.
  
  Suppose $\upphi \in \barC^{\bullet,an}(A,A_\Delta')$, and pick $C>0$ such that $\nrm{\upphi_n} < C^n$ for all $n\geq 1$.
  Then for any element $a \in A[1]^{\otimes i}\otimes A[1] \otimes A[1]^{\otimes j}$ and any decomposition $a=\sum_k \mathbf{x}_k \otimes \underline v_k \otimes \mathbf{y}_k$ 
  \[
    \begin{aligned}
      \nrm{(S\upphi)_{i,j}(a)} &\leq \sum_k \sup_{\nrm{w} = 1} \nrm{(S\upphi)_{i,j}(\mathbf{x}_k\otimes \underline v_k \otimes \mathbf{y}_k)(w)}
      \\&\leq \sum_k \sup_{\nrm{w}=1} \left(\nrm{\upphi_{i+j+1}(\mathbf{x}_k\otimes \underline v_k \otimes \mathbf{y}_k)(w)} + \nrm{\upphi_{i+j+1}(\mathbf{y}_k\otimes \underline w \otimes \mathbf{x}_k)(v_k)}\right)
      \\&\leq 2\nrm{\upphi_{i+j+1}}\cdot \sum_k \nrm{\mathbf{x}_k} \nrm{v_k} \nrm{\mathbf{y}_k}.
    \end{aligned}
  \]
  Taking the infimum over all decompositions of $a$, it follows that $\nrm{(S\upphi)_{i,j}(a)} \leq \nrm{\upphi_{i+j+1}} \cdot \nrm{a}$, so that $\nrm{(S\upphi)_{i,j}} \leq 2\nrm{\upphi_{i+j+1}} < (2C)^{i+j+1}$.
  It follows that by \cref{bianalbound} that $S\upphi$ is analytic.

  To show the surjectivity of $S$ we use the map\footnote{Note that this is not a chain map, hence does not split the exact sequence of chain complexes.} $h\colon \Omega^{2,\cl}(A)[1] \to \barC^\bullet(A,A^\vee)$ defined in \cite[Equation 11]{AT22}, which is defined by the equations
  \begin{equation}\label{eq:splittingmap}
    (h\uprho)(x_1\otimes\cdots\otimes x_n)(x_0) = \sum_{i=1}^n \frac{(-1)^\#}{n+1}\uprho(x_{i+1},\ldots,x_n,\underline x_0,\ldots,x_{i-1})(x_i),
  \end{equation}
  which satisfies $S \circ h = \id$.
  Now suppose $\uprho\in \Omega^{2,\cl,an}(A)$ is an analytic closed 2-form, and let $C>0$ be a constant such that $\nrm{\uprho_{i,j}} < C^{i+j+1}$ for all $i,j\in\N$.
  Then for any element $a \in A[1]^{\otimes n}$ and decomposition $a = \sum_k a_{k,1}\otimes\cdots\otimes a_{k,n}$ there is an inequality
  \[
    \begin{aligned}
      \nrm{(h\uprho)_n(a)} &\leq \sup_{\nrm{a_0}=1} \sum_k \nrm{(h\uprho)_n(a_{k,1}\otimes \cdots \otimes a_{k,n})(a_0)}
      \\&\leq
      \frac{1}{n+1} \cdot \sum_k\sum_{i=1}^n\sup_{\nrm{a_0}=1} \nrm{\uprho_{n-i,i-1}(a_{k,i+1},\ldots,a_{k,n},\underline a_0,a_{k,1},\ldots,a_{k,i-1})(a_{k,i})}
      \\&\leq
      \frac{1}{n+1} \cdot \sum_{i=1}^n\nrm{\uprho_{n-i,i-1}} \cdot \sum_k\nrm{a_{k,1}} \cdots \nrm{a_{k,n}}
      \\&< C^n \cdot \sum_k\nrm{a_{k,1}} \cdots \nrm{a_{k,n}}.
    \end{aligned}
  \]
  Taking the infimum over all decompositions of $a$, it follows that $\nrm{(h\uprho)_n(a)} \leq C^n\nrm{a}$, which shows that $h\uprho$ is analytic.
  Hence $h$ defines a map $\Omega^{2,\cl,\an}(A)[1] \to \barC^\bullet(A,A_\Delta')$ which satisfies $S\circ h = \id$, showing that $S$ is surjective.
  
  Finally, if $f\in\Hom_{\AinfAlg}^\an(A,B)$ is an analytic morphism, it follows from \cref{analpullback} and the subsequent discussion that the pullback morphisms $f^*$ preserve analytic elements, so naturality follows.
\end{proof}
\begin{cor}
  The map $S$ induces an isomorphism $\barC^{\an,\bullet}(A,A_\Delta')/ \barC^{\an,\bullet}_\lambda(A) \xrightarrow{\sim} \Omega^{2,\cl,\an}(A)[1]$.
\end{cor}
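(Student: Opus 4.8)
This corollary is the first isomorphism theorem applied to the short exact sequence of \cref{analsplitepi}, so the proof will be very short. That lemma exhibits $S\colon \barC^{\an,\bullet}(A,A_\Delta') \to \Omega^{2,\cl,\an}(A)[1]$ as a surjective chain map whose kernel is precisely the subcomplex $\barC^{\an,\bullet}_\lambda(A)$. The plan is therefore to observe that $S$ factors through the canonical projection onto the quotient complex $\barC^{\an,\bullet}(A,A_\Delta')/\barC^{\an,\bullet}_\lambda(A)$ via an induced map which, by exactness, is a bijection in every degree, and which is a chain map because $S$ is one; a chain map that is a degreewise bijection is an isomorphism of complexes, and this gives the claim.

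The one point to record along the way is that the quotient complex is meaningful, i.e.\ that $\barC^{\an,\bullet}_\lambda(A)$ is stable under the Hochschild differential $\bb$; this was already built in when the analytic Connes complex was defined in \cref{analhoch} as the intersection of the two $\bb$-stable subcomplexes $\barC^\bullet_\lambda(A)$ and $\barC^{\an,\bullet}(A,A_\Delta')$. I would also note that the induced isomorphism is compatible with its non-analytic counterpart discussed after \cref{sesforcyclic}, through the evident commuting square of inclusions, and is natural in $A$ along analytic $A_\infty$-morphisms $f\in\Hom_{\AinfAlg}^\an(A,B)$ — both consequences of the naturality clause of \cref{analsplitepi}. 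There is no genuine obstacle here: all the work has already been done in establishing the short exact sequence, and the corollary merely repackages it.
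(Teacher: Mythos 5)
Your proof is correct and matches the paper's (implicit) argument: the corollary is stated in the paper without a separate proof precisely because it is the first isomorphism theorem applied to the short exact sequence of \cref{analsplitepi}, exactly as you say. Your additional remarks on $\bb$-stability of $\barC^{\an,\bullet}_\lambda(A)$ and on naturality are accurate and consistent with how these objects were set up in \cref{analhoch} and \cref{analsplitepi}.
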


For the analytic analogue of \cref{secondsesforcyclic} we note that $(\barB A/\ell)^\vee \cong \fM(A,A^\vee)[-2]$, and define
\[
  (\barB A/\ell)^{\vee,\an} \colonequals (\barB A/\ell)^\vee \cap \fA(A,A')[-2],
\]
as the subspace obtained from $\fA(A,A')[-2]$ via this identification.
The following now holds.

\begin{lem}
  Let $A$ be an analytic $A_\infty$-algebra, then there is a short exact sequence
  \begin{equation}\label{eq:secondanalses}
    \begin{tikzcd}
      0 \ar[r] & \barC^{\an,\bullet}(A,A')/\barC_\lambda^{\an,\bullet}(A) \ar[r,"\id-t"] & (\barB A/\ell)^\an \ar[r,"N"] & \barC_\lambda^{\an,\bullet}(A) \ar[r] & 0.
    \end{tikzcd}
  \end{equation}
  which is natural with respect to analytic $A_\infty$-morphisms.
\end{lem}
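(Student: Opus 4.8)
The plan is to show that the classical short exact sequence of \cref{secondsesforcyclic} restricts verbatim to the analytic subcomplexes, i.e.\ that the operators $\id-t$ and $N$ preserve the geometric--series bound and that analytic elements of $\ker N$, respectively analytic cyclic cochains, admit analytic preimages. The one structural input is that, under the identification of $(\barB A/\ell)^\vee$ with a product of spaces $\hom_\elle(A[1]^{\otimes n+1},\ell)$, the cyclic operator $t$ acts on the $n$--th piece by precomposition with a cyclic permutation of tensor factors together with a Koszul sign of modulus $1$. Since the projective tensor norm is invariant under permutation of factors, $t$ is an isometry on each such space, so every operator assembled from the powers $t^j$ with scalar coefficients is bounded, with operator norm at most the sum of the moduli of the coefficients. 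Consequently, if $\upxi$ is analytic with $\nrm{\upxi_n}<C^n$, then $\nrm{((\id-t)\upxi)_n}\leq 2\nrm{\upxi_n}$ and $\nrm{(N\upxi)_n}\leq (n+1)\nrm{\upxi_n}$, and since $(n+1)C^n<(C')^n$ for $C':=C\cdot\sup_{n\geq 1}(n+1)^{1/n}<\infty$, both $\id-t$ and $N$ send analytic cochains (valued in the continuous dual, as $t$ preserves continuity) to analytic cochains; by \cref{analboundedness} they therefore restrict to the indicated maps between the analytic spaces. The composite $N\circ(\id-t)$ vanishes already at the classical level, and $\id-t$ is injective on $\barC^{\an,\bullet}(A,A')/\barC_\lambda^{\an,\bullet}(A)$ because this quotient embeds into $\barC^\bullet(A,A^\vee)/\barC_\lambda^\bullet(A)$ --- here one uses that $\barC_\lambda^{\an,\bullet}(A)=\barC_\lambda^\bullet(A)\cap\barC^{\an,\bullet}(A,A_\Delta')$ by definition --- on which $\id-t$ is injective by \cref{secondsesforcyclic}. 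Naturality with respect to $f\in\Hom_{\AinfAlg}^\an(A,B)$ is inherited from that of \cref{secondsesforcyclic} together with the naturality of the analytic inclusions provided by \cref{analpullback}.

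What remains is to produce analytic lifts. For surjectivity of $N$: given an analytic cyclic cochain $\upphi\in\barC_\lambda^{\an,\bullet}(A)$, the cyclic symmetry condition says exactly that $t$ fixes each $\upphi_n$, so the cochain $\upxi$ with $\upxi_n=\tfrac1{n+1}\upphi_n$ satisfies $N\upxi=\upphi$ while $\nrm{\upxi_n}\leq\nrm{\upphi_n}$; hence $\upxi\in(\barB A/\ell)^\an$ is an analytic preimage. For exactness in the middle: take $\upxi\in(\barB A/\ell)^\an$ with $N\upxi=0$, and let $h_n$ be a standard bounded right inverse of $1-t$ in degree $n$, for instance $h_n=-\tfrac1{n+1}\sum_{j=1}^{n}j\,t^j$, which satisfies $(1-t)h_n=\id-\tfrac1{n+1}N$ and $\nrm{h_n}\leq\tfrac n2$. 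Then $\upeta_n\colonequals h_n\upxi_n$ defines a cochain valued in the continuous dual with $(1-t)\upeta_n=\upxi_n-\tfrac1{n+1}(N\upxi)_n=\upxi_n$ and $\nrm{\upeta_n}\leq\tfrac n2 C^n$, so $\upeta$ is analytic; its class in $\barC^{\an,\bullet}(A,A')/\barC_\lambda^{\an,\bullet}(A)$ maps to $\upxi$. Combined with the exactness of \cref{secondsesforcyclic}, this gives exactness of \labelcref{eq:secondanalses} at every spot.

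The step that actually requires work --- and the only real obstacle --- is this last one: one must know that the linear--algebra splittings used in the proof of \cref{secondsesforcyclic} can be taken with operator norm of at most polynomial growth in the tensor degree, so that analyticity survives under inversion of $N$ and $\id-t$. This is not a genuine difficulty: it follows at once from $t$ being an isometry for the projective tensor norm, and polynomial factors are harmless against a geometric--series bound. But it is the place where the analytic bookkeeping has to be carried out in detail rather than simply deduced from the classical statement.
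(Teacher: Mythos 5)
Your proposal is correct and follows essentially the same route as the paper: both verify that $\id-t$ and $N$ preserve the geometric-series bound (absorbing the polynomial factors $n+1$, resp.\ $2$, into the constant), both exhibit the analytic preimage $\sum_n\tfrac1{n+1}\upphi_n$ for surjectivity of $N$, and both invert $\id-t$ on $\ker N$ by the same polynomial in $t$ — your $h_n=-\tfrac1{n+1}\sum_{j=1}^n j\,t^j$ is (up to sign) exactly the operator $f$ the paper writes down, with the same $O(n)$ norm bound coming from $t$ being an isometry. The only cosmetic difference is that you make the identity $(1-t)h_n=\id-\tfrac1{n+1}N$ explicit where the paper merely asserts the operator is an inverse on $\ker N$.
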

\begin{proof}
  The operator $\id-t$ restricts to $\fA(A,A')[-1] \subset \fM(A,A^\vee)[-1]$ and clearly has the set of cyclic analytic elements as its kernel.
  Hence, it immediately follows that it defines an injective chain map
  \[
    \begin{tikzcd}
      \barC^{\an,\bullet}(A,A')/\barC_\lambda^{\an,\bullet}(A) \ar[r,"\id-t"] & (\barB A/\ell)^\vee \cap \fA(A,A')[-1] = (\barB A/\ell)^{\vee,\an}.
    \end{tikzcd}
  \]
  To see that the norm operator likewise preserves $\fA(A,A')$, note that if $\upphi$ satisfies $\nrm{\upphi_n} < C^n$ then
  \[
    \nrm{(N\upphi)_n} \leq \nrm{\upphi_n} + \cdots + \nrm{(t^n\upphi)_n} = (1+n) \nrm{\upphi_n} < (1+n) C^n < (3C)^n,
  \]
  so that $N\upphi$ is again analytic.
  Therefore it follows that $N$ defines a chain map
  \[
    \begin{tikzcd}
      (\barB A/\ell)^{\vee,\an} \ar[r,"N"] & \barC_\lambda^{\an,\bullet}(A).
    \end{tikzcd}
  \]
  It is moreover surjective, as every $\upphi\in \barC_\lambda^{\an,\bullet}(A)$ is the image of $\sum_n \tfrac1{n+1} \upphi_n \in \fA(A,A')[-1]$.
  To see that the sequence \labelcref{eq:secondanalses} is exact at the middle term, one can consider the inverse operator
  \[
    f\colon \ker N \to \barC^{\an,\bullet}(A,A'_\Delta)/\barC^{\an,\bullet}_\lambda(A),\quad
    (f\upxi)_n = \frac1{n+1}(t + 2 t^2+\cdots + n t^n)\upxi_n,
  \]
  which maps an analytic element with $\nrm{\upxi_n} <C^n$ for $n\geq 1$ to an element $f\upxi$ satisfying $\nrm{(f\upxi)_n} \leq \tfrac {n^2}{n+1} \nrm{\upxi_n} < (2C)^n$, which is therefore again analytic.
  It follows that \labelcref{eq:secondanalses} is a short exact sequence as required.
  Naturality again follows from the naturality in \labelcref{eq:secondanalses}
\end{proof}

To construct the map $\Omega^{2,\cl,\an}(A)[2] \to \barC_\lambda^\bullet(A)$ it now suffices to find a contracting homotopy for $(\barB A/\ell)^\vee$ which restricts to a contracting homotopy for $(\barB A/\ell)^{\vee,\an}$.
For this one can take e.g. the dual of the degeneracy map in \cite[Proposition-definition 1.1.12]{Lod92}, which is given by the formula
\[
  s\colon (\barB A/\ell)^\vee \to (\barB A/\ell)^\vee[-1],\quad (sg)_n(a_1,\ldots,a_n) = g_{n+1}(1,a_1,\ldots,a_n),
\]
where $1$ is any choice of unit in $A$.
The following lemma shows that this restricts to analytic elements.

\begin{lem}
  Let $A$ be a unital analytic $A_\infty$-algebra. Then the degeneracy map restricts to a contracting homotopy $s\colon (\barB A/\ell)^{\vee,\an} \to (\barB A/\ell)^{\vee,\an}[-1]$.
\end{lem}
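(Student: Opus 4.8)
The plan is to reduce everything to a single norm estimate. The map $s$ is the dual of Loday's extra degeneracy, hence is already a contracting homotopy for $(\barB A/\ell)^\vee$; that is, $\d s + s\d = \id$ holds on the whole complex. Since $(\barB A/\ell)^{\vee,\an}$ is, by construction, a subcomplex of $(\barB A/\ell)^\vee$, as soon as one checks that $s$ maps $(\barB A/\ell)^{\vee,\an}$ into $(\barB A/\ell)^{\vee,\an}[-1]$ the identity $\d s + s\d = \id$ restricts verbatim, exhibiting $s$ there as a contracting homotopy. So the only thing the lemma actually asserts is that \emph{$s$ preserves analyticity}, and this is what I would prove.

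For that, first I would record that the chosen unit has finite norm: $c \colonequals \nrm{\one(1_\ell)} < \infty$, since $\one\colon \ell[1]\to A[1]$ is a morphism of normed bimodules. Now let $g\in(\barB A/\ell)^{\vee,\an}$; under the identification $(\barB A/\ell)^\vee\cong\fM(A,A^\vee)[-2]$ this says, by \cref{analboundedness} applied to $\fA(A,A')$, that there is a $C>0$ with $\nrm{g_n}<C^n$ for all $n\ge 1$, where $\nrm{g_n}$ is the operator norm of the $n$-th component viewed as a functional on $A[1]^{\otimes n}$. The defining formula $(sg)_n(a_1,\ldots,a_n)=g_{n+1}(\one(1_\ell),a_1,\ldots,a_n)$ then gives, for all $a_i\in A[1]$ with $\nrm{a_i}\le 1$,
\[
  \nrm{(sg)_n(a_1,\ldots,a_n)}\le \nrm{g_{n+1}}\,\nrm{\one(1_\ell)}\,\nrm{a_1}\cdots\nrm{a_n}\le c\,\nrm{g_{n+1}}<c\,C^{n+1},
\]
so $\nrm{(sg)_n}<c\,C^{n+1}$ for all $n\ge 1$, which again grows at most geometrically in $n$ (e.g.\ it is bounded by $(C')^n$ for any $C'\ge\max\{C,cC^2\}$). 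Hence $sg\in\fA(A,A')$, i.e.\ $sg\in(\barB A/\ell)^{\vee,\an}$, and together with the first paragraph this proves the lemma.

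I do not expect a genuine obstacle here: after the reduction the estimate is essentially one line. The two points that need a little care are (i) bookkeeping with the shift and the identification $(\barB A/\ell)^\vee\cong\fM(A,A^\vee)[-2]$, so that the slot over which the operator norm is taken is matched correctly between $g$ and $sg$ — it is, because prepending $\one(1_\ell)$ only changes the \emph{arity} of the functional, not which argument plays the role of the $A^\vee$-output; and (ii) the fact that $A$ is \emph{strictly} unital is precisely what makes the bare formula $(sg)_n = g_{n+1}(\one(1_\ell),-,\ldots,-)$ legitimate on the reduced complex, so for a merely cohomologically unital $A$ one would have to replace $s$ by a homotopy-corrected degeneracy and run a correspondingly longer (but still geometrically bounded) version of the same estimate.
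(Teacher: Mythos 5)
Your proof is correct and follows essentially the same route as the paper: both reduce the lemma to checking that $s$ preserves the geometric-series bound, and both get this from the single estimate $\nrm{(sg)_n}\le\nrm{g_{n+1}}\,\nrm{\one(1_\ell)}$, which stays geometrically bounded in $n$. Your explicit remark that the identity $\d s+s\d=\id$ restricts automatically to the subcomplex is left implicit in the paper but is the same reduction.
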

\begin{proof}
  Let $f = (f_n)_{n\in\N} \in \fA(A,A')$ be an analytic sequence, and pick $C>0$ such that $\nrm{f_n} < C^n$ for all $n\geq n$.
  Then the corresponding element $g\in (\barB A/\ell)^\vee$ is the map defined by $g_n(a_1,\ldots,a_n) = f_{n-1}(a_1,\ldots,a_{n-1})(a_n)$, hence
  \[
    (sg)_n(a_1,\ldots,a_n) = g_{n+1}(1,a_0,\ldots,a_n) = f_n(1,a_1,\ldots,a_{n-1})(a_n),
  \]
  and the element $sf\in \fA(A,A')$ corresponding to $sg$ is given by $(sf)_n(a) = f_{n+1}(1\otimes a)$.
  Clearly this now satisfies the bound
  \[
    \nrm{(sf)_n(a)} = \nrm{f_{n+1}(1\otimes a)} \leq \nrm{f_{n+1}}\nrm{1}\nrm{a} < C^{n+1} \cdot \nrm{a} < (C^2)^{n+1} \cdot \nrm{a},
  \]
  hence $sf$ is analytic as claimed.  
\end{proof}

\begin{cor}\label{analCYlifts}
  For a unital analytic $A_\infty$-algebra $A$ there is a natural quasi-isomorphism
  \[
    \Omega^{2,\cl,\an}(A)[2] \cong (\barC^{\an,\bullet}(A,A_\Delta')/\barC_\lambda^{\an,\bullet}(A))[-1] \xrightarrow{N \circ s \circ (\id - t)} \barC_\lambda^{\an,\bullet}(A),
  \]
  which maps analytic SHIPs to analytic right CY structures.
\end{cor}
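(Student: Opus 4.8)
The plan is to assemble the pieces developed in this subsection. The crucial observation is that the complex $(\barB A/\ell)^{\vee,\an}$ is acyclic: the preceding lemma exhibits the dual of the degeneracy map as a contracting homotopy $s$ on it, so that $\bb s + s\bb = \id$, and hence its cohomology vanishes. This is the only place the unitality hypothesis on $A$ enters.

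Feeding this acyclicity into the short exact sequence \labelcref{eq:secondanalses}, the associated long exact sequence shows that the connecting homomorphism is an isomorphism between the cohomology of $\barC_\lambda^{\an,\bullet}(A)$ and that of $(\barC^{\an,\bullet}(A,A_\Delta')/\barC_\lambda^{\an,\bullet}(A))[1]$. To realize this isomorphism at the chain level I will check that $N \circ s \circ (\id - t)$ is a chain map inverting the connecting isomorphism; this is the standard contracting-homotopy formula $p \circ s \circ i$ attached to the middle term of a short exact sequence, and the verification reduces to a short manipulation with $\bb s + s \bb = \id$ together with the fact that $\id - t$ and $N$ are chain maps. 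The one point beyond the non-analytic situation treated in \cite{AT22} is that each factor preserves analyticity, but that is exactly the content of \labelcref{eq:secondanalses}, the degeneracy lemma, and \cref{analsplitepi}.

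Precomposing with the inverse of the isomorphism $S\colon \barC^{\an,\bullet}(A,A_\Delta')/\barC^{\an,\bullet}_\lambda(A) \xrightarrow{\sim} \Omega^{2,\cl,\an}(A)[1]$ coming from \cref{analsplitepi} and shifting then produces the desired quasi-isomorphism $\Omega^{2,\cl,\an}(A)[2] \xrightarrow{\sim} \barC_\lambda^{\an,\bullet}(A)$. For naturality: $\id - t$ and $N$ are natural in $A \in \AinfAlgan$ by \labelcref{eq:secondanalses}, and $S$ by \cref{analsplitepi}; the contracting homotopy $s$ depends on a choice of unit and is not literally natural, but the induced map on cohomology is the connecting isomorphism of a natural short exact sequence (equivalently, any two contracting homotopies on an acyclic complex are chain-homotopic), so the resulting quasi-isomorphism is natural with respect to analytic $A_\infty$-morphisms.

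Finally, to see that analytic SHIPs are sent to analytic right Calabi--Yau structures: analyticity of the image is automatic since every map in the composite preserves analyticity, and the CY property is inherited from the non-analytic statement. Indeed, the inclusions $\barC_\lambda^{\an,\bullet}(A)\hookrightarrow\barC_\lambda^\bullet(A)$, $\barC^{\an,\bullet}(A,A_\Delta')\hookrightarrow\barC^\bullet(A,A_\Delta^\vee)$ and $\Omega^{2,\cl,\an}(A)\hookrightarrow\Omega^{2,\cl}(A)$ intertwine the analytic maps $S$, $\id - t$, $N$, $s$ with their non-analytic counterparts, so the analytic quasi-isomorphism is the restriction of the one of \cite[Corollary 2.24]{AT22}, which identifies SHIPs with right CY structures; in particular the nondegeneracy built into the SHIP condition on $\uprho_{0,0}$ is precisely what the definition of a right CY structure demands. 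I expect this last compatibility, together with the naturality argument, to be the least routine part, since one must track the non-naturality of $s$ and the distinction between the continuous dual $A_\Delta'$ and the full dual $A_\Delta^\vee$; everything else is formal bookkeeping on top of the lemmas already proved.
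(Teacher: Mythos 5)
Your proposal is correct and assembles the corollary exactly as the paper intends: acyclicity of $(\barB A/\ell)^{\vee,\an}$ via the restricted degeneracy map, the two analytic short exact sequences, and restriction of the non-analytic identification of SHIPs with right CY structures from \cite[Corollary 2.24]{AT22}. Your explicit handling of the non-naturality of $s$ (natural only on cohomology, as inverse to the connecting map) is in fact more careful than the paper, which leaves the corollary without a written proof.
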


\begin{ex}\label{exSHIPfromlinform}
  Let $(A,\upmu)$ be the analytic $A_\infty$-algebra obtained from a normed DG algebra with $\upphi$ an analytic right CY structure obtained from a bounded linear functional $\uplambda \colon A^n \to \C$ as in \cref{exboundedlin}.
  Then $\upphi$ is the image of the analytic SHIP $\uprho \in \Omega^{2,\cl,\an}(A)$ defined by $\uprho_0(a)(b) = \uplambda(\upmu_2(a,b))$ and $\uprho_n = 0$ for $n>0$:
  one has $\uprho = S(\upxi)$ for the analytic cocycle $\upxi \in \barC^{\an,\bullet}(A,A'_\Delta)$ with $\upxi_1(a)(b) = \tfrac12 \uplambda(a\cdot b)$ and $\upxi_n = 0$ otherwise, and this cocycle maps to the Connes cocycle $(N\circ s\circ (\id - t) \upxi)$ with
  \[
    (N\circ s\circ (\id - t) \upxi)_0(a) = \upxi_1(1)(a) - (-1)^{|a|} \upxi_1(a)(1) = \tfrac12 \uplambda(\upmu_2(1,a) - (-1)^{|a|}\upmu_2(a,1)) = \uplambda(a)
  \]
  and vanishing higher terms.
  This example appears in the non-analytic case in \cite[Example 2.25]{AT22}.
\end{ex}

As result of the corollary, we find that every analytic SHIP is determined up to homotopy by an analytic right Calabi--Yau structure.
Using the corollary above, we can restate the analytic Darboux lemma in terms of Calabi--Yau structures, which yields our main theorem.

\begin{thm}\label{mainthm}
  Suppose $A \in\AinfAlgan$ is compact and admits a unital analytic minimal model. Then:
  \begin{enumerate}
  \item every analytic right $d$-CY structure $\upphi$ defines a cyclic analytic minimal model $(\hh(A)^\upphi,\upsigma^\upphi)$,
  \item if $\upphi,\uppsi$ are analytic right $d$-CY structures such that $[\uppsi] - [\upphi]$ maps to $0$ in $\HC_\lambda^{-d}(A)$, then there is a cyclic analytic $A_\infty$-isomorphism
    \[
      (\hh(A)^\upphi,\upsigma^\upphi) \cong_{\an,\cyc} (\hh(A)^\uppsi,\upsigma^\uppsi),
    \]
  \item if $B \in\AinfAlgan$ is also compact with a unital analytic minimal model, and $g\colon B \to A$ is an analytic quasi-isomorphism, then there is an induced a cyclic analytic $A_\infty$-isomorphism
    \[
      (\hh(B)^{g^*\upphi},\upsigma^{g^*\upphi}) \cong_{\an,\cyc} (\hh(A)^\upphi,\upsigma^\upphi).
    \]
  \end{enumerate}
  Hence, every nondegenerate class in $\HC_\lambda^{\an,-d}(A)$ defines a canonical cyclic analytic minimal model.
\end{thm}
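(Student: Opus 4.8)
The plan is to transport everything to the unital, finite-dimensional minimal model $\hh(A)$, where the analytic Darboux lemma $\cref{Darboux}$ applies directly, after lifting $\upphi$ along the chain of short exact sequences assembled in $\cref{analCYlifts}$. For part (1): fix the unital analytic minimal model $\hh(A)$ with analytic quasi-isomorphisms $I\colon\hh(A)\to A$ and $P\colon A\to\hh(A)$. I would first note that $I^*\upphi\in\barC^{\an,-d}_\lambda(\hh(A))$ is a cocycle whose class corresponds to $[\upphi]$ under the isomorphism $\HC^{\an,-d}_\lambda(A)\cong\HC^{\an,-d}_\lambda(\hh(A))$ (from $\cref{analhochschildhomot}$ and its corollary), and that it is still nondegenerate: via the natural correspondence of $\cref{analCYlifts}$, nondegeneracy of a right CY cocycle translates into invertibility of the $(0,0)$-component of its analytic SHIP lift, which is preserved under the quasi-isomorphism $I$. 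Applying $\cref{analCYlifts}$ to the unital $\hh(A)$ then yields an analytic SHIP $\uprho\in\Omega^{2,\cl,\an}(\hh(A))$ of degree $2-d$ whose image under $N\circ s\circ(\id-t)$ is cohomologous to $I^*\upphi$; since $\hh(A)$ is finite dimensional and minimal, $\uprho_{0,0}$ is invertible, so $\cref{Darboux}$(1) produces an $A_\infty$-algebra $\hh(A)^\uprho$ and an analytic isomorphism $f\in\Hom^\an_{\AinfAlg}(\hh(A)^\uprho,\hh(A))$ with $\upsigma^\uprho\colonequals f^*\uprho$ cyclic. Setting $(\hh(A)^\upphi,\upsigma^\upphi)\colonequals(\hh(A)^\uprho,\upsigma^\uprho)$: because $f_1=\id$ the perturbed product is still minimal and the underlying space is unchanged, so $\hh(A)^\upphi$ is finite dimensional; and since $f^{-1}$ is analytic by $\cref{analinverse}$ (continuity of $f_1^{-1}$ being automatic in finite dimension), the composites $I\diamond f$ and $f^{-1}\diamond P$ exhibit $\hh(A)^\upphi$ as an analytic minimal model for $A$ (cf.\ $\cref{existsminmod}$).

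For part (2): if $[\uppsi]-[\upphi]$ vanishes in $\HC^{-d}_\lambda(A)$ then $[I^*\uppsi]-[I^*\upphi]=0$ in $\HC^{-d}_\lambda(\hh(A))$, and choosing SHIP lifts $\uprho_\upphi,\uprho_\uppsi$ as above, the image of $\uprho_\uppsi-\uprho_\upphi$ under the \emph{non-analytic} quasi-isomorphism $\Omega^{2,\cl}(\hh(A))[2]\to\barC_\lambda(\hh(A))$ (available since $\hh(A)$ is unital) is exact; as this map is injective on cohomology, $\uprho_\uppsi-\uprho_\upphi$ is exact in $\Omega^{2,\cl}(\hh(A))$, so $\cref{Darboux}$(2) gives the cyclic analytic isomorphism $(\hh(A)^\upphi,\upsigma^\upphi)\cong_{\an,\cyc}(\hh(A)^\uppsi,\upsigma^\uppsi)$; taking $\uppsi=\upphi$ also shows independence of the chosen lift $\uprho$. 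For part (3): an analytic quasi-isomorphism $g\colon B\to A$ yields an analytic isomorphism $\hh(g)=P_A\diamond g\diamond I_B\colon\hh(B)\to\hh(A)$ by $\cref{quasitoiso}$, and from $I_A\diamond\hh(g)\sim_\an g\diamond I_B$ together with $\cref{analhochschildhomot}$ one sees that $\hh(g)^*\uprho$ and the SHIP lift $\uprho_B$ of $I_B^*(g^*\upphi)$ on $\hh(B)$ represent the same class, hence $\hh(g)^*\uprho-\uprho_B$ is exact in $\Omega^{2,\cl}(\hh(B))$ by the same argument; $\cref{Darboux}$(2) then gives $(\hh(B)^{\hh(g)^*\uprho},\upsigma^{\hh(g)^*\uprho})\cong_{\an,\cyc}(\hh(B)^{g^*\upphi},\upsigma^{g^*\upphi})$, while strictifying $\uprho$ on $\hh(A)$ and $\hh(g)^*\uprho$ on $\hh(B)$ via $\cref{Darboux}$(1) as $f_A,f_B$ shows $f_A^{-1}\circ\hh(g)\circ f_B$ is an analytic isomorphism pulling $\upsigma^\uprho=f_A^*\uprho$ back to $f_B^*(\hh(g)^*\uprho)=\upsigma^{\hh(g)^*\uprho}$, hence cyclic; composing, $(\hh(B)^{g^*\upphi},\upsigma^{g^*\upphi})\cong_{\an,\cyc}(\hh(A)^\upphi,\upsigma^\upphi)$. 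Finally, applying (3) with $B=A$, $g=\id$ removes the dependence on the choice of minimal model, so (1)--(3) together give the last assertion: every nondegenerate class in $\HC^{\an,-d}_\lambda(A)$ determines a canonical cyclic analytic minimal model.

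The main obstacle I anticipate is keeping the nondegeneracy under control: pullback along the quasi-isomorphisms $I$ and $\hh(g)$ must be shown to send right CY structures to right CY structures (equivalently, SHIPs to SHIPs), which forces one to use the naturality of the SHIP--CY correspondence of $\cref{analCYlifts}$ and its non-analytic counterpart rather than manipulating the cyclic cochains by hand; a secondary point is checking that the isomorphisms produced by $\cref{Darboux}$ stay cyclic after post-composition with $\hh(g)$, which comes down to functoriality of $f\mapsto f^*$ on closed $2$-forms and the identity $(f^{-1})^*f^*=\id$.
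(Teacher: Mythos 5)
Your proposal is correct and follows essentially the same route as the paper: transport $\upphi$ to the unital finite-dimensional minimal model via $I^*$, lift it to an analytic SHIP using \cref{analCYlifts}, strictify with the analytic Darboux lemma \cref{Darboux}, and handle (2) and (3) by naturality of the SHIP--CY correspondence together with the fact that the comparison map is a quasi-isomorphism (hence injective on cohomology), exactly as in the paper's argument. The only cosmetic difference is that you make explicit the injectivity-on-cohomology step and the reduction of canonicity to the case $g=\id$, which the paper leaves implicit.
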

\begin{proof}
  (1) Let $\hh(A)\in \AinfAlgan$ be a finite dimensional analytic minimal model of $A$ with analytic morphisms $I$ and $P$ as in \cref{minimalmodel}.
  Given an analytic right CY structure $\upphi\in\barC_\lambda^{\an,-d}(A)$ the induced map $A\to A^\vee[-d]$ induces an isomorphism $\hh(A) \to \hh(A)^\vee[-d]$, which coincides with the morphism induced by $I^*\upphi \in \barC_\lambda^{\an,\bullet}(\hh(A))$.
  Hence, $I^*\upphi$ is an analytic right CY structure on $\hh(A)$.
  
  Because $\hh(A)$ is unital, it follows by \cref{analCYlifts} that there exists an analytic strong homotopy inner product $\uprho^\upphi \in Z^{2-d}\Omega^{2,\cl,\an}(A)$ with image in $\barC_\lambda^{\an,-d}(A)$ homotopic to $I^*\upphi$.
  Therefore \cref{Darboux} determines a cyclic $A_\infty$-algebra $(\hh(A)^\upphi,\upsigma^\upphi) = (\hh(A)^{\uprho^\upphi},\upsigma^{\uprho^\upphi})$ and an analytic $A_\infty$-isomorphism $f\in \Hom_{\AinfAlg}^\an(\hh(A)^\upphi,\hh(A))$ such that $f^*\uprho^\upphi = \upsigma^\upphi$.
  Because $\hh(A)$ is finite dimensional, it follows by \cref{analinverse} the map $f$ is invertible in $\AinfAlgan$, and we obtain a diagram
  \[
    \begin{tikzcd}
      \hh(A)^\upphi \ar[r,shift left=3pt,"f"] & \ar[l,shift left=3pt,"f^{-1}"] \hh(A) \ar[r,shift left=3pt,"I"] & \ar[l,shift left=3pt,"P"] A,
    \end{tikzcd}
  \]
  which exhibits $(\hh(A)^\upphi,\upsigma^\upphi)$ as a cyclic analytic minimal model of $A$.

  (2) Let $\uppsi$ is a second right CY structure and pick a lift $\uprho^{\uppsi} \in \Omega^{2,\cl,\an}(\hh(A))$ for $I^*\uppsi$, yielding a cyclic minimal model $(\hh(A)^\uppsi,\upsigma^\uppsi)$ as above.
  Suppose $[\upphi] - [\uppsi]$ maps to $0$ in $\HC_\lambda^{-d}(A)$, then it follows by naturality that $[I^*\upphi] - [I^*\uppsi]$ maps to $0$ in $\HC_\lambda^{-d}(\hh(A))$ and considering the commutative diagram
  \[
    \begin{tikzcd}
      \hh^{2-d}\Omega^{2,\cl,\an}(\hh(A)) \ar[r]\ar[d] & \HC_\lambda^{\an,-d}(\hh(A))\ar[d] \\
      \hh^{2-d}\Omega^{2,\cl}(\hh(A)) \ar[r] & \HC_\lambda^{-d}(\hh(A)) 
    \end{tikzcd}
  \]
  it is also clear that $[\uprho^\upphi] - [\uprho^\uppsi]$ maps to $0$ in $\Omega^{2,\cl}(\hh(A))$.
  Hence it follows from \cref{Darboux} that there is a cyclic analytic $A_\infty$-isomorphism
  \[
    (\hh(A)^\uppsi,\upsigma^\uppsi) \cong_{\an,\cyc} (\hh(A)^\upphi,\upsigma^\upphi)
  \]
  between the minimal models as claimed.

  (3) Let $B\in \AinfAlgan$ admit a finite dimensional analytic unital minimal model $\hh(B)$ and suppose $g\colon B \to A$ is an analytic quasi-isomorphism, then $g^*\upphi \in \barC^{\an,\bullet}_\lambda(B)$ is again right CY and induces a cyclic minimal model $\hh(B)^{g^*\upphi}$.
  The map $g$ induces an analytic $A_\infty$-isomorphism $\hh(g)$ fitting into a commutative diagram
  \[
    \begin{tikzcd}[column sep=large]
      B \ar[r,"g"] & A & \\
      \hh(B) \ar[r,dashed,"\hh(g)"]\ar[u,"I"] &
      \hh(A) \ar[u,"I"]\ar[r,"f^{-1}",shift left=3pt] & \ar[l,"f",shift left=3pt] \hh(A)^{\upphi}
    \end{tikzcd}
  \]
  This implies that the equality $[I^*(g^*\upphi)] = [\hh(g)^*I^*\upphi]$ holds, and therefore $[\hh(g)^*\uprho^\upphi] = [\uprho^{g^*\upphi}]$ for the associated closed 2-forms.
  It therefore follows from \cref{Darboux} that the cyclic minimal model $(H^{g^*\upphi},\upsigma^{g^*\upphi})$ is cyclic-analytic $A_\infty$-isomorphic to the cyclic minimal model corresponding to $\hh(g)^*\uprho^\upphi$.
  The latter is given by $(\hh(A)^\upphi,\upsigma^\upphi)$, as the composition $\hh(g)^{-1} \diamond f$ satisfies
  \[
    (\hh(g)^{-1}\diamond f)^*(\hh(g)^*\uprho^\upphi) = f^*\uprho^\upphi = \upsigma^\upphi.\qedhere
  \]
\end{proof}

\begin{cor}\label{potentialsmaincor}
  If $A\in\AinfAlgan$ is compact with unital analytic minimal model with $\hh^0(A) \cong \ell$, then any analytic 3-CY structure $\upphi$ determines an analytic potential
  \[
    W_A^\upphi \in \widetilde\barT_\ell V
  \]
  in the analytic tensor algebra over $V = \hh^1(A)^*$, which only depends on $[\upphi] \in \HC_\lambda^{-3}(A)$ up to an change of coordinates $\widetilde\barT_\ell V \to \widetilde\barT_\ell V$.
  Moreover, if $g\colon B \to A$ is an analytic quasi-isomorphism for $B$ another such analytic $A_\infty$-algebra, then there is an isomorphism $h\colon \widetilde\barT_\ell W \to \widetilde\barT_\ell V$ from the tensor algebra over $W = \hh^1(B)$ such that
  \[
    h^*(W_B^{g^*\upphi}) = W_A^\upphi,
  \]
\end{cor}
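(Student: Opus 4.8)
The plan is to deduce this from \cref{mainthm}, \cref{analpotential}, and Kajiura's characterisation of cyclic morphisms recalled above; beyond one routine estimate, no new ideas are needed. First I would apply \cref{mainthm}(1) to the analytic right $3$-CY structure $\upphi$ to obtain a $3$-cyclic analytic minimal model $(\hh(A)^\upphi,\upsigma^\upphi)$. By the construction via \cref{Darboux}, the underlying graded normed $\ell$-bimodule of $\hh(A)^\upphi$ is that of $\hh(A)$ (only the higher products are perturbed, and the relevant $A_\infty$-isomorphism has first term the identity), so in particular $(\hh(A)^\upphi)^0 \cong \hh^0(A) \cong \ell$. Hence \cref{analpotential} applies to $(\hh(A)^\upphi,\upsigma^\upphi)$ and produces an analytic potential
\[
  W_A^\upphi \colonequals W_{\hh(A)^\upphi} \;\in\; \widetilde\barT_\ell V,\qquad V = \bigl((\hh(A)^\upphi)^1\bigr)^\ast = \hh^1(A)^\ast.
\]

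Next I would record the effect of cyclic analytic isomorphisms on potentials. If $A',A''\in\AinfAlgan$ are $3$-cyclic with $(A')^0\cong(A'')^0\cong\ell$ and $F\in\Hom_{\AinfAlg}^\an(A',A'')$ is a cyclic isomorphism, then the induced graded algebra morphism $F^\ast\colon \widehat\barT_\ell V''\to\widehat\barT_\ell V'$ satisfies $F^\ast(W_{A''}) = W_{A'}$ by Kajiura's theorem. The one point requiring proof is that $F^\ast$ restricts to an isomorphism $\widetilde\barT_\ell V'' \xrightarrow{\ \sim\ } \widetilde\barT_\ell V'$ of the analytic subrings: expanding $F^\ast$ in a basis and using $\nrm{F_n}<C^n$, a bound $\nrm{a_i}<C$ on the chosen dual basis, and the fact that there are $2^{n-1}$ ordered compositions of $n$, one checks that a coefficient of $F^\ast(v_{j_1}\otimes\cdots\otimes v_{j_n})$ is a sum of at most $2^{n-1}$ terms, each bounded by $C^{2n}$ times a product of at most $n$ coefficients of the input potential; if those are bounded by $D^n$, the output coefficients are bounded by $(2C^2D)^n$. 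This is the same style of geometric-series estimate as in \cref{analinverse} and \cref{analbiinverse}, and since $F^{-1}$ is analytic by \cref{analinverse}, $F^\ast$ is an isomorphism of analytic tensor algebras.

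Finally I would assemble the statement. Applying the previous paragraph to the cyclic analytic isomorphism furnished by \cref{mainthm}(2) shows that replacing $\upphi$ by any analytic right $3$-CY structure with the same image in $\HC_\lambda^{-3}(A)$ alters $W_A^\upphi$ only by an automorphism of $\widetilde\barT_\ell V$; combined with the uniqueness up to cyclic analytic isomorphism of the minimal model in \cref{mainthm}, this gives the well-definedness of $W_A^\upphi$ up to change of coordinates. For the functoriality statement, $g$ being a quasi-isomorphism forces $\hh^0(B)\cong\hh^0(A)\cong\ell$, so $W_B^{g^\ast\upphi}$ is defined by the first two paragraphs applied to $B$; feeding the cyclic analytic isomorphism of \cref{mainthm}(3) into the second paragraph yields the isomorphism $h\colon \widetilde\barT_\ell W\xrightarrow{\ \sim\ }\widetilde\barT_\ell V$ (with $W = \hh^1(B)^\ast$) satisfying $h^\ast(W_B^{g^\ast\upphi}) = W_A^\upphi$. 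The only genuinely content-bearing step is the analytic estimate of the middle paragraph; everything else is bookkeeping with the results of \cref{sec:cyclic}.
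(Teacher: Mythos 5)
Your proposal is correct and follows essentially the same route as the paper's proof: \cref{mainthm}(1) together with \cref{analpotential} for the existence of $W_A^\upphi$, Kajiura's theorem for the transformation of potentials under cyclic isomorphisms, a geometric-series estimate showing the induced algebra map preserves the analytic subring $\widetilde\barT_\ell V$, and then \cref{mainthm}(2),(3) for well-definedness and functoriality. The only divergence is in that estimate: the paper reduces to generators via \cite[Lemma 3.12]{HK19} and bounds $|v(f_n(a_{i_1},\ldots,a_{i_n}))| < C^{2n+1}$ directly, whereas you estimate coefficients of $F^{*}$ on arbitrary monomials --- there your count of ``$2^{n-1}$ ordered compositions'' should really be the number of weak compositions of the \emph{output} degree into $n$ parts, but since that is still exponentially bounded the conclusion is unaffected.
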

\begin{proof}
  Letting $(\hh(A)^\upphi,\upsigma^\upphi)$ be the cyclic minimal model from \cref{mainthm}(1) associated to a 3-CY structure $\upphi$, it follows from \cref{analpotential} that there is an associated analytic potential $W_A^\upphi \in \widetilde\barT_\ell V$ for $V= ( (\hh(A))^1)^* = \hh^1(A)^*$.
  If $\uppsi$ is another 3-CY structure with $[\uppsi] = [\upphi]$ then it follows by \cref{mainthm}(2) that there is an isomorphism $f\in \Hom_{\AinfAlg}^\an(\hh(A)^\upphi,\hh(A)^\uppsi)$ which is cyclic.
  There is an induced algebra isomorphism $f^*\colon \widehat\barT_\ell V \to \widehat\barT_\ell V$, and it follows by \cite[Proposition 4.16]{Kaj07} that
  \[
    f^*(W_A^\uppsi) = W_A^\upphi.
  \]
  Hence it remains to verify that $f^*$ restricts to a map $\widetilde\barT_\ell V \to \widetilde\barT_\ell V$ of analytic tensor algebras.
  As in \cite[Lemma 3.12]{HK19} it suffices to show that $f^*(v) \in \widetilde\barT_\ell V$ for each $v \in V$.
  Picking a basis $v_1,\ldots,v_m \in V$ and letting $a_1,\ldots,a_m \in \hh^1(A)$ denote the dual basis, the element $f^*(v)$ can be written as the power series
  \[
    f^*(v) = \sum_{n\geq 0} \sum_{i_1,\ldots,i_n} v(f_n(a_{i_1},\ldots,a_{i_n})) \cdot v_{i_1}\otimes\cdots \otimes v_{i_n} \in \widehat\barT_\ell V.
  \]
  Because $f$ is analytic, it follows that there exists $C_0 > 0$ such that $\nrm{f_n} < C_0^n$ for all $n\in\N$, $\nrm{v} < C_0$, and $\nrm{a_i} < C_0$ for each $a_i$.
  Then for each $i_1,\ldots,i_n$ the coefficient satisfies:
  \[
    |v(f_n(a_{i_1},\ldots,a_{i_n}))| \leq \nrm{v}\nrm{f_n}\nrm{a_{i_1}}\cdots\nrm{a_{i_n}} < C_0^{2n+1} \leq (C_0^3)^n, 
  \]
  which shows that $f^*(v) \in \widetilde\barT_\ell V$.
  It follows that $W_A^\upphi$ and $W_A^\uppsi$ are related by an analytic change of coordinates.
  A similar argument now show the second statement.  
\end{proof}

\section{Analytic Calabi--Yau structures in geometry}\label{sec:geometry}

We will now apply our main theorem to a geometric setting: we will consider $A_\infty$-algebras which govern the deformations of sheaves on a smooth projective variety.
To set notation, we will write $X$ for a smooth projective variety of dimension $n$, which we interpret as an analytic space, and we will write $Y$ for arbitrary complex manifold.

The $A_\infty$-algebras arising in this setting are obtained from normed DG algebras, and the corresponding $A_\infty$-structures are therefore obtained as in \cref{exDGalg} using the canonical shift map $s\colon A \to A[1]$.

\subsection{The Dolbeault construction}

Recall that the sheaf of holomorphic functions $\O_Y$ on a complex manifold $Y$ admits a fine resolution via the \emph{Dolbeault complex}
\[
  \sA_Y^{0,0} \xrightarrow{\ \overline\partial\ } \sA_Y^{0,1} \xrightarrow{\ \overline\partial\ } \ldots \xrightarrow{\ \overline\partial\ } \sA_Y^{0,n} \longrightarrow 0,
\]
where $\sA^{p,q}_X$ is the sheaf of smooth $(p,q)$-forms on $Y$.
The sheaves $\sA^{p,q}_X$ are acyclic, hence can be used to compute the sheaf cohomology of $\O_Y$ via $\hh^i(U,\O_Y) \cong \hh^i(\Upgamma(U,\sA_Y^{0,\bullet}),\overline\partial)$ on any open $U\subset Y$.
More generally, one can take any a bounded complex of locally free $\O_Y$-modules $\cE = (\cE^\bullet,\updelta)$ on $Y$, i.e. perfect complex, and obtain an acyclic resolution of $\cE$ 
\[
  \sA_Y^{0,\bullet}(\cE) \colonequals \quad  \ldots \xrightarrow{\Dol} \bigoplus_{q\in\Z} \cE^{-q} \otimes_Y \sA_Y^{0,q} \xrightarrow{\Dol} \bigoplus_{q\in\Z} \cE^{1-q} \otimes_Y \sA_Y^{0,q} \xrightarrow{\Dol} \ldots,
\]
where the differential is given by $\Dol = \overline\partial + \updelta$.
This complex again computes the hypercohomology $\bR\Upgamma^i(U,\cE) \cong \hh^i\Upgamma(U,\sA_Y^{0,\bullet}(\cE))$ on any open $U\subset Y$.

The Dolbeault construction can be used to construct a DG enhancement of the derived category $\D^\perf(Y)$: for any perfect complexes $\cE,\cF$ there is a natural isomorphism
\[
  \hh^i\Upgamma(Y,\sA_Y^{0,\bullet}(\cH om(\cE,\cF))) \cong
  \bR^i\Upgamma(Y,\cH om_Y(\cE,\cF)) \cong
  \Hom_{\D^\perf(Y)}(\cE,\cF[i]),
\]
so the complex of global sections for $\sA_Y^{0,\bullet}(\cH om(\cE,\cF))$ is a morphism complex between $\cE$ and $\cF$.
The natural composition on $\D^\perf(Y)$ is given by a wedge product $\wedge$, which can be defined in terms of local sections $f \otimes \upxi \in  (\cH om^{i_1}(\cF,\cG) \otimes \sA_Y^{0,q})(U)$ and $g \otimes \upzeta \in (\cH om^{i_1}(\cE,\cF)\otimes \sA_Y^{0,q_2})(U)$ as
\[
  (f\otimes \upxi) \wedge (g\otimes \upzeta) \colonequals (-1)^{i_2q_1}(f\circ g) \otimes (\upxi \wedge \upzeta) \in (\cH om^{i_1+i_2}(\cE,\cG) \otimes \sA_Y^{0,q_1+q_2})(U).
\]
This product satisfies the Leibniz rule with respect to $\Dol$, and therefore defines a DG enhancement of $\D^\perf(Y)$ called the Dolbeault enhancement, see e.g. \cite[§\nobreak\,\nobreak2.3 Example 9]{Toe11}.

To each perfect complex $\cE \in \D^\perf(Y)$ is associated an endomorphism DGA: the \emph{Dolbeault DG algebra}
\[
  \frg_\cE \colonequals \left(\Upgamma(X,\sA_Y^{0,q}(\cE nd(\cE))),\Dol,\wedge\right),
\]
with cohomology $\Ext^\bullet(\cE,\cE)$.
If $\cE$ admits a direct sum decomposition $\cE = \cE_1^{\oplus m_1} \oplus \ldots \oplus \cE_k^{\oplus m_k}$ then the Dolbeault DG algebra is naturally defined over the base ring
\[
  \ell = \Mat_\C(m_1) \times \cdots \times \Mat_\C(m_k) \subset \frg_\cE^0.
\]
In what follows we will always assume such a splitting is given and work over the fixed base $\ell$.

\subsection{Analytic minimal models in the compact setting}

Now let $X$ be (the analytic space associated to) a smooth projective variety and fix a hermitian metric $\<-,-\>_X \colon \sA^{0,q}_X \otimes \sA^{0,q}_X \to \sA^{0,0}$.
For a perfect complex $\cE = (\cE^\bullet,\updelta)$ one can choose a compatible hermitian metric
\[
  \<-,-\>_\cE \colon \sA^{0,0}_X(\cE^i) \otimes \sA^{0,0}_X(\cE^i) \to \sA^{0,0}_X
\]
for each locally free sheaf $\cE^i$; we will refer to $\cE$ endowed which such a metric as a \emph{hermitian perfect complex}.
Recall (see e.g. \cite{GH78}) that this data determines an $L^2$-norm $\norm_{L^2} \colon \Upgamma(X,\sA_X^{0,\bullet}(\cE)) \to \R$, which is defined on pure tensors $\upxi = f \otimes \omega$ with $f\in \Upgamma(X,\sA^{0,0}_X(\cE^i))$ and $\omega \in \Upgamma(X,\sA^{0,q})$ by the formula
\[
  \nrm{\upxi}_{L^2}^2 \colonequals \int_X \<f,f\>_\cE \cdot \<\omega,\omega\>_X \,\d vol_X,
\]
where $\d vol_X \in \Upgamma(X,\sA^{0,n}_X(\Omega^n_X))$ is the canonical real volume form on $X$.
The \emph{Sobolev $(l,2)$-norm} for $l\in \N$ is defined in terms of the $L^2$-norm as $\nrm{\upxi}_{l,2} \colonequals \sum_{k\leq l} \nrm{\nabla^k\upxi}_{L^2}$, where the operator $\nabla$ acts on each locally free sheaf $\cE^i$ as the canonical metric connection.

Given hermitian perfect complexes $\cE$, $\cF$, there is an induced hermitian metric on $\cH om(\cE,\cF)$ and hence also induced norms $\norm_{L^2}$ and $\norm_{l,2}$ on the global sections of its Dolbeault construction.
As remarked by Fukaya \cite{Fuk01}, the operators $\Dol$ and $\wedge$ satisfy a bound
\[
  \nrm{\Dol}_{l,2} < C,\quad \nrm{\wedge}_{l,2} < C^2,\quad (C>0)
\]
if one chooses a sufficiently large $l > 2\dim_\C X$.
In particular, the Dolbeault DG algebra $\frg_\cE$ of a hermitian perfect complex is naturally an analytic $A_\infty$-algebra.
It was shown by Tu \cite{Tu14} and Toda \cite{Tod18} that this admits a strong analytic minimal model.

\begin{thm}[{\cite[Appendix A]{Tu14}\cite[Lemma 4.1]{Tod18}}]\label{minmodTT}
  The DG algebra $\frg_\cE$ admits a minimal model with diagram
  \[
    \begin{tikzcd}
      \cH_\cE \ar[r,"I",shift left=3pt] & \ar[l,"P",shift left=3pt] \frg_\cE \ar[loop right,"Q"]
    \end{tikzcd}
  \]
  such that the $A_\infty$-structure $\upmu$ on $\cH_\cE$ and the maps $I$, $P$, and $Q$ satisfy the bounds
  \[
    \nrm{\upmu_k}_{l,2} < C^k,\quad \nrm{I_k}_{l,2} < C^k,\quad \nrm{P_k}_{l,2} < C^k,\quad \nrm{Q_k}_{l,2} < C^k
  \]
  for all $k\geq 1$ and a fixed constant $C>0$ independent of $k$.
\end{thm}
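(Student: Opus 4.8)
The plan is to produce the minimal model by Hodge theory together with the homological perturbation (homotopy transfer) formula, and then to read the analytic bounds directly off the resulting tree sums; this is essentially the argument of Tu \cite{Tu14} and Toda \cite{Tod18}, so I only indicate the structure.

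First I would fix $l > 2\dim_\C X$ and pass to the Sobolev completion of $\frg_\cE$ with respect to $\norm_{l,2}$. Since $X$ is compact, $\Dol$ is the differential of an elliptic complex, so Hodge theory for the associated Laplacian $\Delta = \Dol\Dol^* + \Dol^*\Dol$ (adjoints taken with respect to the chosen hermitian metrics) furnishes a finite-dimensional space $\cH$ of harmonic forms representing $\Ext_X^\bullet(\cE,\cE)$, an $L^2$-orthogonal decomposition $\frg_\cE = \cH \oplus \operatorname{im}\Dol \oplus \operatorname{im}\Dol^*$, and the Green operator $G$. I would then take $\cH_\cE \colonequals \cH$ with $I_1$ the inclusion, $P_1$ the orthogonal projection onto $\cH$, and the homotopy $Q_1 \colonequals -\Dol^* G$; these satisfy the side conditions $P_1 I_1 = \id$, $\id - I_1 P_1 = \Dol Q_1 + Q_1 \Dol$, and $P_1 Q_1 = Q_1 I_1 = Q_1^2 = 0$, and they are $\ell$-linear, so everything stays over the given base ring. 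By elliptic regularity, $\cH$ consists of smooth forms and $G$ gains two derivatives, so $G$ maps the $(l,2)$-completion boundedly into the $(l+2,2)$-completion; hence $I_1$, $P_1$ and $Q_1 = -\Dol^* G$ are bounded endomorphisms of the $(l,2)$-completion, say all of operator norm $\leq C_0$.

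Next I would invoke Kadeishvili's homotopy transfer formula \cite{Kad80}, which upgrades this contraction together with the product $\wedge$ of $\frg_\cE$ to a minimal $A_\infty$-structure $\upmu$ on $\cH_\cE$, extensions $I$, $P$ of $I_1$, $P_1$ to $A_\infty$-quasi-isomorphisms, and an extension $Q$ of $Q_1$ realising a homotopy $I\diamond P \sim \id_{\frg_\cE}$. Each component $\upmu_k$, $I_k$, $P_k$, $Q_k$ is a signed sum over the trivalent planar rooted trees in $\O(k)$ of composites of the maps $I_1$, $P_1$, $Q_1$, $\id$, $\Dol$ and $\wedge$, with one such operator per vertex and internal edge and one at the root; since a tree in $\O(k)$ with trivalent internal vertices has $k-1$ internal vertices and fewer than $2k$ edges, and since $\wedge$ has norm $\leq C^2$ and $\Dol$ norm $\leq C$ in $\norm_{l,2}$ by the estimate of Fukaya \cite{Fuk01} recalled above, each summand $g_T$ satisfies $\nrm{g_T}_{l,2} \leq K^k$ for a constant $K$ depending only on $C$ and $C_0$. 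As $|\O(k)|$ is dominated by a Catalan number, hence by $q^k$ for some $q>0$ — the same input as in the proof of \cref{analinverse} — summing over trees yields $\nrm{\upmu_k}_{l,2}, \nrm{I_k}_{l,2}, \nrm{P_k}_{l,2}, \nrm{Q_k}_{l,2} < (qK)^k$, which is the asserted bound.

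Finally I would observe that the resulting diagram is an analytic minimal model in the sense of \cref{minimalmodel}: $\upmu_1 = 0$ because $\cH$ is harmonic, $P\diamond I = \id_{\cH_\cE}$ holds on the nose from the tree formula, and the homotopy $I\diamond P \sim_\an \id_{\frg_\cE}$ is the standard one built from $Q$, which is analytic by the previous paragraph. I expect the real work to sit in the Hodge-theoretic first step — genuinely invoking elliptic regularity on the compact $X$ to get finite-dimensionality of the twisted Dolbeault cohomology and the Sobolev boundedness of $G$ (hence of $I_1$, $P_1$, $Q_1$), and noting that the hypothesis $l > 2\dim_\C X$ is used both there and to make $\norm_{l,2}$ submultiplicative for $\wedge$ — after which the exponential estimate in the transfer formula is a routine Catalan-number bound of exactly the kind already carried out in \cref{analinverse} and \cref{analmorb}.
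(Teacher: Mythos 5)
Your proposal is correct and follows essentially the same route the paper attributes to Tu and Toda and summarises immediately after the statement: Hodge theory for the twisted Dolbeault Laplacian supplies the contraction $(I_1,P_1,Q_1)$ with Sobolev bounds, and Kadeishvili's tree-wise homotopy transfer combined with a Catalan-number count gives the geometric bounds on $\upmu_k$, $I_k$, $P_k$, $Q_k$. The paper itself imports this result by citation rather than reproving it, so your sketch is exactly the intended argument.
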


At first order, the minimal model is given by Hodge theory: $\cH_\cE$ is the space of harmonic forms in $\frg_\cE$, with $I_1$ and $P_1$ being the inclusion and projection, and $Q_1$ is defined via the Green operator associated to $\Dol$.
The higher compositions are all defined via the standard homotopy transfer formula \cite{Kad80} given via sums of trees.

If $X$ is a Calabi--Yau variety, then Polishchuk \cite{Pol01} showed that the Serre duality pairing pulls back to a cyclic structure on this analytic minimal model. Using the terminology employed in this paper, this can be phrased as follows.

\begin{thm}[{\cite[Theorem 1.1]{Pol01}}]\label{policyc}
  Suppose $X$ is a projective Calabi--Yau variety with holomorphic volume form $\upnu \in \hh^0(X,\Omega^n_X)$, then the Serre pairing 
  \[
    \uprho(-)(-) = \int_X \upnu \wedge \tr(- \wedge -) 
  \]
  pulls back to a cyclic structure $\upsigma = I^*\uprho$ on $\cH_\cE$.
\end{thm}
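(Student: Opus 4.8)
The plan is to verify directly that the bilinear form $\uprho$ built from the Serre pairing is an analytic closed $2$-form on the Dolbeault DG algebra $\frg_\cE$, is nondegenerate in the sense required of a SHIP, and then to pull it back to the minimal model via $I$ and invoke \cref{Darboux}. Concretely, set $\uplambda\colon \frg_\cE^n \to \C$, $\uplambda(\upxi) = \int_X \upnu \wedge \tr(\upxi)$. The first step is to check that $\uplambda$ is a bounded linear functional which is $\Dol$-closed and graded-symmetric in the sense of \cref{exboundedlin}: boundedness follows because $\upnu$ is a fixed smooth form and the trace and integration against a top form are bounded with respect to the $L^2$- (hence Sobolev-) norm on $\frg_\cE$; the identity $\uplambda(\Dol\upxi)=0$ follows from Stokes' theorem together with $\overline\partial\upnu = 0$ and the fact that $\tr$ kills the commutator part coming from $\updelta$; and the cyclic symmetry $\uplambda(\upxi_1\wedge \upxi_2) = (-1)^{|\upxi_1||\upxi_2|}\uplambda(\upxi_2\wedge\upxi_1)$ is the graded commutativity of the trace. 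By \cref{exboundedlin} and \cref{exSHIPfromlinform}, this $\uplambda$ produces both an analytic right $n$-CY structure $\upphi^\upnu\in \barC^{\an,-n}_\lambda(\frg_\cE)$ and the associated analytic SHIP $\uprho\in\Omega^{2,\cl,\an}(\frg_\cE)$ with $\uprho_0(a)(b) = \uplambda(a\wedge b)$ and $\uprho_k = 0$ for $k>0$ (up to the normalisation appearing in \cref{exSHIPfromlinform}).

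Next I would check the nondegeneracy condition. By \cref{minmodTT} the minimal model $\cH_\cE$ is the space of $\Dol$-harmonic forms, $I_1$ is the inclusion, and $P_1$ the harmonic projection. The induced pairing on cohomology is $([a],[b])\mapsto \uplambda(a\wedge b) = \int_X \upnu\wedge\tr(a\wedge b)$, which is exactly the Serre duality pairing $\Ext^i(\cE,\cE)\times\Ext^{n-i}(\cE,\cE)\to\C$ composed with the trace and the chosen trivialisation of $\Omega^n_X$ given by $\upnu$; this is perfect by Serre duality on the smooth projective variety $X$. Hence the composite $\cH_\cE[1]\xrightarrow{I}\frg_\cE[1]\xrightarrow{\uprho_{0,0}}\frg_\cE^\vee[1]\xrightarrow{I^\vee}\cH_\cE^\vee[1]$ is an isomorphism, so $\uprho$ is an analytic SHIP (equivalently $\upphi^\upnu$ is an analytic right $n$-CY structure, using the nondegeneracy criterion of \cref{exboundedlin}).

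Now apply the machinery: $\cH_\cE$ is finite dimensional and unital, so \cref{analCYlifts} and \cref{Darboux}(1) (equivalently \cref{mainthm}(1) applied to $\frg_\cE$, whose analytic unital minimal model is $\cH_\cE$) produce a perturbed $A_\infty$-structure $\upmu^\upphi$ on the underlying graded space of $\cH_\cE$ together with an analytic $A_\infty$-isomorphism $f\colon (\cH_\cE,\upmu^\upphi)\to\cH_\cE$ with $f_1=\id$ and $f^*\uprho = \upsigma$ a genuine cyclic structure. Transporting along $P\colon\frg_\cE\to\cH_\cE$ and $f$ identifies the pullback $I^*\uprho$ with this cyclic structure on the resulting minimal model, which is what the statement asserts. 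The one point requiring care — and the main obstacle — is matching conventions: one must confirm that Polishchuk's pullback $I^*\uprho$ along the Kadeishvili transfer maps of \cref{minmodTT} actually lands in the image of the SHIP-to-cyclic correspondence (i.e. that the tree formulas defining the homotopy transfer are compatible with the contraction map $\iota_-\uprho$), and that the degree shift $[-d]$ versus $[-n]$ with $d = n = \dim X$ and the Koszul signs in the cyclic identity agree with those in \cref{def:cyclic}. This is bookkeeping rather than a genuine difficulty, since \cref{exSHIPfromlinform} already records exactly this correspondence for SHIPs coming from bounded linear functionals on normed DG algebras, and $\frg_\cE$ is such an algebra; the original argument of Polishchuk \cite{Pol01} then supplies the explicit identification of $I^*\uprho$ with the transferred cyclic structure.
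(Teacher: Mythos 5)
The paper does not actually prove this statement: it is imported verbatim from \cite[Theorem 1.1]{Pol01}, and the surrounding text only reinterprets the Serre pairing as an analytic SHIP/right CY structure in the paper's own language. So there is no in-paper argument to match your proposal against; it has to be judged on its own.

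Judged that way, there is a genuine gap. Everything up to and including the nondegeneracy check is fine and closely parallels what the paper does elsewhere (\cref{exboundedlin}, \cref{exSHIPfromlinform}, and the compactly supported analogues \cref{traceisCY}, \cref{volumeCYbounded}): $\uplambda$ is bounded, $\Dol$-closed, graded-symmetric, and induces the perfect Serre pairing on cohomology, so $\uprho$ is an analytic SHIP. But the theorem asserts something stronger than ``the class of $\uprho$ admits a cyclic model'': it asserts that the specific pullback $I^*\uprho$ along the Hodge-theoretic transfer maps of \cref{minmodTT} is \emph{already} a strict cyclic structure in the sense of \cref{def:cyclic}, i.e.\ $(I^*\uprho)_{i,j}=0$ for $i+j>0$ with $(I^*\uprho)_{0,0}$ a skew-symmetric isomorphism. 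The Darboux route you invoke only produces an $A_\infty$-isomorphism $f$ with $f_1=\id$ from a \emph{perturbed} $A_\infty$-structure such that $f^*(I^*\uprho)$ is cyclic; it does not show the perturbation is trivial, and in general it is not (the paper itself stresses that pullbacks of cyclic structures along $A_\infty$-morphisms acquire nonzero higher components). The vanishing of those higher components for the harmonic transfer is the entire content of Polishchuk's theorem and rests on properties specific to that transfer (compatibility of the Green operator/propagator with the pairing $\int_X\upnu\wedge\tr(-\wedge-)$, essentially a self-adjointness statement), which your argument never touches. Labelling this ``bookkeeping'' and then citing \cite{Pol01} for it amounts to citing the result you set out to prove. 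What your argument \emph{does} prove is the weaker (and for the rest of the paper, sufficient) statement that $[\uptau_\upnu]$ determines a cyclic analytic minimal model of $\frg_\cE$ — which is \cref{mainthm} applied to this example — but not \cref{policyc} as stated.
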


The Serre pairing used in this theorem can be viewed as an analytic strong homotopy inner product $\uprho \in \Omega^{2,\cl,\an}(\frg_\cE)$, which corresponds to the right Calabi--Yau structure associated to the bounded linear functional
\[
  \uptau_\upnu = \int_X\upnu\wedge \tr(-) \in \frg_\cE' \in \bC^{-n}_\lambda(\frg_\cE),
\]
as in \cref{exSHIPfromlinform}.
In view of the analytic Darboux theorem, one might suspect that there exists other cyclic analytic minimal models for other choices of analytic right Calabi--Yau structure.

In what follows we show how such CY structures can be obtained from a choice of holomorphic volume form on open subsets $U\subset X$, using a compactly supported version of the Dolbeault construction.

\subsection{The compactly supported setting}

Let $U\subset X$ be an open analytic subvariety of a smooth projective variety $X$, viewed as an analytic manifold.
Given a bounded complex of locally free $\O_U$-modules $\cF$ we let
\[
  \frg_{\cF,c} \colonequals \left(\Upgamma_c(U,\sA_U^{0,\bullet}(\cE nd(\cF))),\ \Dol,\ \wedge\right)
\]
denote the nonunital DG subalgebra of $\frg_\cF$ of compactly supported sections.
Henceforth, we will assume that the cohomology of $\cF$ is supported on a compact subset $Z\subset U$, which implies that
\[
  \hh^\bullet\frg_{\cF,c} = \bR^i\Upgamma_c(U,\sA^{0,\bullet}_U(\cE nd(\cF))) \cong  \bR^i\Upgamma(U,\sA^{0,\bullet}_U(\cE nd(\cF))) = \hh^\bullet\frg_\cF,
\]
so that the inclusion $\frg_{\cF,c} \hookrightarrow \frg_\cF$ is a quasi-isomorphism.
We also fix a hermitian metric on $\cF$, which induces a hermitian pairing
\[
  \<-,-\>_{\cE nd(\cF)} \colon \sA_U^{0,0}(\cE nd(\cF)) \otimes \sA_U^{0,0}(\cE nd(\cF)) \to \sA_U^{0,0}
\]
The metric data again determines a well-defined $L^2$-norm $\norm_{L^2}\colon \frg_{\cF,c} \to \R$ which is defined on pure tensors $\upxi = f \otimes \omega$ with $f\in \Upgamma_c(U,\sA^{0,0}(\cE nd(\cF)))$ and $\omega\in\Upgamma_c(U,\sA^{0,q})$ as
\[
  \nrm{\upxi}_{L^2} = \left(\int_X \<f,f\>_{\cE nd(\cF)} \cdot \<\omega,\omega\>_X \d vol_X \right)^{1/2},
\]
where we note that the compactly supported function $\<f,f\>_{\cE nd(\cF)}$ can be viewed as a function on $X$ via extension by $0$, and similarly $\omega$ extends to a smooth $(0,q)$-form on $X$.
The metric connection then also determines a Sobolev norm $\norm_{l,2} = \sum_{k\leq l} \nrm{\nabla^k-}_{L^2}$ as in the compact setting.
A priori, the operators $\Dol$ and $\wedge$ are \emph{not} guaranteed to be bounded, however the reader can check that such a bound exists if $U$ is chosen to be a sufficiently small neighbourhood of the compact set $Z$.

Now suppose that $U$ admits a nowhere-vanishing holomorphic volume form $\upnu \in \Upgamma(U,\Omega^n_U)$.
Then writing again $\tr\colon \sA_U^{0,n}(\cE nd(\cE)) \to \sA_U^{0,n}$ for the point-wise trace, we obtain a linear functional
\[
  \uptau_\upnu \colon \frg_{\cF,c}^n \to \C,\quad \upzeta \mapsto \int_U \upnu \wedge \tr(\upzeta).
\]
We will view $\uptau_\upnu$ as an element of $\frg_{\cF,c}^\vee \subset \fM(\frg_{\cF,c},\frg_{\cF,c}^\vee)[-1]$ in the obvious way, and claim that this is a negative cyclic cocycle.

\begin{lem}\label{traceisCY}
  The map $\uptau_\upnu$ defines a right $n$-Calabi--Yau structure.
\end{lem}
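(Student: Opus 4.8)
The plan is to verify two things: first, that $\uptau_\upnu$ is a cocycle in the Connes complex $\barC_\lambda^{\an,\bullet}(\frg_{\cF,c})$, and second, that it satisfies the nondegeneracy condition defining a right CY structure. Both will be deduced from the template of \cref{exboundedlin}, so the real content is to check that $\uptau_\upnu$ is a well-defined bounded linear functional of the required shape, that it kills $\Dol$-exact forms and is ``cyclically symmetric'' for the wedge product, and that the induced cohomology pairing is perfect.

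First I would argue that $\uptau_\upnu$ is bounded. Since $\upnu$ is a fixed holomorphic volume form on $U$ and all sections in $\frg_{\cF,c}^n$ are compactly supported in the fixed small neighbourhood (where the Sobolev setup of the previous subsection applies), the integral $\int_U \upnu \wedge \tr(\upzeta)$ is dominated by $\nrm{\upnu}_{L^\infty}$ times the $L^1$-norm of $\tr(\upzeta)$ on a fixed compact set, which in turn is controlled by the $L^2$- (hence Sobolev $(l,2)$-) norm via Cauchy--Schwarz on that compact set. So $\uptau_\upnu \in \frg_{\cF,c}'$, and the induced Hochschild cochain $\upphi = \upphi^\upnu$ with $\upphi_0 = -s^{-1}\circ\uptau_\upnu$ (adjusting for the shift as in \cref{exDGalg}) and $\upphi_n = 0$ for $n>0$ lies in $\barC_\lambda^{\an,-n}(\frg_{\cF,c})$: the concentration in arity $0$ makes the analytic bound trivial, and cyclicity of a cochain concentrated in degree $0$ is automatic.

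Next I would check the two algebraic identities demanded by \cref{exboundedlin}. For $\uptau_\upnu(Da) = 0$: writing $D = \Dol = \overline\partial + \updelta$, the term $\upnu\wedge\tr(\updelta a)$ vanishes since $\tr$ of a graded commutator is zero (the differential $\updelta$ acts by a commutator with the connection datum, or one uses $\tr(\updelta\circ - ) = \pm\tr(-\circ\updelta)$ on endomorphism-valued forms), and $\upnu\wedge\tr(\overline\partial a) = \overline\partial(\upnu\wedge\tr(a))$ because $\upnu$ is a holomorphic $(n,0)$-form so $\overline\partial\upnu = 0$ and the bidegree forces $\partial(\upnu\wedge\tr a)$ to vanish after wedging (the total form is already of type $(n,n)$ so $d(\upnu\wedge\tr a) = \overline\partial(\upnu\wedge \tr a)$); then Stokes' theorem on $U$ applied to the compactly supported $(n,n-1)$-form $\upnu\wedge\tr(a)$ gives $\int_U \overline\partial(\upnu\wedge\tr a) = \int_U d(\upnu\wedge\tr a) = 0$. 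For the cyclic symmetry $\uptau_\upnu(a_1\wedge a_2) = (-1)^{|a_1||a_2|}\uptau_\upnu(a_2\wedge a_1)$: this is the graded trace property $\tr(a_1\wedge a_2) = (-1)^{|a_1||a_2|}\tr(a_2\wedge a_1)$ for endomorphism-valued forms (combining the graded commutativity of the wedge of scalar forms with the ordinary trace cyclicity), noting the total degree is $n$ in the relevant case.

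Finally, for nondegeneracy I would invoke Serre duality: the pairing $([a],[b])\mapsto \uptau_\upnu([a\wedge b])$ on $\hh^\bullet\frg_{\cF,c} \cong \bR\Upgamma(U,\cE nd(\cF))$ (using the quasi-isomorphism $\frg_{\cF,c}\hookrightarrow \frg_\cF$ established above, since cohomology is supported on the compact $Z$) is, after trivialising $\Omega^n_U$ by the nowhere-vanishing $\upnu$, identified with the compactly-supported Serre duality pairing $\bR^i\Upgamma_c(U,\cE nd(\cF)) \times \bR^{n-i}\Upgamma(U,\cE nd(\cF)\otimes\Omega^n_U) \to \C$ composed with the trace $\cE nd(\cF)\otimes\cE nd(\cF)\to\O_U$; self-duality of $\cE nd(\cF)$ via this trace makes this perfect. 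This is exactly the condition in \cref{exboundedlin}, so $\upphi^\upnu$ is a right $n$-CY structure. I expect the main obstacle to be the bookkeeping around bidegrees, shift signs, and the precise form of the differential $\updelta$ on the endomorphism complex needed to run the $\uptau_\upnu(Da)=0$ argument cleanly; the boundedness and the Serre-duality nondegeneracy are comparatively routine given the earlier Sobolev estimates and standard coherent-sheaf duality.
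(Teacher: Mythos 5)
Your proposal is correct and follows essentially the same route as the paper: closedness via the vanishing of the trace of the graded commutator with $\updelta$ plus Stokes' theorem for the compactly supported $(n,n-1)$-form $\upnu\wedge\tr(\upzeta)$, cyclicity via the graded trace identity, and nondegeneracy by factoring the pairing through the isomorphism $\upnu\wedge-$ and compactly supported Serre duality. The only organisational difference is that you fold in the boundedness of $\uptau_\upnu$, which the paper defers to a separate lemma (\cref{volumeCYbounded}) since the statement itself only asserts a right CY structure, not an analytic one.
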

\begin{proof}
  Unraveling the definition of the differential on $\bC^\bullet(\frg_{\cF,c},\frg_{\cF,c}^\vee) = (\fM(\frg_{\cF,c},\frg_{\cF,c}^\vee)[-1],b)$, we see that $b(\uptau_\upnu) = 0$ if and only if the following two equations hold for all homogeneous $\upzeta,\upxi\in\frg_{\cF,c}$:
  \[
    \begin{aligned}
      \uptau_\upnu(\Dol \upzeta) &= 0,\\
      \uptau_\upnu(\upzeta_1 \wedge \upzeta_2) &= (-1)^{|\upzeta_1||\upzeta_2|} \uptau_v(\upzeta_2\wedge \upzeta_1).
    \end{aligned}
  \]
  To show the first equation, we note that for a local form $\upzeta = f \otimes \omega$ with $f$ a local section of $\cE nd^i(\cF)$ and $\omega$ a local $(0,n-i-1)$-form, we have the local identity
  \[
    \tr(\Dol (f\otimes \omega)) = \tr(f)\overline\partial\omega + \tr([\updelta,f])\omega = \tr(f)\overline\partial\omega = \overline\partial\tr(f\otimes \omega).
  \]
  Hence, the same identity also hold for a general compactly supported section $\upzeta$ over $U$.
  Because $\upnu$ is holomorphic it then follows by Stokes' theorem for compactly supported forms that
  \[
    \uptau_\upnu(\Dol \upzeta) = \int_U \overline\partial(\upnu \wedge \tr(\upzeta))
    = \int_U (\partial + \overline\partial)(\upnu\wedge \tr(\upzeta)) = 0,
  \]
  where we note that $\partial(\upnu\wedge\tr(\upzeta)) = 0$ for degree reasons.
  For the second equation, we can likewise consider local forms $\upzeta_1 = f_1\otimes \omega_1$ and $\upzeta_2 = f_2\otimes \upxi_2$ with $f_k$ a local section of $\cE nd^{i_k}(\cF)$ and $\omega_k$ a local section of $\sA_U^{0,q_k}$, there is an identity
  \[
    \begin{aligned}
      \tr(\upzeta_1\wedge \upzeta_2)
      &= (-1)^{q_1 i_2} \tr(f_1\circ f_2) \cdot \omega_1 \wedge \omega_2
      \\&= (-1)^{q_1(q_2+i_2) + i_1i_2} \tr(f_2\circ f_1) \cdot \omega_2\wedge\omega_1
      = (-1)^{|\upzeta_1||\upzeta_2|}\tr(\upzeta_2\wedge\upzeta_1).
    \end{aligned}
  \]
  This identity then also holds for any compactly supported sections on $U$, so integrating against $\upnu$ yields the section equality.
  It follows that $\uptau_\upnu$ is a cocycle in $\barC^\bullet(\frg_{\cF,c},\frg_{\cF,c}^\vee)$, and it is immediate that it lies in $\barC^\bullet_\lambda(\frg_{\cF,c})$ because the cyclic action is trivial on the summand $\frg_{\cF,c}^\vee \subset \barC^\bullet_\lambda(\frg_{\cF,c})$.

  It remains to show that the map $\upzeta \mapsto \uptau_\upnu(\upzeta \wedge -)$ is a quasi-isomorphism, or equivalently that 
  \begin{equation}\label{pairingtoshow}
    (\upzeta,\upxi) \mapsto \uptau_\upnu(\upzeta\wedge\upxi) = \int_U \upnu \wedge \tr(\upzeta \wedge \upxi)
  \end{equation}
  induces a nondegenerate pairing on cohomology.
  For this, we note that a holomorphic volume form $\upnu \in \Upgamma(U,\Omega^n_U)$ represents an isomorphism $[\upnu] \in \Hom_{\D(U)}(\O_U,\Omega^n_U)$ in the derived category.
  Therefore, the wedge product
  \begin{equation}\label{eq:volumprod}
    \upnu\wedge- \colon \Upgamma_c(U,\sA_U^{0,\bullet}(\cE nd(\cF)))
    \to \Upgamma_c(U,\sA_U^{0,\bullet}(\cE nd(\cF) \otimes \Omega_U^n)),
  \end{equation}
  induces the isomorphisms $\Hom_{\D(U)}(\cE,\cE[i]) \to \Hom_{\D(U)}(\cE,\cE\otimes \Omega^n_U[i])$ in the derived category.
  By inspection, \labelcref{pairingtoshow} is the composition of \labelcref{eq:volumprod} with the Serre duality pairing
  \[
    \Upgamma_c(U,\sA_U^{0,n-i}(\cE nd(\cE))) \otimes \Upgamma_c(U,\sA_U^{0,i}(\cE nd(\cE) \otimes \Omega^n_U)) \xrightarrow{\int_U\tr(-\wedge-)} \C,
  \]
  which induces a nondegenerate pairing $\Hom_{\D(U)}(\cE,\cE[n-i]) \otimes \Hom_{\D(U)}(\cE,\cE \otimes \Omega^n[i]) \to \C$ on cohomology.
  Therefore the pairing induced by \labelcref{pairingtoshow} on cohomology is also nondegenerate, and it follows that $\uptau_\upnu$ is a right Calabi--Yau structure.
\end{proof}

We now want to show that the right Calabi--Yau structure $\uptau_\upnu$ is analytic if $\frg_{\cF,c}$ is an analytic $A_\infty$-algebra.
For this it suffices to show that $\uptau_\upnu$ is bounded with respect to the Sobolev norm.
\begin{lem}\label{volumeCYbounded}
  If $\upnu \in \Upgamma(U,\Omega^n_U)$ is an $L^2$-integrable holomorphic volume form, then $\uptau_\upnu$ is bounded.
\end{lem}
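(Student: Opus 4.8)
The statement asserts that $\uptau_\upnu$, viewed as a Hochschild cochain on $\frg_{\cF,c}$ as in \cref{exboundedlin}, is analytic; since all of its components except $\upphi_0 = \uptau_\upnu$ vanish, the plan is simply to show that the linear functional $\uptau_\upnu \colon \frg_{\cF,c}^n \to \C$ is bounded. Because $\norm_{L^2} \leq \norm_{l,2}$ it suffices to bound $\uptau_\upnu$ in the $L^2$-norm, i.e. to produce a single constant $C$ with $|\uptau_\upnu(\upzeta)| \leq C \nrm{\upzeta}_{L^2}$ for all $\upzeta \in \frg_{\cF,c}^n$.

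First I would reduce the integrand to a top-degree form to which the Hermitian estimates apply: for $\upzeta \in \frg_{\cF,c}^n$ the supertrace annihilates every component of $\upzeta$ lying in $\cE nd^k(\cF)\otimes \sA_U^{0,\bullet}$ with $k \neq 0$, so $\tr(\upzeta)$ is a compactly supported $(0,n)$-form and $\upnu \wedge \tr(\upzeta)$ a compactly supported $(n,n)$-form. Writing $\upnu \wedge \tr(\upzeta) = g_\upzeta \cdot \d vol_X$, pointwise bilinearity of the wedge pairing on the Hermitian bundles of forms over the compact manifold $X$ yields a metric-dependent constant $C_1 > 0$, \emph{independent of $\upzeta$}, with $|g_\upzeta(x)| \leq C_1\, |\upnu(x)|\,|\tr(\upzeta)(x)|$ for all $x$ (pointwise Hermitian norms). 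I would then apply Cauchy--Schwarz for the $L^2$-inner product:
\[
  |\uptau_\upnu(\upzeta)| = \Big|\int_U g_\upzeta \, \d vol_X\Big| \leq C_1 \int_U |\upnu|\,|\tr(\upzeta)| \, \d vol_X \leq C_1 \, \nrm{\upnu}_{L^2}\,\nrm{\tr(\upzeta)}_{L^2}.
\]
Finally, the point-wise trace is a bounded bundle map with uniformly bounded operator norm, hence induces a bounded operator on $L^2$-sections, so $\nrm{\tr(\upzeta)}_{L^2} \leq C_2 \nrm{\upzeta}_{L^2}$; combining the estimates and using $\nrm{\upnu}_{L^2} < \infty$ gives $|\uptau_\upnu(\upzeta)| \leq C_1 C_2 \nrm{\upnu}_{L^2}\, \nrm{\upzeta}_{L^2}$, as required. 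Together with \cref{traceisCY} and \cref{exboundedlin}, this exhibits $\uptau_\upnu$ as an \emph{analytic} right $n$-Calabi--Yau structure on $\frg_{\cF,c}$.

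The one subtlety to watch is that $U$ is not assumed compact, so $\upnu$ need not be bounded on $U$: a section $\upzeta\in\frg_{\cF,c}$ is compactly supported but its support may approach the boundary of $U$, and a sup-bound on $\upnu$ over an individual support would fail to be uniform over $\frg_{\cF,c}$. This is precisely why the argument is routed through the global $L^2$-integrability hypothesis on $\upnu$ and the Cauchy--Schwarz inequality rather than through a naive $L^\infty$ bound; once this point is recognized the estimate is completely elementary, so I do not expect any genuine obstacle beyond this bookkeeping.
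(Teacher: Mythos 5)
Your proposal is correct and follows essentially the same route as the paper: a pointwise Hermitian estimate turning $\left|\int_U \upnu\wedge\tr(\upzeta)\right|$ into a Cauchy--Schwarz bound $\nrm{\upnu}_{L^2}\nrm{\tr(\upzeta)}_{L^2}$, followed by the observation that the pointwise trace is $L^2$-bounded and $\norm_{L^2}\leq\norm_{l,2}$. You merely spell out the constants and the reduction to the degree-zero endomorphism component that the paper leaves implicit, and your closing remark correctly identifies why the hypothesis is $L^2$-integrability of $\upnu$ rather than a sup-bound.
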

\begin{proof}
  For any $\upzeta \in \Upgamma_c(U,\sA^{0,\bullet}(\cE nd(\cF))$ there is a Cauchy-Schwarz type inequality
  \[
    |\tau_\upnu(\upzeta)| = \left|\int_U \upnu \wedge \tr(\upzeta)\right| \leq \nrm{\upnu}_{L^2} \nrm{\tr(\upzeta)}_{L^2},
  \]
  with respect to the $L^2$-inner product on smooth forms determined by the hermitian metric.
  It is then easy to check that $\nrm{\tr(\upzeta)}_{L^2} \leq \nrm{\upzeta}_{L^2}\leq \nrm{\upzeta}_{l,2}$, so that $\nrm{\uptau_\upnu}_{l,2}$ is bounded by $\nrm{\upnu}_{L^2}$.
\end{proof}

We again remark that one can ensure that any holomorphic volume form $\upnu\in\Upgamma(U,\Omega^n_U)$ is bounded in the $L^2$-norm by replacing $U$ with a sufficiently small neighbourhood of $Z$.
After making this slight modification, we find an analytic right Calabi--Yau structure corresponding to the volume form.

\subsection{Comparing the compact and noncompact settings}
Let $\cE$ be a hermitian perfect complex on a projective variety $X$, with cohomology supported on a subset $Z$, and choose an open neighbourhood $U\supset Z$ with inclusion map $i\colon U \hookrightarrow X$.
Then the extension by zero yields an injective map
\[
  i_!\colon \frg_{\cE|_U,c} = \Upgamma_c(U,\sA^{0,\bullet}_U(\cE nd(\cE|_U))) \longrightarrow \Upgamma(X,\sA^{0,\bullet}_X(\cE nd(\cE))) = \frg_\cE,
\]
which exhibits $\frg_{\cE|_U,c}$ as a DG ideal of $\frg_\cE$.
Equipping $\cE|_U$ with the induced metric, this is an isometry with respect to the Sobolev norm, making $\frg_{\cE|_U,c}$ into an analytic $A_\infty$-subalgebra of $\frg_\cE$.
The assumption on the support moreover guarantees that $i_!$ is a quasi-isomorphism.

In order to pull back the analytic right CY structures found above from $\frg_{\cE|_U,c}$ to $\frg_\cE$ we would like to construct a homotopy inverse for $i_!$ using \cref{existsminmod}.
This requires us to first construct a quasi-isomorphism $\frg_\cE \to \frg_{\cE|_U,c}$, for which we use a choice of weak unit.

\begin{lem}\label{unithompair}
  There exists a pair $(u,h) \in \frg_{\cE|_U,c}^0 \times \frg_{\cE}^{-1}$ such that $\Dol u = 0$ and
  \[
    \Dol h = i_!u - \id_\cE \in \frg_\cE^0.
  \]
  In particular, the map $\one \colon l \mapsto l\cdot u$ is a weak unit for the analytic $A_\infty$-algebra $\frg_{\cE|_U,c}$.
\end{lem}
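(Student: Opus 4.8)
The plan is to build the pair $(u,h)$ from a partition of unity adapted to the support $Z$, exploiting that $\cE|_U$ (hence $\cE nd(\cE|_U)$) is cohomologically supported on the compact $Z\subset U$. First I would pick a smooth compactly supported function $\chi\in\Upgamma_c(U,\sA^{0,0}_X)$ with $\chi\equiv 1$ on a neighbourhood of $Z$, and set $u_0 := \chi\cdot\id_{\cE|_U}\in\frg^0_{\cE|_U,c}$. This is not $\Dol$-closed in general: $\Dol u_0 = \overline\partial\chi\otimes\id_{\cE|_U}$, a compactly supported section that vanishes near $Z$, and which equals $i_!u_0-\id_\cE$ modulo a $\Dol$-exact term — indeed $i_!u_0-\id_\cE = (\chi-1)\id_\cE$ is a global smooth section supported away from $Z$. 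So the real task is to correct $u_0$ to a genuine cocycle $u$ and simultaneously produce the primitive $h$.

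The key step is a vanishing statement: the class of $\Dol u_0 = \overline\partial\chi\otimes\id$ in $\hh^1(\frg_{\cE|_U,c})$ is zero. This follows because $\overline\partial\chi$ is supported on $U\setminus(\text{nbhd of }Z)$, while $\hh^\bullet(\frg_{\cE|_U,c})\cong\Ext^\bullet_U(\cE|_U,\cE|_U)$ is concentrated on $Z$; more precisely one can cover $\operatorname{supp}(\overline\partial\chi)$ by opens disjoint from $Z$ on which $\cE nd(\cE|_U)$ is exact (or use that compactly-supported Dolbeault cohomology away from the support vanishes). Hence there is $k\in\frg^0_{\cE|_U,c}$ with $\Dol k = \Dol u_0$; set $u := u_0 - k$, so $\Dol u = 0$. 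Then $i_!u - \id_\cE = (\chi-1)\id_\cE - i_!k$ is a global $\Dol$-closed section of $\frg^0_\cE$ representing the same class as $\id_\cE\cdot(\chi-1)+\cdots$; since the inclusion $i_!$ is a quasi-isomorphism and $u$ maps to the identity class (because $\chi\equiv 1$ near $Z$ forces $[u]=[\id]$ in $\Ext^0$), the class $[i_!u-\id_\cE]$ in $\hh^0(\frg_\cE)$ vanishes, so there exists $h\in\frg^{-1}_\cE$ with $\Dol h = i_!u-\id_\cE$. Finally, for the weak-unit claim: $\one(l)=l\cdot u$ has $\Dol$-closed image by construction, and since $[u]=[\id_{\cE|_U}]$ in $\Ext^0_U(\cE|_U,\cE|_U)=\hh^0(\frg_{\cE|_U,c})$, the cohomological multiplication identities $\upmu_2([u],[a])=[a]$, $\upmu_2([a],[u])=(-1)^{|a|+1}[a]$ hold because they hold for $\id$ on the unital DGA $\frg_\cE$ and the inclusion is a quasi-isomorphism of algebras.

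The main obstacle is the vanishing statement $[\overline\partial\chi\otimes\id]=0$ in $\hh^1(\frg_{\cE|_U,c})$ — i.e. carefully justifying that compactly-supported Dolbeault cohomology of $\cE nd(\cE|_U)$ "localises" to $Z$, so that a cocycle supported away from $Z$ is a coboundary. One clean way: choose $U$ and $\chi$ so that $\operatorname{supp}(\overline\partial\chi)\subset U\setminus Z$, note $\cE|_{U\setminus Z}$ is a perfect complex with \emph{empty} cohomological support, hence $\cE nd(\cE|_{U\setminus Z})$ is acyclic, so $\bR\Upgamma_c(U\setminus Z,\sA^{0,\bullet})$-type arguments (or a direct use of the quasi-isomorphism $\frg_{\cE|_{U\setminus Z},c}\simeq 0$ after shrinking, plus excision for compactly supported sections) give the primitive $k$ with the same support. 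I would state this as a short lemma or fold it into the argument, taking care that $k$ can be chosen compactly supported in $U$. Everything else — Stokes-type manipulations are not even needed here, only the two cohomology-vanishing facts and the quasi-isomorphism property of $i_!$ already established above — is routine.
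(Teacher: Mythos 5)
Your construction is workable but takes a genuinely different, and much more laborious, route than the paper. The paper's proof of the existence of $(u,h)$ is a two-line argument from abstract nonsense: since $i_!\colon \frg_{\cE|_U,c}\to\frg_\cE$ has already been shown to be a quasi-isomorphism, the map $\hh^0(\frg_{\cE|_U,c})\to\hh^0(\frg_\cE)$ is surjective, so the class $[\id_\cE]$ lifts to a class $[u]$ of some cocycle $u\in\frg^0_{\cE|_U,c}$, and $i_!u-\id_\cE$ is then a coboundary $\Dol h$ by definition. No bump function, no excision, no localisation of compactly supported Dolbeault cohomology is needed. What your explicit approach buys is a concrete representative $u=\chi\cdot\id-k$ (useful, e.g., in the style of the paper's later computation for the point sheaf, where an explicit $(u,h)$ is constructed by hand), but for the lemma as stated it is strictly more work. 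The weak-unit part of your argument ($[u\wedge\xi]=[\xi]$ via the quasi-isomorphism $i_!$ and the identity class) coincides with the paper's.

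There is one step in your write-up that, as literally stated, does not follow. Your ``key vanishing statement'' --- that $[\overline\partial\chi\otimes\id]=0$ in $\hh^1(\frg_{\cE|_U,c})$ --- is vacuously true, because $\overline\partial\chi\otimes\id=\Dol u_0$ with $u_0=\chi\cdot\id$ already compactly supported; by itself it gives you nothing, since $k=u_0$ is a valid primitive and yields $u=0$. The real content, which you gesture at only in your final paragraph, is that the primitive $k$ can be chosen with support in $U\setminus Z$ (using that $\cE nd(\cE)$ is acyclic there, so the compactly supported Dolbeault complex of $\cE nd(\cE)|_{U\setminus Z}$ is exact). Only with that refinement does $u=u_0-k$ equal $\id$ near $Z$, which is what your subsequent deduction ``$\chi\equiv1$ near $Z$ forces $[u]=[\id]$'' silently relies on; and even then, passing from ``$i_!u-\id_\cE$ is $\Dol$-closed and supported away from $Z$'' to ``$[i_!u-\id_\cE]=0$ in $\hh^0(\frg_\cE)$'' requires a second localisation argument (vanishing of cohomology with supports in $X\setminus Z'$ for $\cE nd(\cE)$, whose cohomology sheaves live on $Z$). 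Both lemmas are true and provable, but they must be stated and proved with the support conditions built in; otherwise the argument collapses to the trivial primitive. If you restructure so that the support-controlled primitive is the centrepiece, the proof goes through --- or you can simply replace the whole construction by the surjectivity of $i_!$ on $\hh^0$, which is the paper's route.
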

\begin{proof}
  Because $i_!\colon \frg_{\cE|_U,c} \to\frg_\cE$ is a quasi-isomorphism, it follows that there exists a cocycle $u$ with $[i_!u] = [\id_\cE]$, and hence there exists a coboundary $\Dol h = i_!u-\id_\cE$.
  If $\upxi \in \frg_{\cE|_U,c}$ is $\Dol$-closed 
  \[
    [i_!(u \wedge \upxi)] = [\id_\cE \wedge i_!\upxi] + [\Dol h \wedge i_!\upxi] = [i_!\upxi] \in \hh^\bullet\frg_\cE.
  \]
  It follows that $[u\wedge\upxi] = [\upxi]$ and likewise $[\upxi \wedge u] = [\upxi]$ hold in $\hh^\bullet\frg_{\cE|_U,c}$. The map $\one \colon \ell \to \frg_{\cE|_U,c}$ then satisfies $[\one(l)\wedge \upxi] = [l\cdot u\wedge \upxi]= [l\cdot \upxi]$ and $[\upxi \wedge \one(l)] = [\upxi \cdot l \wedge u] = [\upxi\cdot l]$, making it a weak unit.
\end{proof}

For a unit/coboundary pair $(u,h)$ as above, the wedge product $i_!u\wedge \upxi$ with any section $\upxi\in \frg_{\cE}$ has compact support contained in $U$.
Hence, we obtain a restriction $i^*(i_!u \wedge -) \colon \frg_\cE \to \frg_{\cE|_U,c}$ which is a chain-level inverse to $i_!$.
To extend it to an analytic $A_\infty$-morphism, we need the following lemma.

\begin{lem}\label{qisotheotherway}
  Let $h\in \frg_\cE^{-1}$ and $v = \id_\cE + \Dol h$, then the maps $K_n \colon \frg_\cE[1]^{\otimes n} \to \frg_{\cE}[1]$ given by
  \[
    K_n(s\xi_1,\ldots,s\xi_n) = s(v\wedge\xi_1\wedge h\wedge \cdots \wedge h\wedge \xi_n),
  \]
  for $\xi_i \in \frg_\cE$, define an analytic $A_\infty$-morphism $K \in \Hom_{\AinfAlg}^\an(\frg_\cE,\frg_\cE)$.
\end{lem}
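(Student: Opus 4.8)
The plan is to regard $K=(K_n)_{n\geq1}$, with $K_0=0$, as a pre-morphism in $\overline{\fM}^0(\frg_\cE,\frg_\cE)$, to establish analyticity by counting wedge products, and then to verify the $A_\infty$-morphism relations by a direct expansion that exploits the fact that $\frg_\cE$ is a genuine DG algebra.

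First I would check that $K$ is a well-defined homogeneous pre-morphism of degree $0$: since $|v|=0$ and $|h|=-1$, applying the shift $s$ to the iterated wedge $v\wedge\xi_1\wedge h\wedge\xi_2\wedge\cdots\wedge h\wedge\xi_n$ yields an element of $\frg_\cE[1]$ of degree $\sum_i(|\xi_i|-1)=\sum_i|s\xi_i|$, as required. For analyticity, note that $K_n(s\xi_1,\dots,s\xi_n)$ is obtained from the $2n$ elements $v,\xi_1,h,\xi_2,\dots,h,\xi_n$ by $2n-1$ applications of the wedge product and one application of the isometry $s$. Iterating the operator bound $\nrm{\wedge}_{l,2}<C_0^2$ of \cite{Fuk01} recalled before \cref{minmodTT} (valid once $l>2\dim_\C X$) gives
\[
\nrm{K_n(s\xi_1,\dots,s\xi_n)}_{l,2}\ \leq\ (C_0^2)^{2n-1}\,\nrm{v}_{l,2}\,\nrm{h}_{l,2}^{\,n-1}\,\textstyle\prod_{i=1}^{n}\nrm{\xi_i}_{l,2},
\]
and since $\nrm{\xi_i}_{l,2}=\nrm{s\xi_i}_{l,2}$ this yields $\nrm{K_n}_{l,2}<C^{\,n}$ for a suitable constant $C$ depending only on $C_0,\nrm{v}_{l,2},\nrm{h}_{l,2}$. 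Hence $K\in\overline{\fA}^0(\frg_\cE,\frg_\cE)$ by \cref{analboundedness}.

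It then remains to prove $\upmu\circ\widehat K=K\circ\widetilde\upmu$. Because both source and target are the DG algebra $\frg_\cE$, all products $\upmu_m$ with $m\geq3$ vanish, so on an input of length $n$ this identity reduces to a single relation equating $\upmu_1\circ K_n$ and the binary terms $\sum_{p+q=n}\pm\,\upmu_2\circ(K_p\otimes K_q)$ on one side with $\sum_j\pm\,K_n(\dots,\upmu_1(s\xi_j),\dots)$ and $\sum_j\pm\,K_{n-1}(\dots,\upmu_2(s\xi_j,s\xi_{j+1}),\dots)$ on the other, all carrying Koszul signs. Substituting the formula for $K_n$ together with $\upmu_1=-s\circ\Dol\circ s^{-1}$ and $\upmu_2=\pm\,s\circ(\wedge)\circ(s^{-1}\otimes s^{-1})$, every term on either side is $s$ applied to $v$ wedged with a word in the $\xi_i$, the symbol $h$, and at most one insertion of a $\Dol\xi_j$ or of a product $\xi_j\wedge\xi_{j+1}$. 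Expanding $\Dol(v\wedge\xi_1\wedge h\wedge\cdots\wedge h\wedge\xi_n)$ by the Leibniz rule and using $\Dol v=0$ (so the leading $v$ is never differentiated) and $\Dol h=v-\id_\cE$, the term $\upmu_1\circ K_n$ splits into three families: terms with one $\Dol\xi_i$ inserted; terms in which the $i$-th copy of $h$ is replaced by $v$; and terms in which the $i$-th copy of $h$ is deleted, so that $\xi_i$ and $\xi_{i+1}$ become the single factor $\xi_i\wedge\xi_{i+1}$. The first family matches the $K_n(\dots,\upmu_1(s\xi_j),\dots)$ sum and the third matches the $K_{n-1}(\dots,\upmu_2(s\xi_j,s\xi_{j+1}),\dots)$ sum, while the terms $\upmu_2\circ(K_p\otimes K_q)$ are precisely those with a $v$ inserted between $\xi_p$ and $\xi_{p+1}$, i.e. exactly the shape of the second family. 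The lemma therefore amounts to the statement that, with the correct signs, the second family cancels the $\upmu_2\circ(K_p\otimes K_q)$ terms while the first and third families cancel the two remaining sums.

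The one real difficulty is the sign bookkeeping, concentrated in the cancellation of the ``$\Dol h\rightsquigarrow v$'' terms against the $\upmu_2\circ(K_p\otimes K_q)$ terms: one must verify that the Koszul sign produced by differentiating the $i$-th $h$ in the ordered wedge and keeping its $+v$ summand is exactly opposite to the sign with which the coproduct splitting at $p=i$ enters the cohomomorphism $\widehat K$. I would settle this by working consistently in $\frg_\cE[1]$, where $\upmu_1$ and $\upmu_2$ are homogeneous of degree $+1$, and tracking the shift $s$ and the Koszul sign factor-by-factor along the iterated wedge; each of the three matchings then telescopes to an elementary identity of sign exponents, entirely parallel to the classical checks that $v\wedge(-)$ is a chain map and that homotopies of DG algebra maps lift to $A_\infty$-morphisms. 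With the signs in hand the relation holds for every $n$, so $K$ is an $A_\infty$-morphism; combined with the first paragraph this shows $K\in\Hom_{\AinfAlg}^\an(\frg_\cE,\frg_\cE)$.
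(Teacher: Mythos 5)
Your proposal is correct and follows essentially the same route as the paper's proof: bound $\nrm{K_n}$ by counting the $2n-1$ wedge products and the factors of $v$ and $h$ to get a geometric bound, then expand $\upmu_1\circ K_n$ by the graded Leibniz rule using $\Dol v=0$ and $\Dol h=v-\id_\cE$, and match the resulting three families of terms (insertion of $\Dol\xi_i$, insertion of $v$, deletion of an $h$) against $K_n(\ldots,\upmu_1,\ldots)$, $\upmu_2(K_p\otimes K_q)$, and $K_{n-1}(\ldots,\upmu_2,\ldots)$ respectively. The paper carries out the same sign bookkeeping you defer, recording each term with the sign $(-1)^{\epsilon_{i+1}}$, $\epsilon_i=|\xi_1|+\cdots+|\xi_i|+i$, and comparing coefficients exactly as you describe.
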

\begin{proof}
  Since the $A_\infty$-algebra structures on $\frg_\cE$ and $\frg_{\cE|_U,c}$ come from DG algebra structures, the $A_\infty$-morphism conditions can be written as
  \begin{equation}\label{Kmorphcond}
    \begin{aligned}
      \mu_1 K_n + \sum_{i=1}^{n-1} \mu_2 (K_i \otimes K_{n-i})
      &= \sum_{i= 0}^{n-1}  K_n (\id^{\otimes i} \otimes \mu_1 \otimes \id^{\otimes n-i-1})
        \\&\quad+ \sum_{i=0}^{n-2} K_{n-1} (\id^{\otimes i} \otimes \mu_2 \otimes \id^{\otimes n-i-2}).
    \end{aligned}
  \end{equation}
  Plugging in the definitions $\mu_1 = -s \Dol  s^{-1}$ and $\mu_2 = -s(-\wedge -)(s^{-1})^{\otimes 2}$, it follows from the graded Leibniz rule for $\Dol$ and $\wedge$ that
  \[
    \begin{aligned}
      \mu_1 K_1(s\xi_1,\ldots,s\xi_n) &= -s\Dol(v\wedge \upxi_1\wedge h \wedge \cdots \wedge \upxi_n)
      \\&= \sum_{i=0}^{n-1} (-1)^{\epsilon_{i+1}} s(v \wedge \cdots \wedge h \wedge \Dol \upxi_{i+1} \wedge h \wedge \cdots \wedge \upxi_n)
      \\&\quad- \sum_{i=1}^{n-1} (-1)^{\epsilon_{i+1}} s(v \wedge \cdots \wedge h \wedge \upxi_i \wedge v \wedge \upxi_{i+1} \wedge h \wedge \cdots \wedge \upxi_n),
      \\&\quad+ \sum_{i=1}^{n-1} (-1)^{\epsilon_{i+1}} s(v \wedge \cdots \wedge h \wedge \upxi_i \wedge \upxi_{i+1} \wedge h \wedge \cdots \wedge \upxi_n).
    \end{aligned}
  \]
  where we used the notation ${\epsilon_i} = |\xi_1|+\cdots+|\xi_i|+i$ for the sign.
  By inspection, each of the terms in these summations is given by one of the terms
  \[
    \begin{aligned}
      \mu_2(K_i \otimes K_{n-i})(s\xi)
      &= (-1)^{\epsilon_{i+1}} s(v\wedge \xi_1\wedge \cdots\wedge \xi_i \wedge v\wedge \xi_{i+1}\wedge\cdots\wedge h\wedge\xi_n).\\
      K_n(\id^{\otimes i}\otimes \mu_1 \otimes \id^{\otimes n-i-1})(s\xi) &=
      (-1)^{{\epsilon_{i+1}}} s(v\wedge \xi_1 \wedge \ldots \wedge h\wedge \Dol \xi_{i+1} \wedge h \wedge \cdots \wedge \xi_n)\\
      K_{n-1}(\id^{\otimes i-1}\otimes \mu_2\otimes \id^{\otimes n-i-1})(s\xi) &=
      (-1)^{{\epsilon_{i+1}}} s(v\wedge \xi_1 \wedge \ldots \wedge h\wedge \xi_i \wedge \xi_{i+1} \wedge h \wedge \cdots \wedge \xi_n),
    \end{aligned}
  \]
  where we've abbreviated $s\xi = s\xi_1 \otimes \cdots \otimes s\xi_n$.
  Comparing coefficients, it therefore follows that $K$ defines an $A_\infty$-morphism, and it remains to show it is analytic.
  For this let $C\geq 1$ be any constant so that $\nrm{\wedge}_{l,2} \leq C$, $\nrm{u}_{l,2} < C$ and $\nrm{h}_{l,2} < C$.
  Then for any sections $\upxi_1,\ldots,\upxi_n$ 
  \[
    \nrm{K_n(s\upxi_1,\ldots,s\upxi_n)}_{l,2} \leq C^{3n-1} \cdot \nrm{\upxi_1}_{l,2}\cdots \nrm{\upxi_n}_{l,2},
  \]
  which implies that $\nrm{K_n} < C^{3n-1} < (C^3)^n$, which shows that $K$ is indeed analytic.
\end{proof}

Given a unit/coboundary pair $(u,h)$ as above, $\id_\cE+\Dol h = i_!u$ has support contained in the open $U\subset X$, so for any $\xi_1,\ldots,\xi_n \in \frg_\cE$ there is a well-defined pullback
\[
  K_{c,n}(s\xi_1,\ldots,s\xi_n) = i^*(i_!u \wedge \xi_1 \wedge \cdots \wedge \xi_n) \in \frg_{\cE|_U,c}.
\]
We therefore obtain a well-defined $A_\infty$-morphism $K_c$ from $\frg_\cE$ to $\frg_{\cE|_U,c}$.

\begin{lem}
  Let $(u,h)$ be a unit/coboundary pair as in \cref{unithompair} and $K$ the morphism associated to $h$.
  Then $K_c$ is an analytic homotopy inverse to $i_!$.
\end{lem}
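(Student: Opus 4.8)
The plan is to produce explicit analytic homotopies $i_!\diamond K_c\sim_\an\id_{\frg_\cE}$ and $K_c\diamond i_!\sim_\an\id_{\frg_{\cE|_U,c}}$; together these exhibit $K_c$ as an analytic homotopy inverse of $i_!$. The preliminary point is that both composites are governed by the same wedge recipe as $K$. Since $i_!$ is a strict DG inclusion with trivial higher components, and since $i_!\circ i^*$ is the identity on sections supported in $U$ while the form $i_!u\wedge\xi_1\wedge h\wedge\cdots\wedge h\wedge\xi_n$ is already supported in $U$, one gets $i_!\circ K_{c,n}=K_n$, hence $i_!\diamond K_c=K$ on the nose (recall $v=\id_\cE+\Dol h=i_!u$). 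Symmetrically, using that $\alpha\wedge\beta=i_!\bigl((i^*\alpha)\wedge(i^*\beta)\bigr)$ whenever $\alpha$ is supported in $U$, the $n$-th component of $K_c\diamond i_!$ sends $s\xi_1\otimes\cdots\otimes s\xi_n$ to $s\,(u\wedge\xi_1\wedge\bar h\wedge\cdots\wedge\bar h\wedge\xi_n)$, where $\bar h:=i^*h=h|_U$, all products taken in the unital Dolbeault algebra $\frg_{\cE|_U}=\Upgamma(U,\sA^{0,\bullet}_U(\cE nd(\cE|_U)))$, the result landing in the ideal $\frg_{\cE|_U,c}$ precisely because the $\xi_i$ are compactly supported in $U$.

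For each composite I would then run a dilation argument. On $\frg_\cE$, for $t\in[0,1]$ let $K^t$ be the $A_\infty$-morphism obtained from \cref{qisotheotherway} applied to $th$ in place of $h$: explicitly $K^t_n(s\xi_1,\dots,s\xi_n)=s\,(v^t\wedge\xi_1\wedge(th)\wedge\cdots\wedge(th)\wedge\xi_n)$ with $v^t=\id_\cE+\Dol(th)=(1-t)\id_\cE+t\,i_!u$. Then $K^0=\id_{\frg_\cE}$ (the $n>1$ terms vanish because of the factors $th$, and the $n=1$ term is $s(\id_\cE\wedge-)$), $K^1=K$, and the Sobolev estimate in the proof of \cref{qisotheotherway} gives $\nrm{K^t_n}_{l,2}<(C')^n$ with $C'$ independent of $t$, since $\nrm{th}_{l,2}\le\nrm{h}_{l,2}$. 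Over $\frg_{\cE|_U,c}$ I would use the parallel family $L^t_n(s\xi_1,\dots,s\xi_n)=s\,(w^t\wedge\xi_1\wedge(t\bar h)\wedge\cdots\wedge(t\bar h)\wedge\xi_n)$ with $w^t=\id_{\cE|_U}+\Dol(t\bar h)=(1-t)\id_{\cE|_U}+tu$; each $L^t_n$ lands in $\frg_{\cE|_U,c}$ because it is a wedge involving the compactly supported $\xi_i$, the computation of \cref{qisotheotherway} carried out in $\frg_{\cE|_U}$ and restricted to the ideal shows each $L^t$ is an $A_\infty$-morphism, $L^0=\id_{\frg_{\cE|_U,c}}$, $L^1=K_c\diamond i_!$, and the family is uniformly analytic by the same estimate (here one uses that $\id_{\cE|_U}$ and $\bar h$ have finite $L^2$-norm, as $U\subset X$ has finite volume and these are restrictions of smooth sections on the compact $X$).

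Finally I would promote each uniformly analytic path of morphisms to an analytic homotopy by writing down the $dt$-component explicitly. For $K^t$ I take $H=K^t+H_y^t\,dt$ with $H_{y,n}^t(s\xi_1,\dots,s\xi_n)=\pm\,s\,(h\wedge\xi_1\wedge(th)\wedge\cdots\wedge(th)\wedge\xi_n)$, i.e.\ the degree-$0$ front factor $v^t$ is replaced by the degree-$(-1)$ element $h$, which lowers the total degree by one as required. Then \labelcref{eq:homotopycondone} holds because each $K^t$ is an $A_\infty$-morphism, and \labelcref{eq:homotopycondtwo}, namely $\tfrac{\partial}{\partial t}K^t=\upmu\circ(\widehat{K^t}\otimes H_y^t\otimes\widehat{K^t})+H_y^t\circ\widetilde\upmu$, is verified by expanding $\tfrac{\partial}{\partial t}$ of the defining wedge (one term from $\partial_t v^t=\Dol h$, one for each of the $n-1$ factors $th$) and matching it term by term, with Koszul signs, against the graded Leibniz expansion of $\Dol\bigl(h\wedge\xi_1\wedge(th)\wedge\cdots\wedge\xi_n\bigr)$ together with the $\upmu_2$-splitting terms — the one-parameter analogue of the bookkeeping in the proof of \cref{qisotheotherway}, in which the identity $\Dol(th)=v^t-\id_\cE$ converts a differentiated $th$ into either a $\upmu_2$-merge or a $v^t$-insertion. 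Since $\nrm{H_{y,n}^t}_{l,2}$ obeys the same bound as $\nrm{K^t_n}_{l,2}$, the analyticity criterion for homotopies from \cref{sec:prelims} shows $H$ is analytic, whence $K=\ev_1H\sim_\an\ev_0H=\id_{\frg_\cE}$. The identical construction with $L^t$ and $H_{y,n}^t=\pm\,s\,(\bar h\wedge\xi_1\wedge(t\bar h)\wedge\cdots\wedge(t\bar h)\wedge\xi_n)$ yields $K_c\diamond i_!\sim_\an\id_{\frg_{\cE|_U,c}}$; combined with $i_!\diamond K_c=K$ this finishes the proof. The main obstacle is the sign-tracking in the verification of \labelcref{eq:homotopycondtwo}; everything else is a routine adaptation of \cref{qisotheotherway} and the Sobolev estimates of \cref{minmodTT}.
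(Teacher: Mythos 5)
Your proposal is correct and follows essentially the same route as the paper: the interpolating family $K^t$ obtained by applying \cref{qisotheotherway} to $t\cdot h$, together with the $\d t$-component $s(h\wedge\xi_1\wedge(t h)\wedge\cdots\wedge(t h)\wedge\xi_n)$, is exactly the homotopy $K^t+Q^t\,\d t$ the paper writes down, and your treatment of the compactly supported side (an intrinsic family on $\frg_{\cE|_U,c}$ versus the paper's restriction $K^t_c\diamond i_!+Q^t_c\diamond i_!$) is the same construction in different notation. The only content you defer — the term-by-term verification of \labelcref{eq:homotopycondtwo} and the uniform Sobolev bounds — is precisely the computation the paper carries out explicitly, and your description of how the terms match is accurate.
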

\begin{proof}
  We will construct an analytic homotopy $K^t + Q^t\d t \in \Hom^\an_\AinfAlg(\frg_\cE,\Omega^\bullet_{[0,1]} \otimes \frg_\cE)$ between $\id_{\frg_\cE}$ and $K$,
  where the coefficient functions are defined as follows:
  \[
    \begin{aligned}
      K^t_n(s\xi_1,\ldots,s\xi_n) &= s(\id_\cE + t\cdot \Dol h) \wedge \xi_1 \wedge (t\cdot h) \wedge \cdots \wedge (t\cdot h) \wedge \xi_n),\\
      Q^t_n(s\xi_1,\ldots,s\xi_n) &= s(h\wedge \xi_1 \wedge (t\cdot h) \wedge \cdots \wedge (t\cdot h) \wedge \xi_n).
    \end{aligned}
  \]
  For each $t\in [0,1]$ the map $K^t$ is the $A_\infty$-morphism of \cref{qisotheotherway} corresponding to the element $t\cdot h\in\frg_\cE^{-1}$, so it suffices to check the condition~\eqref{eq:homotopycondtwo}.
  The two terms on the right hand side of~\eqref{eq:homotopycondtwo} are respectively given by the sum of the terms
  \[
    \begin{aligned}
      \mu_1Q^t_n(s\xi_1,\ldots,s\xi_n)
      &= \sum_{i=1}^n (-1)^{\epsilon_{i-1}} t^{n-1} \cdot s(h\wedge \xi_1 \wedge \cdots \wedge \Dol  h \wedge \xi_i \wedge \cdots \wedge h \wedge \xi_n) \\
      &\quad-\sum_{i=1}^n (-1)^{\epsilon_{i-1}} t^{n-1} \cdot s(h\wedge \xi_1 \wedge \cdots \wedge h \wedge \Dol \xi_i \wedge \cdots \wedge h \wedge \xi_n)
      \\
      \mu_2(K^t_i \otimes Q^t_{n-i})(s\xi_1,\ldots,s\xi_n)
      &= (n-1) t^{n-1} \cdot s(\Dol h\wedge \xi_1 \wedge \cdots \wedge h \wedge \xi_n)
      \\&\quad+ (n-1) t^{n-2} \cdot s(\xi_1 \wedge h \wedge \cdots \wedge h \wedge \xi_n)
      \\
      \mu_2(Q^t_i \otimes K^t_{n-i})(s\xi_1,\ldots,s\xi_n)
      &= -(-1)^{\epsilon_{i+1} }t^{n-1} \cdot s(h\wedge \xi_1 \wedge \cdots \wedge \Dol  h \wedge \xi_{i+1} \wedge \cdots \wedge h \wedge \xi_n)
      \\&\quad-(-1)^{\epsilon_{i+1}} t^{n-2}\cdot s(h\wedge \xi_1 \wedge \cdots \wedge h \wedge \xi_{i} \wedge \xi_{i+1} \wedge h \wedge \cdots \wedge h \wedge \xi_n)
    \end{aligned}
  \]
  and the term
  \[
    \begin{aligned}
      (Q^t_{n-1} \widetilde \upmu)(s\xi_1,\ldots,s\xi_n) &= \sum_{i=1}^n (-1)^{\epsilon_{i-1}}t^{n-1}\cdot s(h\wedge \xi_1 \wedge \cdots \wedge h \wedge \Dol \xi_i \wedge \cdots \wedge h \wedge \xi_n)\\
      &\quad+ \sum_{i=1}^{n-1}(-1)^{\epsilon_{i+1}} t^{n-2} \cdot s(h\wedge \xi_1 \wedge \cdots \wedge h \wedge \xi_i \wedge \xi_{i+1} \wedge h \wedge \cdots \wedge h \wedge \xi_n).
    \end{aligned}
  \]
  The sum of all these terms is equal to
  \[
    nt^{n-1}\cdot s(\Dol  h \wedge \xi_1 \wedge h \wedge \ldots \wedge h \wedge \xi_n) + (n-1)t^{n-2} \cdot s(\xi_1 \wedge h \wedge \cdots \wedge h \wedge \xi_n ) = \frac{\partial}{\partial s} K^t_n(s\xi_1,\ldots,s\xi_n),
  \]
  which shows that $K^t+Q^t \d t$ defines a homotopy from $K^0 = \id$ to $K^1 = K$ on $\frg_\cE$.
  To see that this homotopy is analytic, take again a constant $C\geq 1$ such that $\nrm{\wedge}_{l,2} \leq C$, $\nrm{\Dol }_{l,2}<C$ and $\nrm{h}_{l,2} < C$.
  Then the component functions in the homotopy are bounded by
  \[
    \begin{aligned}
      \nrm{K_n^t}_\infty &\leq \int_0^1 t^{n-1} \nrm{\wedge}_{l,2}^{4n}\cdot \nrm{h}_{l,2}^{2n-2}\left(1+t \nrm{\Dol }_{l,2}^2\cdot \nrm{h}_{l,2}^{2}\right) \d t
      < 2 C^{6n+2} ,
      \\
      \nrm{\tfrac{\partial}{\partial t} K_n^t}_\infty& \leq \int_0^1 t^{n-2}\nrm{\wedge}_{l,2}^{4n} \cdot  \nrm{h}_{l,2}^{2n-2}\left(n t \nrm{\Dol }_{l,2}^2 \cdot \nrm{h}_{l,2}^{2} + (n-1)\right) \d t
                                                < 2n C^{6n+2},
      \\
      \nrm{Q_n^t}_\infty &\leq \int_0^1 t^{n-1} \nrm{\wedge}_{l,2}^{4n} \cdot \nrm{h}_{l,2}^{2n}\,\d t
                    < C^{6n}.
    \end{aligned}
  \]
  Clearly, we then have a common bound (e.g. $C' = 2(C+1)^8$ suffices) and it follows that $K^t + Q^t\d t$ is an analytic homotopy, which shows that $i_! \diamond K_c = K \sim_\an \id_{\frg_\cE}$.
  The forms $Q^t_n(s\xi_1,\ldots,s\xi_n)$ and $K^t_n(s\xi_1,\ldots,s\xi_n)$ have compact support on $U$
  whenever at least on of the $\xi_i$ does.
  Writing $K_c^t$ and $Q_c^t$ to denote the composition with the restriction as before, we obtain an analytic homotopy
  \[
    K_c^t \diamond i_! + Q_c^t \diamond i_! \in \Hom_\AinfAlg(\frg_{\cE|_U,c},\Omega^\bullet_{[0,1]}\otimes \frg_{\cE|_U,c}),
  \]
  between $\id_{\frg_{\cE|_U,c}}$ and $K_c \diamond i_!$, which shows the result.
\end{proof}

We conclude that $\frg_{\cE|_U,c}$ is analytically homotopy equivalent to $\frg_\cE$, and therefore also analytically homotopy equivalent to the minimal model $\cH_\cE$.
\Cref{existsminmod} then shows that there exists some automorphism $T\in \Hom_\AinfAlg^\an(\frg_\cE,\frg_\cE)$ such that the compositions in the diagram
\begin{equation}\label{eq:minmodcs}
  \begin{tikzcd}[column sep=large]
    \cH_\cE \ar[r,"I",shift left=3pt] & \ar[l,"P",shift left=3pt] \frg_\cE \ar[r,"K_c",shift left=3pt] & \ar[l,"T\diamond i_!",shift left=3pt] \frg_{\cE|_U,c}.
  \end{tikzcd}    
\end{equation}
make $\cH_E$ into an analytic minimal model for $\frg_{\cE|_U,c}$.
Hence, $\frg_{\cE|_U,c}$ is again one of the well-behaved analytic $A_\infty$-algebras, admitting a compact analytic minimal model.

Finally, we wish to compare the situation where $\cE|_U$ is replaced by an arbitrary hermitian perfect complex on $U$.
It turns out that we can again compare the corresponding Dolbeault DG algebras via a quasi-isomorphism, as the following lemma shows.

\begin{lem}\label{arbcompqiso}
  Suppose $\cF$ is a hermitian perfect complex quasi-isomorphic to $\cE|_U$, such that $\frg_{\cF,c}$ is analytic with respect to the Sobolev norm.
  Then there exists a triple $(r,r^{-1},h_{rr^{-1}})$ with
  \[
    r \in \Upgamma_c(U,\sA_X^{0,\bullet}(\cH om(\cF,\cE|_U))),\quad r^{-1} \in \Upgamma_c(U,\sA_X^{0,\bullet}(\cH om(\cE|_U,\cF)))
  \]
  degree $0$ cocycles and $h_{rr^{-1}} \in \frg_\cE^{-1}$ satisfying $\Dol h_{rr^{-1}} = i_!(r\wedge r^{-1}) - \id_\cE$.
  Such that the maps
  \[
    F_n(s\upxi_1,\ldots,s\upxi_n) \colonequals s(r^{-1} \wedge \upxi_1 \wedge h_{rr^{-1}} \wedge \cdots \wedge h_{rr^{-1}} \wedge \upxi_n \wedge r)
  \]
  determine an analytic $A_\infty$-quasi-isomorphism $F\in \Hom_{\AinfAlg}(\frg_{\cE|_U,c},\frg_{\cF,c})$.
\end{lem}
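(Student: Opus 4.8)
The plan is to construct the triple $(r,r^{-1},h_{rr^{-1}})$ out of the derived equivalence $\cF\simeq\cE|_U$, then to verify that $F$ is an analytic $A_\infty$-morphism by a computation parallel to \cref{qisotheotherway}, and finally to check that $F_1$ induces an isomorphism on cohomology.

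\emph{Construction of the data.} Since the Dolbeault construction is a DG enhancement of $\D^\perf(U)$, the isomorphism $\cF\simeq\cE|_U$ and its inverse are represented by degree-$0$ $\Dol$-cocycles in the Dolbeault complexes of $\cH om(\cF,\cE|_U)$ and $\cH om(\cE|_U,\cF)$; because the cohomology of $\cF$ is supported on the compact set $Z$, these complexes compute the same cohomology as their compactly supported subcomplexes, and after multiplying the representatives by the weak unit $u$ of \cref{unithompair} (which does not change their cohomology classes) I may take $r$, $r^{-1}$ to have compact support in $U$. Then $r\wedge r^{-1}\in\frg_{\cE|_U,c}^0$ is a $\Dol$-cocycle with $[r\wedge r^{-1}]=[\id_{\cE|_U}]$ in cohomology, so it is itself a weak unit for $\frg_{\cE|_U,c}$; since any two weak units are cohomologous, $[r\wedge r^{-1}]=[u]$ in $\hh^0\frg_{\cE|_U,c}$, whence $[i_!(r\wedge r^{-1})]=[i_!u]=[\id_\cE]$ in $\hh^0\frg_\cE$. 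Therefore $i_!(r\wedge r^{-1})-\id_\cE$ is a $\Dol$-coboundary, and I fix $h_{rr^{-1}}\in\frg_\cE^{-1}$ with $\Dol h_{rr^{-1}}=i_!(r\wedge r^{-1})-\id_\cE$.

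\emph{$F$ is an analytic $A_\infty$-morphism.} Setting $v=i_!(r\wedge r^{-1})=\id_\cE+\Dol h_{rr^{-1}}$, I would expand $\Dol$ of $r^{-1}\wedge\xi_1\wedge h_{rr^{-1}}\wedge\cdots\wedge h_{rr^{-1}}\wedge\xi_n\wedge r$ by the graded Leibniz rule: the terms where $\Dol$ hits $r^{-1}$ or $r$ vanish; those where it hits a $\xi_i$ reassemble into $F_n$ with $\mu_1$ inserted in the $i$-th slot; and in each term where it hits some $h_{rr^{-1}}$ one uses $\Dol h_{rr^{-1}}=v-\id_\cE$, the $-\id_\cE$ part yielding $F_{n-1}$ with a $\mu_2$ insertion and the $v$ part, after regrouping the inserted $r\wedge r^{-1}$, yielding a term $\mu_2(F_i\otimes F_{n-i})$. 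Comparing coefficients gives the $A_\infty$-morphism equations, with exactly the sign bookkeeping of \cref{qisotheotherway}. For analyticity, choose $C\ge1$ bounding the relevant wedge products together with $\nrm{r}_{l,2}$, $\nrm{r^{-1}}_{l,2}$, and $\nrm{h_{rr^{-1}}}_{l,2}$; since $F_n(s\xi_1,\dots,s\xi_n)$ is a product of $2n+1$ factors, $n-1$ of them copies of $h_{rr^{-1}}$, one obtains $\nrm{F_n}_{l,2}<C^{3n+1}<(C^4)^n$, so $F$ is analytic.

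\emph{$F$ is a quasi-isomorphism.} On cohomology $F_1$ acts by $[\xi]\mapsto[r^{-1}]\circ[\xi]\circ[r]$, i.e.\ conjugation by the derived isomorphism represented by $r$, which is an isomorphism $\hh^\bullet\frg_{\cE|_U,c}\xrightarrow{\sim}\hh^\bullet\frg_{\cF,c}$ with inverse $[\eta]\mapsto[r]\circ[\eta]\circ[r^{-1}]$ since $[r]$ and $[r^{-1}]$ are mutually inverse in $\D^\perf(U)$; hence $F$ is an analytic quasi-isomorphism. The step I expect to require the most care is the first one — arranging $r$, $r^{-1}$ to have compact support while ensuring that the coboundary relation defining $h_{rr^{-1}}$ still holds — since the rest is essentially a transcription of the computations already carried out for \cref{qisotheotherway,unithompair}.
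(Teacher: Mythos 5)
Your proposal is correct and follows essentially the same route as the paper: compactly supported Dolbeault cocycles $r$, $r^{-1}$ representing the derived equivalence, a coboundary $h_{rr^{-1}}$ for $i_!(r\wedge r^{-1})-\id_\cE$ obtained from the fact that $r\wedge r^{-1}$ represents the identity, the Leibniz-rule verification modelled on \cref{qisotheotherway}, the bound $\nrm{F_n}_{l,2}\leq C^{3n+1}$, and the quasi-isomorphism property via conjugation by $r$ (the paper writes the explicit quasi-inverse $s\xi\mapsto s(r\wedge\xi\wedge r^{-1})$, which is the same argument). The only cosmetic difference is your extra step of multiplying by the weak unit to force compact support, which is harmless but unnecessary since, as you also note, the compactly supported Dolbeault complex already computes $\Hom_{\D(U)}(\cF,\cE|_U)$.
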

\begin{proof}
  Because the cohomologies of $\cF$ and $\cE|_U$ are compactly supported on $U$, the morphism spaces between $\cE|_U$ and $\cF$ in the derived category can be computed as
  \[
    \Hom_{\D(U)}(\cE|_U,\cF) \cong \bR^0\Upgamma_c(U,\cH om(\cE|_U,\cF)) \cong \hh^0(\Upgamma_c(U,\sA_U^{0,\bullet}(\cH om(\cE|_U,\cF))),\Dol),
  \]
  and similarly $\Hom_{\D(U)}(\cF,\cE|_U) \cong \hh^0(\Upgamma_c(U,\sA_U^{0,\bullet}(\cH om(\cE|_U,\cF)),\Dol)$.
  Hence, there exists degree $0$ cocycles $r^{-1} \in \Upgamma_c(U,\sA_U^{0,\bullet}(\cH om(\cE|_U,\cF)))$ and $r \in \Upgamma_c(U,\sA_U^{0,\bullet}(\cH om(\cE|_U,\cF)))$ which induces the isomorphism between $\cF$ and $\cE|_U$ in the derived category.
  Because the composition $r\wedge r^{-1}$ induces the identity on $\cE|_U$ in the derived category, there exists some $h_{rr^{-1}}\in \frg_\cE^{-1}$ such that
  \[
    i_!(r\wedge r^{-1}) = \id_{\cE} + \Dol h_{rr^{-1}}.
  \]
  Hence, $F$ is well-defined and one can check easily that it is an $A_\infty$-morphism in an analogous fashion to the proof of \cref{qisotheotherway}.
  To see that it is a quasi-isomorphism we note that that $F_1(s\xi) = s(r^{-1}\wedge \xi \wedge r)$ has a quasi-inverse $F_1^{-1}(s\xi) \colonequals s(r \wedge \xi \wedge r^{-1})$.
  Indeed, we have
  \[
    \begin{aligned}
      [i_!(F_1^{-1}(F_1(s\upxi)))] &= [s(i_!(r\wedge r^{-1}) \wedge i_!\upxi \wedge i_!(r \wedge r^{-1}))]
      \\&= [s((\id_\cE + \Dol h_{rr^{-1}}) \wedge i_!\upxi \wedge (\id_\cE+\Dol h_{rr^{-1}}))]
      \\&= [si_!\upxi] \in \hh^0\frg_\cE.
    \end{aligned}
  \]
  which implies that $F_1^{-1}\circ F_1$ induces the identity on $\hh^\bullet\frg_{\cE|_U,c}$, since $i_!\colon \frg_{\cE|_U,c}\to \frg_\cE$ is a quasi-isomorphism.
  To check that $F$ is analytic let $C> \max\{\nrm{r^{-1}}_{l,2},\nrm{r}_{l,2},\nrm{h}_{l,2},\nrm{\wedge}_{l,2}\}$, so that
  \[
    \nrm{F}_{l,2} \leq \nrm{\wedge}_{l,2}^{2n}\cdot \nrm{r}_{l,2}\cdot \nrm{r^{-1}}_{l,2}\cdot \nrm{h_{rr^{-1}}}_{l,2}^{n-1} \leq C^{3n+1} \leq C^{4n},
  \]
  as in the proof of \cref{qisotheotherway}. It follows that $F$ is analytic when $\frg_{\cF,c}$ is an analytic $A_\infty$-algebra.
\end{proof}

If $\cF$ is an arbitrary perfect complex on $U$ with compactly supported cohomology then $\cF$ is quasi-isomorphic to $i^*\cE = \cE|_U$, where $\cE \to \bR i_! \cF$ is any resolution of the exceptional direct image.
With the above lemma, we find maps on analytic negative cyclic cohomology
\[
  \begin{tikzcd}[column sep=large]
    \HC_\lambda^\bullet(\frg_{\cF,c}) \ar[r,"F^*"] &
    \HC_\lambda^\bullet(\frg_{\cE|_U,c}) \arrow{r}{K_c^*}[swap]{\sim} &
    \HC_\lambda^\bullet(\frg_\cE)  \arrow{r}{I^*}[swap]{\sim}&
    \HC_\lambda^\bullet(\cH_\cE),
  \end{tikzcd}    
\]
along which right Calabi--Yau structures can be pulled back.
With these comparison maps, we are now ready to prove the main theorems of the paper.

\subsection{Cyclic minimal models from local volumes}

We now combine all results to obtain new cyclic minimal models associated to local holomorphic volume forms.
It turns out that the cyclic minimal model one obtains does not depend on the neighbourhood chosen, so we can take an agnostic approach: given a closed subset $Z$ we consider the space of germs along $Z$ 
\[
  (\Omega^n_X)_Z \colonequals \{ \upnu \in \Upgamma(V,\Omega^n_X) \mid V\supset Z \text{ open in } X\}/\sim
\]
where $\upnu \sim \upnu'$ for two sections $\upnu \in \Upgamma(V,\Omega^n_X)$ and $\upnu' \in \Upgamma(V',\Omega^n_X)$ if there exists $V'' \subset V\cap V'$ such that $Z \subset V''$ and $\upnu|_{V''} = \upnu'|_{V''}$.
We consider germs which locally act as a volume.

\begin{defn}
  An element $\upnu \in (\Omega^n_X)_Z$ is a \emph{volume germ} along $Z$ if there is some open neighbourhood $V\supset Z$ such that $\upnu|_V$ is nonzero at every point in $V$.
\end{defn}

The main result of this section is the following theorem.

\begin{thm}\label{cycminmodcan}
  Let $X$ be a smooth projective variety of dimension $n$, and $\cE$ a perfect complex on $X$ with cohomology supported on $Z \subset X$.
  Then any volume germ $\upnu \in (\Omega^n_X)_Z$ determines a canonical class $[\upphi^\upnu]_\an \in \HC_\lambda^{\bullet,\an}(\frg_\cE)$ of an analytic right Calabi--Yau structure,
  which determines a corresponding \emph{analytic cyclic minimal model} of $\frg_\cE$
  \[
    (\cH_\cE^\upnu,\upsigma^\upnu)  = (\Ext_X^\bullet(\cE,\cE),\upmu^\nu,\upsigma^\nu),
  \]
  which is well-defined up to cyclic analytic $A_\infty$-isomorphism.
\end{thm}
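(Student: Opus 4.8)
The plan is to obtain $[\upphi^\upnu]_\an$ by transporting the trace cocycle of \cref{traceisCY} from a compactly supported Dolbeault algebra to $\frg_\cE$, and then to feed the result into \cref{mainthm}. First I would fix a representative of the germ: pick an open $U\supset Z$ small enough that $\upnu$ is represented by a nowhere-vanishing, $L^2$-integrable holomorphic form on $U$ and that the operators $\Dol,\wedge$ on $\frg_{\cE|_U,c}$ are bounded in a Sobolev norm $\norm_{l,2}$ with $l>2\dim_\C X$, so that $\frg_{\cE|_U,c}\in\AinfAlgan$; such $U$ exist by the remarks surrounding \cref{minmodTT} and \cref{volumeCYbounded}. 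By \cref{traceisCY} and \cref{volumeCYbounded} the functional $\uptau_\upnu=\int_U\upnu\wedge\tr(-)$ is a cocycle in $\barC_\lambda^{\an,-n}(\frg_{\cE|_U,c})$ defining an analytic right $n$-CY structure. The extension-by-zero map $i_!\colon\frg_{\cE|_U,c}\to\frg_\cE$ is an analytic quasi-isomorphism (it is a Sobolev isometry onto a DG ideal), and both algebras are compact with $\cH_\cE$ as a common analytic minimal model ($\frg_\cE$ by \cref{minmodTT}, $\frg_{\cE|_U,c}$ by \eqref{eq:minmodcs}); hence $i_!$ has a two-sided analytic homotopy inverse $j$ (one may take $j=K_c$). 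I would then set $[\upphi^\upnu]_\an\colonequals j^*[\uptau_\upnu]\in\HC_\lambda^{\an,-n}(\frg_\cE)$. This is independent of $j$: any two analytic homotopy inverses satisfy $j\sim_\an j\diamond i_!\diamond j'\sim_\an j'$, and by \cref{analhochschildhomot} homotopic analytic morphisms act identically on analytic cyclic cohomology.

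Next I would check that $[\upphi^\upnu]_\an$ depends only on the germ. The functional $\uptau_\upnu$ is itself metric-independent — the hermitian metric enters only through \cref{volumeCYbounded}, to certify boundedness — and any two Sobolev norms on the compact $X$ are equivalent, so the subcomplex $\barC^{\an,\bullet}_\lambda(\frg_\cE)$ and the map $j^*$ on it are unchanged. For independence of $U$: if $Z\subset U'\subseteq U$ with both admissible, extension by zero gives commuting analytic quasi-isomorphisms $\frg_{\cE|_{U'},c}\hookrightarrow\frg_{\cE|_U,c}\hookrightarrow\frg_\cE$ along which $\uptau_{\upnu|_U}$ restricts to $\uptau_{\upnu|_{U'}}$; composing with a homotopy inverse of the total inclusion and using the uniqueness up to $\sim_\an$ of such inverses together with \cref{analhochschildhomot} again, the two induced classes in $\HC_\lambda^{\an,-n}(\frg_\cE)$ agree. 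Since any two representatives of $\upnu$ coincide on a common admissible neighbourhood of $Z$, the class $[\upphi^\upnu]_\an$ is well-defined. This bookkeeping — confirming that the neighbourhood, the metric, the unit/coboundary pair $(u,h)$, the automorphism $T$ of \eqref{eq:minmodcs}, and the homotopy inverse all wash out at the level of $\HC_\lambda^{\an,-n}$ — is the only real obstacle, and \cref{analhochschildhomot} together with the uniqueness of homotopy inverses does the essential work.

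Finally I would invoke the main theorem. $\frg_\cE$ is compact (its cohomology is $\Ext_X^\bullet(\cE,\cE)$) and admits a unital analytic minimal model $\cH_\cE$ (\cref{minmodTT}; strict unitality is arranged from the unit $\id_\cE\in\frg_\cE$ using \cref{analinverse}), so \cref{mainthm}(1) applied to the analytic right $n$-CY structure $\upphi^\upnu$ produces an $n$-cyclic analytic minimal model $(\hh(\frg_\cE)^{\upphi^\upnu},\upsigma^{\upphi^\upnu})$, which I rename $(\cH_\cE^\upnu,\upsigma^\upnu)$; its underlying graded vector space is the cohomology of $\frg_\cE$, namely $\Ext_X^\bullet(\cE,\cE)$, so $\cH_\cE^\upnu=(\Ext_X^\bullet(\cE,\cE),\upmu^\upnu,\upsigma^\upnu)$. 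Well-definedness up to cyclic analytic $A_\infty$-isomorphism is then immediate from \cref{mainthm}(2): any admissible change of data — in particular replacing $\upnu$ by an equivalent germ — leaves $[\upphi^\upnu]_\an$ fixed, hence a fortiori fixes its image in $\HC_\lambda^{-n}(\frg_\cE)$, which is precisely the hypothesis of \cref{mainthm}(2).
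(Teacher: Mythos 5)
Your proposal is correct and follows essentially the same route as the paper: choose an $L^2$-integrable nowhere-vanishing representative of the germ on a small $U\supset Z$, take the trace cocycle $\uptau_\upnu$ on $\frg_{\cE|_U,c}$ from \cref{traceisCY} and \cref{volumeCYbounded}, pull it back along the analytic homotopy inverse $K_c$ of $i_!$, and feed the resulting class into \cref{mainthm}, with \cref{analhochschildhomot} handling independence of all choices. Your extra remarks on metric-independence and on the independence of the chosen homotopy inverse are correct refinements of the same argument rather than a different approach.
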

\begin{proof}
  We fix a hermitian structure on $X$ and $\cE$ as before, so that $\frg_\cE$ is an analytic DG algebra which admits a strong analytic minimal model by \cref{minmodTT} with underlying space
  \[
    \cH_\cE \cong \hh^\bullet\frg_\cE \cong \Ext^\bullet_X(\cE,\cE).
  \]
  We claim that every volume germ $\upnu$ along $Z$ has a representative which is $L^2$-integrable.
  To construct this, pick any nonvanishing representative $\upnu^0 \in \Upgamma(V,\Omega^n_X)$, and note that the pointwise squared-norm is a continuous function $p \mapsto |\upnu^0(p)|_X^2$ on $V$.
  Because $Z$ is compact, this function is bounded on $Z$, so picking any $\epsilon>0$ we obtain an open neighbourhood
  \[
    U = \left\{\ p\in Y \ \middle|\ |\upnu_p^0|_X^2 < \epsilon + \max_{q\in Z} |\upnu_X^0(q)|^2\ \right\},
  \]
  on which $|\upnu_p^0|^2$ is bounded.
  Then $\upnu = \upnu^0|_U$ is square-integrable, yielding the required representative.
 
  Fixing such a $\upnu \in \Upgamma(U,\Omega^n_X)$, the trace $\uptau_\upnu \in \hom_\elle^\cont(\ell,\frg_{\cE|_U,c}')$ from \cref{traceisCY} is an analytic right CY structure on $\frg_{\cE|_U,c}$ by \cref{volumeCYbounded}.
  Writing $i\colon U\to X$ for the embedding of $U$, it follows by \cref{unithompair} that there is a unit/coboundary pair $(u,h) \in \frg_{\cE|_U,c}^0\times \frg_\cE^{-1}$, which induces an analytic homotopy equivalence
  $K_c\in \Hom_{\AinfAlg}^\an(\frg_\cE,\frg_{\cE|_U,c})$ by \cref{qisotheotherway}.
  We obtain the analytic right CY structure
  \[
    \upphi^\upnu \colonequals K_c^*\uptau_\upnu \in \barC^{\an,\bullet}(\frg_\cE).
  \]
  Because the minimal model $\cH_\cE$ is unital, it follows from \cref{mainthm}(1) that $\upphi^\upnu$ determines an analytic cyclic minimal model
  \[
    (\cH_\cE^\upnu\kern-.5pt, \upsigma^\upnu) \colonequals (\cH_\cE^{\upphi^\upnu}\kern-5pt, \upsigma^{\upphi^\upnu}),
  \]
  so that $\upphi^\upnu$ pulls back to a class in $\barC_\lambda^{\bullet,\an}(\cH_\cE^\upnu)$ which is homotopic to the image of $\upsigma^\upnu$ along the map in \cref{analCYlifts}.
  It now remains to show that the right CY structure and the cyclic minimal model are independent of the choice of neighbourhood $U$ or the unit/coboundary pair $(u,h)$.
  
  Suppose $V\subset U$ is another open subset with inclusion $j\colon V\hookrightarrow U$, and let $(u_V,h_V)$ be a choice of unit/coboundary pair such that $(ij)_!u_V = \id_\cE + \Dol h_V$, and let $K_{c,V}\in \Hom_{\AinfAlg}^\an(\frg_\cE,\frg_{\cE|_V,c})$ be the induced analytic homotopy equivalence.
  Then $\upnu|_V \sim \upnu$ is another representative of the germ and induces the analytic right CY structure $K_{c,V}^*\uptau_{\upnu|_V}$ as before.
  Since $j_!$ is a strict morphism from $\frg_{\cE|_V,c}$ to $\frg_{\cE|_U,c}$ satisfying
  \[
    (j_!)^*\uptau_\upnu(\upxi) = \int_U \upnu \wedge \tr(j_!\upxi) = \int_V \upnu|_V \wedge \tr(\upxi) = \uptau_{\upnu|_V}(\upxi).
  \]
  Now since $K_c\diamond i_! \sim_\an \id_{\frg_{\cE|_U,c}}$ and $i_!\diamond j_!\diamond K_{c,V} \sim_\an \id_{\frg_\cE}$ it follows by \cref{analhochschildhomot} that
  \[
    \begin{aligned}
      [K_{c,V}^*\uptau_{\upnu|_V}]_\an
      = [(j_!\diamond K_{c,V})^*\uptau_\upnu]_\an
      = [(K_c \diamond i_! \diamond j_!\diamond K_{c,V})^*\uptau_\upnu]_\an
      = [K_c^*\uptau_\upnu]_\an = [\upphi^\upnu]_\an.
    \end{aligned}
  \]
  By \cref{mainthm}(2) the cyclic minimal model induced by $K_{c,V}^*\uptau_{\upnu|_V}$ is then cyclic analytic $A_\infty$-isomorphic to $\cH^\upnu_\cE$.
\end{proof}

In the 3-Calabi--Yau case, a potential can be constructed using \cref{potentialsmaincor}.

\begin{cor}\label{potentialsvolumecor}
  Suppose $X$ is a threefold, then any volume germ $\upnu$ induces an analytic potential
  \[
    W^\upnu \in \widetilde\barT_{\kern-1pt\ell} \Ext^1(\cE,\cE)^\vee,
  \]
  which is well-defined up to an analytic change of coordinates $\widetilde\barT_{\kern-1pt\ell} \Ext^1(\cE,\cE)^\vee \xrightarrow{\sim} \widetilde\barT_{\kern-1pt\ell} \Ext^1(\cE,\cE)^\vee$.
\end{cor}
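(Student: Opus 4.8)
The plan is to obtain the corollary by combining \cref{cycminmodcan} with \cref{potentialsmaincor}, so that essentially nothing needs to be proved beyond checking that the hypotheses line up. Since $X$ is a threefold, $n = 3$, and \cref{cycminmodcan} already attaches to the volume germ $\upnu$ a canonical class $[\upphi^\upnu]_\an \in \HC_\lambda^{\an,-3}(\frg_\cE)$ of an analytic right $3$-Calabi--Yau structure on the Dolbeault DGA $\frg_\cE$. It therefore remains only to verify that $\frg_\cE$ meets the standing assumptions of \cref{potentialsmaincor}: that it is compact, that it admits a unital analytic minimal model, and that $\hh^0(\frg_\cE) \cong \ell$.

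Compactness I would dispatch at once: $\hh^\bullet\frg_\cE \cong \Ext^\bullet_X(\cE,\cE)$ is finite dimensional over $\C$ because $X$ is projective and $\cE$ is perfect. The unital analytic minimal model is the content of \cref{minmodTT} together with the DG unit $\id_\cE \in \frg_\cE^0$, which descends to a cohomological unit on $\cH_\cE$ and can be rigidified to a strict unit without leaving the finite-dimensional analytic category — this is precisely the input already invoked in \cref{cycminmodcan} through \cref{mainthm}(1). Finally, under the standing convention that $\ell = \Mat_\C(m_1)\times\cdots\times\Mat_\C(m_k)$ is the endomorphism algebra of the chosen splitting of $\cE$, one has $\hh^0(\frg_\cE) = \Hom_{\D^\perf(X)}(\cE,\cE) \cong \ell$, which in particular is automatic in the polystable situation of \cite{Tod18}. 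Hence \cref{potentialsmaincor} applies to $A = \frg_\cE$ with $V = \hh^1(\frg_\cE)^\vee = \Ext^1_X(\cE,\cE)^\vee$, and produces an analytic potential
\[
  W^\upnu \colonequals W_{\frg_\cE}^{\upphi^\upnu} \in \widetilde\barT_\ell \Ext^1_X(\cE,\cE)^\vee,
\]
which by \cref{analpotential} is the potential of the cyclic analytic minimal model $(\cH_\cE^\upnu,\upsigma^\upnu)$.

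For the well-definedness claim I would argue that the class $[\upphi^\upnu]_\an$, and hence its image in $\HC_\lambda^{-3}(\frg_\cE)$, is canonically attached to $\upnu$ by \cref{cycminmodcan}; the residual choices (of analytic minimal model, of the closed $2$-form lifting the CY class, and of the morphism strictifying it) yield by \cref{mainthm}(2) a cyclic analytic $A_\infty$-isomorphism between the two cyclic minimal models, and the last paragraph of the proof of \cref{potentialsmaincor} shows such an isomorphism induces an analytic algebra automorphism of $\widetilde\barT_\ell \Ext^1_X(\cE,\cE)^\vee$ sending one potential to the other. I do not expect a genuine obstacle: the only points meriting a line of care are the bookkeeping of degree conventions (top-degree form $n = 3$ matching $-d = -3$ and the $d = 3$ hypothesis of \cref{potentialsmaincor} and \cref{analpotential}) and the rigidification of the cohomological unit on $\cH_\cE$, both already implicit in the preceding results.
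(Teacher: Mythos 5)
Your proposal is correct and follows exactly the route the paper intends: the paper offers no separate proof of \cref{potentialsvolumecor}, only the remark that the potential is obtained by feeding the cyclic analytic minimal model of \cref{cycminmodcan} into \cref{potentialsmaincor}, which is precisely your argument. Your explicit check of the hypothesis $\hh^0(\frg_\cE)\cong\ell$ (left tacit in the paper, and indeed only automatic in the polystable setting of \cite{Tod18}) is a welcome point of extra care rather than a deviation.
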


\begin{rem}
  If $X$ is itself Calabi--Yau then the global volume form $\upnu \in \Upgamma(X,\Omega^n_X)$ is  $L^2$-integrable and $(u,h) = (\id_\cE,0)$ is a valid unit/coboundary inducing the identity maps $K_c = \id$ on $\frg_{\cE,c} = \frg_\cE$.
  The trace $K_c^*\uptau_\upnu = \uptau_\upnu$ induces the Serre pairing, and $(\cH^\upnu,\upsigma^\upnu)$ is simply the cyclic analytic minimal model of \cite{Pol01,Tu14,Tod18} as given in \cref{policyc}.
\end{rem}

The cyclic minimal models obtained from the above theorem express the local geometry on an open subset, but are always defined with respect to a global choice of projective variety.
We claim however, that the cyclic minimal model really only depends on the local geometry.
To substantiate this claim, we will consider diagrams of the form
\begin{equation}\label{eq:embeddings}
  \begin{tikzcd}
    X & Y\ar[l,hook',"i",swap] \ar[r,"f"] & X'
  \end{tikzcd}
\end{equation}
where $Y$ is an open analytic subvariety in a smooth projective varieties $X$, and $f$ is an open embedding into a second smooth projective variety $X'$.
We claim that the cyclic minimal model for a perfect complex with support on $Y$ can be computed equivalently on $X'$ or $X$.

\begin{thm}\label{embeddingminmod}
  In the situation of \cref{eq:embeddings}, let $\cE' \in \D^\perf(X')$ have support $f(Z) \subset f(Y)$ for some compact $Z\subset Y$, let $\upnu \in (\Omega^n_{X'})_Z$ be a volume form germ, and let $\cE\in \D^\perf(X)$ be such that $\cE|_Y \simeq f^*\cE'$.
  Then there exists an analytic homotopy equivalence
  \[
    \frg_{\cE} \simeq_\an \frg_{\cE'}
  \]
  along which $\upphi^\upnu$ pulls back to a an analytic right CY structure analytically homotopic to $\upphi^{f^*\upphi}$.
  In particular, there is an analytic cyclic $A_\infty$-isomorphism between the cyclic analytic minimal models
  \[
    (\cH_\cE^{f^*\upnu},\upsigma^{f^*\upnu}) \cong_{\an,\cyc} (\cH_{\cE'}^\upnu,\upsigma^\upnu).
  \]
\end{thm}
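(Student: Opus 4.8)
The plan is to realise both $\frg_\cE$ and $\frg_{\cE'}$ as analytically homotopy equivalent to the compactly supported Dolbeault DG algebra of a single perfect complex on a small neighbourhood of $Z$, and then to match the two volume traces along this comparison. Fix hermitian data on $(X,\cE)$ and on $(X',\cE')$ so that, by \cref{minmodTT}, $\frg_\cE$ and $\frg_{\cE'}$ are analytic DG algebras with strong analytic minimal models $\cH_\cE \cong \Ext^\bullet_X(\cE,\cE)$ and $\cH_{\cE'} \cong \Ext^\bullet_{X'}(\cE',\cE')$. Since $f$ is an open embedding, $f|_Y$ is a biholomorphism onto the open submanifold $f(Y)\subset X'$; I would choose an open neighbourhood $U\subset Y$ of $Z$ small enough that the Dolbeault operators of $\cE|_U$ and of $f^*\cE'|_U$ are bounded for the Sobolev norms (so $\frg_{\cE|_U,c}$ and $\frg_{f^*\cE',c}$ are analytic $A_\infty$-algebras) and that $\upnu$ admits an $L^2$-integrable nowhere-vanishing representative on $U':=f(U)$ whose pullback $\upnu_0$ along $f|_U$ is $L^2$-integrable on $U$. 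The biholomorphism $f|_U\colon U\xrightarrow{\ \sim\ }U'$ induces a strict isomorphism of analytic DG algebras $\Phi\colon \frg_{\cE'|_{U'},c}\xrightarrow{\ \sim\ }\frg_{f^*\cE',c}$ intertwining the pointwise traces, and by hypothesis $\cE|_U\simeq f^*\cE'$ as perfect complexes on $U$.

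Next I would assemble a zig-zag of analytic quasi-isomorphisms. Following the previous two subsections, a unit/coboundary pair for $\frg_{\cE|_U,c}\hookrightarrow\frg_\cE$ gives via \cref{unithompair} and \cref{qisotheotherway} an analytic homotopy equivalence $K_c\colon\frg_\cE\to\frg_{\cE|_U,c}$ with analytic homotopy inverse $i_!$, and by \eqref{eq:minmodcs} the algebra $\frg_{\cE|_U,c}$ is compact and admits a unital analytic minimal model; the same applies on $X'$, giving $K_c'\colon\frg_{\cE'}\to\frg_{\cE'|_{U'},c}$ with inverse $i_!'$, and $\frg_{\cE'|_{U'},c}$ again admitting a unital analytic minimal model. \cref{arbcompqiso} (applied with $\cF=f^*\cE'$) furnishes an analytic $A_\infty$-quasi-isomorphism $F\colon\frg_{\cE|_U,c}\to\frg_{f^*\cE',c}$; composing with $\Phi^{-1}$ yields an analytic quasi-isomorphism $\frg_{\cE|_U,c}\to\frg_{\cE'|_{U'},c}$ between compact analytic $A_\infty$-algebras that both admit unital analytic minimal models, hence an analytic homotopy equivalence by the discussion after \cref{quasitoiso}. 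Therefore $g\colonequals i_!'\diamond\Phi^{-1}\diamond F\diamond K_c\colon\frg_\cE\to\frg_{\cE'}$ is an analytic quasi-isomorphism, so an analytic homotopy equivalence $\frg_\cE\simeq_\an\frg_{\cE'}$, which is the first assertion.

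It then remains to track the Calabi--Yau classes. By the proof of \cref{cycminmodcan} — including its verification that the class is independent of the chosen neighbourhood and unit/coboundary pair — one has $[\upphi^{f^*\upnu}]_\an=K_c^*[\uptau_{f^*\upnu}]_\an$ in $\HC_\lambda^{\an,\bullet}(\frg_\cE)$ and $[\upphi^\upnu]_\an=(K_c')^*[\uptau_\upnu]_\an$ in $\HC_\lambda^{\an,\bullet}(\frg_{\cE'})$, where $\uptau_{f^*\upnu}=\int_U\upnu_0\wedge\tr(-)$ on $\frg_{\cE|_U,c}$ and $\uptau_\upnu=\int_{U'}\upnu\wedge\tr(-)$ on $\frg_{\cE'|_{U'},c}$. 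Using \cref{analhochschildhomot} together with $i_!'\diamond K_c'\sim_\an\id$ and $K_c'\diamond i_!'\sim_\an\id$, this gives $g^*[\upphi^\upnu]_\an=K_c^*F^*(\Phi^{-1})^*[\uptau_\upnu]_\an$, and a change of variables under $f|_U$ (using that $\Phi$ intertwines traces) identifies $(\Phi^{-1})^*\uptau_\upnu$ with the functional $\uptau^{f^*\cE'}_{f^*\upnu}=\int_U\upnu_0\wedge\tr(-)$ on $\frg_{f^*\cE',c}$. So the statement reduces to the identity $F^*[\uptau^{f^*\cE'}_{f^*\upnu}]_\an=[\uptau_{f^*\upnu}]_\an$ in $\HC_\lambda^{\an,\bullet}(\frg_{\cE|_U,c})$, i.e. that the volume-trace class on $U$ depends only on the germ $f^*\upnu$ and not on which of the quasi-isomorphic complexes $\cE|_U,f^*\cE'$ on $U$ is used. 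Granting this, $g^*\upphi^\upnu$ is an analytic right CY structure analytically homotopic to $\upphi^{f^*\upnu}$, and \cref{mainthm}(2)--(3) deliver the cyclic analytic $A_\infty$-isomorphism $(\cH_\cE^{f^*\upnu},\upsigma^{f^*\upnu})\cong_{\an,\cyc}(\cH_{\cE'}^\upnu,\upsigma^\upnu)$.

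The displayed identity is the one genuinely computational step, and I expect it to be the main obstacle. It should be established in the style of \cref{traceisCY} and of the independence argument inside \cref{cycminmodcan}: write $F$ through the explicit formula of \cref{arbcompqiso} in terms of cocycles $r,r^{-1}$ with $r\wedge r^{-1}=\id_{\cE|_U}+\Dol h_{rr^{-1}}$, and expand $F^*\uptau^{f^*\cE'}_{f^*\upnu}$ as a Hochschild cochain on $\frg_{\cE|_U,c}$. Cyclicity of the pointwise trace and the local identity $\tr\circ\Dol=\overline\partial\circ\tr$ rewrite each term as a contribution to $\int_U\upnu_0\wedge\tr(-)$ plus terms of the form $\int_U\overline\partial\bigl(\upnu_0\wedge\tr(h_{rr^{-1}}\wedge\cdots)\bigr)$, which vanish by Stokes' theorem for compactly supported forms; the residual discrepancy is then exhibited as $\bb$ of an explicit analytic cochain built from $h_{rr^{-1}}$, analytic by the same geometric-series bookkeeping as in \cref{qisotheotherway} and \cref{arbcompqiso}. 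Keeping track of the higher Hochschild components of $F^*\uptau$ and writing down this primitive is the delicate part; the rest is a formal consequence of the results already assembled in the preceding sections.
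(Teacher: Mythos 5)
Your architecture coincides with the paper's: the same zig-zag $\frg_\cE \xrightarrow{K_c} \frg_{\cE|_U,c} \xrightarrow{F} \frg_{f^*\cE'|_U,c} \xrightarrow{(f^*)^{-1}} \frg_{\cE'|_{f(U)},c} \xrightarrow{j_!} \frg_{\cE'}$, the same reduction of the CY-class comparison to showing that $F$ (composed with the change of variables under $f|_U$) carries $[\uptau_\upnu]_\an$ to $[\uptau_{f^*\upnu}]_\an$, and the same appeal to \cref{mainthm}(3) at the end. However, you have correctly flagged that this last identity is the entire substance of the theorem, and you do not prove it: you write ``Granting this\dots'' and then sketch a strategy (expand $F^*\uptau$, integrate by parts via Stokes, exhibit the discrepancy as $\bb$ of an explicit analytic primitive built from $h_{rr^{-1}}$). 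That sketch is not an argument — producing the primitive, checking it is a Hochschild coboundary in the \emph{cyclic} subcomplex, and verifying its geometric-series bounds is precisely the nontrivial content, and as written the proof is incomplete at its only genuinely new step.

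The paper closes this gap with a much shorter device that avoids any Stokes computation or explicit primitive. Writing $F_n(s\upxi_1,\ldots,s\upxi_n) = s(r^{-1}\wedge\upxi_1\wedge h_{rr^{-1}}\wedge\cdots\wedge\upxi_n\wedge r)$ and using the graded cyclic symmetry of the pointwise trace under the integral (already established in \cref{traceisCY}), one rotates the trailing $r$ to the front to get
\[
  \int_U f^*\upnu\wedge\tr(r^{-1}\wedge\upxi_1\wedge\cdots\wedge\upxi_n\wedge r)
  = \int_U f^*\upnu\wedge\tr\bigl(r\wedge r^{-1}\wedge\upxi_1\wedge h_{rr^{-1}}\wedge\cdots\wedge\upxi_n\bigr),
\]
and the right-hand side is \emph{literally} $((K_c\diamond i_!)^*\uptau_{f^*\upnu})_n$ for the morphism $K_c$ of \cref{qisotheotherway} attached to the unit/coboundary pair $(u,h)=(r\wedge r^{-1},h_{rr^{-1}})$ from \cref{arbcompqiso}. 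Since $K_c\diamond i_!\sim_\an\id$ is already known, the identity $((f^*)^{-1}\diamond F)^*\uptau_\upnu\sim_\an\uptau_{f^*\upnu}$ follows from \cref{analhochschildhomot} with no further analysis. You should replace your sketched Stokes argument by this observation; it turns the ``delicate part'' into a two-line consequence of lemmas you have already invoked.
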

\begin{proof}
  Given a germ in $(\Omega^n_{X'})_{f(Z)}$ we can find a representative $\upnu \in \Upgamma(f(U),\Omega^n_{X'})$ defined on an the image of some open neighbourhood $U\supset Z$.
  As before, this neighbourhood can be chosen so that $\upnu$ and $f^*\upnu$ are $L^2$-integrable forms on $f(U)$ and $U$ respectively, yielding $\uptau_\upnu\in \barC_\lambda^{\bullet,\an}(\frg_{\cE|_{f(U)},c})$ and $\uptau_{f^*\upnu}\in \barC_\lambda^{\bullet,\an}(\frg_{\cE|_U,c})$.
  Now for any choice of complexes $\cE$ and $\cE'$ as above, we can consider the chain of quasi-isomorphisms
  \begin{equation}\label{eq:chainofqisos}
    \begin{tikzcd}
      \frg_\cE \ar[r,"K_c"] & \frg_{\cE|_U,c} \ar[r,"F"] & \frg_{f^*\cE'|_U,c} \ar[r,"(f^*)^{-1}"]& \frg_{\cE'|_{f(U)},c},
    \end{tikzcd}
  \end{equation}
  where $F\colon \frg_{\cE|_U,c} \to \frg_{f^*\cE'|_U,c}$ is the map in \cref{arbcompqiso} associated to a triple $(r,r^{-1},h_{rr^{-1}})$ of mutually inverse quasi-isomorphisms $r^{-1}\colon\cE|_U\to f^*\cE'|_U$ and $r^{-1}\colon f^*\cE'|_U\to \cE|_U$ with coboundary $h_{rr^{-1}}$ for the composition $r\wedge r^{-1}$, and $(f^*)^{-1}$ denotes the DG algebra morphism given by the inverse of the pullback map
  \[
    f^*\colon \Upgamma_c(f(U),\sA^{0,\bullet}_X(\cE nd(\cE'))) \to \Upgamma_c(U,\sA^{0,\bullet}_Y(\cE nd(f^*\cE'))),
  \]
  which is a DG algebra isomorphism because $f$ is a diffeomorphism onto its image.
  Choosing hermitian metrics on $\cE$ and $\cE'$, there is an induced hermitian metric on $f^*\cE'$, and the maps $K_c$ and $F$ are analytic.
  The strict map $f^*$ is again bounded provided that $U$ is chosen sufficiently small, as the ratio between the metric on $U$ and the metric pulled back from $f(U)$ is again bounded on a neighbourhood of the compact subset $Z$.
  
  Now we observe that the functional $\uptau_\upnu$ pulls back along the quasi-isomorphism $(f^*)^{-1} \diamond F$  to an analytic negative cyclic cocycle of the form  
  \[
  \begin{aligned}
    (((f^*)^{-1} \diamond F)^*\uptau_\upnu)_n(s\upxi_1,\ldots,s\upxi_n)
    &= \int_{f(U)} \upnu \wedge \tr((f^*)^{-1}(r^{-1} \wedge \upxi_1 \wedge h_{rr^{-1}} \wedge \cdots \wedge \upxi_n \wedge r))
    \\&= \int_{U} f^*\upnu \wedge \tr(r^{-1} \wedge \upxi_1 \wedge h_{rr^{-1}} \wedge \cdots \wedge \upxi_n \wedge r)
    \\&= \int_{U} f^*\upnu \wedge \tr(r\wedge r^{-1} \wedge\upxi_1 \wedge h_{rr^{-1}} \wedge \cdots \wedge \upxi_n)
    \\&= ((K_c\diamond i_!)^*\uptau_{f^*\upnu})_n(s\upxi_1,\ldots,s\upxi_n),
  \end{aligned}
\]
where $K_c$ is the map associated to the unit/coboundary pair $(u,h) = (r\wedge r^{-1}, h_{rr^{-1}})$ constructed in \cref{qisotheotherway}.
Since $K_c\diamond i_!\sim_\an \id_{\frg_{\cE|_U,c}}$ it follows that $((f^*)^{-1}\diamond F)^*\uptau_\upnu \sim_\an \uptau_{f^*\upnu}$.
Now let $j\colon f(U) \hookrightarrow X'$ denote the inclusion of $f(U)$, and let $K_c'$ be the homotopy inverse of $j_!$ defining the analytic right CY structure $\upphi^\upnu = (K_c')^*\uptau_\upnu$.
Then
\[
  j_! \diamond (f^*)^{-1} \diamond F \diamond K_c \in \Hom_{\AinfAlgan}^\an(\frg_\cE,\frg_{\cE'})
\]
is an analytic quasi-isomorphism for which the pullback of the cocycle $\upphi^\upnu$ satisfies
\[
  \begin{aligned}
    (j_! \diamond (f^*)^{-1} \diamond F \diamond K_c)^*\upphi^\upnu
    &= (K_c' \diamond j_! \diamond (f^*)^{-1} \diamond F \diamond K_c)^*\uptau_\upnu
    \\&\sim_\an ((f^*)^{-1} \diamond F \diamond K_c)^*\uptau_\upnu
    \\&= K_c^*((f^*)^{-1} \diamond F)^*\uptau_\upnu
    \\&\sim_\an K_c^*\uptau_{f^*\upnu} = \upphi^{f^*\upnu}.
  \end{aligned}
\]
It now follows by \cref{mainthm}(3) that there is a cyclic analytic $A_\infty$-isomorphism between the cyclic analytic minimal models, which finishes the proof.
\end{proof}

In the threefold case, the second part of \cref{potentialsmaincor} now directly implies the following.

\begin{cor}\label{potentialsembedcor}
  Suppose $X$ and $X'$ are threefolds with complexes $\cE$ and $\cE'$ as above.
  Then for any volume germ $\upnu \in (\Omega_{X'}^3)_Z$ there is an algebra isomorphism
  \[
    g\colon \widetilde\barT_{\kern-1pt\ell} \Ext^1(\cE',\cE')^\vee \xrightarrow{\ \sim\ } \widetilde\barT_{\kern-1pt\ell} \Ext^1(\cE,\cE)^\vee.
  \]
  which relates the two potentials via $g(W^\upnu) = W^{f^*\upnu}$
\end{cor}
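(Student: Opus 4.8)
The plan is to deduce this from \cref{embeddingminmod} together with the tensor-algebra machinery already used to prove \cref{potentialsmaincor}. By \cref{embeddingminmod}, applied to the volume germ $\upnu \in (\Omega^3_{X'})_Z$, there is an analytic cyclic $A_\infty$-isomorphism between the $3$-cyclic analytic minimal models
\[
  \Phi \colon (\cH_\cE^{f^*\upnu},\upsigma^{f^*\upnu}) \xrightarrow{\ \sim\ } (\cH_{\cE'}^\upnu,\upsigma^\upnu),
\]
and, both algebras being finite dimensional, $\Phi$ is invertible in $\AinfAlgan$ by \cref{analinverse}. First I would record that, since $X$ and $X'$ are threefolds and the minimal models have $\hh^0 \cong \ell$ (the standing assumption under which the potentials of \cref{potentialsvolumecor} are defined), \cref{analpotential} attaches to each side an analytic potential, namely $W^{f^*\upnu} \in \widetilde\barT_\ell \Ext^1(\cE,\cE)^\vee$ and $W^\upnu \in \widetilde\barT_\ell \Ext^1(\cE',\cE')^\vee$.

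Next I would pass to tensor algebras. The $A_\infty$-isomorphism $\Phi$ induces a graded algebra isomorphism $\Phi^* \colon \widehat\barT_\ell \Ext^1(\cE',\cE')^\vee \to \widehat\barT_\ell \Ext^1(\cE,\cE)^\vee$ of the completed tensor algebras, and because $\Phi$ is \emph{cyclic} it follows from Kajiura's result \cite[Proposition 4.16]{Kaj07}, exactly as in the proof of \cref{potentialsmaincor}, that $\Phi^*$ sends the potential of the target to that of the source, i.e. $\Phi^*(W^\upnu) = W^{f^*\upnu}$. Finally, since $\Phi$ (and $\Phi^{-1}$) are analytic, the estimate used in the proof of \cref{potentialsmaincor} --- reducing, via \cite[Lemma 3.12]{HK19}, to a geometric bound on the coefficients $v(\Phi_n(a_{i_1},\ldots,a_{i_n}))$ of $\Phi^*(v)$ for each generator $v$ --- shows that $\Phi^*$ restricts to an isomorphism $g \colon \widetilde\barT_\ell \Ext^1(\cE',\cE')^\vee \xrightarrow{\ \sim\ } \widetilde\barT_\ell \Ext^1(\cE,\cE)^\vee$ of the analytic subrings, and this $g$ satisfies $g(W^\upnu) = W^{f^*\upnu}$, which is the claim.

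I do not expect a genuine obstacle: the substantive content is already in \cref{embeddingminmod} (existence of the analytic cyclic isomorphism identifying the two right CY structures) and in \cref{potentialsmaincor}/\cref{analpotential} (passage from cyclic analytic $A_\infty$-structures to analytic potentials, and analyticity of the induced tensor-algebra map). The points needing care are purely bookkeeping: fixing the variance of $\Phi \mapsto \Phi^*$ so that $g$ runs from the $\cE'$-side to the $\cE$-side, noting the $\hh^0 \cong \ell$ hypothesis that makes the potentials meaningful, and invoking --- rather than re-deriving --- the analyticity estimate of \cref{potentialsmaincor} for the restriction to $\widetilde\barT_\ell$. An alternative packaging avoids mentioning $\Phi$ altogether: apply the second assertion of \cref{potentialsmaincor} directly to the analytic quasi-isomorphism $\frg_\cE \to \frg_{\cE'}$ furnished by \cref{embeddingminmod}, and then use that theorem's identification $[\upphi^{f^*\upnu}] = [g^*\upphi^\upnu]$ in $\HC_\lambda^{-3}(\frg_\cE)$ together with the ``well-defined up to change of coordinates'' clause to replace $W_{\frg_\cE}^{g^*\upphi^\upnu}$ by $W^{f^*\upnu}$; this is the same argument rephrased.
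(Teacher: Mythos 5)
Your proposal is correct and matches the paper's argument: the paper proves this corollary in one line by applying the second assertion of \cref{potentialsmaincor} to the analytic quasi-isomorphism furnished by \cref{embeddingminmod}, which is exactly the ``alternative packaging'' you describe at the end (and your main packaging via the cyclic isomorphism $\Phi$ of minimal models is just the unwound version of the same argument). The bookkeeping points you flag --- variance of $\Phi \mapsto \Phi^*$, the $\hh^0 \cong \ell$ hypothesis, and reusing rather than re-deriving the analyticity estimate --- are handled the same way in the paper.
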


\subsection{The example of a point}
We will give an explicit computation of the cyclic minimal model associated to the point sheaf of the origin in $\A^n$, which is a quasi-projective Calabi--Yau with standard volume form $\omega = \d z_1 \wedge \cdots \wedge \d z_n$ in standard coordinates.

Let $\P^n \subset \A^n$ be the projective compactification with coordinates $z_0,z_1,\ldots,z_n$ and write $o \in \A^n \subset \P^n$ for the origin.
Writing $V = (T_o\A^n)^* \cong \C^n$, the point sheaf $\O_o$ is resolved by the Koszul complex
\[
  \cE \colonequals \quad \wedge^n (V \otimes_\C \O_{\bbP^n}(-1)) \xrightarrow{\ \updelta\ } \cdots \xrightarrow{\ \updelta\ } \wedge^2 (V \otimes_\C \O_{\bbP^n}(-1)) \xrightarrow{\ \updelta\ } V \otimes_\C \O_{\bbP^n}(-1) \xrightarrow{\ \updelta\ } \O_{\bbP^n},
\]
where the differential acts as $v \otimes f \mapsto \sum_{i=1}^n v(\partial_{z_i}) z_i \cdot f$ on $\cE^{-1} = V\otimes_\C \O_{\bbP^n}(-1)$ and acts on the other terms by extension over the wedge product.
We endow $\cE$ with a hermitian structure from the Fubini-Study metric on $\bbP^n$, yielding a Dolbeault DG algebra $\frg_\cE$ as a model for $\Ext^\bullet(\O_o,\O_o)$.
It is known (see e.g. \cite[Lemma 3.6]{RS21} for the threefold case) that the algebra of polyvectors
\[
  \frh \colonequals \left(\bigoplus_{i=0}^n (T_o\A^n)^{\wedge i}, \wedge \right),
\]
at $o$ forms an $A_\infty$-minimal model for $\frg_\cE$.
Here we include a proof for the analytic setting.

\begin{lem}
  There is an analytic DG algebra quasi-isomorphism $I \colon \frh \xrightarrow{\sim} \frg_\cE$, which makes $\frh$ into an analytic minimal model for $\frg_\cE$.
\end{lem}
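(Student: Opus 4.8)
The plan is to write $I$ down explicitly as the inclusion of the sub-DG-algebra of ``constant'' (holomorphic, degree-raising) endomorphisms of the Koszul complex, verify the three defining properties in turn---morphism of DG algebras, quasi-isomorphism, analytic---and then upgrade ``analytic quasi-isomorphism out of a minimal algebra'' to ``analytic minimal model'' using the uniqueness machinery of \cref{sec:prelims}.

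\emph{Construction of $I$.} Take $o = [1:0:\cdots:0]$ with homogeneous coordinates $z_0,\ldots,z_n$, so that on the chart $\{z_0\neq 0\}$ the complex $\cE$ is the ordinary Koszul complex $\bigwedge^\bullet(\O^{\oplus n})$ with differential $\updelta = \sum_{i=1}^n z_i\,\iota_{\partial_{z_i}}$, the sections $z_i \in \hh^0(\P^n,\O(1))$ twisting the contraction operators $\iota_{\partial_{z_i}}$ (contraction of $\bigwedge^\bullet V$ by $\partial_{z_i}\in T_o\A^n = V^*$). For a polyvector $\xi = \partial_{z_{i_1}}\wedge\cdots\wedge\partial_{z_{i_k}}\in\frh$ I set
\[
  I(\xi) \colonequals z_0^{\,k}\,\iota_{\partial_{z_{i_1}}}\circ\cdots\circ\iota_{\partial_{z_{i_k}}} \in \frg_\cE,
\]
extended $\C$-linearly. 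The factor $z_0^{\,k}\in\hh^0(\P^n,\O(k))$ turns $z_0^{\,k}\,\iota_{\partial_{z_{i_1}}}\circ\cdots\circ\iota_{\partial_{z_{i_k}}}$ into a genuine degree-$k$ endomorphism of $\cE$ on all of $\P^n$, and it is a holomorphic section of $\cE nd(\cE)$; since the contraction operators anticommute with $\updelta$, each such endomorphism is $\Dol$-closed. As $\xi\mapsto\iota_\xi$ is (up to sign) the standard embedding $\bigwedge^\bullet T_o\A^n\hookrightarrow\End(\bigwedge^\bullet V)$ and the $z_0$-powers multiply, $I$ is a morphism of DG algebras, the differential on $\frh$ being zero.

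\emph{Quasi-isomorphism and analyticity.} It is classical that $\hh^\bullet\frg_\cE = \Ext^\bullet_X(\O_o,\O_o)$ is the exterior algebra on $\Ext^1_X(\O_o,\O_o)\cong T_o\A^n$; this is \cite[Lemma 3.6]{RS21} in dimension $3$, and the identical Koszul computation works for any $n$. Restricting to the affine chart identifies $[I(\partial_{z_i})] = [z_0\iota_{\partial_{z_i}}]$ with the $i$-th standard generator of $\Ext^1$, so $\hh^1(I)$ is an isomorphism; as $I$ is multiplicative and the target is freely generated in degree $1$, $\hh^\bullet(I)$ is an isomorphism in all degrees. For analyticity: the induced $A_\infty$-morphism (which I again call $I$) is strict, with $I_m = 0$ for $m\geq 2$ and $I_1$ a linear map out of the \emph{finite-dimensional} normed space $\frh$, hence bounded; so $\nrm{I_m} < C^m$ for $C = 1 + \nrm{I_1}$ and $I\in\Hom_\AinfAlg^\an(\frh,\frg_\cE)$.

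\emph{Conclusion.} The algebra $\frg_\cE$ is compact (its cohomology has dimension $2^n$) and admits an analytic minimal model $\cH_\cE$ by \cref{minmodTT}, while the minimal finite-dimensional algebra $\frh$ is its own analytic minimal model. By \cref{quasitoiso} the analytic quasi-isomorphism $I$ induces an analytic isomorphism $\hh(I) = P_{\cH_\cE}\diamond I\colon\frh\to\cH_\cE$, and then $P\colonequals\hh(I)^{-1}\diamond P_{\cH_\cE}$ satisfies $P\diamond I = \id_\frh$ and $I\diamond P\sim_\an\id_{\frg_\cE}$ by the discussion following \cref{uniqanalminmod}; hence $(\frh,P,I)$ is an analytic minimal model of $\frg_\cE$. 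The only step carrying real content is the quasi-isomorphism claim---the formality of the Ext-algebra of a point together with the identification of its degree-$1$ generators---since the analytic bound is automatic from finite-dimensionality and the rest is bookkeeping.
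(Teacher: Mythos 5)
Your proposal is correct and follows essentially the same route as the paper: define $I$ by (twisted) contraction operators on the Koszul complex, verify it is a $\Dol$-closed, square-zero-compatible DG algebra morphism, note analyticity is automatic from finite-dimensionality of $\frh$, and conclude via the uniqueness of analytic minimal models (\cref{uniqanalminmod}/\cref{quasitoiso}). If anything, your version is slightly more careful on two points the paper glosses over --- the $z_0^k$ twist needed to make $I(\xi)$ a globally defined holomorphic endomorphism of $\cE$ on $\P^n$, and the identification of the degree-one generators of $\Ext^\bullet(\O_o,\O_o)$, which upgrades the paper's bare dimension count to an actual proof that $\hh^\bullet(I)$ is an isomorphism.
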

\begin{proof}
  It suffices to define $I(\xi)$ for $\xi \in T_o\A^n$ and check that $I(\xi)\wedge I(\xi) = 0$ and $\Dol I(\xi) = 0$ hold.
  For a vector $\xi\in T_o\A^n$ the element $I(\xi)$ is defined as the holomorphic endomorphism acting by $v\otimes f \mapsto v(\xi)\cdot f \in \cE^0$ on $\cE^{-1}$ and extended to maps $\cE^{-k} \mapsto \cE^{1-k}$ over the wedge product.
  A standard computation shows that $I(\xi) \wedge I(\xi) = I(\xi) \circ I(\xi)$ acts by
  \[
    \begin{aligned}
      (I(\xi)\circ I(\xi))(v_1 \otimes f_1 \wedge \cdots \wedge v_k \otimes f_k)
      &= \sum_{i=1}^n (-1)^i v_i(\xi)f_i \cdot I(\xi)(v_1\otimes f_1 \wedge \cdots \widehat{v_i\otimes f_i}\cdots \wedge v_k \otimes f_k)\\
      &= \sum_{i<j} ((-1)^{i+j} v_i(\xi)v_j(\xi)f_if_j + (-1)^{i+j-1} v_i(\xi)v_j(\xi)f_if_j) \\
      &\quad\quad\quad\cdot (v_1\otimes f_1 \wedge \cdots  \cdots \widehat{v_i\otimes f_i} \cdots \widehat{v_j\otimes f_j}\cdots \wedge v_k \otimes f_k))\\
      &=0,
    \end{aligned}
  \]
  where the hats indicate omission.
  It follows that $I$ is an algebra morphism, and a similar argument shows that $\Dol(I(\xi)) = 0$, making $I$ into an analytic DG algebra morphism.
  It is moreover a quasi-isomorphism because the dimension of $\frh$ is $\dim_\C \Ext^i(\O_o,\O_o) = \dim_\C (T_o\A^n)^{\wedge i}$.
  By \cref{uniqanalminmod} it follows that $\frh$ is equivalent to the analytic minimal model $\cH_\cE$ of $\frg_\cE$.
\end{proof}

It follows easily from \cite[Lemma 11.2]{VdBer15} that $\frh$ admits a cyclic structure: for every choice of trace $\uplambda\colon \frh^n \to \C$ there is a cyclic structure of the form
\[
  \upsigma(s\upxi_1)(s\upxi_2) \colonequals \uplambda(\upxi_1\wedge \upxi_2).
\]
Viewing $\uplambda$ as a cocycle in $\barC_\lambda^{\bullet,\an}(\frh)$, one checks that it corresponds to $\upsigma$ along the map in \cref{analCYlifts}.
The following lemma relates $\uplambda$ to an analytic right CY structure on $\frg_{\cE|_U,c}$ induced by the volume $\omega$.

\begin{lem}\label{uhpairfortrace}
  There exists an open $U\subset \A^n$ and unit/coboundary pair $(u,h)$ for $\frg_{\cE|_U,c} \subset \frg_\cE$ with associated quasi-isomorphism $K_c$,
  such that $(K_c\diamond I)^*\uptau_{\omega|_U}$ is analytically homotopic to a trace $\uplambda$.
\end{lem}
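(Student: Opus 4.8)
The plan is to compute the pullback $(K_c \diamond I)^*\uptau_{\omega|_U}$ explicitly as a negative cyclic cochain on $\frh$, and identify its cohomology class. First I would fix the neighbourhood: since the resolution $\cE$ has cohomological support at the origin $o$, Lemma~\ref{unithompair} provides a unit/coboundary pair $(u,h)\in \frg_{\cE|_U,c}^0 \times \frg_\cE^{-1}$ for some open $U\supset\{o\}$, which may be shrunk so that $\omega|_U$ is $L^2$-integrable (as in the proof of \cref{cycminmodcan}); this gives an analytic right CY structure $\uptau_{\omega|_U}$ on $\frg_{\cE|_U,c}$ by \cref{traceisCY} and \cref{volumeCYbounded}, and an analytic homotopy equivalence $K_c\in\Hom^\an_\AinfAlg(\frg_\cE,\frg_{\cE|_U,c})$ by \cref{qisotheotherway}. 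Composing with the DG algebra inclusion $I\colon\frh\hookrightarrow\frg_\cE$ from the previous lemma yields an analytic morphism $K_c\diamond I \in \Hom^\an_\AinfAlg(\frh,\frg_{\cE|_U,c})$, and hence (by the functoriality established in \cref{analhoch}) an analytic negative cyclic cocycle $(K_c\diamond I)^*\uptau_{\omega|_U} \in \barC^{\an,\bullet}_\lambda(\frh)$.

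The key observation is that $\frh$ is \emph{finite-dimensional} and concentrated in degrees $0,\dots,n$, so every analytic negative cyclic cocycle on it is automatically a finite-dimensional object, and moreover $\HC^{-n}_\lambda(\frh)$ is one-dimensional (spanned by a trace $\uplambda\colon \frh^n\to\C$, as follows from \cite[Lemma 11.2]{VdBer15} together with the computation of $\Omega^{2,\cl}(\frh)$ for the polyvector algebra). Therefore it suffices to show two things: (i) the class $[(K_c\diamond I)^*\uptau_{\omega|_U}]$ is nonzero in $\HC^{-n}_\lambda(\frh)$, so that it equals $c\cdot[\uplambda]$ for some $c\in\C^\times$ and some fixed trace $\uplambda$; and (ii) rescaling $\uplambda$ by $c$ gives a genuine trace to which $(K_c\diamond I)^*\uptau_{\omega|_U}$ is analytically homotopic. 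For (i), note that $K_c$ and $I$ are both quasi-isomorphisms, so by \cref{analhochschildhomot} (or its non-analytic shadow) the pullback map on $\HC^\bullet_\lambda$ is an isomorphism; since $\uptau_{\omega|_U}$ is a right $n$-CY structure (hence in particular represents a nonzero class), its pullback is nonzero. For (ii), since $\HC^{-n}_\lambda(\frh)$ is one-dimensional, $(K_c\diamond I)^*\uptau_{\omega|_U}$ and $c\cdot\uplambda$ represent the same class; being analytic cocycles on a finite-dimensional $A_\infty$-algebra, cohomologous means the difference is an analytic coboundary, i.e.\ they are analytically homotopic in the sense needed downstream (or, more directly, \cref{analCYlifts} identifies both with SHIPs differing by an exact analytic closed 2-form). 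Renaming $c\cdot\uplambda$ as $\uplambda$ — which is still a trace $\frh^n\to\C$ — completes the argument.

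The main obstacle, I expect, is not the abstract homological input but the concrete verification that the pullback cochain really is analytic and really does land in the expected degree with nonzero class — in particular, that no lower-order terms $\xi_k$ with $k>0$ obstruct recognising the class. This is handled by the degree and dimension constraints: $\frh$ is supported in degrees $[0,n]$, so a negative cyclic cochain of total degree $-n$ can only have a nonzero $\xi_0$-component paired into top degree, and the recursive tree formulas for $K_c$ (built from $u$, $h$, and the wedge product, all analytic by \cref{minmodTT} and \cref{qisotheotherway}) together with the DG-algebra (strict) nature of $I$ keep everything inside $\fA$. Alternatively, one can sidestep explicit computation entirely by invoking that $\frg_\cE$ is the Dolbeault algebra of a perfect complex on the \emph{Calabi--Yau} ambient $\A^n$ with its global volume form $\omega$; by the Remark following \cref{potentialsvolumecor} the pair $(u,h)=(\id_\cE,0)$ would work on $\A^n$ itself, and the analytic homotopy invariance of \cref{mainthm} forces the resulting cyclic minimal model on $\frh$ to have cyclic structure $\upsigma^\lambda$ for some trace $\lambda$ — which is exactly the content of the lemma. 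I would present the explicit-computation route as the main proof and remark on this conceptual shortcut.
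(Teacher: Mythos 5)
Your strategy is genuinely different from the paper's: you want to identify the cohomology class of $(K_c\diamond I)^*\uptau_{\omega|_U}$ abstractly in $\HC_\lambda^{-n}(\frh)$ and then strictify, whereas the paper \emph{constructs} a specific unit/coboundary pair out of radial bump functions $u_i = p(|z_i|^2)\id - p'(|z_i|^2)(\partial_{z_i}^*\wedge-)\,\d\overline z_i$, $h_i = q(|z_i|^2)\overline z_i(\partial_{z_i}^*\wedge-)$, and evaluates the pullback in polar coordinates, finding that every higher component $(I^*K_c^*\uptau_{\omega|_U})_k$, $k\geq 1$, vanishes on the nose (each term carries an angular factor $\int_0^{2\pi}e^{-\alpha_j\theta_j i}\,\d\theta_j = 0$) while the zeroth component is $\pi^n$ times a trace. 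Your route has a fatal gap: the claim that $\HC_\lambda^{-n}(\frh)$ is one-dimensional and spanned by a trace is asserted without proof and is false. The graded exterior algebra $\frh=\bigwedge^\bullet T_o\A^n$ is free graded-commutative on odd generators, so by the graded HKR theorem its Hochschild homology is $\frh\otimes\Sym(V)$ with the symmetric generators sitting in degree $0$ of the Hochschild complex; a fixed total degree therefore receives infinitely many classes, and the same persists in cyclic (co)homology. Neither \cite{VdBer15} nor anything in the paper computes $\Omega^{2,\cl}(\frh)$ or $\HC_\lambda^{-n}(\frh)$. Without one-dimensionality, knowing that the pullback class is nonzero does not force it to be a scalar multiple of $[\uplambda]$, and the argument stalls at step (i). A secondary gap: even granting the class identification in $\HC_\lambda^{-n}(\frh)$, your inference that cohomologous analytic cocycles differ by an \emph{analytic} coboundary is unproved --- a primitive of an analytic coboundary need not be analytic, the paper never shows $\HC_\lambda^{\an,\bullet}\to\HC_\lambda^\bullet$ is injective, and \cref{mainthm}(2) is engineered precisely so that only non-analytic exactness is needed; the lemma as stated asks for more than that.

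The "conceptual shortcut" also does not work. The pair $(u,h)=(\id_\cE,0)$ is admissible only when the ambient \emph{projective} variety is itself Calabi--Yau: here the compactification is $\P^n$, which is not, and $\id_\cE$ does not have compact support in $U\subsetneq\P^n$, so it is not an element of $\frg_{\cE|_U,c}$; the analytic framework (Sobolev norms, \cref{minmodTT}) lives on the compact $\P^n$, not on $\A^n$. Finally, note what the downstream corollary actually requires: not merely that the minimal model carries \emph{some} cyclic structure, but that it is the \emph{unperturbed} wedge-product algebra with the strict pairing $\uplambda(\xi\wedge-)$. The paper gets this for free because the computed pullback is literally a strict trace; an argument conducted only at the level of cohomology classes would have to route back through \cref{mainthm}(2) to undo the Darboux perturbation of the product. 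I would replace the abstract identification with the explicit construction and the two polar-coordinate integrals.
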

\begin{proof}
  Let $p\colon \R \to \R$ be the smooth function with compact support in $[-1,1]$ defined by
  \[
    p(t) = \begin{cases}
      e^{\frac{-t^2}{1-t^2}} & \text{if } t \in [-1,1],\\
      0 & \text{otherwise}.
    \end{cases}
  \]
  Then $q(t) = t^{-1}(p(t) - 1)$ is another smooth function $q\colon \R\to\R$ and we define for each $i = 1,\ldots,n$ the smooth sections $u_i \in \frg_{\cE|_{\A^n}}^0$ and $h_i\in \frg_{\cE|_{\A^n}}^{-1}$ by the formulas
  \[
    \begin{aligned}
      u_i(z) &= p(|z_i|^2) \cdot \id_{\cE|_{\A^n}} - p'(|z_i|^2)(\partial_{z_i}^* \wedge -) \cdot \d \overline z_i,\\
      h_i(z) &= q(|z_i|^2)\overline z_i (\partial_{z_i}^*\wedge -),
    \end{aligned}
  \]
  where $\partial_{z_i}^* \in (T_o\A^n)^* = V$ is the dual of the standard basis vector $\partial_{z_i}$.
  By inspection, we have
  \[
    \begin{aligned}
      \Dol h_i(z) &= q(|z_i|^2)|z_i|^2 \cdot \id_{\cE|_{\A^n}} + (q(|z_i|^2) + q'(|z_i|^2)|z_i|^2)(\partial^*_{z_i}\wedge-) \d \overline z_i\\
      &= (p(|z_i|^2) - 1) \cdot \id_{\cE|_{\A^n}} - p'(|z_i|^2)(\partial^*_{z_i}\wedge-) \d \overline z_i\\
      &= u_i(z) - \id_{\cE|_{\A^n}}.
    \end{aligned}
  \]
  The element $u = u_1 \wedge \cdots \wedge u_n$ has the closed polydisc $\{z \mid |z_i| \leq 1 \ \forall i\}$ as its support and can therefore be interpreted as an element of $\frg_{\cE|_U,c}$ for a bounded open neighbourhood $U$ of said polydisc.
  It moreover satisfies $u = \id_{\cE|_{\A^n}} + \Dol \widetilde h$ with respect to
  \[
    \widetilde h = \sum_{k=1}^n h_k + \sum_{i < j} h_i \wedge \Dol h_j + \ldots + \sum_{j_1 < \cdots < j_{n-1}} h_{j_1} \wedge \Dol h_{j_2} \wedge \cdots \wedge \Dol h_{j_n} +  h_1 \wedge \Dol h_2 \wedge \cdots\wedge \Dol h_n.
  \]
  We want to extend $(u,\widetilde h)$ to a unit/coboundary pair on all of $\P^n$, but note that $\widetilde h$ blows up at infinity.
  To remedy this, we can pick any smooth bump function $\phi$ on $\A^n$ with $\phi|_U = 1$.
  Then $\phi\cdot\widetilde h$ has compact support, and hence extends to a form $i_!(\phi\cdot \widetilde h)$ on $\P^n$ which satisfies
  \[
    \Dol(i_!(\phi\cdot \widetilde h)) = \phi(i_!u - \id_\cE) + i_!\overline\partial \phi \cdot \widetilde h = i_!u - \id_\cE + ((1-\phi) \id_\cE + i_!(\overline\partial \phi\cdot \widetilde h)).
  \]
  Now the element $(1-\phi) \id_\cE + i_!(\overline\partial \phi\cdot\widetilde h) \in \frg_\cE^0$ has support in $\P^n \setminus U$, and is therefore given by $\Dol\upbeta$ for some $\upbeta\in \frg_\cE^{-1}$ with compact support in $\P^n\setminus U$.
  Taking $h = i_!(\phi\cdot \widetilde h) - \upbeta$ then yields the required extension, making $(u,h)$ into a unit/coboundary pair with $h|_U = \widetilde h$.
  Let $K_c \colon \frg_\cE \to \frg_{\cE|_U,c}$ be the morphism obtained from \ref{qisotheotherway}, so that $I^*K_c^*\uptau_{\omega|_U}$ is an analytic negative cyclic cocycle for $\frh$.
  A computation in polar coordinates shows that the zeroth component maps the shift of $\xi = \partial_{z_1} \wedge \cdots \wedge \partial_{z_n} \in \frh^n$ to
\[
  \begin{aligned}
    (I^*K_c^*\uptau_{\omega|_U})_0(s\xi)
  &= \int_U p'(|z_1|^2)\cdots p'(|z_n|^2) \cdot \d z_1\cdots \d z_n \d \overline z_1\cdots\d \overline z_n.
    \\&= (-1)^n \int_0^{2\pi}\int_0^1 p'(r_1^2) r_1 \d r_1\d \theta_1 \cdots \int_0^{2\pi}\int_0^1 p'(r_n)^2 r_n \d r_n\d \theta_n
    \\&= (-1)^n (\pi p(1) - \pi \cdot p(0))^n = \pi^n.
  \end{aligned}
\]
A similar computation shows that the higher components of $I^*K_c^*\uptau_{\omega|_U}$ vanish
\[
  \begin{aligned}
    (I^*K_c^*\uptau_{\omega|_U})_k(s\xi_1,\ldots,s\xi_k)(\xi_0)
    &= \int_{\A^n} \d z_1 \cdots  \d z_n  \tr(u \wedge I(\xi_0) \wedge h \wedge \cdots \wedge h \wedge I(\xi_k))
    \\&= \sum_{\substack{\alpha \in \N^n\\ |\alpha| = k}} \int_{\A^n} F_{\xi,\alpha}(|z_1|^2,\ldots,|z_n|^2) \overline z_1^{\alpha_1} \cdots \overline z_n^{\alpha_n} \d z_1 \cdots  \d z_n \d \overline z_1  \cdots  \d \overline z_n
    \\&=
    \sum_{\substack{\alpha\in\N^n\\|\alpha|=k}} (-1)^n\int_{[0,1]^n} F_{\xi,\alpha}(r_1^2,\ldots,r_n^2)r^{\alpha+1} \d r \cdot \prod_{j=1}^n \int_0^{2\pi} e^{-\alpha_j \theta_j \cdot i} \d \theta_j.
  \end{aligned}
\]
for some smooth functions $F_{\xi,\alpha}\colon \R^n \to \R$ supported on $[-1,1]^n$, which depend on $\xi$ and a multi-index $\alpha$ with $|\alpha| = \alpha_1+ \ldots + \alpha_n = k$.
For $k\geq 1$ there is at least one $\alpha_j \neq 0$, contributing a factor $\int_0^{2\pi} e^{-\alpha_j \theta_j \cdot i} \d\theta_j = 0$ which makes the term in the sum vanish. 
It follows that $(I^*K_c^*\uptau_{\omega|_U})_k = 0$ for $k\geq 1$, and therefore $\uplambda = I^*K_c^*\uptau_{\omega|_U} = (I^*K_c^*\uptau_{\omega|_U})_0$ is a linear functional.
\end{proof}

\begin{cor}
  The pair $(\frh,\upsigma)$ is equivalent to the cyclic analytic minimal model $(\cH_\cE^{\omega},\upsigma^\omega)$.
\end{cor}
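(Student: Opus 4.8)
The plan is to read $(\cH_\cE^\omega,\upsigma^\omega)$ directly off its construction in \cref{cycminmodcan} and recognise it as $(\frh,\upsigma)$ by means of the explicit data provided by \cref{uhpairfortrace}. First I would recall that, by \cref{cycminmodcan} together with the canonicity in \cref{mainthm} (part (3) applied with $g=\id_{\frg_\cE}$ shows in particular that the construction does not depend on the chosen minimal model), the pair $(\cH_\cE^\omega,\upsigma^\omega)$ is determined up to cyclic analytic $A_\infty$-isomorphism independently of the open neighbourhood $U\supset\{o\}$, of the unit/coboundary pair, and of the analytic minimal model of $\frg_\cE$ used. I am therefore free to compute it by running \cref{mainthm}(1) with the neighbourhood $U\subset\A^n$ and the unit/coboundary pair $(u,h)$ of \cref{uhpairfortrace} --- with associated analytic homotopy equivalence $K_c\colon\frg_\cE\to\frg_{\cE|_U,c}$ and analytic right Calabi--Yau structure $\upphi^\omega=K_c^*\uptau_{\omega|_U}$ --- and with $\frh$ itself, via the DG-algebra quasi-isomorphism $I\colon\frh\xrightarrow{\sim}\frg_\cE$ of the preceding lemma, as the (finite-dimensional, unital, analytic) minimal model of $\frg_\cE$.

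Next I would identify the pulled-back Calabi--Yau class on $\frh$. By functoriality, $I^*\upphi^\omega=(K_c\diamond I)^*\uptau_{\omega|_U}$, and \cref{uhpairfortrace} asserts exactly that this Hochschild cochain is (analytically cohomologous to) a trace $\uplambda\colon\frh^n\to\C$, with all higher components vanishing. By the remark preceding \cref{uhpairfortrace} together with \cref{exSHIPfromlinform}, an analytic strong homotopy inner product on $\frh$ lifting $\uplambda$ along the quasi-isomorphism of \cref{analCYlifts} may be taken to be the \emph{strict} closed $2$-form $\uprho$ with $\uprho_{0,0}(s\upxi_1)(s\upxi_2)=\uplambda(\upxi_1\wedge\upxi_2)$ and $\uprho_{i,j}=0$ for $i+j>0$; that is, $\uprho$ is $\upsigma$ itself, now regarded as an element of $\Omega^{2,\cl,\an}(\frh)$. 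I would then note that $\uprho$ is already a genuine $n$-cyclic structure on $\frh$: the components $\uprho_{i,j}$ with $i+j>0$ vanish because $\frh$ is minimal, $\uprho_{0,0}$ is skew-symmetric by the graded-commutativity of $\wedge$, and it is an isomorphism because $I^*\upphi^\omega$ is a right Calabi--Yau structure on $\frh$ (the induced map $\frh\to\frh^\vee[-n]$ being an isomorphism, as $I$ is a quasi-isomorphism).

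I would then feed the SHIP $\uprho=\upsigma$ into the analytic Darboux procedure of \cref{mainthm}(1)/\cref{Darboux} and observe that it is trivial: the interpolating family $\uprho^t=\uprho_{0,0}+t(\uprho-\uprho_{0,0})$ of \labelcref{eq:formfam} is constant equal to $\upsigma$; the Hochschild cochain $\upxi$ fixed via the linear section of \cref{analsplitepi} (formula \labelcref{eq:splittingmap}) equals $h(\uprho-\uprho_{0,0})=h(0)=0$; hence the family $\upalpha^t=(\uprho^t)^{-1}\circ\overline\upxi$ vanishes and the unique solution of the defining equation \labelcref{crucialdiffeq} is $f^t\equiv\id$. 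Consequently $\frh^{I^*\upphi^\omega}=\frh$ as an analytic $A_\infty$-algebra and $\upsigma^{I^*\upphi^\omega}=\id^*\uprho=\upsigma$; combined with the first paragraph this yields the asserted cyclic analytic $A_\infty$-equivalence $(\cH_\cE^\omega,\upsigma^\omega)\cong_{\an,\cyc}(\frh,\upsigma)$.

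All of the genuine computational content --- the collapse of $(K_c\diamond I)^*\uptau_{\omega|_U}$ to a plain linear functional --- is already done in \cref{uhpairfortrace}, so the remaining work is bookkeeping. I expect the main (minor) obstacle to be tracking the various ``up to cyclic isomorphism'' and ``up to homotopy'' clauses, namely that the choices of $U$, of $(u,h)$, of $\frh$ versus the Hodge-theoretic model $\cH_\cE$, and of the SHIP lift of $\uplambda$ all wash out, together with the small observation that the Darboux construction, when started from an already-strict SHIP, returns the identity morphism (which uses only that the section appearing in \cref{analsplitepi} is linear).
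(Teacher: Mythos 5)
Your proposal is correct and follows essentially the same route as the paper: identify $(K_c\diamond I)^*\uptau_{\omega|_U}$ with the trace $\uplambda$ via \cref{uhpairfortrace}, observe that the associated SHIP on $\frh$ is the strict cyclic structure $\upsigma$ itself, and invoke the uniqueness/comparison statements of \cref{mainthm} to identify the result with $(\cH_\cE^{\omega},\upsigma^\omega)$. The only difference is presentational: you unpack explicitly why the Darboux procedure returns the identity morphism when fed an already-strict SHIP, a point the paper leaves implicit in its appeal to \cref{mainthm}.
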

\begin{proof}
  Letting $U$ and $K_c$ be as in \cref{uhpairfortrace} above, the map $K_c\diamond I$ makes $(\frh,\upsigma)$ into the cyclic analytic minimal model of $\frg_{\cE|_U,c}$ corresponding to the analytic right CY structure $\uptau_{\omega|_U}$ as in \cref{mainthm}.
  It follows that $K_c \in \Hom_{\AinfAlg}^\an(\frg_\cE,\frg_{\cE|_U,c})$ is a quasi-isomorphism such that
  \[
    [K_c^*\uptau_{\omega|_U}]_\an = [\upphi^\omega]_\an \in \HC_\lambda^{\an,\bullet}(\frg_\cE),
  \]
  is the canonical analytic right CY structure of \cref{cycminmodcan}.
  But then it follows from \cref{mainthm}(3) that there is a cyclic analytic $A_\infty$-isomorphism $(\frh,\upsigma) \cong_{\cyc,\an} (\cH_\cE^\omega,\upsigma^\omega)$.
\end{proof}

Because any point of a smooth variety has an analytic neighbourhood biholomorphic to an open analytic subset of $\A^n \subset \P^n$, we now find a similar result for the cyclic analytic minimal models of arbitrary points.

\begin{prop}\label{cyclicstructonpoint}
  Let $p\in X'$ be a closed point in a smooth projective variety $X'$ of dimension $n$, and let $\cE' \to \O_p$ be any resolution by a perfect complex.
  Then for \emph{every} holomorphic volume germ $\upnu \in (\Omega^d_X)_Z$ the cyclic analytic minimal model is cyclic-analytic $A_\infty$-isomorphic to $(\frh,\upsigma)$.
\end{prop}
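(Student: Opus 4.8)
The plan is to reduce to the case already treated. The corollary just proven identifies the cyclic analytic minimal model $(\cH_\cE^\omega,\upsigma^\omega)$ attached to the Koszul resolution $\cE\to\O_o$ on $\P^n$ and the standard volume $\omega=\d z_1\wedge\cdots\wedge\d z_n$ with $(\frh,\upsigma)$, and \cref{embeddingminmod} lets one transport cyclic analytic minimal models along a local biholomorphism. The only genuinely new input is the holomorphic Darboux theorem for volume germs: any volume germ at a point of an $n$-dimensional complex manifold becomes $\d z_1\wedge\cdots\wedge\d z_n$ in suitable analytic coordinates.

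First I would establish this normalization. In arbitrary analytic coordinates centered at $p$, the germ $\upnu$ is represented by $g\,\d z_1\wedge\cdots\wedge\d z_n$ with $g$ holomorphic and $g(0)\neq0$. Then I take the triangular change of coordinates $w=\psi(z)$ with $\psi_j=z_j$ for $j\geq2$ and $\psi_1$ the solution of the one-variable holomorphic initial value problem $\partial_{z_1}\psi_1=1/g(\psi_1,z_2,\ldots,z_n)$, $\psi_1(0,z_2,\ldots,z_n)=0$; since $1/g$ is holomorphic and nonvanishing near $0$ this has a holomorphic solution depending holomorphically on the parameters $z_2,\ldots,z_n$, and $\det D\psi=\partial_{z_1}\psi_1=1/(g\circ\psi)$ is nonvanishing, so $\psi$ is a local biholomorphism with $\psi^*(g\,\d z_1\wedge\cdots\wedge\d z_n)=\d z_1\wedge\cdots\wedge\d z_n$. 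Composing and shrinking, I obtain an open neighbourhood $Y$ of the origin $o\in\A^n\subset\P^n$ together with an open embedding $f\colon Y\hookrightarrow X'$ satisfying $f(o)=p$ and $f^*\upnu=\omega$, where $\omega$ now denotes $\d z_1\wedge\cdots\wedge\d z_n$ on $Y$.

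Next I would apply \cref{embeddingminmod} with $X=\P^n$, with $\cE$ the Koszul resolution of $\O_o$ on $\P^n$ defined above, with $\cE'$ the given resolution of $\O_p$, with the compact set $Z=\{o\}$, and with the embedding $f$. The hypotheses hold: the cohomological support of $\cE'$ is $\{p\}=f(\{o\})$, and $\cE|_Y$ and $f^*\cE'$ are both perfect complexes on $Y$ quasi-isomorphic to the skyscraper $\O_o$ (using that $f$ is a biholomorphism onto its image with $f(o)=p$), so $\cE|_Y\simeq f^*\cE'$ in $\D^\perf(Y)$. \Cref{embeddingminmod} then yields a cyclic-analytic $A_\infty$-isomorphism $(\cH_\cE^{f^*\upnu},\upsigma^{f^*\upnu})\cong_{\an,\cyc}(\cH_{\cE'}^\upnu,\upsigma^\upnu)$; since $f^*\upnu=\omega$ and $(\cH_\cE^\omega,\upsigma^\omega)\cong_{\an,\cyc}(\frh,\upsigma)$ by the corollary above, the proposition follows. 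The only matters requiring care — the compatibility of the repeated shrinking of $Y$ with the $L^2$-boundedness and analyticity requirements, and the harmless identification of germs along $Z$ with germs along $f(Z)$ used in \cref{embeddingminmod} — are exactly the bookkeeping already absorbed into \cref{cycminmodcan} and \cref{embeddingminmod}, so I do not anticipate a real obstacle: the substance of the argument is carried by those two results together with the explicit computation of \cref{uhpairfortrace}.
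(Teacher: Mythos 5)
Your proposal is correct and follows essentially the same route as the paper: reduce to the computed case of $(\cH_\cE^\omega,\upsigma^\omega)\cong(\frh,\upsigma)$ at the origin of $\A^n$ by normalizing the volume germ to $\d z_1\wedge\cdots\wedge\d z_n$ in local coordinates and then invoking \cref{embeddingminmod}. The only (harmless) difference is in the normalization step: you solve a holomorphic ODE to make $f^*\upnu=\omega$ on the nose and apply \cref{embeddingminmod} once, whereas the paper takes the more elementary antiderivative $G$ with $\partial G/\partial z_1=g$ to build a second embedding $\varphi$ with $\varphi^*\omega=f^*\upnu$ and applies \cref{embeddingminmod} twice.
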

\begin{proof}
  Let $Y \subset \A^n \subset \P^n$ be a sufficiently small neighbourhood of $o \in \A^n$ so that there is an open embedding $f\colon Y \to X'$ with $f(o) = p$.
  Then along this map $f^*\O_p \cong \O_o$ and the volume $\upnu$ pulls back to a form
  \[
    f^*\upnu = g(z_1,\ldots,z_n) \d z_1 \wedge \cdots \wedge \d z_n,
  \]
  for some nonvanishing holomorphic function $g$ on $Y$.  
  Let $G(z_1,\ldots,z_n)$ be a holomorphic function such that $\frac{\partial G}{\partial z_1} = g$ and $G(0,\ldots,0) = 0$.
  Then the map $\varphi\colon Y \to \A^n$ given by
  \[
    \varphi(z_1,\ldots,z_n) = (G(z_1,\ldots,z_n),z_2,\ldots,z_n)
  \]
  has a Jacobian $J(\varphi) = \det \frac{\partial G_i}{\partial z_j} = g$ which does not vanish on $Y$.
  Shrinking $Y$ if necessary, we obtain an open embedding $\varphi \colon Y \to \A^n$ such that $\varphi^*\O_o \cong \O_o$ and 
  \[
    \varphi^*\omega = \d G(z_1,\ldots,z_n) \wedge \cdots \wedge\d z_n = g(z_1,\ldots,z_n) \d z_1 \wedge \cdots \wedge \d z_n = f^*\upnu.
  \]
  Applying \cref{embeddingminmod} twice then yields cyclic-analytic $A_\infty$-isomorphisms
  \[
    (\frh,\upsigma) = \cH^\omega_\cE \cong_{\an,\cyc} \cH^{f^*\upnu}_{\cE} \cong_{\an,\cyc} \cH^\upnu_{\cE'}
    \qedhere
  \]
\end{proof}

\printbibliography%

\end{document}